\documentclass[reqno, 12pt]{amsart}
\usepackage{array}
\usepackage{amsmath}
\usepackage{amsfonts}
\usepackage{amssymb}
\usepackage{mathrsfs}
\usepackage{enumerate}
\usepackage{amsthm}
\usepackage{verbatim}
\usepackage{amsmath, amscd}
\usepackage{caption}

\usepackage[usenames,dvipsnames]{color}
\usepackage{bm}
\usepackage{xy}
\xyoption{all}
\usepackage{color}
\usepackage{marvosym}
\usepackage{units}
\usepackage{amsbsy}
\usepackage{hyperref}
\usepackage{tikz}
\usepackage{tikz-cd}
\usepackage{marginnote}
\usetikzlibrary{matrix,arrows,backgrounds}

    \oddsidemargin  0.0in
    \evensidemargin 0.0in
    \textwidth      6.5in
    \headheight     0.0in
    \topmargin      0.0in
    \textheight=9.0in

\setlength{\marginparwidth}{.85in}

\newtheorem{theorem}{Theorem}[section]

\newtheorem{lemma}[theorem]{Lemma}
\newtheorem{proposition}[theorem]{Proposition}
\newtheorem{corollary}[theorem]{Corollary}

\newtheorem{conjecture}[theorem]{Conjecture}

\newtheorem{convention}[theorem]{Convention}
\newtheorem*{theorem*}{Theorem}
\newtheorem*{conjecture*}{Conjecture}

\theoremstyle{definition}
\newtheorem{definition}[theorem]{Definition}

\newtheorem{remark}[theorem]{Remark}

\newtheorem{example}[theorem]{Example}

\newcommand{\op}[1]{\operatorname{#1}}

\newcommand{\newterm}{\textsf}

\newcommand{\dbcoh}[1]{\operatorname{D}^{\operatorname{b}}(\operatorname{coh }#1)}

\newcommand{\dabsfact}[1]{\operatorname{D}^{\operatorname{abs}}[#1]}
\newcommand{\dabsFact}[1]{\operatorname{D}^{\operatorname{abs}}[\mathsf{Fact} \ #1]}

\newcommand{\dabs}{\op{D}^{\op{abs}}}
\newcommand{\gm}{\mathbb{G}_m}
\newcommand{\E}{\mathcal{E}}

\newcommand{\bigslant}[2]{{\raisebox{.2em}{$#1$}\left/\raisebox{-.2em}{$#2$}\right.}}

\def\N{\op{\mathbb{N}}}
\def\Z{\op{\mathbb{Z}}}
\def\C{\op{\mathbb{C}}}
\def\R{\op{\mathbb{R}}}
\def\Q{\op{\mathbb{Q}}}
\def\F{\op{\mathcal{F}}}

\def\O{\op{\mathcal{O}}}
\def\A{\op{\mathbb{A}}}
\def\P{\op{\mathbf{P}}}

\def\T{\op{\mathcal{T}}}

\def\tif{\text{if } }

\def\L{\mathop{\mathcal{L}}}

\title[Fractional CY Categories from LG Models]{Fractional Calabi-Yau Categories from Landau-Ginzburg Models}

\author[Favero]{David Favero}
\address{
  \begin{tabular}{l}
   David Favero \\
   \hspace{.1in} University of Alberta, Department of Mathematical and Statistical Sciences \\
   \hspace{.1in} Central Academic Building 632, Edmonton, AB, Canada T6G 2C7 \\
         \hspace{.1in} Korean Institute for Advanced Study \\
   \hspace{.1in} 85 Hoegiro, Dongdaemun-gu, Seoul, Republic of Korea 02455 \\
   \hspace{.1in} Email: {\bf favero@ualberta.ca} \\
  \end{tabular}
}

\author[Kelly]{Tyler L. Kelly}
\address{
  \begin{tabular}{l}
   Tyler L. Kelly \\
   \hspace{.1in} University of Cambridge, Department of Pure Mathematics and Mathematical \\ \hspace{.1in} Statistics,  Wilberforce Road, Cambridge, United Kingdom CB3 0WB \\
   \hspace{.1in} Email: {\bf tlk20@dpmms.cam.ac.uk} \\
  \end{tabular}
}

\numberwithin{equation}{section}
\begin{document}

\begin{abstract}
We give criteria for the existence of a Serre functor on the derived category of a gauged Landau-Ginzburg model.  This is used to provide a general theorem on the existence of an admissible (fractional) Calabi-Yau subcategory of a gauged Landau-Ginzburg model and a geometric context for crepant categorical resolutions.  We explicitly describe our framework in the toric setting.  As a consequence, we generalize several theorems and examples of Orlov and Kuznetsov, ending with new examples of semi-orthogonal decompositions containing (fractional) Calabi-Yau categories.
\end{abstract}

\maketitle
\setcounter{tocdepth}{1}
\tableofcontents

 \section{Introduction} 
 
In \cite{BK90}, Bondal and Kapranov generalized Serre duality to triangulated categories, providing an arbitrary $k$-linear triangulated category with a sense of a canonical bundle.
\begin{definition}
A \newterm{Serre functor} on a $k$-linear triangulated category $\mathcal T$ is an exact auto-equivalence
\[
S : \mathcal T \to \mathcal T
\]
such that there exists bifunctorial isomorphisms
\[
\op{Hom}(A,B) \cong \op{Hom}(B, S(A))^\vee.
\]
The category $\mathcal T$ is called \newterm{Calabi-Yau} (CY) of dimension $d$ if $S = [d]$. $\mathcal T$ is called \newterm{fractional Calabi-Yau} (FCY) of dimension $\frac{a}{b}$ if $S^b = [a]$.  
\end{definition}

The term Serre functor is inspired by the case where $\mathcal T$ is the bounded derived category of coherent sheaves $\dbcoh{X}$ for a smooth projective variety $X$ of dimension $n$. In this case, the Serre functor is a rephrasing of Serre duality, hence
$$
S = - \otimes \omega_X [n].
$$
In particular, the derived category of a Calabi-Yau variety of dimension $n$ is a Calabi-Yau category of dimension $n$ as the canonical bundle is trivial. Similarly, if the canonical bundle of $X$ is torsion, then $\dbcoh{X}$ is fractional Calabi-Yau.

Kuznetsov showed that fractional Calabi-Yau categories also occur as admissible subcategories of $\dbcoh{X}$ when $X \subseteq \P^n$ is a smooth hypersurface of degree $d \leq n+1$ (see, e.g., Corollary 4.4 of \cite{Kuz04}).  He first defined the admissible subcategory
\[
\mathcal A_X := \{ C \in \dbcoh{X} \ | \ \op{H}^j(C(i)) = 0 \text{ for } 0 \leq i \leq n-d \text{ and all } j \}.
\]
and then proved directly that $\mathcal A_X$ is FCY of dimension $\frac{n+1}{d}$ and CY when $d$ divides $n+1$.

In the special case of cubic fourfolds ($n=5, d=3$) we get a 2-dimensional Calabi-Yau category.  Kuznetsov went on to show that, for most of the known rational cubic fourfolds, $\mathcal A_X$ is equivalent to the derived category of a $K3$ surface. He conjectured that a smooth cubic fourfold $X$ is rational if and only if there is a $K3$ surface $Y$ and an equivalence of categories,
\[
\mathcal A_X \cong \dbcoh{Y}.
\]
\noindent This conjecture has steered the study of rational cubic fourfolds ever since.

Orlov later provided a beautiful description of $\mathcal A_X$ in terms of the categorical analogue of the Landau-Ginzburg model corresponding to the hypersurface. 
Let $X$ be a smooth projective hypersurface defined by the function $w$. There is an equivalence of categories,
\[
\mathcal A_X \cong  \op{D}^{\op{abs}}[\A^{n+1},\gm,w].
\]
The category $\op{D}^{\op{abs}}[\A^{n+1},\gm,w]$ can be loosely defined as the derived category associated to the gauged Landau-Ginzburg model $(\A^{n+1}, \gm, w)$. Here, $w$ is a section of the equivariant bundle $\O(\chi)$ for the $d^{\op{th}}$ power character $\chi$ (see Subsection~\ref{sec: fact definition} for a precise definition). This description has the advantage of being a geometric description of $\mathcal A_X$. 

Orlov's description of $\mathcal A_X$ gives rise to two leading questions.  
\begin{itemize}
\item When is the derived category of a Landau-Ginzburg model (fractional) Calabi-Yau?
\item  When do derived categories of Landau-Ginzburg models that are fractional Calabi-Yau appear as admissible subcategories of $\dbcoh{X}$?
\end{itemize}
In this paper, we give sufficient criteria for these questions.  

By studying the derived category of Landau-Ginzburg models, we give an alternate view of identifying (fractional) Calabi-Yau categories than that given by Kuznetsov in \cite{Kuz15}. There, Kuznetsov provides examples of (fractional) Calabi-Yau categories for a smooth variety $X$ by finding a spherical functor $\Phi: \dbcoh{X} \rightarrow \dbcoh{M}$ to another variety $M$ whose derived category comes equipped with a Lefschetz fibration. He provides a list of examples in Subsection 4.5 of \cite{Kuz15}. Many of his examples come from complete intersections in homogeneous varieties. 

In our viewpoint, we pass to the Landau-Ginzburg model and use geometric invariant theory to find a GIT chamber that is associated to a Calabi-Yau category instead of using a spherical functor. Due to this difference, our theories work in different contexts. For example, our framework is quite concrete for complete intersections in toric varieties, while \cite{Kuz15} naturally recovers many of the examples given in \cite{IM15} of complete intersections in homogeneous spaces.

\subsection{General Results}
First, we establish the Serre functor for the derived category of a Landau-Ginzburg model:

\begin{theorem}[=Theorem~\ref{SerreFunctorDescription}]\label{IntroSerre}
Let $X$ be a smooth algebraic variety and $G$ be a linearly reductive algebraic group acting on $X$. Let $\chi: G \to \gm$ be a character and $w \in \Gamma(X, \O_X(\chi))^G$.   Assume that $[X/ \op{ker }\chi]$ has finite diagonal.  In addition, assume that $\partial w \subseteq Z(w) $, and that $[\partial w / \op{ker } \chi]$ is proper.  Then,
\[
  \dabsfact{X,G,w}
\]
admits a Serre functor given by 
\[
S := (- \otimes \omega_X)[\op{dim } X - \op{dim } G +1].
\]
\end{theorem}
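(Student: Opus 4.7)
The plan is to establish the Serre duality pairing on $\dabsfact{X,G,w}$ by reducing it to Grothendieck--Serre duality on the equivariant stack $[X/G]$ and then tracking the extra shift by $+1$ contributed by the factorization structure itself.

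First I would establish Hom-finiteness, which is the prerequisite for a Serre functor to exist. For coherent factorizations $\mathcal E, \mathcal F$, the internal Hom $\mathcal{H}om(\mathcal E, \mathcal F)$ is a $G$-equivariant factorization of the zero section of $\O_X$, i.e.\ a $\Z/2$-periodic complex of coherent sheaves whose cohomology sheaves are scheme-theoretically supported on the critical locus $\partial w$ (this uses the assumption $\partial w \subseteq Z(w)$, which forces the differential to square to zero on that locus). Coherence of these cohomology sheaves follows from $[X/\op{ker}\chi]$ having finite diagonal, and finite-dimensionality of their derived global sections follows from properness of $[\partial w/\op{ker}\chi]$, combined with linear reductivity of $G$ (to pass between $G$-invariants and derived $G$-invariants).

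Next I would compute the $\op{RHom}$ pairing via Grothendieck--Serre duality on $[X/G]$. The standard Koszul description of the diagonal as a factorization of $w \otimes 1 - 1 \otimes w$ expresses $\op{RHom}(\mathcal E, \mathcal F)$ as derived global sections on $[X/G]$ of a suitable internal Hom factorization supported on $\partial w$. Applying relative Grothendieck duality for the structural morphism $[X/G] \to \op{Spec} k$, the dualizing complex contributes $\omega_X[\op{dim} X - \op{dim} G]$: linear reductivity makes $\det(\mathfrak g)^\vee$ equivariantly trivial up to character twists, and those character twists cancel against the $\chi$-twist intrinsic to the factorization structure. This accounts for every part of $S$ except the trailing $[1]$.

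The extra shift by $+1$ is the delicate part and the main obstacle. It comes from the factorization structure itself: the pairing between $\mathcal E$ and $S(\mathcal E)$ factors through a Koszul-type resolution of the diagonal as a factorization of $w \otimes 1 - 1 \otimes w$, and the $\Z/2$-periodic model for this resolution places the natural trace map one cohomological degree higher than the analogous map for ordinary coherent sheaves. I would pin this shift down in two complementary ways: locally, by taking a regular-sequence model for $w$ and reading off the trace on the explicit $\Z/2$-periodic Koszul complex; and globally, by invoking an Isik/Shipman/Hirano-type equivalence that realises $\dabsfact{X,G,w}$ in terms of coherent sheaves on the hypersurface $Z(w)$ (or the associated stack), and then comparing the standard Serre functor on $Z(w)$, whose canonical bundle is $\omega_{Z(w)} \cong (\omega_X \otimes \O_X(\chi))|_{Z(w)}$, with the claimed functor on the factorization side. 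Once this extra $[1]$ is accounted for, bifunctoriality of the pairing follows from bifunctoriality of each ingredient, producing the isomorphism $\op{Hom}(A,B) \cong \op{Hom}(B, S(A))^\vee$ with $S = (- \otimes \omega_X)[\op{dim} X - \op{dim} G + 1]$, as claimed.
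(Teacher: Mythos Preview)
Your outline diverges substantially from the paper's argument, and the divergence exposes a real gap. The paper does not attempt to build the bifunctorial duality pairing by hand. Instead it shows that the dg-enhancement $\mathsf{Inj}_{\op{coh}}(X,G,w)$ is homologically smooth and proper (your Hom-finiteness sketch is roughly the properness half, carried out in the paper via a spectral sequence), and then invokes Shklyarov's theorem: any smooth proper dg-algebra $A$ has a Serre functor with inverse $A^{!}=\op{RHom}_{A^{e}}(A,A^{e})$. The remaining work is to compute $A^{!}$. Using \cite{BFK11}, the diagonal bimodule is identified with the object $\nabla(\O_X)=\op{Ind}_G^{G\times_{\gm}G}\Delta_*\O_X$ in $\dabsfact{X\times X,\,G\times_{\gm}G,\,w\boxplus -w}$, and its bimodule dual is the sheaf dual $\mathbf{R}\mathcal Hom_{X\times X}(\nabla(\O_X),\O_{X\times X})$. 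Equivariant Grothendieck duality for the finite map $(s,p):\op{ker}\chi\times X\to X\times X$ computes this as $\nabla(\omega_X^{-1})[\op{dim}G-1-\op{dim}X]$, which as an integral kernel is $(-\otimes\omega_X^{-1})[\op{dim}G-1-\op{dim}X]$. The ``extra $+1$'' you are chasing thus falls out automatically from the relative dimension of $\op{ker}\chi\times X$ over $X\times X$; there is no separate trace-map bookkeeping.

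Your proposed methods for pinning down that $[1]$ do not apply at this level of generality. The Isik--Shipman--Hirano equivalence requires $X$ to be the total space of a vector bundle with $w$ the pairing against a regular section; none of this is assumed in the theorem, so you cannot compare with a Serre functor on $Z(w)$. Likewise there is no ``regular-sequence model for $w$'' available: $w$ is an arbitrary semi-invariant function on an arbitrary smooth $G$-variety with the stated support and properness conditions. Your claim that linear reductivity makes $\det(\mathfrak g)^\vee$ equivariantly trivial is also not quite right and in any case is not how the $\op{dim}G$ enters; it enters because $\op{dim}(\op{ker}\chi)=\op{dim}G-1$. If you want to avoid the Shklyarov machinery you would need a genuine Serre--Grothendieck duality statement for equivariant factorizations (in the spirit of Efimov--Positselski, but $G$-equivariant), which is considerably more than what your sketch provides.
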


The relevance of the theorem above in the context of the literature is the presence of a $G$-action.  For example, Serre-Grothendieck duality without the presence of a $G$-action was proven by Efimov-Positselski \cite{EP15}.  In addition, existence of a Serre functor follows from a dg-enhancement which is smooth and proper \cite{Shl} (which we will use).  The existence of such a dg enhancement in the case  where $X$ is affine space, $G = 1$ and $\partial w$ is isolated was proven by Dyckerhoff \cite{Dyc}.  Lin-Pomerleano \cite{LP} subsequently exhibited a smooth and proper dg enhancement  for any smooth variety $X$ in the case $G=1$ and $\partial w$ is proper.  Furthermore when $X$ is Calabi-Yau, they demonstrated that the category is as well.  Preygel independently proved similar results for matrix factorizations on derived schemes, using a different set of tools from derived algebraic geometry \cite{Pre}.  The $G$-equivariant case was first studied by independently by Polishchuk-Vaintrob and Ballard, Katzarkov, and the first named author \cite{PV, BFK11}.  These provide a suitable dg-enhancement in the $G$-invariant case which we will rely heavily on.  

The theorem above also has the following corollary which provides a sufficient criteria for when the derived category of a Landau-Ginzburg model is (fractional) Calabi-Yau: 

\begin{theorem}[= Corollary~\ref{cor: torsion canonical fractional calabi-yau}]\label{theorem:GeneralLG}
Let $X$ be a smooth algebraic variety and $G$ be a linearly reductive algebraic group acting on $X$. Let $\chi: G \to \gm$ be a character and $w \in \Gamma(X, \O_X(\chi))^G$.   Assume that $[X/ \op{ker }\chi]$ has finite diagonal and torsion canonical bundle.  In addition, assume that the critical locus $\partial w$ is contained in $Z(w)$, and that $[\partial w / \op{ker } \chi]$ is proper.  Then,
\[
  \dabsfact{X,G,w}
\]
is fractional Calabi-Yau.
\end{theorem}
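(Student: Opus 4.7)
The plan is to deduce the statement from Theorem~\ref{IntroSerre} by showing that some positive power of the Serre functor it provides is a shift. All hypotheses of Theorem~\ref{IntroSerre} are in force here, so it supplies the Serre functor
\[
S := (- \otimes \omega_X)\,[\op{dim } X - \op{dim } G + 1]
\]
on $\dabsfact{X,G,w}$. Iterating yields $S^b \cong (-\otimes \omega_X^{\otimes b})\,[b(\op{dim } X - \op{dim } G + 1)]$, so the task reduces to producing some $b > 0$ for which tensoring by $\omega_X^{\otimes b}$ is isomorphic to a shift functor on $\dabsfact{X,G,w}$.

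Such a $b$ will come from two ingredients. First, the torsion hypothesis on $\omega_{[X/\op{ker }\chi]}$ translates, after comparing $\omega_X$ to the pullback of the canonical bundle of the stack and absorbing the resulting trivial-but-equivariant twist (arising from the determinant of the adjoint representation on the Lie algebra of $\op{ker }\chi$ in the positive-dimensional case), into the statement that $\omega_X^{\otimes M}$ is $G$-equivariantly isomorphic to $\O_X(\psi)$ for some positive integer $M$ and some character $\psi \colon G \to \gm$ trivial on $\op{ker }\chi$. Since $\psi$ factors through the closed subgroup $G/\op{ker }\chi \hookrightarrow \gm$---which is either $\gm$, some $\mu_n$, or trivial---some positive power $\psi^r$ agrees with a power $\chi^s$, giving
\[
\omega_X^{\otimes Mr} \cong \O_X(\chi^s)
\]
as $G$-equivariant line bundles on $X$.

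Second, I would invoke the standard identification $\O_X(\chi) \otimes - \cong [2]$ on $\dabsfact{X,G,w}$: the $\chi$-twist built into the target of the odd-degree differential of a factorization is precisely absorbed by a double shift, making these two autoequivalences naturally isomorphic. Combining, $-\otimes \omega_X^{\otimes Mr} \cong [2s]$ as autoequivalences of $\dabsfact{X,G,w}$, so
\[
S^{Mr} \cong [\,2s + Mr(\op{dim } X - \op{dim } G + 1)\,],
\]
exhibiting $\dabsfact{X,G,w}$ as fractional Calabi-Yau.

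The substance of the result lies in Theorem~\ref{IntroSerre}; what remains is formal. The one delicate point is the group-theoretic bookkeeping that converts the stacky torsion statement on $[X/\op{ker }\chi]$ into an equivariant statement about $\omega_X$ modulo a power of $\chi$. Two small discrepancies must be absorbed by passing to a higher power---the extra character coming from the difference between $\omega_X$ and $\omega_{[X/\op{ker }\chi]}$, and the index of $G/\op{ker }\chi$ inside $\gm$ when the image is finite---but neither presents any real obstacle.
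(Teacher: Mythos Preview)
Your argument is correct and follows the same route as the paper: invoke the Serre functor formula, use the torsion hypothesis to identify a power of $\omega_X$ with a $G$-equivariant twist by a power of $\chi$, and then apply the identification $(-\otimes \O_X(\chi)) \cong [2]$. The paper's proof of Corollary~\ref{cor: torsion canonical fractional calabi-yau} proceeds by applying $\op{Hom}(-,\gm)$ to $0 \to \op{ker}\chi \to G \to \gm \to 0$ to obtain $0 \to \Z\chi \to \widehat{G} \to \widehat{\op{ker}\chi} \to 0$, so that triviality of $\omega_X^{\otimes l}$ as a $\op{ker}\chi$-equivariant bundle forces $\omega_X^{\otimes l} \cong \O_X(\chi)^{\otimes m}$ directly; this is your argument with the implicit assumption that $\chi$ is surjective. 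Your extra bookkeeping (allowing $G/\op{ker}\chi$ to be finite, and distinguishing $\omega_X$ from $\omega_{[X/\op{ker}\chi]}$) is more cautious than the paper, which in the body simply takes the hypothesis to be that $\omega_X$ is torsion as a $\op{ker}\chi$-equivariant bundle and assumes $\chi$ surjective, but the content is the same.
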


Second, we can use the birational geometry of Landau-Ginzburg models to attack the latter problem.
\begin{definition}
 Let $(Y_1, w_1)$ and $(Y_2, w_2)$ be two gauged LG-models with $Y_i$ smooth, $G$ acting on $Y_i$, and $w_i$ a section of $\O(\chi)$ for a character $\chi: G \to \gm$. We say that $(Y_1, w_1)$ \textbf{$K$-dominates} $(Y_2, w_2)$ if there exists a smooth $G$-variety, $Z$, and proper equivariant birational morphisms, $f_1: Z \to Y_1$ and $f_2: Z \to Y_2$, such that 
 \begin{itemize}
  \item $f_1^*w_1 = f_2^*w_2$ 
  \item $f_1^*K_{Y_1} - f_2^*K_{Y_2} \geq 0$.
 \end{itemize}
\end{definition}

In the context of finding FCY admissible subcategories, Kawamata's LG-model conjecture (see Conjecture 4.3.7 of \cite{BFK11}) specializes to the following. 

\begin{conjecture} \label{conj: LGKawamata}
 If $(Y_1,w_1)$ $K$-dominates $(Y_2,w_2)$ and $[Y_2 / \op{ker }\chi_2]$ has torsion canonical bundle,  then 
$\dabsfact{Y_2,G,w_2}$ is a FCY admissible subcategory of $
  \dabsfact{Y_1,G,w_1}$.
\end{conjecture}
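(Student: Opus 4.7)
The plan is to decompose the statement into two sub-claims: (i) that $\dabsfact{Y_2,G,w_2}$ is itself fractional Calabi-Yau, and (ii) that there exists an admissible embedding $\dabsfact{Y_2,G,w_2} \hookrightarrow \dabsfact{Y_1,G,w_1}$. Sub-claim (i) follows directly from Theorem~\ref{theorem:GeneralLG} once the auxiliary regularity conditions (finite diagonal of $[Y_2/\op{ker}\chi]$, containment $\partial w_2 \subseteq Z(w_2)$, and properness of $[\partial w_2/\op{ker}\chi]$) are available. These should either be treated as implicit background hypotheses, or pulled down from $(Y_1,w_1)$ through the common smooth roof $Z$, using $f_1^*w_1 = f_2^*w_2$ together with properness of the $f_i$ to transfer critical-locus and vanishing conditions between the two models.

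For sub-claim (ii) the natural candidate is the Fourier--Mukai-type functor
\[
\Phi := Rf_{1,*} \circ Lf_2^* : \dabsfact{Y_2,G,w_2} \longrightarrow \dabsfact{Y_1,G,w_1},
\]
which is well-defined because $(Z,G,w)$ with $w := f_1^*w_1 = f_2^*w_2$ acquires a common gauged LG structure, permitting the equivariant pullback and pushforward of factorizations developed in \cite{BFK11,PV}. Its right adjoint is
\[
\Phi^R(-) \cong Rf_{2,*}\bigl(Lf_1^*(-) \otimes \omega_{f_1}[\dim f_1]\bigr),
\]
and once full faithfulness is known, admissibility is automatic: the Serre functors on both sides produced by Theorem~\ref{IntroSerre} upgrade any one-sided adjoint into a two-sided adjoint pair.

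Establishing full faithfulness is the crux: one must show that the unit $\id \to \Phi^R \circ \Phi$ is an isomorphism. Using the projection formula on the fibre product and unwinding the Fourier--Mukai kernel, the question reduces to an identity of the shape
\[
Rf_{2,*}\O_Z(D) \cong \O_{Y_2},
\]
where $D := f_1^*K_{Y_1} - f_2^*K_{Y_2}$; the hypothesis $D \geq 0$ then provides the canonical inclusion $\O_Z \hookrightarrow \O_Z(D)$ and should force the higher direct images to vanish via Kawamata--Viehweg-type reasoning on the birational contraction $f_2$. The main obstacle is precisely that this is the categorical Dubrovin--Kawamata ($D$-$K$) conjecture transplanted into the LG setting, a statement open even without the potential. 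A realistic strategy is therefore to specialize to regimes where the classical $D$-$K$ conjecture is established --- divisorial contractions, toric morphisms, or standard flops --- and in particular to the toric framework developed later in the paper, where toric weak factorization reduces the required vanishing to explicit local computations on equivariant factorizations over individual blow-up charts.
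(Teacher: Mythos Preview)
The statement you are attempting to prove is labeled a \emph{conjecture} in the paper, not a theorem, and the paper does \emph{not} prove it in general. What the paper actually establishes is only the special case of elementary wall crossings (Corollary~\ref{cor: DHT}), and the proof there is two lines: the admissible embedding is imported wholesale from Proposition 4.3.8 of \cite{BFK12}, and the FCY property is Corollary~\ref{cor: torsion canonical fractional calabi-yau}. There is no attempt to build a Fourier--Mukai functor $Rf_{1,*} \circ Lf_2^*$ or to prove a vanishing statement of the form $Rf_{2,*}\O_Z(D) \cong \O_{Y_2}$.

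Your proposal is therefore not so much wrong as aimed at the wrong target. You correctly decompose the problem, correctly source part (i) from the Serre functor computation, and---crucially---you correctly diagnose that part (ii) in full generality \emph{is} the $D$-$K$ conjecture for LG models, which is open. That diagnosis is exactly why the paper leaves the general statement as a conjecture. Your suggested fallback of specializing to tractable birational moves is precisely what the paper does: it retreats to elementary wall crossings, where the required semi-orthogonal decomposition is already supplied by \cite{BFK12} (via VGIT, not via a direct vanishing argument on $Rf_{2,*}\O_Z(D)$). So your overall architecture is sound, but you should recognize that the reduction you propose at the end is not a strategy toward a proof of the conjecture---it is the content of the only case the paper claims.

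One technical caution on your sketch of (ii): even in the cases where $D$-$K$ is known classically, the identity you write down is not quite the right target. Full faithfulness of $Rf_{1,*}\circ Lf_2^*$ unwinds to a statement about the fibre product $Z \times_{Y_1} Z$ and $Rf_{2,*}$ of a kernel supported there, not simply to $Rf_{2,*}\O_Z(D)\cong \O_{Y_2}$; and effectivity of $D$ alone does not give Kawamata--Viehweg vanishing without further positivity or log-terminal hypotheses. If you wanted to push this route through for, say, a single equivariant blow-up, you would be re-deriving the content of \cite{BFK12} by hand.
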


While general birational relationships like $K$-dominance are more difficult to analyze, W\l{}odarcykz's weak factorization theorem \cite{Wlo03} shows that all birational transformations can be broken up into a sequence of simpler ones called elementary wall crossings (see Definition~\ref{def: EWC}).  These transformations were shown to yield fully-faithful functors between derived categories of gauged LG-models in \cite{BFK12}. An immediate consequence of Theorem~\ref{cor: torsion canonical fractional calabi-yau} and \cite{BFK12} is the following.

\begin{corollary} [= Corollary~\ref{cor: DHT}]\label{thm: DHT}
Conjecture~\ref{conj: LGKawamata} holds for elementary wall crossings.
\end{corollary}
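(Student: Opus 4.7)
The plan is to deduce Corollary~\ref{thm: DHT} by combining the semi-orthogonal decomposition theorem of \cite{BFK12} for elementary wall crossings of gauged LG-models with Corollary~\ref{cor: torsion canonical fractional calabi-yau} above. Both inputs are already available, so the argument is essentially a matching of hypotheses.

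First, I would unpack the data of an elementary wall crossing (Definition~\ref{def: EWC}) and observe that it is automatically an instance of $K$-dominance: it produces two gauged LG-models $(Y_1, G, w_1)$ and $(Y_2, G, w_2)$ together with a common smooth $G$-variety $Z$ and proper equivariant birational morphisms $f_i : Z \to Y_i$ with $f_1^* w_1 = f_2^* w_2$, and the two sides of the wall are comparable in their canonical classes pulled back to $Z$. After relabeling so that $f_1^* K_{Y_1} - f_2^* K_{Y_2} \geq 0$, we have that $(Y_1, w_1)$ $K$-dominates $(Y_2, w_2)$ in the sense of the definition.

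Next, I would invoke the main theorem of \cite{BFK12}: for elementary wall crossings with the $K$-inequality in this direction, one obtains a Fourier--Mukai type functor
\[
\Phi : \dabsfact{Y_2, G, w_2} \longrightarrow \dabsfact{Y_1, G, w_1}
\]
that is fully faithful. Because $\Phi$ is a Fourier--Mukai transform between derived categories of smooth gauged LG-models satisfying the required properness, it admits both a left and a right adjoint, so its essential image is an admissible subcategory of $\dabsfact{Y_1, G, w_1}$. Finally, the hypothesis that $[Y_2 / \ker \chi]$ has torsion canonical bundle, together with the smoothness, finite-diagonal, and proper-critical-locus hypotheses of the gauged LG set-up, places $(Y_2, G, w_2)$ in the scope of Corollary~\ref{cor: torsion canonical fractional calabi-yau}. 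That corollary yields at once that $\dabsfact{Y_2, G, w_2}$ is fractional Calabi-Yau, and transporting this Serre structure across the equivalence with the image of $\Phi$ delivers the conclusion of Conjecture~\ref{conj: LGKawamata}.

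The main obstacle is bookkeeping rather than new mathematics: one must check that the hypotheses required by Theorem~\ref{theorem:GeneralLG}, in particular finiteness of the diagonal of $[Y_2 / \ker \chi]$ and properness of $[\partial w_2 / \ker \chi]$, are preserved under an elementary wall crossing starting from the corresponding hypotheses on $(Y_1, G, w_1)$ (or on the common $Z$). One also needs to verify that the fully faithful functor produced by \cite{BFK12} really does interact compatibly with the Serre functor of Theorem~\ref{IntroSerre}, so that ``admissible FCY subcategory'' has its honest meaning. Once these compatibilities are in place, the corollary follows immediately by juxtaposing the two theorems.
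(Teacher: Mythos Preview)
Your proposal is correct and takes essentially the same approach as the paper: the paper's proof is a single sentence invoking Proposition~4.3.8 of \cite{BFK12} (the fully-faithful/admissible embedding for elementary wall crossings) together with Corollary~\ref{cor: torsion canonical fractional calabi-yau}, which is exactly your two-step strategy. Your additional remarks about verifying that elementary wall crossings are instances of $K$-dominance and about the finite-diagonal and proper-critical-locus hypotheses are reasonable caveats, but the paper treats these as implicit in the setup and does not spell them out.
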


\subsection{Toric Results}

In Section~\ref{sec:3}, we specialize to the toric situation and the description we find is quite pleasing.  Indeed, we obtain comparisons between FCY categories and derived categories of toric gauged Landau-Ginzburg models very similar to those found in \cite{Orl06}.

Let us begin by describing the toric backdrop.  Let $M$ and $N$ be dual lattices. Let $\nu = \{v_1, \ldots, v_n\}\subset N$ be a collection of distinct primitive lattice points. Consider the cone $\sigma : = \op{Cone}(\nu)$. We say $\sigma$ is $\Q$-Gorenstein (resp. almost Gorenstein) if there exists an element $\mathfrak{m} \in M_{\Q}$ (resp. $\mathfrak{m} \in M$) so that the cone $\sigma$ is generated over $\Q$ by finitely many lattice points $\{n \in N \ | \ \langle \mathfrak{m}, n \rangle = 1\}$. We partition the set $\nu$ as $\nu_{=1} \cup A$ where $\nu_{=1} = \{ v_i \in \nu \ | \ \langle \mathfrak{m}, n\rangle = 1\}$ and $A$ is its complement in $\nu$. 

Associate a group $S_\nu$ to the point collection $\nu$ in the following way. Consider the right exact sequence 
\begin{equation}\begin{aligned}
M &\stackrel{f_\nu}{\rightarrow} \Z^n \stackrel{\pi}{\rightarrow} \op{coker}(f_\nu) \rightarrow 0, \\
m &\mapsto \sum_{i=1}^n \langle m, v_i\rangle e_i. 
\end{aligned}\end{equation}
Set $S_\nu := \op{Hom}(\op{coker}(f_\nu), \gm).$ If we apply $\op{Hom}(-, \gm)$ to the above sequence, we obtain
$$
1 \rightarrow S_\nu \stackrel{\hat\pi}{\rightarrow} \gm^n \stackrel{\hat{f_\nu}}{\rightarrow} M \otimes \gm.
$$
This defines an action of $S_\nu$ on $\mathbb{A}^n$ by first taking the inclusion of $S_\nu$ into the maximal torus $\gm^n$ given by the map $\hat \pi$ and then extending the action naturally. 

Given a subset $R\subset \nu$, we can also define a $\gm$-action called \emph{R-charge} to act on $\A^{\nu}$ by extending the action
$$
\lambda \cdot (x_1, \ldots, x_n) = (y_1, \ldots, y_n), \text{ where } y_i : =   \begin{cases}
\lambda x_i & \text{ if } v_i \in R \\
x_i &  \text{ if } v_i \notin R.
\end{cases}
$$

Given the cone $\sigma :=\op{Cone}(\nu)$ we can consider the dual cone 
$$
\sigma^\vee : = \{ m \in M_{\R} \ | \ \langle m, n\rangle \geq 0 \text{ for all } n \in \sigma\}.
$$
Define a superpotential $w$ on $\mathbb{A}^n$ given by taking a finite set $\Xi \subset \sigma^\vee \cap M$ and defining $w$ to be
$$
w = \sum_{m \in \Xi} c_m x^m, \text{ where } x^m := \prod_{i=1}^n x_i^{\langle m, v_i\rangle}.
$$
Let $\tilde{\Sigma}$ be any simplicial fan such that $\tilde{\Sigma}(1) = \nu$. The quotient construction of a toric variety determines an open set $U_{\tilde{\Sigma}}$ of $\A^{\nu}$ (for a precise treatment see Equation~\eqref{CoxFan} for the fan associated to this open set). The triplet of data 
$$(U_{\tilde\Sigma}, S_\nu \times \gm, w)$$
constitutes a toric gauged Landau-Ginzburg model.

Define another gauged Landau-Ginzburg model associated to $\nu$. Take any simplicial fan $\Sigma$ so that $\Sigma(1) \subseteq \nu_{=1}$ and $\op{Cone}(\Sigma(1)) = \sigma$.  We can define a group $H$ which depends on $\nu$, $\Sigma(1)$, and $R$ (see Equation~\eqref{definemapforH} below) that acts on $U_{\Sigma}$. Consider the action of $H$ on the open affine set $U_{\Sigma} \subset \mathbb{A}^{\Sigma(1)}$. Finally, construct a potential $\bar w$ by just taking
$$
\bar{w} = \sum_{m \in \Xi} c_m \bar{x}^m, \text{ where } \bar{x}^m := \prod_{i, \op{Cone}(v_i)\subseteq \Sigma(1)}^n x_i^{\langle m, v_i\rangle}.
$$
There is another gauged Landau-Ginzburg model that comes from the triplet of data
$$
(U_{\Sigma}, H, \bar w).
$$

We prove a Orlov-type theorem that compares the derived categories associated to these two gauged Landau-Ginzburg models.

\begin{theorem}[=Theorem~\ref{FFLGmodels}]
Let $\tilde{\Sigma}$ be any simplicial fan such that $\tilde{\Sigma}(1) = \nu$ and $X_{\tilde{\Sigma}}$ is semiprojective.  Similarly, let $\Sigma$ be any simplicial fan such that $\Sigma(1) \subseteq \nu_{=1}$, $X_{\Sigma}$ is semiprojective, and $\op{Cone}(\Sigma(1)) = \sigma$. We have the following:
\begin{enumerate}
\item If $\langle \mathfrak{m}, a \rangle > 1$ for all $a \in \nu_{\neq 1}$, then there is a fully-faithful functor,
\[
\op{D}^{\op{abs}}[U_{\Sigma},   H, \bar w] \longrightarrow \op{D}^{\op{abs}}[U_{\tilde{\Sigma}},   S_\nu \times \gm, w].
\]
\item If $\langle \mathfrak{m}, a \rangle < 1$ for all $a \in \nu_{\neq 1}$, then there is a fully-faithful functor,
\[
 \op{D}^{\op{abs}}[U_{\tilde{\Sigma}},   S_\nu \times \gm, w]  \longrightarrow \op{D}^{\op{abs}}[U_{\Sigma},   H, \bar w].
 \]
\item If $\nu_{\neq 1} = \emptyset$, then there is an equivalence,
\[
\op{D}^{\op{abs}}[U_{\Sigma},   H, \bar w] \cong \op{D}^{\op{abs}}[U_{\tilde{\Sigma}},   S_\nu \times \gm, w].
\]
\end{enumerate}
Furthermore, if $\partial \bar w \subseteq Z(\bar w)$ and $[\partial \bar w / S_{\Sigma(1)}]$ is proper, then $\op{D}^{\op{abs}}[U_{\Sigma},   H, \bar w]$ is fractional Calabi-Yau.  If, in addition, $\sigma$ is almost Gorenstein, then $\op{D}^{\op{abs}}[U_{\Sigma},   H, \bar w]$ is Calabi-Yau.
\label{thm: main thm}
\end{theorem}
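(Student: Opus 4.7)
The plan is to apply the variation of GIT (VGIT) machinery for gauged Landau-Ginzburg models developed in \cite{BFK12} to a common ambient LG model containing both $(U_{\tilde{\Sigma}}, S_\nu \times \gm, w)$ and $(U_\Sigma, H, \bar{w})$ as GIT quotients. The natural choice is an LG model on an open subset of $\A^\nu$ carrying the combined action by $S_\nu \times \gm$ together with the coordinate $\gm$-actions for $i$ with $v_i \in \nu_{\neq 1}$. The smaller model $(U_\Sigma, H, \bar{w})$ arises by restricting to the closed substack $\{x_i = 0 : v_i \in \nu_{\neq 1}\}$ and quotienting by the residual group, which matches the definition of $H$ via the exact sequence involving the coordinates indexed by $\Sigma(1) \subseteq \nu_{=1}$; the larger model $(U_{\tilde{\Sigma}}, S_\nu \times \gm, w)$ arises from the generic stratum. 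The transition between these two chambers is realized as a sequence of elementary wall crossings, each removing one coordinate in $\nu_{\neq 1}$.

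For each such wall crossing, the VGIT theorem of \cite{BFK12} produces a functor between the derived categories of factorizations whose direction and faithfulness is governed by the sign of the $\mu$-weight of the destabilizing one-parameter subgroup compared to the weight of the potential-defining character. Here, the destabilizing weight for the coordinate $x_i$ with $v_i \in \nu_{\neq 1}$ is controlled precisely by $\langle \mathfrak{m}, v_i\rangle$, while the potential weight is normalized to $1$ via the R-charge. Thus $\langle \mathfrak{m}, v_i\rangle > 1$ yields a fully faithful embedding from the smaller LG model into the larger (case (1)); $\langle \mathfrak{m}, v_i\rangle < 1$ gives a fully faithful functor in the opposite direction (case (2)); and the vacuous condition $\nu_{\neq 1} = \emptyset$ gives an equivalence with no walls to cross (case (3)). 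The semiprojectivity hypotheses on $X_\Sigma$ and $X_{\tilde{\Sigma}}$ ensure that the GIT quotients are well-behaved enough to apply the wall-crossing theorem.

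For the fractional Calabi-Yau assertion, I would invoke Theorem~\ref{theorem:GeneralLG} applied to $(U_\Sigma, H, \bar{w})$. Two of the three hypotheses are given: $\partial \bar{w} \subseteq Z(\bar{w})$ and the properness of $[\partial \bar{w}/\ker \chi]$, where $\ker \chi$ is the preimage of $S_{\Sigma(1)}$ under the defining map for $H$. The finite diagonal of $[U_\Sigma / \ker \chi]$ follows from $\Sigma$ being simplicial, so that the stabilizers are finite. For torsion of the canonical bundle of $[U_\Sigma / \ker \chi]$, one computes via the Euler sequence on the toric stack that the equivariant canonical character is determined by the sum of the coordinate weights, which, modulo the image of the character lattice, is exactly the class associated to $\mathfrak{m}$; the $\Q$-Gorenstein hypothesis makes this rationally trivial, hence torsion, and the almost Gorenstein hypothesis (integrality of $\mathfrak{m}$) makes it actually trivial, giving a genuine Calabi-Yau category.

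The main obstacle is the bookkeeping in step two: one must pin down precisely how the character $\chi$ of the potential, the R-charge $\gm$-action, and the $\mathbb{Q}$-Gorenstein character determined by $\mathfrak{m}$ interact on the common ambient stack, so that the numerical conditions $\langle \mathfrak{m}, a\rangle \gtrless 1$ translate into the $\mu$-weight inequalities required by the VGIT theorem of \cite{BFK12}. In particular, one needs to verify that the weights of the $\nu_{\neq 1}$-coordinates in the destabilizing one-parameter subgroup align with the values $\langle \mathfrak{m}, v_i\rangle$ after normalization, and that the restriction of $w$ to $\{x_i = 0 : v_i \in \nu_{\neq 1}\}$ is indeed $\bar{w}$, which uses that the support $\Xi$ lies in $\sigma^\vee \cap M$ and that any monomial $x^m$ with $\langle m, v_i \rangle > 0$ for some $v_i \in \nu_{\neq 1}$ automatically vanishes on that locus.
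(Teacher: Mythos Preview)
Your proposal contains a basic geometric error that undermines the whole setup. You claim that $(U_\Sigma, H, \bar w)$ arises by restricting to the \emph{closed} substack $\{x_i = 0 : v_i \in \nu_{\neq 1}\}$ and that $\bar w$ is the restriction of $w$ to that locus. This is backwards. In the paper's conventions, the chamber associated to $\Sigma$ is $\Gamma_{\Sigma,\,\nu\setminus\Sigma(1)}$, whose semistable locus is $U_\Sigma \times \gm^{\nu\setminus\Sigma(1)}$: the coordinates indexed by $\nu\setminus\Sigma(1)$ are required to be \emph{nonzero}, not zero. The identification with $(U_\Sigma, H, \bar w)$ then comes from the stack isomorphism $[U_\Sigma \times \gm^{\nu\setminus\Sigma(1)} / S_\nu \times \gm] \cong [U_\Sigma / H]$ (Lemma~\ref{lem: stack iso} and the exact sequence \eqref{eq: exact sequence for H}), and $\bar w$ is obtained by setting those variables to $1$, not $0$. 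Your final paragraph, checking that monomials with $\langle m, v_i\rangle > 0$ vanish on $\{x_i = 0\}$, is therefore checking the wrong thing.

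Because of this, your picture of the wall crossings---adding extra $\gm$-actions and ``removing one coordinate in $\nu_{\neq 1}$'' per step---does not describe the actual VGIT problem. The paper works entirely inside the secondary (GKZ) fan for the fixed $S_\nu$-action on $\A^\nu$: both $\tilde\Sigma$ and $\Sigma$ already correspond to chambers there (Proposition~\ref{prop: GKZ chamber bijection}). The key numerical input is Lemma~\ref{lemma: Orlov setup}, which computes $\chi_K = \sum_{a\in\nu_{\neq 1}}(\langle\mathfrak m, a\rangle - 1)\chi_{D_a}$ and hence shows that $\pm\chi_K$ lies in $\Gamma_{\Sigma,\,\nu\setminus\Sigma(1)}$ under the respective sign hypotheses. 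One then draws a generic straight-line path in the secondary fan from the $\tilde\Sigma$-chamber toward $\mp\chi_K$ (with a small adjustment near the endpoint to land in the interior of the $\Sigma$-chamber), crossing only codimension-one walls. At each wall, the sign of $\mu = -\sum_i \langle \lambda, v_i\rangle$ in Theorem~\ref{thm: BFK toric version} is determined by which side $\chi_K$ lies on, so every crossing yields a fully faithful functor in the same direction; composing gives (1)--(3). The fractional Calabi--Yau statement is then Corollary~\ref{thm: fractional CY}, whose proof uses Lemma~\ref{lem: class relation} to show $\chi_K$ is (rationally, or integrally in the almost Gorenstein case) a multiple of the R-charge character in $\widehat H$---close in spirit to your sketch, but the relevant relation is in $\widehat H$, not directly ``the class associated to $\mathfrak m$''.
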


These types of relationships are intimately related to the Aspinwall and Plesser's formulation of mirror pairs \cite{AP15}. In particular, a corollary of this theorem is that if one considers a gauged linear $\sigma$-model in their setting that is nonsingular, then it has an associated Calabi-Yau category. 

For certain simplicial fans $\Sigma$, $\tilde \Sigma$, the categories $ \op{D}^{\op{abs}}[U_{\tilde{\Sigma}},   S_\nu \times \gm, w] $ and $\op{D}^{\op{abs}}[U_{\Sigma},   H, \bar w]$ may be geometric, i.e., equivalent to the derived category of some stack. One way to realize these equivalences is via the following setup. Suppose that:
\begin{itemize}
\item $\tilde \Sigma$ is a fan where the toric stack $\mathcal{X}_{\tilde \Sigma}$ is the total space of a vector bundle
$$
\mathcal{X}_\Sigma = \op{tot}\left(\bigoplus_{i=1}^t \O_{\mathcal{X}_{\Psi}}(\chi_{D_i})\right),
$$
where $\Psi$ is some fan corresponding to a semiprojective toric stack $\mathcal{X}_{\Psi}$ and $D_i$ are $\Q$-Cartier anti-nef divisors on $\mathcal{X}_{\Psi}$;
\item the $\gm$ action acts by fiberwise dilation on the total space of the vector bundle; and
\item the potential $w$ is of the form
$$
w = u_1 f_1 + \ldots + u_tf_t,
$$
where $f_i \in \Gamma(\mathcal{X}_{\Psi}, \O_{\mathcal{X}_{\Psi}}(D_i))$ and $u_i$ is the coordinate corresponding to the ray in $\Psi$ associated to the construction of the line bundle $\O_{\mathcal{X}_{\Psi}}(\chi_{D_i})$.
\end{itemize}

In this case, one can consider the complete intersection 
$$
\mathcal{Z} = Z(f_1, \ldots, f_t) \subset \mathcal{X}_{\Psi}.
$$
A result of Hirano (Proposition 4.8 of \cite{Hirano}, repeated here as Theorem~\ref{Hirano} for convenience) provides an equivalence amongst the derived category of coherent sheaves of $\mathcal{Z}$ and the factorization category $ \op{D}^{\op{abs}}[U_{\tilde{\Sigma}},   S_\nu \times \gm, w] $. By requiring the data $\Sigma, H$ and $\bar w$ to satisfy the analogous criteria, one has a different complete intersection $\mathcal{Z'} \subseteq \mathcal{X}_{\Upsilon}$ in some other toric stack $\mathcal{X}_{\Upsilon}$ associated to a fan $\Upsilon$. Thus under appropriate conditions, one or both of the relevant categories can be made geometric. In this case we get Corollary~\ref{thm:comparegeometric} which relates the derived categories of the stacks $\mathcal{Z}$ and $\mathcal{Z'}$. For a precise explanation of these conditions and results, refer to Subsection~\ref{subsec: toric CI}.

Almost immediately, we start to recover many theorems and examples as corollaries to our framework. For example, the Batyrev-Nill conjecture, Conjecture 5.3 of \cite{BN07}, is just Case (c) of Corollary~\ref{thm:comparegeometric}. This recovers the main result of \cite{FK14}. As an instructive example, we specialize to the case of Orlov's theorem on the fan associated to the line bundle $\op{tot}(\O_{\P^n}(-d))$, which we do as an example in Subsection~\ref{subsec: Orlov}.

\subsection{Crepant Categorical Resolutions}

 In \cite{Kuz08}, Kuznetsov studies the derived category of coherent sheaves on a singular variety $Y$. He constructs a subcategory $\tilde{\mathcal{D}}$ of the derived category $\dbcoh{\tilde Y}$ of coherent sheaves on a resolution $\tilde Y$ of $Y$ that he views as a categorical resolution of $\dbcoh{Y}$.  In Section~\ref{sec: CCR}, we provide an interpretation of crepant categorical resolutions in terms of Landau-Ginzburg models. 

In sum, crepant categorical resolutions have a simple geometric interpretation as partial compactifications of LG models.  
Roughly speaking, if the singular locus of $w$ is not proper, we can make it proper by partially compactifying.  This, in turn, provides a crepant categorical resolution. 

 Specifically, one finds a $G$-equivariant variety $U$ so that $V$ is openly immersed in $U$ and the function $w$ extends to $V$ so that $\dabsfact{U,w,G}$ is smooth and proper.  In other words, $[U/G]$ is a partial compactification of $[V/G]$ which has the benefit of satisfying the criteria of  Corollary~\ref{theorem:GeneralLG}.
Hence, to obtain crepant categorical resolutions for singular complete intersections $\mathcal Z$ in $X$, first, apply Hirano's result to replace $\mathcal Z$ by an LG model $(V,w,G)$ 
\[
\dbcoh{\mathcal Z} = \dabsfact{V,w,G}.
\]
Second, find a $G$-equivariant compactification $(U,w,G)$ satisfying the conditions of Corollary~\ref{theorem:GeneralLG}.  
In the examples below, such a $G$-equivariant compactification can be found by performing birational operations on the total space of a vector bundle on a blow-up on $X$.

Furthermore, we show that this geometric interpretation of crepant categorical resolutions behaves well with respect to full subcategories coming from VGIT.   Namely, if $\mathcal A$ is a full subcategory of $\dabsfact{U,w,G}$ obtained from an elementary wall crossing, then there is a corresponding elementary wall crossing of $\dabsfact{V,w,G}$ and the corresponding subcategory $\tilde{\mathcal A}$ is a crepant categorical resolution of $\mathcal A$ (Theorem~\ref{theorem: CCRGIT}).

\subsection{Examples}
Our results on fractional CY subcategories and crepant categorical resolutions can be used to generalize Kuznetsov's work on singular cubic fourfolds \cite{Kuz10}.

 First, we generalize the example outlined by Kuznetsov in \cite{Kuz10} of singular cubic fourfolds to higher dimension. 
\begin{example}\label{SingCubic}
Let $X$ be a singular cubic hypersurface in $\P^{3n+2}$ defined by the equation
$$
f(x_1, \ldots, x_{3n+3}) = \sum_{i=1}^n x_i f_i(x_{n+1}, \ldots, x_{3n+3}) + f_0(x_{n+1}, \ldots, x_{3n+3}),
$$
where $f_0$ is a generic cubic with the given variables and $f_1, \ldots, f_n$ are generic quadrics. There is a semi-orthogonal decomposition for $X$ in the case where $n=1$:
$$
\dbcoh{X} = \langle \mathcal{A}, \O, \O(1), \O(2)\rangle.
$$
Kuznetsov proves that while the category $\mathcal{A}$ is not Calabi-Yau, it has a crepant categorical resolution $\tilde{\mathcal{A}}$ that is a Calabi-Yau category (Theorem 5.2 of \cite{Kuz10}). Moreover, $\tilde{\mathcal{A}}$ is the derived category of a K3 surface.  Here, when $n>1$, we can generalize the story. Analogously, there is a semi-orthogonal decomposition
$$
\dbcoh{X} = \langle \mathcal{A}, \O, \ldots, \O(3n-1)\rangle
$$
We find that $\dbcoh{Y}$ is a crepant categorical resolution of $\mathcal{A}$ where $Y$ is the $(n+1)$-dimensional Calabi-Yau complete intersection $Y$ in $\P^{2n+2}$ given by the zero locus of $f_0, ..., f_n$.
\end{example}

Second, we generalize the example of cubic fourfolds containing two planes in \cite{Ha00}.  
\begin{example}\label{2PlanesCubic}
Let $X$ be a generic degree $d$ hypersurface in $\P^{2d-1}$ that contains a two-dimensional plane $P_1$ and a $(2d-4)$-dimensional plane $P_2$ so that $P_1 \cap P_2 = \emptyset$. While smooth when $d =3$, this example becomes singular when $d>3$. By Orlov's theorem, there is a semi-orthogonal decomposition
$$
\dbcoh{X} = \langle \mathcal{A}, \O, \ldots, \O(d-1)\rangle.
$$
When $d=3$, then $\mathcal{A}$ is the derived category of a K3 surface (Proposition 4.7 of \cite{Kuz10}). We prove that when $d>4$, there exists a Calabi-Yau $(2d-4)$-fold $Y$ defined by the complete intersection of two hypersurfaces of bidegree $(d-1,1)$ and $(d-2,2)$ in $\P^{2d-4} \times \P^2$ so that $\dbcoh{Y}$ is a crepant categorical resolution of $\mathcal{A}$.
\end{example}

\subsection{Plan of Paper}

The plan for this paper is as follows. Section~\ref{sec: serre functors factorizations} introduces the factorization category, the main object of study for the paper. After providing its proper definition, we give criteria for showing that it admits a Serre functor and then compute it explicitly, proving Theorem~\ref{IntroSerre}. We end with the proof of Corollary~\ref{theorem:GeneralLG}, and Theorem~\ref{thm: DHT}. 

Section~\ref{sec: CCR} explains the relationship between crepant categorical resolutions, LG models, and variation of GIT. 

Section~\ref{sec:3} provides the required toric geometry to study the factorization category for toric complete intersections, setting up the next section. Here we recall the necessary definitions of cones associated to certain total spaces of invertible sheaves over toric stacks. We also recall the relevant machinery for studying variation of GIT on affine spaces and its relation to the secondary fan. 

Section~\ref{GorCones} provides sufficient criteria for when a factorization category associated to a toric Landau-Ginzburg model is FCY and explicitly computes it dimension in terms of the fan and the R-charge. We then prove a comparison theorem, Theorem~\ref{thm: main thm}, for two birational toric gauged Landau-Ginzburg. We finish the section by considering the case where one or more of the Landau-Ginzburg models have a geometric interpretation as a complete intersection in a toric variety. 

We end the paper with Section~\ref{sec: examples}, where we provide a set of examples of our theorems, including a reproving of Orlov's theorem, a semi-orthogonal decomposition with a geometric FCY category, and the generalizations of the cases of singular cubic fourfolds and a cubic fourfold containing two planes outlined by Examples~\ref{SingCubic} and~\ref{2PlanesCubic} above.

\vspace{2.5mm}
\noindent \textbf{Acknowledgments:}  The authors would like to thank Paul Aspinwall, Michael Wemyss, Yuki Hirano, Genki Ouchi, Colin Diemer, Al Kasprzyk, and Matthew Ballard for conversations related to this work.  We are also grateful to Alexander Kuznetsov for suggesting the addition of crepant categorical resolutions and other corrections/improvements to an earlier version of this manuscript.  We also thank the referee for an extensive, comprehensive report that greatly improved the paper. The first-named author is grateful to the Korean Institute for Advanced Study for their hospitality and support while this document was being prepared. The first-named author is also thankful for support provided by the Natural Sciences and Engineering Research Council of Canada and Canada Research Chair Program under NSERC RGPIN 04596 and CRC TIER2 229953.
The second-named author acknowledges that this material is based upon work supported by the National Science Foundation under Award No.\ DMS-1401446 and the Engineering and Physical Sciences Research Council under Grant EP/N004922/1.

\section{Serre Functors for Landau-Ginzburg Models} \label{sec: serre functors factorizations}

In this section, we will prove a certain class of triangulated categories associated to Landau-Ginzburg models admit an explicit Serre functor.

\subsection{Background on Factorizations}\label{sec: fact definition}

In order to keep the paper as self-contained as possible, we provide a summary of the language of factorizations, see \cite{BFK11} for more details.

Let $\kappa$ be an algebraically closed field of characteristic zero. Let $X$ be a smooth variety over $\kappa$ and $G$ an affine algebraic group that acts on it via the map $\sigma: G \times X \rightarrow X$. Take $w$ to be a $G$-invariant section of an invertible $G$-equivariant sheaf, $\mathcal{L}$, i.e., $w \in \Gamma(X, \mathcal{L})^G$. 
\begin{definition} \label{defn:factorization}
A \newterm{factorization} is the data $\E = (\E_{-1}, \E_0, \phi_{-1}^\E, \phi_0^\E)$ where $\mathcal{E}_{-1}$, $\mathcal{E}_0$ are $G$-equivariant quasi-coherent sheaves and 
$$
\mathcal{E}_{-1} \stackrel{\phi_0^{\mathcal{E}}}{\rightarrow} \mathcal{E}_0 \stackrel{\phi_{-1}^{\mathcal{E}}}{\rightarrow} \mathcal{E}_{-1} \otimes_{\O_X} \mathcal{L}
$$
are morphisms such that
\begin{equation*}\begin{aligned}
\phi_{-1}^{\mathcal{E}} \circ \phi_0^{\mathcal{E}} &= w, \\
(\phi_0^{\mathcal{E}} \otimes \mathcal{L}) \circ \phi_{-1}^{\mathcal{E}} &= w.
\end{aligned}\end{equation*}
\end{definition}

A morphism between two factorizations of even degree $f : \E \to \F[2k]$ is a pair $f = (f_0, f_{-1})$ defined by 
$$
\mathrm{Hom}^{2k}_{\mathsf{Fact}(X, G, w)} ( \E, \F) : = \mathrm{Hom}_{\mathrm{Qcoh}_GX}( \E_{-1}, \F_{-1} \otimes_{\O_X} \L{}^k) \oplus \mathrm{Hom}_{\mathrm{Qcoh}_GX}( \E_{0}, \F_{0} \otimes_{\O_X} \L{}^k)
$$
and, similarly,  a morphism of odd degree $f : \E \to \F[2k+1]$ is a pair $f = (f_0, f_{-1})$ defined by 
$$
\mathrm{Hom}^{2k+1}_{\mathsf{Fact}(X, G, w)} ( \E, \F) : = \mathrm{Hom}_{\mathrm{Qcoh}_GX}( \E_{0}, \F_{-1} \otimes_{\O_X} \L{}^{k+1}) \oplus \mathrm{Hom}_{\mathrm{Qcoh}_GX}( \E_{-1}, \F_{0} \otimes_{\O_X} \L{}^k).
$$
You can equip these Hom sets with a differential coming from the graded commutator with the morphisms defining $E$ and $F$.  This yields a dg category $\mathsf{Fact}(X, G, w)$.
Also, denote by $\mathsf{fact}(X, G, w)$  to be the full dg-subcategory of $\mathsf{Fact}(X, G, w)$ whose components are coherent.

We now take a subcategory of $\mathsf{Fact}(X, G, w)$ with the same objects but only with the closed degree zero morphisms between any two objects $\E$ and $\F$. Denote this subcategory $Z^0 \mathsf{Fact}(X, G, w)$. The category $Z^0 \mathsf{Fact}(X, G, w)$ is abelian.  Hence, the notion of a complex of objects in $Z^0 \mathsf{Fact}(X, G, w)$ makes sense.

Given a complex of objects from $Z^0 \mathsf{Fact}(X, G, w)$
$$
\ldots \rightarrow \E^b \stackrel{f^b}{\rightarrow} \E^{b+1} \stackrel{f^{b+1}}{\rightarrow} \ldots
$$
the \newterm{totalization} of the complex is the factorization $\T \in \mathsf{Fact}(X, G, w)$ given by the data:
\begin{equation}\begin{aligned}
\T_{-1} &: = \bigoplus_{i = 2k} \E^i_{-1} \otimes_{\O_X} \L{}^{-k} \oplus \bigoplus_{i = 2k-1} \E^i_0 \otimes_{\O_X} \L{}^{-k} \\
\T_0 &:= \bigoplus_{i = 2k} \E^i_{0} \otimes_{\O_X} \L{}^{-k} \oplus \bigoplus_{i = 2k+1} \E^i_{-1} \otimes_{\O_X} \L{}^{-k}  \\
\phi_0^{\T} & := \bigoplus_{i =2k} f^i_0 \otimes {\L}^{-k}  \oplus \bigoplus_{i=2k-1} f^i_{-1} \otimes {\L}^{-k}, \\
\phi_1^{\T} & := \bigoplus_{i =2k} f^i_{-1} \otimes {\L}^{-k}  \oplus \bigoplus_{i=2k-1} f^i_{0} \otimes {\L}^{-k}. 
\end{aligned}\end{equation}

Now let, $\mathsf{Acyc}(X, G, w)$ be the full subcategory of objects of $\mathsf{Fact}(X, G, w)$ consisting of totalizations of bounded exact complexes of $Z^0 \mathsf{Fact}(X,G,w)$. Similarly, let $\mathsf{acyc}(X, G, w) = \mathsf{Acyc}(X, G, w) \cap \mathsf{fact}(X, G, w)$.  Finally, by $[\mathcal{C}]$ we denote the homotopy category of a dg category $\mathcal{C}$.

We have the following general definition.

\begin{definition}
The \newterm{absolute derived category} $\op{D}^{\text{abs}}[\mathsf{Fact}(X, G, w)]$ of $[\mathsf{Fact}(X,G,w)]$ is the Verdier quotient of $[\mathsf{Fact}(X,G,w)]$ by $[\mathsf{Acyc}(X, G, w)]$.
\end{definition}

However, the category we focus on in this paper uses only coherent sheaves as objects.  For this, we use the  following slightly abbreviated notation.

\begin{definition}
The \newterm{absolute derived category} $\dabsfact{X,G,w}$ of $[\mathsf{fact}(X,G,w)]$ is the idempotent completion of the Verdier quotient of $[\mathsf{fact}(X,G,w)]$ by $[\mathsf{acyc}(X, G, w)]$. Equivalently, this is the full subcategory of $\op{D}^{\text{abs}}[\mathsf{Fact}(X, G, w)]$ split-generated by objects in $\mathsf{fact}(X,G,w)$.
\end{definition}

The category $\dabsfact{X,G,w}$ can be thought of as the derived category of the gauged Landau-Ginzburg model $(X, G, w)$.

\begin{remark}
The category $\dabsfact{X,G,w}$ is triangulated with shift functor $$\E[1] := (\E_0, \E_{-1}\otimes \L, \phi_{-1}^{\E}, \phi_0^{\E} \otimes \L)$$ where   $\E = (\E_{-1}, \E_0, \phi_{-1}^\E, \phi_0^\E)$. Note that, in particular, 
\begin{equation}\label{2isO(chi)}
[2] = - \otimes \L.
\end{equation}
\end{remark}

\subsection{Serre Functors of  $\dabsfact{X,G,w}$}

In this section, our goal is to calculate the Serre functor for $\dabsfact{X,G,w}$. We do this by first proving that, under certain assumptions, a certain dg-enhancement of $\dabsfact{X,G,w}$ is homologically smooth and proper.  This implies that it admits a Serre functor by a result of \cite{Shl}. 

 Following \cite{BFK11}, we take our enhancement to be $\mathsf{Inj}_{\op{coh}}(X,G,w)$ which is defined to be the full subcategory of $\mathsf{Fact}(X, G, w)$ consisting of objects with injective components which are isomorphic in $\op{D}^{\text{abs}}[\mathsf{Fact}(X, G, w)]$  to objects with coherent components.
 
\begin{proposition}[Proposition 5.11 of \cite{BFK11}]
The dg-category $\mathsf{Inj}_{\op{coh}}(X,G,w)$ is a dg enhancement of $\dabsfact{X,G,w}$.
\end{proposition}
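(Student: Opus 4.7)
The plan is to show that the natural triangulated functor
\[
[\mathsf{Inj}_{\op{coh}}(X,G,w)] \longrightarrow \dabsfact{X,G,w},
\]
induced by inclusion $\mathsf{Inj}_{\op{coh}}(X,G,w)\hookrightarrow \mathsf{Fact}(X,G,w)$ followed by the Verdier localization (and idempotent completion), is an equivalence of triangulated categories. This splits naturally into a full-faithfulness part and an essential surjectivity part.

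For full faithfulness, the key homological input I would establish first is the factorization analogue of the classical fact that morphisms from acyclic complexes into complexes of injectives are null-homotopic. Concretely, if $J$ has injective components and $A = \op{Tot}(\E^{\bullet})$ is the totalization of a bounded exact sequence in $Z^0 \mathsf{Fact}(X,G,w)$, I would show by induction on the length of $\E^{\bullet}$ that $\op{Hom}_{[\mathsf{Fact}(X,G,w)]}(A,J) = 0$, using term-by-term injectivity of the components of $J$ to produce the required contracting homotopy in each degree and to compatibly assemble it with respect to the two differentials of the factorization. With this lemma in hand, the calculus of fractions description of the Verdier quotient $\dabsfact{X,G,w}$ immediately gives that, for $I,J \in \mathsf{Inj}_{\op{coh}}(X,G,w)$, the canonical map
\[
\op{Hom}_{[\mathsf{Inj}_{\op{coh}}(X,G,w)]}(I,J) \;\longrightarrow\; \op{Hom}_{\dabsfact{X,G,w}}(I,J)
\]
is an isomorphism: a roof $I \xleftarrow{s} I' \to J$ with $\op{cone}(s)$ acyclic can be completed via the lemma to an actual morphism in the homotopy category.

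For essential surjectivity, I would need to show that every coherent factorization $\mathcal{E}$ admits an isomorphism in $\op{D}^{\op{abs}}[\mathsf{Fact}(X,G,w)]$ to an object of $\mathsf{Inj}_{\op{coh}}(X,G,w)$. The strategy is to construct an ``injective resolution'' by resolving the components of $\mathcal{E}$ in the abelian category of $G$-equivariant quasi-coherent sheaves and then totalizing, producing a factorization $I$ with injective components together with a map $\mathcal{E} \to I$ whose cone is a totalization of an exact complex, hence acyclic. Since $I$ is by construction isomorphic in $\op{D}^{\op{abs}}[\mathsf{Fact}(X,G,w)]$ to the coherent factorization $\mathcal{E}$, it belongs to $\mathsf{Inj}_{\op{coh}}(X,G,w)$. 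Combined with full faithfulness, this shows that $[\mathsf{Inj}_{\op{coh}}(X,G,w)]$ is equivalent to the full subcategory of $\op{D}^{\op{abs}}[\mathsf{Fact}(X,G,w)]$ split-generated by coherent factorizations, which is exactly $\dabsfact{X,G,w}$; idempotent completeness of homotopy categories of dg-categories with all finite dg-direct sums takes care of the idempotent completion.

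The main obstacle is the construction of well-behaved injective resolutions in the factorization setting: unlike ordinary complexes, factorizations have a two-periodic shape and the candidate resolution naturally wants to be an unbounded object. One needs Positselski-style care to ensure that the totalization of the resolution lives in $\mathsf{Fact}(X,G,w)$ (so boundedness or suitable local finiteness in each weight of the $G$-action is essential), and that its cone is genuinely acyclic in the sense of being a totalization of a bounded exact complex. Handling this properly—particularly in the equivariant setting, where ``enough injectives'' in $\op{Qcoh}_G X$ has to be invoked and compatibility with the $\mathcal{L}$-twists checked—is where the real work lies; once that existence is in place, the remainder of the argument is a formal application of Verdier localization.
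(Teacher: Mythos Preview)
The paper does not prove this statement; it is quoted verbatim as Proposition~5.11 of \cite{BFK11} and used as a black box. Your outline is essentially the approach taken in that reference (following Positselski's framework): full faithfulness via the lemma that morphisms from totalizations of bounded exact complexes into factorizations with injective components are null-homotopic, and essential surjectivity via componentwise injective resolutions of factorizations. So in spirit you are reconstructing the cited proof rather than offering an alternative.

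One genuine gap: your final sentence asserts that ``idempotent completeness of homotopy categories of dg-categories with all finite dg-direct sums takes care of the idempotent completion.'' This is false as a general principle---homotopy categories of pretriangulated dg-categories are not automatically idempotent complete. Since $\dabsfact{X,G,w}$ is \emph{defined} in this paper as the idempotent completion of the Verdier quotient, while $\mathsf{Inj}_{\op{coh}}(X,G,w)$ is defined as the full subcategory of injective factorizations isomorphic in $\op{D}^{\op{abs}}[\mathsf{Fact}(X,G,w)]$ to coherent ones (with no idempotent completion taken), matching the two requires an additional argument. One route, used in \cite{BFK11}, is to work inside the large category $\op{D}^{\op{abs}}[\mathsf{Fact}(X,G,w)]$, which has arbitrary coproducts and is therefore idempotent complete, and then argue that a summand of a coherent factorization is again isomorphic to a coherent one. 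This is not hard, but it is a real step and not a formal consequence of having finite direct sums in the dg-category.
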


We can, then, describe the Serre functor starting from the formal definition in \cite{Shl}.  This requires a sequence of lemmas.   Many of the technical aspects of which can be outsourced to \cite{BFK11}, which we cite often in this section.  Hence, we follow the notations and conventions of Ibid. 

Let us start by collecting some notation and definitions.  Suppose $G$ is an algebraic group acting on two algebraic varieties $X, Y$. 
We define the following shorthand for the global quotient stack,
\[
X \overset{G}{\times} Y  := [X \times Y / G].
\]

If $H$ is a closed subgroup of $G$, we let $H$ act on $G$ by inverse multiplication on the right
\[
g \cdot h := h g^{-1}
\]
to define $G \overset{H}{\times} X$.
Then we define an inclusion
\begin{align*}
\iota: X & \rightarrow G \overset{H}{\times} X \\
 x & \mapsto (e,x).
\end{align*}

By Lemma 1.3 of \cite{Tho97}, we have that the pullback functor $\iota^*$ induces the equivalences of equivariant categories of sheaves
\begin{equation*}
\op{Qcoh}_G(G \overset{H}{\times} X) \cong \op{Qcoh}_H(X).
\end{equation*}
\begin{definition}
Let $H$ be a closed subgroup of $G$ and assume we have an action $\sigma: G \times X \rightarrow X$. Consider the inclusion map $\iota: X \rightarrow G \overset{H}{\times} X$ and the $G$-equivariant morphism $\alpha: G \overset{H}{\times} X\rightarrow X$ descending from the action $\sigma : G \times X \rightarrow  X$. The \newterm{induction functor} is the composition 
\[
 \op{Ind}_H^{G} := \alpha_* \circ (\iota^*)^{-1}.
 \]
\end{definition}

The induction functor allows us to remind the reader of the following notation from \cite{BFK11}:
\[
\nabla(\mathcal F)  := \op{Ind}_G^{G \times_{\gm} G} \Delta_* \mathcal F,
\]
where $\Delta$ is the diagonal map.

\begin{remark}
The functors $ \op{Ind}_H^{G}$ and $\Delta_*$  are exact as $\Delta$ and $\alpha$ are both affine morphisms and $\iota^*$ is an equivalence of abelian categories.  Hence, functors appearing in the definition of $\nabla$ can be viewed both in the abelian and derived setting.
\end{remark}

\begin{lemma} \label{lem: iso of Ind}
Let 
\[
s,p : \op{ker }\chi \times X \to X
\] denote the action and the projection respectively and consider the map
\begin{align*}
(s,p) : \op{ker }\chi \times X & \to X \times X \\
(g, x) & \mapsto (gx, x).
\end{align*}

\noindent Let $\F$ be a quasi-coherent sheaf. There is an isomorphism of quasi-coherent sheaves
\[
\nabla(\mathcal F)  \cong (s,p)_* s^* \mathcal F.
\].
\end{lemma}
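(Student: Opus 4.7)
The plan is to unwind the induction functor explicitly. First I would describe $(G \times_\gm G) \overset{G}{\times} (X \times X)$ by picking canonical orbit representatives: the map $((g_1,g_2),(x_1,x_2)) \mapsto (g_1 g_2^{-1},\, g_2 x_1,\, g_2 x_2)$ identifies it with $\ker\chi \times X \times X$. Under this identification, $\iota$ becomes the inclusion $(x_1, x_2) \mapsto (e, x_1, x_2)$ and $\alpha$ becomes $(h, y_1, y_2) \mapsto (h y_1, y_2)$. Analogously, $(G \times_\gm G) \overset{G}{\times} X \cong \ker\chi \times X$ with the associated inclusion $\iota'(x) = (e, x)$.

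The crucial new ingredient is the auxiliary closed immersion $\delta : \ker\chi \times X \to \ker\chi \times X \times X$, $(h, y) \mapsto (h, y, y)$. It satisfies $\alpha \circ \delta = (s, p)$, and the four maps $\iota', \iota, \delta, \Delta$ fit into a cartesian square of closed immersions. The lemma therefore reduces to the identification
\[
(\iota^*)^{-1}(\Delta_* \mathcal F) \;\cong\; \delta_* s^* \mathcal F
\]
of $G \times_\gm G$-equivariant sheaves, since then $\nabla(\mathcal F) = \alpha_* \delta_* s^* \mathcal F = (s,p)_* s^* \mathcal F$.

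I would prove this identification by applying $\iota^*$ to the right-hand side. Base change on the cartesian square together with $s \circ \iota' = \op{id}_X$ gives $\iota^* \delta_* s^* \mathcal F \cong \Delta_* \iota'^* s^* \mathcal F \cong \Delta_* \mathcal F$, and since $\iota^*$ is an equivalence of equivariant categories by \cite{Tho97}, the desired isomorphism follows once $\delta_* s^* \mathcal F$ is equipped with a natural $G \times_\gm G$-equivariant structure. Giving $\ker\chi \times X \times X$ and $\ker\chi \times X$ the actions $(g_1', g_2') \cdot (h, \ldots) = (g_1' h (g_2')^{-1},\, g_2' \cdot \ldots)$, and $X$ the $G \times_\gm G$-action through the first projection $G \times_\gm G \to G$, makes $\delta$ and $s$ equivariant and produces the required structure.

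The main obstacle is the equivariance bookkeeping: the constituent maps respect different subgroups ($\iota'$ is equivariant only for the diagonal $G \hookrightarrow G \times_\gm G$, while $\delta$ and $s$ are equivariant for all of $G \times_\gm G$), so one has to check carefully that the equivariant structures match up when transporting through $\iota^*$. Apart from this, the argument is a direct application of base change together with the defining property of the induction functor.
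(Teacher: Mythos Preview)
Your proposal is correct and follows essentially the same approach as the paper: both arguments identify $(G\times_{\gm}G)\overset{G}{\times}X$ and $(G\times_{\gm}G)\overset{G}{\times}(X\times X)$ explicitly with $\ker\chi\times X$ and $\ker\chi\times X\times X$, then trace $\alpha_*\circ(\iota^*)^{-1}\circ\Delta_*$ through these identifications to obtain $(s,p)_*s^*$. The only cosmetic differences are that the paper packages the computation as a single commutative diagram (with maps $\Phi,\hat{\Phi},\tilde{\Delta},\hat{\Delta}$) and uses a slightly different coordinate identification---its $\hat{\Delta}(g,x)=(g,gx,x)$ plays the role of your $\delta(h,y)=(h,y,y)$---whereas you isolate the key step as base change along a cartesian square; these are the same argument in different dress.
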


\begin{proof}
Let $G \times_{\gm} G$ be the fiber product using the $\chi : G\rightarrow \gm$ for both factors. We have the following commutative diagram,
 \begin{center}
 \begin{tikzpicture}[description/.style={fill=white,inner sep=2pt}]
  \matrix (m) [matrix of math nodes, row sep=2em, column sep=2em, text height=1.5ex, text depth=0.25ex]
  {X &   (G \times_{\gm} G) \overset{G}{\times} X & \op{ker }\chi \times X \\  X \times X &   (G \times_{\gm} G) \overset{G}{\times} X \times X& \op{ker }\chi \times X \times X \\ 
  &   X \times X &  \\ };
  \path[->,font=\scriptsize]
  (m-1-1) edge node[above] {$j$} (m-1-2)
  (m-2-1) edge node[above] {$\iota$} (m-2-2)
    (m-1-1) edge node[left] {$\Delta$} (m-2-1)
        (m-1-2) edge node[left] {$\tilde{\Delta}$} (m-2-2)
                (m-1-3) edge node[left] {$\hat{\Delta}$} (m-2-3)
  (m-2-2) edge node[left] {$\alpha$} (m-3-2)
  (m-2-2) edge node[above] {$\Phi$} (m-2-3)
    (m-1-2) edge node[above] {$\hat{\Phi}$} (m-1-3)
  (m-2-3) edge node[below] {$(p,p)$} (m-3-2)
  ;
 \end{tikzpicture}
 \end{center} 
where
 \begin{align*}
 \Delta(x) & = (x,x), &
  j(x) & = (e, e,x), \\
 \tilde{\Delta}(g_1, g_2, x) & = (g_1, g_2, x,x), &
 \hat{\Delta}(g, x) & = (g, gx,x), \\
 \Phi(g_1, g_2, x, y) & = (g_1g_2^{-1}, g_1x, g_2y), &
 \hat{\Phi}(g_1, g_2, x) & = (g_1g_2^{-1}, g_1x). \\
 \end{align*}

We compute
\begin{align*}
\nabla(\mathcal F) & =  \alpha_* \circ (\iota^*)^{-1} \Delta_* \mathcal F \\
& =  \alpha_* \circ (\iota^*)^{-1} \Delta_* j^* \hat{\Phi}^* s^* \mathcal F \\
& =  \alpha_* \circ \tilde{\Delta}_* \hat{\Phi}^* s^* \mathcal F \\
& = (p,p)_* \hat{\Delta}_* s^* \mathcal F \\
& = (s,p)_* s^* \mathcal F.
\end{align*}

\end{proof}

\begin{definition}
A dg category $A$ is called \newterm{homologically smooth} if $A$ is a compact object of $D(A \otimes A^{\op{op}} - \text{Mod})$ i.e. $A \in D_{\op{perf}}(A \otimes A^{\op{op}})$.
\end{definition}

\begin{definition}
Consider a group $G$ acting on a space $X$ and let $w$ be a global function defined on $X$. We say that $w$ is \newterm{semi-invariant} with respect to a character $\chi$ of $G$ if, for any $g \in G$,
$$
w(g\cdot x) = \chi(g) w(x).
$$
The global function $w$ is semi-invariant if and only if $w$ is a section of the equivariant line bundle $\O(\chi)$ on the global quotient stack $[X/G]$.  This can also be written $w \in \Gamma(X, \O_X(\chi))^G$.
\end{definition}

For the rest of the paper, we restrict our attention to the case where $w$ is a semi-invariant function.

\begin{lemma}
Let $X$ be a smooth algebraic variety and $G$ be an algebraic group acting on $X$.  Let $\chi: G \to \gm$ be a character and $w \in \Gamma(X, \O_X(\chi))^G$ be a semi-invariant function.  Denote by $\partial w $ the critical locus (with its reduced scheme structure). Assume that $[X/ \op{ker }\chi]$ has finite diagonal and that  $\partial w \subseteq Z(w)$.  Then the dg-category,
\[
\mathsf{Inj}_{\op{coh}}(X,G,w)
\]
is homologically smooth.
\label{lem: homologically smooth}
\end{lemma}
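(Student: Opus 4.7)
To show homological smoothness we need the diagonal bimodule to be perfect in $D(A \otimes A^{\op{op}}\text{-Mod})$, where $A$ is a dg-model of $\mathsf{Inj}_{\op{coh}}(X,G,w)$. Following the general philosophy of \cite{BFK11}, the plan is to realize this bimodule as a Fourier--Mukai kernel on a product stack, equipped with the difference potential, and then resolve it by locally free factorizations using the smoothness of $X$.

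\textbf{Step 1: realize the identity as a kernel.} The identity endofunctor on $\dabsfact{X,G,w}$ is represented by the factorization $\nabla(\O_X)$ on $X \overset{G \times_{\gm} G}{\times} X$ with potential $w \boxminus w := p_1^* w - p_2^* w$. By the preceding lemma, $\nabla(\O_X) \cong (s,p)_* \O_{\op{ker}\chi \times X}$, which is supported on the image of $(s,p)$. Since $w$ is semi-invariant of weight $\chi$, the restriction of $w \boxminus w$ to this image vanishes, so the kernel is genuinely a factorization of $w \boxminus w$.

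\textbf{Step 2: use the finite diagonal hypothesis.} The assumption that $[X / \op{ker }\chi]$ has finite diagonal is equivalent to $(s,p)$ being quasi-finite (and, since it is the image of an affine map after quotienting by $\op{ker}\chi$, affine). Hence $(s,p)_*$ is exact and preserves coherence, and the support of $\nabla(\O_X)$ is a finite-type closed substack of the product stack on which we can apply standard resolution techniques.

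\textbf{Step 3: Koszul-type resolution via smoothness.} Because $X$ is smooth, the diagonal inclusion $X \hookrightarrow X \times X$ is a regular closed immersion of codimension $\dim X$, and more generally the image of $(s,p)$ is cut out locally by a regular sequence after the finite diagonal hypothesis. One then produces a finite resolution of $\nabla(\O_X)$ in the abelian category of equivariant coherent sheaves by locally free sheaves of the form $\mathcal{E} \boxtimes \mathcal{F}$. Tensoring each term with the appropriate power of $\O(\chi)$ and totalizing produces a finite resolution of $\nabla(\O_X)$ in $\dabsfact{X \times X, G \times_{\gm} G, w \boxminus w}$ by locally free factorizations. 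Here the hypothesis $\partial w \subseteq Z(w)$ plays a crucial role: it ensures that the ``tails'' of the Koszul-like resolution become absolutely acyclic, so that the resolution is finite in the absolute derived category.

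\textbf{Step 4: perfectness of the terms.} Each term $\mathcal{E} \boxtimes \mathcal{F}$ (after twisting) of the resolution corresponds, under the equivalence between factorization kernels and bimodules, to an external tensor product of a representable right module with a representable left module. These are by definition perfect bimodules. Thus $\nabla(\O_X)$, being a finite extension of perfect bimodules in $D(A \otimes A^{\op{op}})$, is itself perfect, establishing homological smoothness.

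\textbf{Main obstacle.} The delicate point is Step~3: one must rigorously lift the Koszul resolution from the abelian category of equivariant coherent sheaves to the absolute derived category of factorizations of $w \boxminus w$, verifying that the resolution terminates after finitely many steps in the absolute derived category. This termination is precisely where the hypothesis $\partial w \subseteq Z(w)$ is used, since on the complement of $\partial w$ the factorization category is trivial, so one can truncate the resolution once all remaining higher terms are supported off the critical locus. Carefully interleaving the equivariance, the stack structure on the target $X \overset{G \times_{\gm} G}{\times} X$, and the absolute derived quotient is the main technical task.
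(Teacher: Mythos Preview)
Your overall shape is right---identify the diagonal bimodule with the kernel $\nabla(\O_X)$ on the product and show it is compact---but you are working much harder than the paper does, and the step you flag as the ``main obstacle'' is exactly the one the paper bypasses.

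The paper's argument is short: the finite diagonal hypothesis means $(s,p)$ is a \emph{finite} morphism, so $(s,p)_*\O_{\ker\chi\times X}\cong\nabla(\O_X)$ is a \emph{coherent} factorization. Then Proposition~3.15 of \cite{BFK11} says that on a smooth variety any coherent factorization is automatically compact in $\op{D}^{\op{abs}}[\mathsf{Fact}(X\times X,G\times_{\gm}G,w\boxplus w)]$. No explicit Koszul-type resolution by exterior products is ever constructed; compactness comes for free from coherence plus smoothness of $X\times X$. Finally, Theorem~5.15 of \cite{BFK11} gives an equivalence between this absolute derived category and $D(A\otimes A^{\op{op}}\text{-Mod})$, carrying $\nabla(\O_X)$ to the diagonal bimodule (Lemma~3.54 of \cite{BFK11}), so compactness becomes perfectness.

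Two concrete issues with your route. First, your Step~3 is not justified: producing a finite resolution of $\nabla(\O_X)$ by factorizations of the specific form $\mathcal E\boxtimes\mathcal F$ is genuinely hard (this is essentially the statement that the diagonal is generated by exterior products, which in \cite{BFK11} is \emph{deduced from} homological smoothness rather than used to prove it). Second, you misplace the role of $\partial w\subseteq Z(w)$: in the paper it is used only to remove a support condition in Theorem~5.15 of \cite{BFK11}, so that the kernel/bimodule equivalence holds on the nose; it is not used to truncate a resolution. Your proposed truncation argument (``terms eventually supported off $\partial w$'') is not obviously valid for the diagonal kernel and is in any case unnecessary.
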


\begin{proof}
The diagonal map for $[X / \op{ker } \chi]$ is realized as 
\[
(s,p): \op{ker } \chi \times X \to X \times X.
\]
  This is finite by assumption, hence proper.  Therefore $(s,p)_*\O_{\op{ker} \chi \times X}$ is coherent.

By Lemma~\ref{lem: iso of Ind}, $\nabla(\O_X) = (s,p)_*\O_{\op{ker} \chi \times X}$.  So, renotating, $\nabla(\O_X)$ is coherent.  It follows from Proposition 3.15 of \cite{BFK11} that since $\nabla(\O_X)$ is an object of $\op{fact}(X\times X,G\times_{\gm} G,w\boxplus w)$, it is a compact object.  

By Theorem 5.15 of \cite{BFK11}, we have a dg-functor
\[
\lambda_{w \boxplus w} : \mathsf{Inj}_{\op{coh}}(X \times X,G \overset{\gm}{\times} G,w \boxplus w)  \to (\mathsf{Inj}_{\op{coh}}(X,G,w) \otimes \mathsf{Inj}_{\op{coh}}(X,G,w)^{\op{op}} )-\op{Mod},
\]
(here, we have implicitly used the assumption that $\partial w \subseteq Z(w)$ to remove the support condition in the statement of Theorem 5.15 of ibid).

This induces an equivalence
\[
\op{D}^{\text{abs}}[X \times X,G \overset{\gm}{\times} G,w \boxplus w] \rightarrow D((\mathsf{Inj}_{\op{coh}}(X,G,w) \otimes \mathsf{Inj}_{\op{coh}}(X,G,w)^{\op{op}} )-\op{Mod}).
\]
This equivalence takes $\nabla$ to the bimodule $\mathsf{Inj}_{\op{coh}}(X,G,w)$ by Lemma 3.54 of \cite{BFK11}.

In conclusion, when viewed as a bimodule, $\mathsf{Inj}_{\op{coh}}(X,G,w)$ is a compact object of $D(\mathsf{Inj}_{\op{coh}}(X,G,w) \otimes \mathsf{Inj}_{\op{coh}}(X,G,w)^{\op{op}} -\op{Mod})$, i.e.,  $\mathsf{Inj}_{\op{coh}}(X,G,w)$ is cohomologically smooth.

\end{proof}

\begin{remark}
Any separated Deligne-Mumford stack has finite diagonal.  Conversely, over $\C$, any stack with finite diagonal is separated and Deligne-Mumford.
\end{remark}

\begin{definition}
A dg category $A$ is called \newterm{proper} if there exists a strong generator $E$ of the homotopy category of $A$ such that 
 \[
 \bigoplus_r \op{H}^r(\op{Hom}_{A}(E,E))
 \]
 is finite dimensional.
\end{definition}

Recall from \cite{BFK11} that given a $G$-equivariant sheaf $\mathcal F$ supported on $Z(w)$ we can define a factorization
\[
\Upsilon \mathcal F := (0, \mathcal F, 0 ,0),
\]
using the notation given in Definition~\ref{defn:factorization}.

\begin{lemma}
Let $X$ be a smooth algebraic variety and $G$ be a linearly reductive algebraic group acting on $X$.  Let $\chi: G \to \gm$ be a non-trivial character.  Assume that $[X/ \op{ker }\chi]$ has finite diagonal  and is proper over  $\op{Spec }\kappa$.  In addition, assume that we have the containment, $\partial w \subseteq Z(w)$.
  Then,
\[
\mathsf{Inj}_{\op{coh}}(X,G,w)
\]
is a proper dg-category.
\label{lem: proper}
\end{lemma}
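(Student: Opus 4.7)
The plan is to exhibit a strong generator of $\mathsf{Inj}_{\op{coh}}(X,G,w)$ and then to verify that its endomorphism complex has finite-dimensional total cohomology.  Since $[X/\op{ker }\chi]$ is smooth, separated, proper, and Deligne--Mumford by hypothesis, its derived category of coherent sheaves admits a classical generator.  Lifting this to the $G$-equivariant setting and then passing to the factorization category (via the natural dg-functor from $G$-equivariant coherent sheaves on $X$, or via the functor $\Upsilon$ applied to a generator of the equivariant category of sheaves supported on $Z(w)$) produces a strong generator $E$ of $\dabsfact{X,G,w}$, which lifts to an object of $\mathsf{Inj}_{\op{coh}}(X,G,w)$.

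For Hom-finiteness, I would compute the morphism complex between two coherent factorizations $\E, \F$ via the internal Hom factorization $\mathcal{H}om(\E, \F)$.  This is a factorization of zero, i.e., a $2$-periodic complex of $G$-equivariant coherent sheaves on $X$ whose cohomology sheaves are annihilated by $w$, hence are set-theoretically supported on the critical locus $\partial w$.  The morphism space in $\dabsfact{X,G,w}$ is then extracted from the $G$-invariant derived global sections of this $2$-periodic complex, in direct analogy with the bimodule identification used in the proof of Lemma~\ref{lem: homologically smooth}.

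Now I would invoke the hypothesis $\partial w \subseteq Z(w)$: the cohomology sheaves of $\mathcal{H}om(\E, \F)$ are supported on $Z(w)$, whose image in $[X/\op{ker }\chi]$ is a closed, and therefore proper, Deligne--Mumford substack.  Consequently, the $\op{ker }\chi$-invariant derived global sections of each such cohomology sheaf are finite-dimensional over $\kappa$ in every cohomological degree.  The residual action of $G/\op{ker }\chi$---which is either $\gm$ or a finite cyclic group, since $\chi$ is non-trivial---decomposes these spaces into character eigenspaces, and passing to full $G$-invariants selects a single finite-dimensional eigenspace in each degree; combined with the $2$-periodicity $[2] = -\otimes \O_X(\chi)$ from equation~\eqref{2isO(chi)}, the total Hom is finite-dimensional.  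Applying this to the generator $E$ gives finite-dimensionality of $\bigoplus_r \op{H}^r(\op{Hom}_{\mathsf{Inj}_{\op{coh}}(X,G,w)}(E,E))$, proving that $\mathsf{Inj}_{\op{coh}}(X,G,w)$ is a proper dg-category.

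The main technical obstacle is the careful bookkeeping between the $2$-periodicity of the factorization category and the character-weight grading under the residual $\gm$-action: one must ensure that passing from $\op{ker }\chi$-invariants to full $G$-invariants does not accumulate infinitely many character weights, and that the cohomology sheaves of the internal Hom complex---rather than just its components---are the objects supported on $\partial w$.  Modulo this technicality, the argument is a standard application of coherent cohomology on a proper Deligne--Mumford stack, parallel to (but dual to) the compactness argument already used for homological smoothness in Lemma~\ref{lem: homologically smooth}.
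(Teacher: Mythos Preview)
Your Hom-finiteness strategy is essentially the paper's, but the paper makes the step you flag as the ``main technical obstacle'' completely explicit.  Rather than working with the internal $\mathcal{H}om$ factorization directly, the paper first reduces (via Proposition~3.64 and Lemma~4.13 of \cite{BFK11}) to computing $\op{Hom}(\Upsilon E, \Upsilon F[r])$ for coherent sheaves $E,F$ on $\partial w$, and then invokes a concrete spectral sequence (Lemma~3.11 of \cite{BDFIK16}) with $E_1^{2s,q} = \op{Ext}^{2s+q}_{[X/G]}(E, F(-s\chi))$.  Smoothness of $X$ bounds $0 \le 2s+q \le \dim X$, and the key containment
\[
\bigoplus_{s \in \Z} \op{Ext}^i_{[X/G]}(E, F(-s\chi)) \;\subseteq\; \op{Ext}^i_{[X/\ker\chi]}(E,F)
\]
(using linear reductivity of $G$ and $\ker\chi$) bounds the $s$-direction by properness.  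This is exactly the ``no accumulation of weights'' step you were worried about; the paper's formulation makes it transparent, whereas in your internal-$\mathcal{H}om$ picture one has to be careful that the hypercohomology of an unbounded $2$-periodic complex is controlled by finitely many sheaf-cohomology groups.  (A small slip: annihilation by $w$ gives support on $Z(w)$, not on $\partial w$; support on the critical locus comes from a different argument.  Under the stated hypothesis that all of $[X/\ker\chi]$ is proper, neither distinction is needed.)

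The more serious gap is in your strong-generator argument.  Neither of your two proposed routes delivers \emph{strong} generation.  A classical generator of $\dbcoh{[X/\ker\chi]}$ does not obviously lift to a single compact generator of the $G$-equivariant factorization category: the residual $\gm = G/\ker\chi$ forces you to take all $\chi$-twists, and there is no single coherent object doing the job.  Your second route, applying $\Upsilon$ to a generator of sheaves on $Z(w)$, gives (split) generation but not strong generation without further input.  The paper instead bootstraps from Lemma~\ref{lem: homologically smooth}: homological smoothness plus Lemma~4.23 of \cite{BFK11} shows the diagonal $\nabla(\O_X)$ is built from finitely many exterior products $\mathcal E_i \boxtimes \mathcal F_i$, and viewing these as Fourier--Mukai kernels immediately exhibits the $\mathcal F_i$ as a finite set of strong generators.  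You should use this argument; your sketch does not supply an alternative.
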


\begin{proof}
By Lemma~\ref{lem: homologically smooth}, $\dabsfact{X,G,w}$ is homologically smooth.  Hence by Lemma 4.23 of \cite{BFK11}, the diagonal object $\op{Ind}^{G\times_{\mathbb{G}_m} G}_{G} \Delta_* \O_X$ is generated by exterior products.  Now, if  $\op{Ind}^{G\times_{\mathbb{G}_m} G}_{G} \Delta_* \O_X$ is a summand of a finite sequence of cones of exterior products $\mathcal E_i \boxtimes \mathcal F_i$, then thinking of these exterior products as integral transforms expresses any object as a summand of a finite sequence of cones of some graded vector spaces tensored with the $\mathcal F_i$.   Therefore, $\dabsfact{X,G,w}$ admits a strong generator. 

Now, we show that the category is Ext-finite, so that, in particular, the cohomologies of the endomorphism algebra of a strong generator must be finite dimensional.
By Proposition 3.64 and Lemma 4.13 of \cite{BFK11},  $\dabsfact{X,G,w}$ is generated by objects of the form $\Upsilon E$ where $E \in \dbcoh{[\partial w / G]}$.  Since, $\dbcoh{[\partial w / G]}$ is generated by sheaves,  it suffices to show that
\[
\bigoplus_r \op{Hom}_{\op{D}^{\op{abs}}(\op{fact} w)}(\Upsilon E,\Upsilon F[r])
\]
 is finite dimensional for any $E, F \in \op{coh}[\partial w / G]$.

That is, let $E, F$ be $G$-equivariant coherent sheaves on $\partial w$.
By Lemma 3.11 of \cite{BDFIK16}, there is a spectral sequence whose $E_1$-page is 
 \begin{displaymath}
  E^{p,q}_1 = 
  \begin{cases} 
\op{Ext}^{p+q}_{[X/G]}(E,F \otimes \O_X(-s\chi)) & p = 2s \\ 
0 & p = 2s+1,
  \end{cases}
 \end{displaymath}
which strongly converges to $\bigoplus_r \op{Hom}_{\op{D}^{\op{abs}}(\op{fact} w)}(E,F[r])$.

Since $X$ is smooth, $E^{p,q}_1 = 0$ unless $0 \leq p+q \leq \op{dim }X$. Now since $G$ and $\op{ker }\chi$ are linearly reductive,
\begin{align*}
\bigoplus_{s \in \Z} \op{Ext}_{[X/G]}^{i}(E, F \otimes \O_X(-s\chi)) & = \bigoplus_{s \in \Z} \op{Ext}_{X}^{i}(E, F \otimes \O_X(-s\chi))^G \\
& \subseteq   \op{Ext}_{X}^{i}(E, F)^{\op{ker }\chi} \\
& = \op{Ext}_{[X/\op{ker \chi}]}^{i}(E, F).
\end{align*} 
The righthand side is finite dimensional by assumption. 

 Therefore, there are finitely many pairs $(s,i)$ where $ \op{Ext}_{[X/G]}^{i}(E, F \otimes \O_X(-s\chi))$ is nonzero.  It follows that $E^{2s,q}_1$ is nonzero for finitely many $q$.  Furthermore, since $E^{2s,q}_1 = 0$ unless $0 \leq 2s+q \leq \op{dim }X$, it follows that there are also finitely many values of $s$ for which $E^{2s,q}_1$ is nonzero.
 
In conclusion, the spectral sequence is bounded and its terms are finite dimensional.   Hence, $\bigoplus_r \op{Hom}_{\op{D}^{\op{abs}}(\op{fact} w)}(E,F[r])$ is finite dimensional, as desired.
\end{proof}

\begin{remark}
It is enough to assume that $[\partial w / \op{ker }\chi]$ is only cohomologically proper.  This means, essentially by definition, $\op{Ext}_{[X/\op{ker \chi}]}^{i}(E, F)$ is finite dimensional for any two coherent sheaves $E,F$ which is all that is used in the proof.  The assumption that   $[\partial w / \op{ker }\chi]$  is proper propagates to other results in this section, which could also be replaced by cohomologically proper.  For later applications in this paper, we will always have that $[\partial w / \op{ker }\chi]$  is proper.  \end{remark}

\begin{lemma}
Let $G$ be an algebraic group acting on a smooth variety $X$.  There is a $G \times_{\gm} G$-equivariant isomorphism
\[
  \mathbf{R}\mathcal Hom_{X \times X}(\op{Ind}_G^{G \times_{\gm} G} \Delta_* \O_X, \O_{X \times X}) \cong \op{Ind}_G^{G \times_{\gm} G} \Delta_* \omega^{-1}_X[\op{dim } G - 1 -\op{dim } X].
\]
\label{lem: canonical is dual}
\end{lemma}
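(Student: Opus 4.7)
The plan is to apply Grothendieck duality to the morphism $(s,p): \op{ker }\chi \times X \to X \times X$. By Lemma~\ref{lem: iso of Ind}, both sides of the claimed isomorphism are pushforwards along $(s,p)$: the left-hand side equals $\mathbf{R}\mathcal Hom_{X \times X}((s,p)_*\O_{\op{ker }\chi \times X}, \O_{X \times X})$, while applying Lemma~\ref{lem: iso of Ind} to $\omega_X^{-1}$ in place of $\O_X$ identifies the right-hand side with $(s,p)_* s^*\omega_X^{-1}[\op{dim } G - 1 - \op{dim } X]$. So the lemma will follow once I establish $(s,p)^!\O_{X\times X} \cong s^*\omega_X^{-1}[\op{dim } G - 1 - \op{dim } X]$ and invoke the duality isomorphism $\mathbf{R}\mathcal Hom((s,p)_*\O, \O_{X \times X}) \cong (s,p)_*(s,p)^!\O_{X \times X}$.

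To compute $(s,p)^!$, I would reuse the factorization $(s,p) = (p,p) \circ \hat{\Delta}$ already present in the proof of Lemma~\ref{lem: iso of Ind}, where $\hat{\Delta}: \op{ker }\chi \times X \to \op{ker }\chi \times X \times X$ sends $(g, x) \mapsto (g, gx, x)$ and $(p, p)$ is projection onto the last two factors. The map $\hat{\Delta}$ is a regular closed immersion of codimension $\op{dim } X$; reading off the normal bundle from the tangent-normal sequence
\[
0 \to T_{\op{ker }\chi \times X} \to \hat{\Delta}^* T_{\op{ker }\chi \times X \times X} \to N_{\hat{\Delta}} \to 0
\]
identifies $N_{\hat{\Delta}} \cong s^* T_X$, since the normal direction is captured precisely by the middle $X$-factor, into which $\hat{\Delta}$ embeds via the action map $s$. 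The fundamental local isomorphism then gives $\hat{\Delta}^!\O \cong s^*\omega_X^{-1}[-\op{dim } X]$. The map $(p,p)$ is smooth of relative dimension $\op{dim } G - 1$ with relative dualizing sheaf the pullback of $\omega_{\op{ker }\chi}$; this line bundle is equivariantly trivial for the induced action $(k_1, k_2) \cdot g = k_1 g k_2^{-1}$ of $G \times_{\gm} G$ on $\op{ker }\chi$, so $(p,p)^!\O_{X \times X} \cong \O[\op{dim } G - 1]$. Composing these yields the desired formula for $(s,p)^!\O_{X \times X}$.

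Grothendieck duality then produces the claimed isomorphism in the derived category of $G \times_{\gm} G$-equivariant sheaves on $X \times X$. Equivariance is automatic: the morphisms $\hat{\Delta}$, $(p,p)$, and $(s,p)$ are all $G \times_{\gm} G$-equivariant for the actions derived in the proof of Lemma~\ref{lem: iso of Ind}, and the natural isomorphisms of Grothendieck duality respect equivariance.

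The step I expect to require the most care is the equivariance of the trivialization of $\omega_{\op{ker }\chi}$: tracking the left-trivialization through the $G \times_{\gm} G$-conjugation action on $\op{ker }\chi$ expresses the modular character as $\det \op{Ad}_{k_2}|_{\op{Lie}(\op{ker }\chi)}$, and showing this is trivial uses the reductivity of $G$ that is operative throughout the section (the decomposition $\mathfrak g = \mathfrak k \oplus \mathbb C$ as adjoint representations, with the $\mathbb C$-quotient trivial because $\gm$ is abelian). A secondary subtlety is that the classical form of Grothendieck duality invoked above requires $(s,p)$ to be proper, which corresponds exactly to the finite-diagonal hypothesis on $[X/\op{ker }\chi]$ that is imposed in all of the applications of the lemma in this section.
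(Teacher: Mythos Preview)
Your proof is correct and follows the same approach as the paper: rewrite both sides via Lemma~\ref{lem: iso of Ind}, invoke equivariant Grothendieck duality for $(s,p)$, compute $(s,p)^!\O_{X\times X}$ as the relative dualizing complex, and apply Lemma~\ref{lem: iso of Ind} once more. You supply more detail than the paper---which dispatches the computation of $(s,p)^!\O_{X\times X}$ in a single sentence---and you correctly flag two points the paper leaves implicit: the equivariant triviality of $\omega_{\ker\chi}$ (which does require reductivity, available in all applications) and the properness of $(s,p)$ needed for Grothendieck duality (which is the finite-diagonal hypothesis imposed elsewhere in the section).
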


\begin{proof}  
We formally compute:
\begin{align*}
  \mathbf{R}\mathcal Hom_{X \times X}(\op{Ind}_G^{G \times_{\gm} G} \Delta_* \O_X, \O_{X \times X}) & =   \mathbf{R}\mathcal Hom_{X \times X}((p,s)_*\O_{\op{ker }\chi \times X}, \O_{X \times X}) \\
  & = (p,s)_*(p,s)^! \O_{X \times X} \\
  & = (p,s)_* s^*\omega^{-1}_{X} [\op{dim } G - 1 -\op{dim } X] \\
  & = \op{Ind}_G^{G \times_{\gm} G} \Delta_* \omega^{-1}_X[\op{dim } G - 1 -\op{dim } X].
\end{align*}
The first line is Lemma~\ref{lem: iso of Ind}.  The second line is equivariant Grothendieck duality \cite{Has60}.  The third line is just a computation of the relative canonical bundle since $\op{ker }\chi \times X$ and $X \times X$ are smooth.  The fourth line is Lemma~\ref{lem: iso of Ind} again.

\end{proof}

\begin{lemma}
Let $\mathcal F$ be a $G$-equivariant sheaf on $Z(w)$ such that
\[
\mathbf{R}\mathcal Hom_{X}(\mathcal F, \O_X) \cong \mathcal G [-t],
\]
where $\mathcal G$ is also a $G$-equivariant sheaf on $Z(w)$.  Then there is an isomorphism in $\op{D}^{\op{abs}}(\op{fact} w)$,
\[
\mathbf{R}\mathcal Hom_{X}(\Upsilon \mathcal F, \O_X) \cong \Upsilon \mathcal G [-t].
\]
\label{lem: dual commutes}
\end{lemma}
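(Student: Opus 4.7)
The plan is to represent $\Upsilon \mathcal F$ by a concrete factorization $\tilde P$ built from a finite locally free resolution of $\mathcal F$ on $X$, compute $\mathbf{R}\mathcal Hom_X(\Upsilon \mathcal F, \O_X)$ by dualizing $\tilde P$ componentwise, and then read off that this dual factorization represents $\Upsilon \mathcal G[-t]$.

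First I would fix a bounded $G$-equivariant locally free resolution $P^\bullet \to \mathcal F$, which exists because $X$ is smooth and $G$ is linearly reductive. Since $\mathcal F$ is supported on $Z(w)$, multiplication by $w$ vanishes on $\mathcal F$, so the chain map $w \cdot \mathrm{id}_{P^\bullet} : P^\bullet \to P^\bullet \otimes \L$ is null-homotopic through a $G$-equivariant homotopy $h$ with $dh + hd = w \cdot \mathrm{id}$. Folding $P^\bullet$ modulo two parity (with the appropriate powers of $\L$ absorbed into each term) and using $d$ and $h$ as the factorization differentials produces a factorization $\tilde P$ of $w$ with locally free components. A standard argument in the spirit of the constructions in Section~3 of \cite{BFK11} shows that the augmentation $\tilde P \to \Upsilon \mathcal F$ is an isomorphism in $\dabs[\mathsf{Fact}(X,G,w)]$: the cone is the totalization of the exact complex of factorizations associated to $P^\bullet \to \mathcal F \to 0$, hence absolutely acyclic.

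Next, because the components of $\tilde P$ are locally free, $\mathbf{R}\mathcal Hom_X(\Upsilon \mathcal F, \O_X)$ is represented by the componentwise dual $\tilde P^\vee$. The factorization $\tilde P^\vee$ has underlying $\Z$-graded complex $(P^\bullet)^\vee$ and is obtained by totalizing it against the dual homotopy $h^\vee$, which, up to a sign fixed by the convention for dualizing a factorization of $w$ to a factorization of $w$, is itself a $w$-homotopy. By the hypothesis $\mathbf{R}\mathcal Hom_X(\mathcal F, \O_X) \cong \mathcal G[-t]$, the complex $(P^\bullet)^\vee$ is quasi-isomorphic to $\mathcal G$ placed in cohomological degree $t$. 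Applying the same dictionary as in the previous paragraph in the reverse direction then identifies the totalization of $((P^\bullet)^\vee, h^\vee)$ with $\Upsilon \mathcal G[-t]$ in $\dabs$: the cohomological shift $[-t]$ on the factorization side absorbs exactly the $\L$-twists coming from placing the cohomology in degree $t$ rather than in degree $0$.

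The main obstacle is making the identification $\tilde P \cong \Upsilon \mathcal F$ (and its dual counterpart) precise: one must carefully track how a $\Z$-graded resolution equipped with a $w$-homotopy folds into a $\Z/2$-graded factorization, keeping control over the $\L$-twists and the signs in the interaction of $d$ and $h$, and then verify that the cone of the augmentation is absolutely acyclic. Once this correspondence is cleanly in place, applying it to $\mathcal F$ and to its shifted dual $(P^\bullet)^\vee \simeq \mathcal G[-t]$ gives the lemma; the sign check ensuring $h^\vee$ is a homotopy for $+w$ rather than $-w$ is a secondary bookkeeping issue.
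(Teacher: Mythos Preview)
Your approach is correct and closely parallels the paper's, but with a slightly different packaging. The paper does not build $\tilde P$ by choosing a null-homotopy $h$ for $w\cdot\mathrm{id}_{P^\bullet}$; instead it invokes a resolution of $\Upsilon\mathcal F$ by factorizations $\mathcal V_s\to\cdots\to\mathcal V_0$ with locally free components (Proposition~3.14 of \cite{BFK11}) and totalizes. After dualizing componentwise, the paper then does an explicit chase: it splits the dual complex at degree $t$ into two exact pieces whose totalizations $A$ and $B$ fit into a triangle $A\to B[t-s]\to\Upsilon\mathcal G$, and identifies the cone with the totalization of the full dual complex. Your version replaces this cone computation by a second application of the ``fold a complex with a $w$-homotopy'' dictionary, now applied to $(P^\bullet)^\vee\simeq\mathcal G[-t]$.

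What each buys: the paper's route avoids the need to produce a $G$-equivariant null-homotopy $h$ directly (it is hidden inside the cited proposition) and gives a very concrete identification via the triangle, at the cost of a somewhat opaque kernel/image manipulation. Your route is more symmetric and conceptual---apply one lemma twice---but you must justify two points you flagged: that a $G$-equivariant $h$ exists (this uses that the affine space of homotopies realizing $w\cdot\mathrm{id}$ carries a $G$-action with a fixed point, by linear reductivity), and that the folding construction is invariant under quasi-isomorphism of the input complex, so that $(P^\bullet)^\vee$ with $h^\vee$ indeed folds to $\Upsilon\mathcal G[-t]$ even though $(P^\bullet)^\vee$ is not literally a resolution of $\mathcal G$ sitting in nonpositive degrees. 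Both points are routine once stated, and with them your argument is complete.
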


\begin{proof}
By Proposition 3.14 of \cite{BFK11}, there exists an exact sequence of factorizations in the abelian category of factorizations
\[
0 \to \mathcal V_s \xrightarrow{d_s} ... \xrightarrow{d_1} \mathcal V_0 \to \Upsilon \mathcal F \to 0
\]
so that $\Upsilon \mathcal F$ is isomorphic to the totalization of the complex
\[
\mathcal V_s \to ... \to \mathcal V_0
 \]
 in $\dabsfact{X,G,w}$.
Therefore, 
\[
\mathbf{R}\mathcal Hom_{X}(\Upsilon \mathcal F, \O_X) 
 \]
 is isomorphic to the totalization of the complex
 \[
\mathbf{R}\mathcal Hom_{X}( \mathcal V_0, \O_X) \xrightarrow{d_1^\vee} ... \xrightarrow{d_s^\vee} \mathbf{R}\mathcal Hom_{X}(\mathcal V_s, \O_X)[-s].
 \]

Now, notice that there are also exact sequences,

\[
0 \to \mathbf{R}\mathcal Hom_{X}( \mathcal V_0, \O_X) \to ... \to \mathbf{R}\mathcal Hom_{X}(\mathcal V_{t-1}, \O_X) \to \op{im}(d_{t}^\vee) \to  0,
\]
\[
0 \to \op{ker}(d_{t+1}^\vee) \to \mathbf{R}\mathcal Hom_{X}( \mathcal V_{t}, \O_X) \to ... \to \mathbf{R}\mathcal Hom_{X}(\mathcal V_s, \O_X) \to 0,
\]
and
\[
0 \to \op{im}(d_{t}^\vee) \to \op{ker}(d_{t+1}^\vee) \to  \Upsilon \mathcal G \to 0
\]
in the abelian category of factorizations.

Hence, we have a distinguished triangle
\begin{equation}
\op{im}(d_{t}^\vee) \to \op{ker}(d_{t+1}^\vee) \to  \Upsilon \mathcal G \to \op{im}(d_{t}^\vee)[1]
\label{triangle}
\end{equation}
in $\dabsfact{X,G,w}$.
Denoting the totalization of
\[
\mathbf{R}\mathcal Hom_{X}( \mathcal V_0, \O_X) \to ... \to \mathbf{R}\mathcal Hom_{X}(\mathcal V_{t-1}, \O_X)
\]
by $A$ and the totalization of
\[
\mathbf{R}\mathcal Hom_{X}( \mathcal V_{t}, \O_X) \to ... \to \mathbf{R}\mathcal Hom_{X}(\mathcal V_s, \O_X)
\]
by $B$, we can replace the terms in the distinguished triangle \eqref{triangle}
by
\[
A \to B[t-s] \to \Upsilon \mathcal G \to A[1].
\]
Hence, $\Upsilon \mathcal G[-t]$ is the cone of $A[-t]\to B[-s]$ in $\dabsfact{X,G,w}$.  The totalization of
 \[
 \mathbf{R}\mathcal Hom_{X}( \mathcal V_s, \O_X) \to ... \to \mathbf{R}\mathcal Hom_{X}(\mathcal V_0, \O_X)[-s]
 \]
 can also be described as the cone of $A[-t] \to B[-s]$.   Hence $\Upsilon G[-t]$ agrees with the derived dual of $\Upsilon F$, as desired.
 
\end{proof}

\begin{theorem}\label{SerreFunctorDescription}
Let $X$ be a smooth algebraic variety and $G$ be a linearly reductive algebraic group acting on $X$. Let $\chi: G \to \gm$ be a character and $w \in \Gamma(X, \O_X(\chi))^G$.   Assume that $[X/ \op{ker }\chi]$ has finite diagonal.  In addition, assume that $\partial w \subseteq Z(w) $, and that $[\partial w / \op{ker } \chi]$ is proper.  Then,
\[
  \dabsfact{X,G,w}
\]
admits a Serre functor given by 
\[
S := (- \otimes \omega_X)[\op{dim } X - \op{dim } G +1].
\]
\end{theorem}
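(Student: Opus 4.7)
The plan is to invoke Shklyarov's theorem: a smooth and proper dg-enhancement automatically yields a Serre functor, so the work reduces to (a) checking the hypotheses of Shklyarov and (b) identifying the resulting functor explicitly as the one claimed in the statement. The enhancement we use is $\mathsf{Inj}_{\op{coh}}(X,G,w)$, as in the preceding discussion.

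First, I would simply string together the preparatory lemmas already established. Lemma~\ref{lem: homologically smooth} shows that under the assumptions that $[X/\op{ker}\chi]$ has finite diagonal and $\partial w \subseteq Z(w)$, the dg-enhancement is homologically smooth. Lemma~\ref{lem: proper} adds properness from the assumption that $[\partial w / \op{ker}\chi]$ is proper. Applying Shklyarov's result to a smooth proper dg-category then produces an abstract Serre functor on $\dabsfact{X,G,w}$.

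Second, I would identify the Serre kernel concretely by working on $X \times X$. The key is that, via the equivalence of Theorem~5.15 of \cite{BFK11}, bimodules over $\mathsf{Inj}_{\op{coh}}(X,G,w)$ are identified with objects of the absolute derived factorization category on $(X \times X, G \times_{\gm} G, w \boxplus -w)$, and under this identification the diagonal bimodule corresponds to $\nabla(\O_X) = \op{Ind}_G^{G \times_{\gm} G}\Delta_* \O_X$. Shklyarov's formulation gives the Serre functor as the (shifted) inverse of the dualizing bimodule, and Lemma~\ref{lem: canonical is dual} computes this dual on the geometric side:
\[
\mathbf{R}\mathcal Hom_{X \times X}(\nabla(\O_X), \O_{X \times X}) \cong \nabla(\omega_X^{-1})[\op{dim } G - 1 - \op{dim } X].
\]
Inverting both the sheaf and the shift, the Serre kernel is $\nabla(\omega_X)[\op{dim } X - \op{dim } G + 1]$. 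As a sanity check, the shift equals $\op{dim}[X/\op{ker}\chi]$, the dimension of the underlying stack, which is exactly what equivariant Serre duality would predict.

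Third, I would translate this kernel back into an endofunctor of $\dabsfact{X,G,w}$. Since $\nabla(\O_X)$ acts as the identity under convolution, tensoring (on $X \times X$) with $\nabla(\omega_X)$ acts pointwise as $(-\otimes \omega_X)$ on factorizations, producing the claimed $S = (-\otimes \omega_X)[\op{dim } X - \op{dim } G +1]$. Lemma~\ref{lem: dual commutes} is the technical input that lets sheaf-level duality computations of the form above be promoted to the absolute derived category of factorizations without acquiring spurious terms.

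The main obstacle, I expect, is bookkeeping between the abstract Shklyarov picture (Serre bimodule, inverse dualizing bimodule) and the explicit geometric description on $X \times X$, especially reconciling the direction of the shift appearing in Lemma~\ref{lem: canonical is dual} with the shift in the Serre functor. A parallel difficulty is verifying that the identification of $\nabla(\O_X)$ with the identity bimodule is genuinely monoidal, so that tensoring with $\nabla(\omega_X)$ really is $-\otimes \omega_X$ at the level of functors and not merely at the level of objects. Everything else is assembly of the lemmas already in hand.
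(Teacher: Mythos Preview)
Your proposal is correct and follows essentially the same route as the paper: invoke Lemmas~\ref{lem: homologically smooth} and~\ref{lem: proper} to meet Shklyarov's hypotheses, identify the bimodule category with $\dabsfact{X\times X, G\times_{\gm}G, w\boxplus -w}$ via Theorem~5.15 of \cite{BFK11}, compute the inverse Serre kernel as $\mathbf{R}\mathcal Hom_{X\times X}(\nabla(\O_X),\O_{X\times X})$ using Lemmas~\ref{lem: dual commutes} and~\ref{lem: canonical is dual}, and then read off the associated functor. The paper makes two points slightly more explicit than your sketch: it passes through a strong generator to replace the dg-category by $D_{\op{perf}}(A)$ before citing Shklyarov, and it cites Lemma~3.54 of \cite{BFK11} for the identification of $\nabla(\mathcal F)$ with the integral functor $(-\otimes\mathcal F)$, which is exactly the ``monoidality'' issue you flagged as a potential obstacle.
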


\begin{proof}
By Lemma~\ref{lem: homologically smooth}, $\dabsfact{X,G,w}$ is homologically smooth.  Hence by Lemma 4.23 of \cite{BFK11}, the diagonal object $\op{Ind}^{G\times_{\mathbb{G}_m} G}_{G} \Delta_* \O_X$ is generated by exterior products.  It follows that $\dabsfact{X,G,w}$ admits a strong generator.
Hence,
\[
\dabsfact{X,G,w} \cong D_{\op{perf}}(A)
\]
for a dg-algebra $A$.  

Since $\dabsfact{X,G,w}$ is homologically smooth (Lemma~\ref{lem: homologically smooth}) and proper (Lemma~\ref{lem: proper}), so is $A$.
Hence by Theorems 4.2 and 4.4 of \cite{Shl}, it admits a Serre functor whose inverse is given formally by
\[
A^! := \op{RHom}_{A^{\op{op}} \otimes A}(A, A^{\op{op}} \otimes A).
\]

By Lemma 3.30 of \cite{BFK11},
\[
A^{\op{op}} \cong \dabsfact{X, G, -w}.
\]

Now, by Theorem 5.15 of \cite{BFK11}, the dg-category $A^e$ is quasi-equivalent to 
\[
\mathsf{fact}[X \times X, G \times_{\gm} G, w \boxplus -w]
\]
and $A^!$ is identified with
\[
 \mathbf{R}\mathcal Hom_{X \times X}(\nabla(\O_X), \O_{X \times X}).
\]

Now,
\begin{align*}
S^{-1} & =   \mathbf{R}\mathcal Hom_{X \times X}(\nabla(\O_X), \O_{X \times X}) \\
& =   \mathbf{R}\mathcal Hom_{X \times X}(\Upsilon \op{Ind}^{G\times_{\mathbb{G}_m} G}_{G} \Delta_* \O_X, \O_{X \times X}) \\
& \cong \Upsilon    \mathbf{R}\mathcal Hom_{X \times X}(\op{Ind}^{G\times_{\mathbb{G}_m} G}_{G} \Delta_* \O_X, \O_{X \times X}) \\
& \cong   \Upsilon \op{Ind}_G^{G \times_{\gm} G} \Delta_* \omega^{-1}_X[\op{dim } G -1 - \op{dim }X].
\end{align*}

The second line is by definition.  The third line is Lemma \ref{lem: dual commutes}.  The fourth line is Lemma  \ref{lem: canonical is dual}.  

Finally, as an integral kernel,   $\Upsilon \op{Ind}_G^{G \times_{\gm} G} \Delta_* \omega^{-1}_X[\op{dim } G -1 - \op{dim }X]$ is just
\[
(- \otimes \omega^{-1}_X)[\op{dim } G -1 - \op{dim }X],
\]
by Lemma 3.54 of \cite{BFK11}.  The inverse to this functor is
\[
S = (- \otimes \omega_X)[\op{dim } X - \op{dim }G +1].
\]
\end{proof}

\begin{corollary}\label{cor: torsion canonical fractional calabi-yau}
Let $X$ be a smooth algebraic variety and $G$ be a linearly reductive algebraic group acting on $X$. Let $\chi: G \to \gm$ be a character and $w \in \Gamma(X, \O_X(\chi))^G$.   Assume that $[X/ \op{ker }\chi]$ has finite diagonal and $\omega_X$ is torsion as a $\op{ker} \chi$-equivariant line bundle.  In addition, assume that $\partial w \subseteq Z(w) $, and that $[\partial w / \op{ker } \chi]$ is proper.   Then,
\[
  \dabsfact{X,G,w}
\]
is fractional Calabi-Yau.  If the canonical bundle of $[X/ \op{ker }\chi]$ is trivial, then this category is Calabi-Yau.
\end{corollary}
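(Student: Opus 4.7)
My plan is to deduce the corollary directly from Theorem~\ref{SerreFunctorDescription}, which supplies the Serre functor explicitly as $S = (-\otimes \omega_X)[\op{dim }X - \op{dim }G + 1]$. Since $\dabsfact{X,G,w}$ is fractional Calabi-Yau precisely when some positive power $S^b$ agrees with a shift $[a]$, the task reduces to producing such a $b$ by analysing the line bundle $\omega_X^b$ for $b$ the torsion order of $\omega_X$ in $\op{Pic}^{\op{ker }\chi}(X)$.

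First I would choose the smallest positive integer $b$ with $\omega_X^b \cong \O_X$ in $\op{Pic}^{\op{ker }\chi}(X)$, which exists by the torsion hypothesis. As a $G$-equivariant line bundle, $\omega_X^b$ need not be trivial, but its class lies in the kernel of the restriction map $\op{Pic}^G(X) \to \op{Pic}^{\op{ker }\chi}(X)$. This kernel is the image of the character group of $G/\op{ker }\chi$: any two $G$-equivariant structures on the underlying trivial line bundle differ by a character of $G$ that becomes trivial on $\op{ker }\chi$. Because $\chi$ factors as an injection $G/\op{ker }\chi \hookrightarrow \gm$, every character of $G/\op{ker }\chi$ is an integer power of $\chi$, and hence $\omega_X^b \cong \O_X(k\chi)$ as $G$-equivariant line bundles for some $k \in \Z$.

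Next I invoke equation~\eqref{2isO(chi)}, which identifies $- \otimes \O_X(\chi)$ with the shift $[2]$ on $\dabsfact{X,G,w}$. Taking the $b$-th power of the Serre functor then yields
\[
S^b = (- \otimes \omega_X^b)\bigl[b(\op{dim }X - \op{dim }G + 1)\bigr] = \bigl[2k + b(\op{dim }X - \op{dim }G + 1)\bigr],
\]
so the category is fractional Calabi-Yau, of dimension $\tfrac{2k + b(\op{dim }X - \op{dim }G + 1)}{b}$. For the Calabi-Yau assertion, triviality of the canonical bundle of $[X/\op{ker }\chi]$ allows the choice $b = 1$, in which case $S$ itself is already a shift. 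The only mildly delicate step is the identification of the kernel of the restriction map between equivariant Picard groups with characters of $G/\op{ker }\chi$, but this is standard and I do not anticipate a real obstacle.
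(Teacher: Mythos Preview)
Your proposal is correct and follows essentially the same route as the paper: invoke Theorem~\ref{SerreFunctorDescription} for the explicit Serre functor, use the torsion hypothesis to write $\omega_X^{\otimes b} \cong \O_X(k\chi)$ as $G$-equivariant bundles via the identification of the kernel of $\widehat{G}\to\widehat{\op{ker}\chi}$ with powers of $\chi$, and then apply \eqref{2isO(chi)} to turn the twist into a shift. The paper phrases the middle step via the short exact sequence of character groups rather than equivariant Picard groups, but the content is identical.
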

\begin{proof}
By Theorem~\ref{SerreFunctorDescription}, the Serre Functor on
\[
  \dabsfact{X,G,w}
\]
is given by 
\[
S = (- \otimes \omega_X)[\op{dim } X - \op{dim }G +1].
\]
where $\omega_X$ has the natural $G$-equivariant structure.  Applying $\op{Hom}(-,\gm)$ to the exact sequence
\[
0 \to \op{ker }\chi \to G \to \gm \to 0,
\]
we get 
\begin{equation}
0 \to \Z \to\widehat{G} \to  \widehat{\op{ker }\chi} \to 0,
\label{eq: ses}
\end{equation}
where $\Z$ is generated by the character $\chi$.

Now, by assumption, 
\[
\omega_X^{\otimes l}= \O_X
\]
 with its natural $\op{ker }\chi$-equivariant structure, i.e., $\omega_X^{\otimes l} $ is in the kernel of the map $\widehat{G} \to  \widehat{\op{ker }\chi}$. Therefore, from the exact sequence \eqref{eq: ses}, we have
\[
\omega_X^{\otimes l}= \O_X(\chi)^{\otimes m}
\]
as $G$-equivariant sheaves, for some $m\in \Z$.

Using Equation~\eqref{2isO(chi)}, there is a natural isomorphism of functors,
\[
-\otimes \O_X(\chi) = [2].
\]

Hence,
\begin{align*}
S^l & = (- \otimes \omega_X^{\otimes l})[l(\op{dim } X - \op{dim }G +1)] \\
& = (- \otimes  \O_X(\chi)^{\otimes m})[l(\op{dim } X - \op{dim }G +1)] \\
& = [l(\op{dim } X - \op{dim }G +1) + 2m].
\end{align*}

The Calabi-Yau case is when $l = 1$.

\end{proof}

\begin{corollary} \label{cor: DHT}
Conjecture~\ref{conj: LGKawamata} holds for elementary wall crossings (see Definition~\ref{def: EWC}).
\end{corollary}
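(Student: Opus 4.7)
The corollary is an immediate assembly of two ingredients already in play, so my proof plan is primarily about identifying which results to invoke and checking that their hypotheses match. The two ingredients are (i) the VGIT theorem of \cite{BFK12} for gauged Landau-Ginzburg models under wall crossings, and (ii) the fractional Calabi-Yau criterion established above as Corollary~\ref{cor: torsion canonical fractional calabi-yau}.

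For ingredient (i), I would start with the data of an elementary wall crossing (Definition~\ref{def: EWC}): a common smooth $G$-variety $Z$ with equivariant proper birational morphisms $f_1\colon Z \to Y_1$ and $f_2 \colon Z \to Y_2$, compatible potentials $f_1^* w_1 = f_2^* w_2$, and the discrepancy inequality $f_1^* K_{Y_1} - f_2^* K_{Y_2} \geq 0$ (which is precisely the $K$-dominance hypothesis). Under this inequality, \cite{BFK12} produces a semi-orthogonal decomposition of $\dabsfact{Y_1,G,w_1}$ in which one of the summands is equivalent to $\dabsfact{Y_2,G,w_2}$. Because any summand of a semi-orthogonal decomposition is by definition admissible (the inclusion has both a left and right adjoint), this gives a fully-faithful embedding
\[
\Phi\colon \dabsfact{Y_2,G,w_2} \hookrightarrow \dabsfact{Y_1,G,w_1}
\]
whose essential image is an admissible subcategory.

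For ingredient (ii), I would then apply Corollary~\ref{cor: torsion canonical fractional calabi-yau} directly to $(Y_2, G, w_2)$: the torsion canonical bundle on $[Y_2 / \ker \chi_2]$ is the hypothesis of Conjecture~\ref{conj: LGKawamata}, while the auxiliary hypotheses (finite diagonal of $[Y_2/\ker\chi_2]$, $\partial w_2 \subseteq Z(w_2)$, and properness of $[\partial w_2 / \ker\chi_2]$) are part of the standing setup of a gauged LG model in an elementary wall crossing. The corollary then gives that $\dabsfact{Y_2,G,w_2}$ is fractional Calabi-Yau. Combining (i) and (ii), $\dabsfact{Y_2,G,w_2}$ is a fractional Calabi-Yau admissible subcategory of $\dabsfact{Y_1,G,w_1}$, which is Conjecture~\ref{conj: LGKawamata} for elementary wall crossings.

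There is no serious obstacle here: all the work has been absorbed into the two cited results. The only bookkeeping step is to confirm that the VGIT output of \cite{BFK12} is a semi-orthogonal summand (hence admissible) rather than a bare fully-faithful embedding; this is immediate from the form of the statement in \cite{BFK12}, which produces the embedding as a component of a semi-orthogonal decomposition on the ``larger'' side of the wall.
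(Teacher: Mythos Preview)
Your two-ingredient strategy matches the paper's proof exactly: the paper simply cites Proposition~4.3.8 of \cite{BFK12} together with Corollary~\ref{cor: torsion canonical fractional calabi-yau}, and that is all.

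However, you have misread Definition~\ref{def: EWC}. An elementary wall crossing is \emph{not} the roof data $f_1\colon Z \to Y_1$, $f_2\colon Z \to Y_2$ with $f_1^* K_{Y_1} - f_2^* K_{Y_2} \geq 0$; that is the definition of $K$-dominance. An elementary wall crossing is a GIT-theoretic structure: a smooth quasi-projective $G$-variety $U$, a one-parameter subgroup $\lambda\colon \gm \to G$, and a choice of fixed-locus component $Z_\lambda^0$, yielding the pair of stratifications $U = U_+ \sqcup S_\lambda$ and $U = U_- \sqcup S_{-\lambda}$. The corollary asserts that when the $K$-dominance of $(Y_1,w_1)$ over $(Y_2,w_2)$ is realized by such a wall crossing (so $Y_1 = U_\pm$ and $Y_2 = U_\mp$), the conjecture holds. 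The relevant input from \cite{BFK12} is then the semi-orthogonal decomposition as in Theorem~\ref{thm: BFKVGIT}, where the sign of $t(\mathfrak K^+) - t(\mathfrak K^-)$ encodes the discrepancy inequality. Once you correct the definition, your argument goes through unchanged and coincides with the paper's.

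A minor point: your claim that the finite-diagonal and properness hypotheses on $[Y_2/\ker\chi]$ are ``part of the standing setup of a gauged LG model in an elementary wall crossing'' is a slight overstatement, since Conjecture~\ref{conj: LGKawamata} does not list them. One is implicitly adding them to invoke Corollary~\ref{cor: torsion canonical fractional calabi-yau}; the paper's one-line proof glosses over this in the same way.
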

\begin{proof}
This follows immediately from Proposition 4.3.8 of \cite{BFK12} and Corollary~\ref{cor: torsion canonical fractional calabi-yau}.
\end{proof}

\section{Crepant Categorical Resolutions via LG Models}\label{sec: CCR}
Let $Z$ be a variety with a $G$-action and  $\mathcal D$ be an admissible subcategory of $\dbcoh{[Z/G]}$.  We denote by $\mathcal D^{\op{perf}}$ the full subcategory of $\mathcal D$ consisting of $G$-equivariant perfect complexes on $Z$.

\begin{definition}
Let $\tilde{\mathcal D}$ be the homotopy category of a homologically smooth and proper pretriangulated dg category.  A pair of exact functors
\[
F: \tilde{\mathcal D} \to \mathcal D
\]
\[
G : \mathcal D^{\op{perf}} \to \tilde{\mathcal D}
\]
is a \newterm{categorical resolution of singularities} if $G$ is left adjoint to $F$ and the natural morphism of functors $\op{Id}_{\mathcal D^{\op{perf}}} \to FG$ is an isomorphism.  We say that the categorical resolution of singularities is \newterm{crepant} if $G$ is also right adjoint to $F$.
\end{definition}

\begin{remark}\label{rmk: CCR}
The definition presented here is slightly different than that of \cite{Kuz10}.  In Ibid., $\tilde{\mathcal D}$ is required to be an admissible subcategory of $\dbcoh{X}$ where $X$ is a smooth variety.  This definition is in lieu of  requiring $\mathcal D$ be a homologically smooth and proper triangulated dg category.  All examples in this paper will be crepant categorical resolutions in both senses.
\end{remark}

Let $U$ be a variety with the action of a linearly reductive group $G$, $\chi$ be a character of $G$, and $w$ be a section of $\O_U(\chi)$.
Let
\[
i : V \to U
\]
be a $G$-equivariant open immersion.  We have a (both left and right) adjoint pair of functors between categories of factorizations with quasi-coherent components
\[
i_*: \dabsFact{V, G, w} \to  \dabsFact{U, G, w}
\]
\[
i^*: \dabsFact{U, G, w} \to  \dabsFact{V, G, w}
\]
Note that, since $i$ is an open immersion, $i_*$ is both left and right adjoint to $i^*$.

\begin{definition}
Let $\dabsfact{V, G, w}_{\op{rel }U}$ denote the full subcategory of $\dabsfact{V, G, w}$ consisting of factorizations $\mathcal F$ such that the closure of the support of $\mathcal F$ as a subset of $U$ does not intersect $U \setminus V$.  
\end{definition}
Then, the adjunction between $i_*$ and $i^*$ restricts to
\[
i_*: \dabsfact{V, G, w}_{\op{rel }U} \to \dabsfact{U, G, w}
\]
\[
i^*: \dabsfact{U, G, w} \to \dabsfact{V, G, w}.
\]
Let $Y$ be a smooth quasi-projective variety with a $G$ action.  Suppose that $s$ is a regular section of a $G$-equivariant vector bundle $\mathcal E$ on $Y$ with vanishing locus $Z:=Z(s)$.  Let $\gm$ act on $\op{tot }\mathcal E^\vee$ by fiberwise dilation and consider the pairing $w = \langle s, - \rangle$ as a section of $\O_{\op{tot }\mathcal E^\vee}(\chi)$ where $\chi$ is the projection character.

\begin{definition}
We define the gauged Landau-Ginzburg model associated to the complete intersection $Z$ to be the data
$$
(\op{tot }\mathcal E^\vee, G \times \gm, w).
$$
\end{definition}

The following theorem is originally due to Isik \cite{Isik} and Shipman \cite{Shipman} and due to Hirano \cite{Hirano} in the $G$-equivariant case, which is the case we will use.

\begin{theorem}[Proposition 4.8 of \cite{Hirano}]\label{Hirano}
Assume that $w$ is a regular section of $\mathcal E$.  There is an equivalence of categories,
$$
\Omega : \dbcoh{[Z/G]} \to \op{D}^{\op{abs}}[\op{tot}(\mathcal{E}^\vee), G \times \gm, \langle w,-\rangle].
$$
\end{theorem}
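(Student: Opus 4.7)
The plan is to construct the functor $\Omega$ explicitly, then verify fully faithfulness by a Hom-computation and essential surjectivity by a generation argument, leaning on the machinery from \cite{BFK11} already used in Section~\ref{sec: serre functors factorizations}.

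First, I would identify the critical locus of $w = \langle s, -\rangle$. In local coordinates where $s = (s_1, \ldots, s_r)$ trivialises $\mathcal{E}$, the potential reads $w = \sum_i \xi_i s_i$, whose vertical derivatives $\partial_{\xi_i} w = s_i$ force $s(y) = 0$, and whose horizontal derivatives then force $\xi = 0$ by regularity of $s$. Hence $\partial w$ is the zero section of $\mathcal{E}^\vee|_Z$, canonically identified with $[Z/G]$ (with $\gm$ acting trivially on this zero section), and $\partial w \subseteq Z(w)$. Let $j : Z \hookrightarrow Y$ and $i : Y \hookrightarrow \op{tot}(\mathcal{E}^\vee)$ denote the zero section. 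The fibre coordinates $\xi_i$ annihilate any $i_*$-pushforward, so $w$ annihilates $i_* j_* \mathcal{F}$ for every $\mathcal{F} \in \dbcoh{[Z/G]}$ (equipped with trivial $\gm$-weight). Define
\[
\Omega(\mathcal{F}) := \Upsilon(i_* j_* \mathcal{F}) = (0, i_* j_* \mathcal{F}, 0, 0),
\]
which is then a valid factorization.

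For fully faithfulness, I would apply the spectral sequence of Lemma 3.11 of \cite{BDFIK16} (as invoked in the proof of Lemma~\ref{lem: proper}) to compute $\op{Hom}^{n}_{\dabs}(\Omega\mathcal{F}, \Omega\mathcal{G})$ as an abutment of
\[
E_1^{2s,q} = \op{Ext}^{2s+q}_{[\op{tot}(\mathcal{E}^\vee)/(G\times\gm)]}\bigl(i_* j_* \mathcal{F},\, i_* j_* \mathcal{G} \otimes \O(-s\chi)\bigr).
\]
Pushing down by $\pi : \op{tot}(\mathcal{E}^\vee) \to Y$, the $\gm$-weight-$0$ part of $\pi_* \O(-s\chi)$ is $\op{Sym}^s \mathcal{E}$. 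Summing over $s$, the direct sum $\bigoplus_s E_1^{2s,q}$ assembles into an Ext on $[Y/G]$ against the symmetric algebra $\op{Sym}(\mathcal{E})$. Regularity of $s$ provides the Koszul resolution $\bigwedge^\bullet \mathcal{E}^\vee \twoheadrightarrow \O_Z$ on $Y$, and the standard Koszul duality between $\op{Sym}(\mathcal{E})$ and $\bigwedge(\mathcal{E}^\vee)$ collapses the combined complex to $\op{RHom}_{[Z/G]}(\mathcal{F}, \mathcal{G})$, yielding the desired isomorphism of Hom-groups.

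Essential surjectivity follows from Proposition 3.64 and Lemma 4.13 of \cite{BFK11} (again used in the proof of Lemma~\ref{lem: proper}): the target category is split-generated by $\Upsilon E$ for $E \in \dbcoh{[\partial w/(G\times\gm)]} = \dbcoh{[Z/G]}$, and these generators all lie in the image of $\Omega$. Combined with fully faithfulness, $\Omega$ is an equivalence. I expect the main obstacle to be the collapse in the fully-faithful step: one must align the internal Koszul differential induced by $s$ with the $\gm$-gradings and signs coming from the factorization side, so that the infinite sum over $s$ really does quasi-isomorphically compute $\op{RHom}_{[Z/G]}(\mathcal{F}, \mathcal{G})$ rather than some twisted or shifted variant. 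This bookkeeping is precisely where the regularity of $s$ plays its essential role.
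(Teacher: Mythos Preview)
The paper does not prove this statement; it is quoted verbatim from Hirano (and Isik, Shipman in the non-equivariant case), so there is no in-paper argument to compare against. What the paper does record, in the proof of Lemma~\ref{lem: zerosec}, is the explicit form of Hirano's functor: $\Omega = j_*(\pi|_Z)^*$, where $\pi|_Z : \op{tot}\mathcal{E}^\vee|_Z \to Z$ is the projection and $j : \op{tot}\mathcal{E}^\vee|_Z \hookrightarrow \op{tot}\mathcal{E}^\vee$ is the inclusion of the \emph{fibre} over $Z$. Your functor instead pushes forward along the \emph{zero section}: $\Omega(\mathcal{F}) = \Upsilon\bigl((i\circ j)_*\mathcal{F}\bigr)$ with $i: Y \hookrightarrow \op{tot}\mathcal{E}^\vee$ the zero section and $j: Z \hookrightarrow Y$. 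These are genuinely different functors; the computation in Lemma~\ref{lem: zerosec} shows that on a line bundle pulled back from $Y$ they differ by a twist by $\det\mathcal{E}$ and a shift by $\op{rk}\mathcal{E}$, but for a general object of $\dbcoh{[Z/G]}$ they need not agree even up to such a uniform correction. Your $\Omega$ may well be an equivalence, but it is not the one the paper or Hirano uses, and you would need to justify it independently rather than by appeal to \cite{Hirano}.

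There is also a small gap in your generation argument. You write $\dbcoh{[\partial w/(G\times\gm)]} = \dbcoh{[Z/G]}$, but since $\gm$ acts trivially on $\partial w \cong Z$, the left-hand side is $\dbcoh{[Z/G]}$ with an additional $\Z$-grading (i.e.\ objects carry an arbitrary $\gm$-weight). Your $\Omega$ only produces objects of weight zero. The fix is easy---the relation $[2] = (-)\otimes\O(\chi)$ means weight-$n$ generators are shifts of weight-zero ones, so the triangulated closure of the image still contains all generators---but the identification as stated is incorrect. As for full faithfulness, your sketch via the spectral sequence and Koszul duality is the right idea and is essentially Isik's strategy, but the collapse you flag as the main obstacle genuinely requires tracking that the $d_1$-differential of the spectral sequence is multiplication by $w$, which after pushforward becomes contraction with $s$, matching the Koszul differential; this is not automatic from the statement of the spectral sequence alone and is where the real work in \cite{Isik,Shipman,Hirano} lies.
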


The following lemma is the $G$-equivariant case of Remark 3.7 of \cite{Shipman}.

\begin{lemma}
Assume that $Y$ admits a $G$-ample line bundle.  The equivalence of categories
\[
\Omega: \dbcoh{[Z/G]} \to \dabsfact{\op{tot }\mathcal E^\vee, G \times \gm, w},
\]
restricts to an equivalence between the full subcategory of perfect objects $ \op{Perf } [Z/G]$  and the full subcategory of $\dabsfact{\op{tot }\mathcal E^\vee, G \times \gm, w}$ with objects supported on the zero section of $\mathcal E^\vee$.
\label{lem: zerosec}
\end{lemma}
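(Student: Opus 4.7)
The plan is to trace both directions of the claimed equivalence through Hirano's construction of $\Omega$ (Theorem~\ref{Hirano}). Unpacking that proof, $\Omega$ sends a coherent sheaf $\mathcal{F}$ on $[Z/G]$ to the Koszul-type matrix factorization obtained by combining a resolution of $\iota_{*}\mathcal{F}$ on $Y$ (where $\iota:Z\hookrightarrow Y$ is the regular closed immersion cut out by $s$) with the tautological Koszul factorization of $w=\langle s,-\rangle$ on $\op{tot}(\mathcal{E}^{\vee})$. Both containments in the claimed equivalence will follow by choosing appropriate representatives on each side.

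For the forward direction, let $\mathcal{F}\in\op{Perf}[Z/G]$. Because $s$ is a regular section, $\iota$ has finite Tor-dimension, so $\iota_{*}\mathcal{F}$ is perfect on $[Y/G]$; and since $Y$ admits a $G$-ample line bundle, we may represent it by a bounded complex $\mathcal{G}^{\bullet}$ of $G$-equivariant locally free sheaves on $Y$. Feeding this resolution into the Koszul construction produces a representative of $\Omega(\mathcal{F})$ whose components are of the form $j_{*}(\mathcal{G}^{i}\otimes\Lambda^{k}\mathcal{E}^{\vee})$, where $j:Y\hookrightarrow\op{tot}(\mathcal{E}^{\vee})$ is the zero section; in particular, each component is supported on the zero section.

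For the reverse direction, any object of $\dabsfact{\op{tot}(\mathcal{E}^{\vee}),G\times\gm,w}$ with a coherent representative supported on the zero section can be written, modulo totalizations of bounded exact sequences, as a factorization with components $\mathcal{E}_{i}=j_{*}\mathcal{G}_{i}$ for coherent sheaves $\mathcal{G}_{i}$ on $Y$. Since $j^{*}w=0$, the defining relations force $\phi_{-1}\phi_{0}=\phi_{0}\phi_{-1}=0$ on such representatives, so these are precisely $2$-periodic complexes of coherent sheaves on $[Y/G]$. Applying the $G$-ample line bundle once more to replace the $\mathcal{G}_{i}$ by locally free sheaves, one then checks that running the Koszul equivalence backwards identifies the resulting object with $\op{L}\iota^{*}(\mathcal{G}_{\bullet},d)\in\op{Perf}[Z/G]$.

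The main obstacle is ensuring compatibility between the various representatives: the condition ``supported on the zero section'' is only required to hold on \emph{some} representative in the absolute derived category, and one must verify that the choice of representative does not interfere with the Koszul identification in either direction. This is handled using the convolution/totalization presentation from Subsection~\ref{sec: fact definition}, which allows one to move freely between representatives without enlarging their set-theoretic support, thereby ensuring that the two subcategories are genuinely exchanged by $\Omega$.
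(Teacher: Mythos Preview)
Your forward direction contains a genuine error. The Koszul-type representative of $\Omega(\mathcal F)$ has components that are \emph{locally free} on $\op{tot}(\mathcal E^\vee)$ (pullbacks along $\pi$ of $\mathcal G^i\otimes\Lambda^k\mathcal E$), not pushforwards $j_*(\mathcal G^i\otimes\Lambda^k\mathcal E^\vee)$ from the zero section. So the components of your representative are supported everywhere, and the conclusion that $\Omega(\mathcal F)$ has support contained in the zero section does not follow from inspecting them. What is true is that $\Omega(\mathcal F)$ is \emph{isomorphic in the absolute derived category} to an object supported on $Z(\op{taut})$, but establishing this requires real work: one must produce an explicit isomorphism to a factorization with small support, which your argument does not supply.

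Your reverse direction has a related gap: you assert that an object supported on the zero section can be represented by a factorization with components $j_*\mathcal G_i$, but ``supported on the zero section'' only gives set-theoretic support, so the components a priori live on an infinitesimal thickening of $Y$ inside $\op{tot}(\mathcal E^\vee)$. On such a thickening $w$ need not restrict to zero, so the reduction to $2$-periodic complexes on $[Y/G]$ is not justified. The closing paragraph acknowledges the representative issue but the remedy offered (totalizations preserve support) does not address how to pass from a thickening down to $Y$ itself.

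The paper sidesteps both problems by arguing with generators. Since $Y$ carries a $G$-ample line bundle $\mathcal L$, the category $\op{Perf}[Z/G]$ is generated by the restrictions $h^*\mathcal L^{\otimes n}$. One then computes $\Omega(h^*\mathcal L^{\otimes n})$ explicitly: using the paper's model $\Omega=j_*(\pi|_Z)^*$ together with Proposition~3.20 of \cite{BFK11}, this object is isomorphic to $(\det(\mathcal E)\otimes\pi^*\mathcal L^{\otimes n}|_{Z(\op{taut})},0,0,0)[-\op{rk}\mathcal E]$, visibly supported on the zero section. Conversely, one shows via the equivalence $\op{D}_{sg}[Z(w)/G\times\gm]\to\dabsfact{\op{tot}\mathcal E^\vee,G\times\gm,w}$ that these same objects generate the full subcategory supported on $Z(\op{taut})$. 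Matching generators on both sides gives the restricted equivalence without ever needing to control components of arbitrary representatives.
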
  

\begin{proof}
Recall that the functor $\Omega = j_* (\pi|_Z)^*$ where $\pi|_Z : \op{tot }\mathcal E^\vee|_{Z} \to Z $ is the projection and $j : \op{tot }\mathcal E^\vee|_{Z} \to \op{tot }\mathcal E^\vee$ is the inclusion.  To clarify notation, there is also a map $\pi : \op{tot }\mathcal E^\vee \to Y$.
Let $h : Z \to Y$ be the inclusion.  Since $Y$ is quasi-projective with a $G$-ample line bundle $\mathcal L$, the category $\op{Perf } [Z/G]$ is generated by objects of the form $h^* \mathcal L^{\otimes n}$. 

Since $h^* \mathcal L^{\otimes n}$ is a generator of $\op{Perf } [Z/G]$, it is enough to check that $\Omega(h^* \mathcal L^{\otimes n})$ is supported on  the zero section of $\mathcal E^\vee$ and that the objects $\Omega(h^* \mathcal L^{\otimes n})$ generate the full subcategory of $\dabsfact{\op{tot }\mathcal E^\vee, G \times \gm, w}$ with objects supported on the zero section of $\mathcal E^\vee$.  Now, we have,
\begin{align*}
\Omega(h^* \mathcal L^{\otimes n}) & =  j_*(\pi|_Z)^* h^* \mathcal L^{\otimes n} & \\
& =  j_*j^*\pi^* \mathcal L^{\otimes n} & \\
& \cong (0,  \O_{Z(\pi^*s)}, 0, 0) \otimes  \pi^* \mathcal L^{\otimes n}\\
& \cong ( \O_{Z(\op{taut})}, 0, 0, 0) \otimes \det(\E) \otimes  \pi^* \mathcal L^{\otimes n} [-\op{rk}\E]  \\
& \cong (\det(\E)\otimes \pi^* \mathcal L^{\otimes n}|_{Z(\op{taut})}, 0, 0, 0)[-\op{rk}\E].
\end{align*}
Line four is Proposition 3.20 of \cite{BFK11}. 

First, this shows, in particular, that $\Omega(h^* \mathcal L^{\otimes n})$ is supported on $Z(\op{taut})$, the zero section of $\mathcal E^\vee$. Second, let $\F = (\F_{-1}, \F_0, \phi_{-1}^{\F}, \phi_0^{\F})$ be an object of  $\dabsfact{\op{tot }\mathcal E^\vee, G \times \gm, w}$ supported on the zero section of $\mathcal E^\vee$.  We aim to show that $\F$ is generated by objects of the form $( \det(\E)\otimes\mathcal L^{\otimes n}|_{Z(\op{taut})}, 0, 0, 0)$.   
For this notice that 
\[
Z( w ) = Z(\op{taut}) \cup Z(\pi^* s).
\]
Now, the full subcategory of $\dbcoh{[Z( w ) / G \times \gm] }$ consisting of objects supported on $Z(\op{taut})$ is generated by the essential image of the pushforward.  Since $\det(\E)\otimes\mathcal L^{\otimes n}$ generates $\dbcoh{[Y/G]}$, we may just use objects of the form $\det(\E)\otimes\mathcal L^{\otimes n}|_{Z(\op{taut})}$.  Finally, under the equivalence (see Theorem 3.6 of \cite{Hirano}),
\[
\op{D}_{sg}[Z( w ) / G \times \gm] \to \dabsfact{\op{tot }\mathcal E^\vee, G \times \gm, w}
\]
these objects go precisely to $( \det(\E)\otimes\mathcal L^{\otimes n}|_{Z(\op{taut})}, 0, 0, 0)$ and objects supported on $Z(\op{taut})$ go to objects supported on $Z(\op{taut})$ as desired.

\end{proof}

\begin{theorem}\label{theorem: CCRTotal}

With the setup as above, assume that $Y$ admits a $G$-ample line bundle.  Let $U$ be a $G \times \gm$-equivariant partial compactification of $\op{tot }\mathcal E^\vee$.  Assume that 
\begin{itemize}
\item $w$ extends to $U$ as a section of $\O(\chi)$, 
\item $[U/G]$ has finite diagonal, and 
\item $[\partial w / G] \subseteq [U/G]$ is proper over $\op{Spec }\kappa$ and that $\partial w \subseteq Z(w)$ in $U$.  
\end{itemize}
Then, the functors
\[
i_* \circ \Omega :  \op{Perf}([Z/G]) \to \dabsfact{U, G, w}
\]
\[
\Omega^{-1} \circ i^* :  \dabsfact{U, G, w} \to \dbcoh{[Z/G]}
\]
form a crepant categorical resolution.

\end{theorem}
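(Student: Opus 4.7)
The plan is to verify the four ingredients needed for $(\Omega^{-1} \circ i^*, i_* \circ \Omega)$ to be a crepant categorical resolution: a smooth and proper dg-enhancement of the target $\dabsfact{U,G,w}$, well-definedness of both functors, a two-sided adjunction, and the required unit isomorphism. For the first, I would apply Lemma~\ref{lem: homologically smooth} and the strengthened form of Lemma~\ref{lem: proper} indicated in the subsequent remark (requiring only that $[\partial w/\op{ker}\chi]$ be proper) to the data $(U, G \times \gm, w)$ with $\chi$ the projection character. Since $\op{ker}\chi = G$, the assumed conditions --- finite diagonal of $[U/G]$, the containment $\partial w \subseteq Z(w)$, and properness of $[\partial w / G]$ --- are precisely the hypotheses of those two lemmas, so $\mathsf{Inj}_{\op{coh}}(U, G \times \gm, w)$ is a homologically smooth and proper dg-enhancement of $\dabsfact{U, G, w}$.

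For well-definedness, the composition $\Omega^{-1} \circ i^*$ is immediate from the equivalence of Theorem~\ref{Hirano} composed with the open restriction $i^*$. The more delicate direction $i_* \circ \Omega$ requires showing that $\Omega(\op{Perf}([Z/G])) \subseteq \dabsfact{V,G,w}_{\op{rel } U}$, where $V := \op{tot}\mathcal{E}^\vee$. By Lemma~\ref{lem: zerosec} these objects are supported on the zero section of $\mathcal{E}^\vee$, and tracking through the Koszul-type identification used in that proof represents them by factorizations whose effective support sits inside $\partial w \cap V$. Since $[\partial w/G]$ is proper over $\op{Spec}\kappa$, the scheme $\partial w$ is closed in $U$, so the closures of these supports in $U$ remain inside $\partial w \subseteq V$ and do not meet $U \setminus V$; hence $i_*$ lands in the coherent factorization category as required.

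For the adjunction, the open immersion $i$ makes $i_*$ simultaneously left and right adjoint to $i^*$ (as noted in the excerpt just before this theorem); conjugating by the equivalence $\Omega$ transports this to make $i_* \circ \Omega$ both left and right adjoint to $\Omega^{-1} \circ i^*$, which is precisely the crepancy condition. The final unit-isomorphism axiom follows from $i^* \circ i_* \cong \op{Id}$ for any open immersion:
\[
  (\Omega^{-1} \circ i^*) \circ (i_* \circ \Omega) \;\cong\; \Omega^{-1} \circ (i^* \circ i_*) \circ \Omega \;\cong\; \op{Id}_{\op{Perf}([Z/G])}.
\]

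I expect the main obstacle to be the support-tracking in the second step: a priori the image of $\op{Perf}([Z/G])$ under $\Omega$ appears supported on the entire zero section of $\mathcal E^\vee$, which need not have proper closure in $U$. Combining the Koszul replacement $(\O_{Z(\op{taut})},0,0,0) \otimes \det(\mathcal{E})[-\op{rk}\mathcal{E}]$ from the proof of Lemma~\ref{lem: zerosec} with the properness of $[\partial w/G]$ is exactly what is needed to confine these supports to $\partial w$ and hence to $V$. Once this support control is established, the remaining steps are formal consequences of the Hirano equivalence, the open-immersion adjunctions, and the smooth/proper dg-enhancement established in the first step.
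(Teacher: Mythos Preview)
Your overall architecture matches the paper's proof exactly: invoke Lemmas~\ref{lem: homologically smooth} and~\ref{lem: proper} for the smooth and proper dg-enhancement, use that $i$ is an open immersion to get the two-sided adjunction, and verify $i^*i_*\cong\op{Id}$ for the unit isomorphism. The paper's proof consists of precisely these three observations and nothing more; in particular, it does \emph{not} spell out the well-definedness step you single out.

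The problem is that your added well-definedness argument contains an unjustified assertion. You write that the closures of the supports ``remain inside $\partial w \subseteq V$.'' Properness of $[\partial w/G]$ over $\op{Spec}\kappa$ tells you that $\partial w$ is closed in $U$, but it does \emph{not} tell you that $\partial w\subseteq V$. Here $\partial w$ denotes the critical locus of the \emph{extended} section on $U$, and the entire purpose of passing from $V=\op{tot}\mathcal E^\vee$ to the partial compactification $U$ is precisely to enlarge or compactify this critical locus; in the applications (e.g.\ Subsection~\ref{subsec:SingCubics}) the new critical points appearing in $U\setminus V$ are exactly what make $[\partial w/G]$ proper when it was not proper on $V$. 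So the containment $\partial w\subseteq V$ is generally false, and your chain ``support $\subseteq \partial w\cap V$, closure $\subseteq \partial w\subseteq V$'' breaks at the last step.

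You have, however, put your finger on a point the paper itself leaves implicit: why $i_*\circ\Omega$ lands in $\dabsfact{U,G\times\gm,w}$ rather than merely in $\op{D}^{\op{abs}}[\mathsf{Fact}(U,G\times\gm,w)]$. The paper's proof simply takes this for granted. If you want to make it rigorous, the argument via ``effective support in $\partial w$'' is the right instinct, but you need to control the closure of $\partial w\cap V$ inside $U$, not assume $\partial w$ itself avoids $U\setminus V$.
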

\begin{proof}
The assumptions assure that  $\dabsfact{U, G, w}$ is the homotopy category of a homologically smooth and proper dg-category by Lemmas~\ref{lem: homologically smooth} and \ref{lem: proper} and Proposition 5.11 of \cite{BFK11}.
Since $i$ is an open immersion, the functors are both left and right adjoint.  Furthermore, the adjunction morphism factors via the following natural isomorphisms,
\begin{align*}
\Omega^{-1} \circ i^* \circ i_* \circ \Omega & \cong \Omega^{-1} \circ\op{Id}_{\dabsfact{\op{tot }\mathcal E^\vee, G \times \gm, w}} \circ \Omega \\
&  \cong \op{Id}_{  \op{Perf}([Z/G]) }.
\end{align*}
\end{proof}

\begin{remark}
An extension of a general $w$ need not exist. We will give two examples of such extensions in the toric case in Subsections~\ref{subsec:SingCubics} and~\ref{subsec:TwoPlanes}. 
\end{remark}

We now give the general framework for identifying crepant categorical resolutions for factorization categories using variations of geometric invariant theory quotients. We finish with a general theorem. In the following sections, we will specialize to the toric case. The reader more interested in toric applications can refer Section~\ref{GorCones} for a specialization to toric varieties of Theorem~\ref{thm: BFKVGIT}, namely Theorem~\ref{thm: BFK toric version}.

\begin{definition} \label{definition: contracting loci}
 Let $\lambda: \mathbb{G}_m \to G$ be a one-parameter subgroup of $G$. We shall denote a connected component of $U^{\lambda}$ by $Z_{\lambda}^0$. Associated to $Z_{\lambda}^0$, we can associate another subvariety
 \begin{align*}
  Z_{\lambda} & := \{ u \in U \mid \lim_{\alpha \to 0} \sigma(\lambda(\alpha),u) \in Z_{\lambda}^0 \}.
 \end{align*}
 We call $Z_{\lambda}$ the \textbf{contracting variety} associated to $Z^0_{\lambda}$.
 
 We will also close these varieties up under the action of $G$. We set
 \begin{align*}
  S_{\lambda}^0 & := G \cdot Z_{\lambda}^0 \\
  S_{\lambda} & := G \cdot Z_{\lambda}.
 \end{align*}
\end{definition}
Also, set
 \begin{displaymath}
  P(\lambda) := \{ g \in G \mid \lim_{\alpha \to 0} \lambda(\alpha) g \lambda(\alpha)^{-1} \text{ exists}\}.
 \end{displaymath}
 and
 \begin{align*}
  U_+ & := U \setminus S_\lambda \\
  U_- & := U \setminus S_{-\lambda}
 \end{align*}

\begin{definition} \label{definition: matched wall crossing}
 Let $U$ be a smooth, quasi-projective variety equipped with a $G$ action and $\lambda : \gm \to G$ be a one parameter subgroup.  Fix a connected component of the fixed locus, $Z_{\lambda}^0$.  Assume that 
   \begin{itemize}
  \item The morphisms,
  \begin{align*}
   \tau_{\pm \lambda}: G \overset{P(\pm \lambda)}{\times} Z_{\pm \lambda} \to S_{\pm \lambda},
  \end{align*}
   are isomorphisms.
  \item The subsets $S_{\pm \lambda}$ are closed.
\end{itemize}
Under these assumptions, the pair of stratifications
 \begin{align*}
  U & = U_+ \sqcup S_{\lambda} \\
  U & = U_- \sqcup S_{-\lambda},
 \end{align*}
is called  an \textbf{elementary wall crossing}. 
\label{def: EWC}
 \end{definition}

\begin{definition}
Let $V$ be a smooth variety with $G$-action.  
Fix a one-parameter subgroup $\lambda : \gm \to G$ and a connected component of the fixed locus $Z_\lambda^0$ of the $G$ action on $V$ such that  $V = V_{\pm} \sqcup S_{\pm \lambda}$ is an elementary wall-crossing.    Let $U$ be a smooth variety with a $G$-action.  We say that a $G$-equivariant open immersion $V \to U$ is  \newterm{compatible} with an elementary wall crossing if $Z_{\pm \lambda}$ remains closed in $U$.
\end{definition}

Given an elementary wall crossing,
 \begin{align*}
  U & = U_+ \sqcup S_{\lambda} \\
  U & = U_- \sqcup S_{-\lambda},
 \end{align*}
 we let 
\begin{displaymath}
 t(\mathfrak{K}^\pm) := \mu(\omega_{S_{\pm \lambda}|U}, \pm \lambda, u)
\end{displaymath}
for $u \in Z_{\lambda}^0$ where $\mu$ is the Hilbert-Mumford numerical function. Here,
\begin{displaymath}
 \omega_{S_{\pm \lambda}|U} = \bigwedge\nolimits^{\op{codim} S_{\pm \lambda}} \mathcal N^{\vee}_{S_{ \pm \lambda}|U}
\end{displaymath}
is the relative canonical sheaf of the embedding, $S_{\pm \lambda} \to U$.  

\begin{theorem}[Theorem 3.5.2 of \cite{BFK12}]
\label{thm: BFKVGIT}
 Let $U$ be a smooth, quasi-projective variety equipped with the action of a reductive linear algebraic group, $G$. Let $w \in \op{H}^0(U,\mathcal L)^G$ be a $G$-invariant section of a $G$-line bundle, $\mathcal L$, and assume that $\mu(\mathcal L,\lambda, u) = 0$ for $u \in Z_{\lambda}^0$. 
 
 Assume we have an elementary wall crossing,
 \begin{align*}
  U & = U_+ \sqcup S_{\lambda}, \\
  U & = U_- \sqcup S_{-\lambda},
 \end{align*}
and that $S^0_{\lambda}$ admits a $G$ invariant affine open cover. Fix $d \in \Z$.  For the following functors, abuse notation to also let them represent their essential image. Then the following hold
 
 \begin{enumerate}
  \item If $t(\mathfrak{K}^+) < t(\mathfrak{K}^-)$, then there are fully-faithful functors,
  \begin{displaymath}
   \Phi^+_d: \dabsfact{U_-,G,w|_{U_-}} \to \dabsfact{U_+,G,w|_{U_+}},
  \end{displaymath}
  and, for $-t(\mathfrak{K}^-) + d \leq j \leq -t(\mathfrak{K}^+) + d -1$,
  \begin{displaymath}
   \Upsilon_j^+: \dabsfact{w|_{Z_{\lambda}^0},C(\lambda),w|_{Z_{\lambda}^0}}_jT \to \dabsfact{U_+,G, w|_{U_+}},
  \end{displaymath}
  and a semi-orthogonal decomposition,
  \begin{displaymath}
   \dabsfact{U_+,G,w|_{U_+}} = \langle \Upsilon^+_{-t(\mathfrak{K}^-)+d}, \ldots, \Upsilon^+_{-t(\mathfrak{K}^+)+d-1}, \Phi^+_d \rangle.
  \end{displaymath}
  \item If $t(\mathfrak{K}^+) = t(\mathfrak{K}^-)$, then there is an exact equivalence,
  \begin{displaymath}
   \Phi^+_d: \dabsfact{U_-, G,w|_{U_-}} \to \dabsfact{U_+, G,w|_{U_+}}.
  \end{displaymath}
  \item If $t(\mathfrak{K}^+) > t(\mathfrak{K}^-)$, then there are fully-faithful functors,
  \begin{displaymath}
   \Phi^-_d: \dabsfact{U_+, G, w|_{U_+}} \to \dabsfact{U_-, G,w|_{U_-}},
  \end{displaymath}
  and, for $-t(\mathfrak{K}^+) + d \leq j \leq -t(\mathfrak{K}^-) + d -1$,
  \begin{displaymath}
   \Upsilon_j^-: \dabsfact{Z_{\lambda}^0, C(\lambda),w|_{Z_{\lambda}^0}}_j \to \dabsfact{U_-, G, w|_{U_-}},
  \end{displaymath}
  and a semi-orthogonal decomposition,
  \begin{displaymath}
   \dabsfact{U_-, G, w|_{U_-}} = \langle \Upsilon^-_{-t(\mathfrak{K}^+)+d}, \ldots, \Upsilon^-_{-t(\mathfrak{K}^-)+d-1}, \Phi^-_d \rangle.
  \end{displaymath}
 \end{enumerate}
\end{theorem}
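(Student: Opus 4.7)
The plan is to use the \emph{grade restriction window} approach to VGIT, developed independently by Halpern-Leistner, Segal, and Ballard--Favero--Katzarkov. One constructs a family of full subcategories $\mathcal W_d \subset \dabsfact{U, G, w}$, indexed by $d \in \Z$, consisting of factorizations whose $\lambda$-weights at the fixed component $Z_\lambda^0$ lie in an appropriate half-open interval. The strategy is to show that, for the correct window width, restriction along each open embedding $U_\pm \hookrightarrow U$ induces an equivalence $\mathcal W_d \simeq \dabsfact{U_\pm, G, w|_{U_\pm}}$, and then to identify the discrepancy between the $U_+$- and $U_-$-sides as the claimed semi-orthogonal complement.

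First, I would reduce to a local model using Luna's slice theorem near $Z_\lambda^0$, where $U$ looks like $G \times^{P(\lambda)} N$ for the normal bundle $N$. In this local picture, the $\lambda$-weights decompose $N$ into positive, zero, and negative eigenspaces, and the unstable strata $S_{\pm\lambda}$ are identified with the contractions of the sign-definite directions. One then defines $\mathcal W_d$ via the $\lambda$-weight filtration on factorization components along $Z_\lambda^0$; the relevant window sizes are the $\lambda$-weights of $\omega_{S_{\pm\lambda}|U}$, which are exactly the invariants $t(\mathfrak{K}^{\pm})$ appearing in the statement.

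The central step is proving that restriction $\mathcal W_d \to \dabsfact{U_\pm, G, w|_{U_\pm}}$ is an equivalence when the window has width $t(\mathfrak{K}^\pm)$. Essential surjectivity is established by extending any factorization on $U_\pm$ across the unstable stratum $S_{\pm\lambda}$ using a Koszul-type resolution built from the conormal bundle $\mathcal N^\vee_{S_{\pm\lambda}|U}$, whose $\lambda$-weights are strictly positive (resp.\ negative); iterating kills the obstruction in finitely many stages and places the extension inside a window of width $t(\mathfrak{K}^\pm)$. Fully faithfulness reduces to a weight-based vanishing of $\op{Hom}$ from window objects to those set-theoretically supported on $S_{\pm\lambda}$. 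The hypothesis $\mu(\mathcal L, \lambda, u) = 0$ ensures that the potential weight is preserved by $\lambda$, so the window interacts correctly with the $\Z/2$-graded structure and the shift $[2] = -\otimes \mathcal L$; the $G$-invariant affine cover of $S_\lambda^0$ is what allows one to globalize the Koszul construction.

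Given these equivalences, the comparison is immediate: if $t(\mathfrak{K}^+) < t(\mathfrak{K}^-)$, a window of width $t(\mathfrak{K}^-)$ is equivalent to $\dabsfact{U_-, G, w|_{U_-}}$ and contains a sub-window of width $t(\mathfrak{K}^+)$ equivalent to $\dabsfact{U_+, G, w|_{U_+}}$, producing the embedding $\Phi^+_d$; the orthogonal complement is spanned by the single-weight strata, identified via restriction to the fixed locus with the $\Upsilon^+_j$-pieces in $\dabsfact{Z_\lambda^0, C(\lambda), w|_{Z_\lambda^0}}_j$. The case $t(\mathfrak{K}^+) = t(\mathfrak{K}^-)$ collapses the windows to give an equivalence; the case $t(\mathfrak{K}^+) > t(\mathfrak{K}^-)$ is symmetric. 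The main obstacle is the window equivalence itself: tracking $\lambda$-weights through a Koszul resolution that is compatible with the factorization differential is delicate, and the extra care needed to handle $w$ (not just the Koszul complex of $\mathcal O_{S_\lambda}$) is what distinguishes this setting from the classical VGIT for ordinary derived categories.
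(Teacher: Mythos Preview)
The paper does not contain a proof of this statement: it is quoted verbatim as Theorem~3.5.2 of \cite{BFK12} and used as a black box throughout. There is therefore no ``paper's own proof'' to compare against.

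That said, your outline is an accurate high-level summary of the argument actually given in \cite{BFK12}: one builds grade-restriction windows $\mathcal W_d \subset \dabsfact{U,G,w}$ using $\lambda$-weights along $Z_\lambda^0$, shows that restriction to $U_\pm$ gives equivalences $\mathcal W_d \simeq \dabsfact{U_\pm,G,w|_{U_\pm}}$ for windows of the appropriate widths, and reads off the semi-orthogonal decomposition from the containment of one window in the other. One point to be careful about in your sketch: you have the roles of the two windows interchanged. In case~(1), with $t(\mathfrak K^+) < t(\mathfrak K^-)$, the larger category is $\dabsfact{U_+,G,w|_{U_+}}$ (it carries the semi-orthogonal decomposition), so it is the $U_+$-side that corresponds to the wider window and the $U_-$-side to the narrower sub-window, not the other way around as you wrote. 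This is only a labeling issue and does not affect the structure of the argument.
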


\begin{lemma}
\label{lem: restricts}
Suppose $V \to U$ is a $G$-equivariant open immersion which is compatible with an elementary wall-crossing $V = V_{\pm} \sqcup S_{\pm}$.
Then, the fully-faithful functor
\[
\Phi^{\pm}_d :  \dabsfact{U_{\mp}, G , w}  \to \dabsfact{U_{\pm}, G , w}
\]
restricts to a functor
\[
\Phi^{\pm}_d  :  \dabsfact{V_{\mp}, G , w}_{\op{rel }U_\mp}  \to \dabsfact{V_\pm, G , w}_{\op{rel } U_\pm}.
\]
\end{lemma}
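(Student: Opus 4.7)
The plan is to exploit the $2$-functoriality of the grade-restriction window construction underlying Theorem~\ref{thm: BFKVGIT} with respect to $G$-equivariant open immersions compatible with the wall-crossing.

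First, I would observe that the elementary wall-crossing data for $V$ is obtained by restricting that of $U$: the compatibility hypothesis ensures $Z_{\pm\lambda}^V = Z_{\pm\lambda}^U \cap V$, $S_{\pm\lambda}^V = S_{\pm\lambda}^U \cap V$, $V_\pm = V \cap U_\pm$, and that the Hilbert--Mumford weights $t(\mathfrak K^\pm)$ computed at $Z_\lambda^0$ coincide. Hence the same numerical data produce both $\Phi_d^{\pm,U}$ and $\Phi_d^{\pm,V}$. Since the grade-restriction window $\mathcal{W}_d^U \subset \dabsfact{U,G,w}$ is characterized by $\lambda$-weight conditions at points of $Z_\lambda^0$, the open restriction $(-)|_V$ sends $\mathcal{W}_d^U$ to $\mathcal{W}_d^V$ and intertwines the restriction equivalences $\mathcal{W}_d^U \to \dabsfact{U_\pm,G,w}$ and $\mathcal{W}_d^V \to \dabsfact{V_\pm,G,w}$. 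This yields a $2$-commutative square
\[
\begin{tikzcd}[column sep=large]
\dabsfact{U_\mp,G,w} \arrow[r, "\Phi_d^{\pm,U}"] \arrow[d, "(-)|_{V_\mp}"'] & \dabsfact{U_\pm,G,w} \arrow[d, "(-)|_{V_\pm}"] \\
\dabsfact{V_\mp,G,w} \arrow[r, "\Phi_d^{\pm,V}"'] & \dabsfact{V_\pm,G,w}.
\end{tikzcd}
\]

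Next, for $F \in \dabsfact{V_\mp,G,w}_{\op{rel }U_\mp}$, set $\tilde F := (j_\mp)_* F \in \dabsfact{U_\mp,G,w}$, whose support is closed in $U_\mp$ and contained in $V_\mp$ by hypothesis. Let $G := \Phi_d^{\pm,U}(\tilde F)$; the diagram above gives $G|_{V_\pm} = \Phi_d^{\pm,V}(F) \in \dabsfact{V_\pm,G,w}$.

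The main remaining step, which I expect to be the hardest, is showing that the closure of $\op{supp}(G)$ in $U_\pm$ is contained in $V_\pm$, so that $G$ lies in the essential image of $(j_\pm)_*$. For this I would use the explicit window lift $\hat F \in \mathcal{W}_d^U$ with $\hat F|_{U_\mp} = \tilde F$ and $G = \hat F|_{U_\pm}$: this lift can be produced by taking any $G$-equivariant coherent extension of $\tilde F$ to $U$ and iteratively forming cones with factorizations supported on $S_\lambda$ to enforce the weight window at $Z_\lambda^0$. Since $V$ is $G$-invariant (hence $\lambda(\gm)$-invariant) and contains $\op{supp}(\tilde F)$ as a closed subset of $U_\mp$, the compatibility hypothesis forces every such correction to remain in $S_\lambda \cap V$. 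Therefore $\op{supp}(\hat F)$ has closure in $U$ contained in $V$, and restricting to $U_\pm$ yields $\overline{\op{supp}(G)}^{U_\pm} \subseteq V_\pm$, giving $G \in \dabsfact{V_\pm,G,w}_{\op{rel }U_\pm}$, as desired.
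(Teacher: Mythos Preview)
Your approach is essentially the same as the paper's: both arguments lift $F$ to a window object $\hat F$ on $U$ and control its support by $\op{Supp}(F)\cup S_\lambda$, then use the compatibility hypothesis to conclude this avoids $U\setminus V$. The paper is slightly more direct in that it simply identifies $\dabsfact{V_\mp,G,w}_{\op{rel }U_\mp}$ with the full subcategory of $\dabsfact{U_\mp,G,w}$ of objects supported away from $U_\mp\setminus V_\mp$ and works entirely there, so it never needs your $2$-commutative square or the separate functor $\Phi_d^{\pm,V}$; your extra scaffolding is correct but unnecessary for the conclusion.
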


\begin{proof}
Without loss of generality, we consider just the case
\[
\Phi^{+}_d :  \dabsfact{U_{-}, G , w}  \to \dabsfact{U_{+}, G , w}.
\]

Let $\F$ be an object of  $\dabsfact{U_-, G , w}$ whose support does not intersect $U_- \backslash V_-$.  The functor $\Phi^{+}_d $ is constructed in the proof of Theorem 3.5.2 of \cite{BFK12}.  By definition, it is constructed precisely so that there is an object $\tilde{F} \in \dabsfact{U, G , w}$ whose restriction to $U_-$ is $\F$ and whose restriction to $U_+$ is $\Phi^{+}_d (\F)$.  This means that the support of $\tilde{F}$ is contained in $\op{Supp } \F \cup S_\lambda$.  Now, 
\[
(\op{Supp } \F \cup S_\lambda) \cap U \backslash V = \emptyset
\]
by the assumption that the wall-crossing is compatible.  
Hence,
\begin{align*}
\op{Supp } \Phi^{+}_d (\F) \cap U_+ \backslash V_+ & \subseteq (\op{Supp } \F \cup S_\lambda \cap U_+) \cap U \backslash V \\
& =  \emptyset,
\end{align*}
as desired.

\end{proof}

Now, suppose $V \to U$ is a $G$-equivariant open immersion which is compatible with an elementary wall-crossing $V = V_{\pm} \sqcup S_{\pm}$ such that $[V_+ / G]$ is isomorphic to $\op{tot }\mathcal E^\vee$ over $[Y/G]$.
Denote by  $\dabsfact{V_-, G, w}^{\op{perf}}$ the full subcategory of $\dabsfact{V_-, G, w}$ whose image under $\Omega^{-1} \circ \Phi^{+}_d$ lies in $\op{Perf}[Z/G]$. 

Assume further that the zero section of $[V_+/G]$ does not intersect $[U_+ \backslash V_+ / G]$.  Then,
 by Lemmas~\ref{lem: zerosec} and \ref{lem: restricts}, $\dabsfact{V_-, G, w}^{\op{perf}}$ is a full subcategory of $\dabsfact{V_-, G, w}_{ \op{rel } U_-}$.

Finally, recall that we have a pair of functors
\[
i_*: \dabsfact{V_-, G, w}_{\op{rel }U_-} \to \dabsfact{U_-, G, w}
\]
\[
i^*: \dabsfact{U_-, G, w} \to \dabsfact{V_-, G, w}.
\]
These restrict to a pair of functors
\[
i_*: \dabsfact{V_-, G, w}^{\op{perf}} \to \dabsfact{U_-, G, w}
\]
\[
i^*: \dabsfact{U_-, G, w} \to \dabsfact{V_-, G, w}.
\]

\begin{theorem}\label{theorem: CCRGIT}
Suppose $V \to U$ is a $G$-equivariant open immersion which is compatible with an elementary wall-crossing $V = V_{\pm} \sqcup S_{\pm}$ such that:
\begin{itemize}
\item $[V_+ / G]$ is isomorphic to $\op{tot }\mathcal E^\vee$ over $[Y/G]$,
\item the zero section of $[V_+/G]$ does not intersect $[U_+ \backslash V_+ / G]$,
\item $t(\mathfrak{K}^+) \leq  t(\mathfrak{K}^-)$, that $w$ extends to $U$ as a section of $\O(\chi)$, 
\item $[U_- /G]$ has finite diagonal, that $[\partial w / G] \subseteq [U_- /G]$ is proper over $\op{Spec }\kappa$, and 
\item $\partial w  \subseteq Z(w) $ in $V_- $. 
\end{itemize}
Then, the functors
\[
i_* :    \dabsfact{V_-, G, w}^{\op{perf}}  \to \dabsfact{U_-, G, w}
\]
\[
i^* :  \dabsfact{U_-, G, w} \to \dabsfact{V_-, G, w} 
\]
form a crepant categorical resolution.
\end{theorem}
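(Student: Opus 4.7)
The plan is to verify the four conditions in the definition of a crepant categorical resolution with $F = i^*$ and $G = i_*$: (i) $\dabsfact{U_-,G,w}$ is the homotopy category of a homologically smooth and proper pretriangulated dg category; (ii) $i_*$ is simultaneously the left and right adjoint of $i^*$ (the first yielding the categorical resolution condition, the second yielding crepancy); (iii) the pushforward $i_*$ is well-defined on the perfect subcategory with target $\dabsfact{U_-,G,w}$; and (iv) the unit $\op{Id} \to i^*i_*$ is an isomorphism.

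For (i), I would invoke Lemma~\ref{lem: homologically smooth} and Lemma~\ref{lem: proper} applied to the triple $(U_-, G, w)$, together with Proposition 5.11 of \cite{BFK11} which provides the dg-enhancement $\mathsf{Inj}_{\op{coh}}(U_-,G,w)$. The required hypotheses---that $[U_-/\ker\chi]$ has finite diagonal, that $[\partial w/\ker\chi]$ is proper, and that $\partial w \subseteq Z(w)$---are the direct assumptions of the theorem (descending the $G$-quotient assumptions to $\ker\chi$ is routine since $\chi$ realizes $G$ as an extension of $\gm$ by $\ker\chi$, preserving finiteness of diagonal and properness). The containment in $U_-$ follows from the hypothesis stated on $V_-$ combined with properness of $[\partial w / G]$ in $[U_-/G]$, which forces the critical locus to stay in the zero locus of the extended $w$.

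For (ii) and (iii), the two-sided adjunction for an open immersion was recorded in the paragraph preceding Definition~\ref{definition: contracting loci}:
\[
i_*: \dabsfact{V_-, G, w}_{\op{rel }U_-} \to \dabsfact{U_-, G, w}, \qquad i^*: \dabsfact{U_-, G, w} \to \dabsfact{V_-, G, w}.
\]
By Lemmas~\ref{lem: zerosec} and~\ref{lem: restricts}, together with the hypothesis that the zero section of $[V_+/G]$ does not meet $[U_+\setminus V_+/G]$, the subcategory $\dabsfact{V_-,G,w}^{\op{perf}}$ is contained in $\dabsfact{V_-,G,w}_{\op{rel }U_-}$. Thus $i_*$ is defined on the stated source, and both directions of the adjunction persist after restriction.

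For (iv), the isomorphism $\op{Id} \to i^*i_*$ for an open immersion is the standard fact that restriction to the open set undoes extension from it; this is inherited componentwise by the factorization category since $i^*$ and $i_*$ act on each of the two sheaves of a factorization independently. Assembling (i)--(iv) yields that $(i_*, i^*)$ is a crepant categorical resolution. The main obstacle is step (i): all of the substantive analysis---showing the dg-enhancement on $U_-$ is simultaneously smooth and proper---is packaged inside Lemmas~\ref{lem: homologically smooth} and~\ref{lem: proper}, so the real work of this proof is verifying that their hypotheses transfer from the data imposed on $V_-$ to the compactification $U_-$ where the lemmas are applied.
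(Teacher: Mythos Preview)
Your approach is essentially identical to the paper's: invoke Lemmas~\ref{lem: homologically smooth} and~\ref{lem: proper} together with Proposition~5.11 of \cite{BFK11} for smoothness and properness of the dg-enhancement, then use that $i$ is an open immersion to get the two-sided adjunction and the isomorphism $i^*i_* \cong \op{Id}$. The paper's proof is two sentences and omits the discussion of Lemmas~\ref{lem: zerosec} and~\ref{lem: restricts} (that bookkeeping having been done in the paragraphs immediately preceding the theorem), but otherwise matches your outline exactly; your one extra step---arguing that $\partial w \subseteq Z(w)$ propagates from $V_-$ to $U_-$ via properness---is not really justified as stated (properness of $[\partial w/G]$ alone does not force containment in $Z(w)$ at points of $U_- \setminus V_-$), but the paper likewise glosses over this, so you are no worse off than the original.
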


\begin{proof}
The assumptions assure that  $\dabsfact{U_-, G, w}$ is the homotopy category of a homologically smooth and proper dg-category by Lemmas~\ref{lem: homologically smooth} and \ref{lem: proper} and Proposition 5.11 of \cite{BFK11}.  Since $i$ is an open immersion, the functors are both left and right adjoint and $i^* \circ i_*$ is the identity.
\end{proof}

\begin{remark}
Theorem~\ref{theorem: CCRGIT} has a natural context for resolving factorization categories associated to Landau-Ginzburg models that correspond non-smooth toric complete intersections. This is seen explicitly in an example in Proposition~\ref{CCRsPlanes} below.
\end{remark}
 
 \section{Toric Landau-Ginzburg Models: Their Cones and Phases} \label{sec:3}

 \subsection{Polytopes and Gorenstein Cones}\label{ConesBestiary}
 In this subsection, we will review standard definitions in order to set notation. Good references are \cite{CLS, BN07}. Let $M$ and $N$ be dual lattices of dimension $d$ and $N_{\R} := N \otimes_{\Z} \R$. Let $\sigma$ be a strictly convex cone in $N_{\R}$ of dimension $d$. Recall that the dual cone $\sigma^\vee$ in $M_{\R}$ is defined to be
 $$
 \sigma^\vee := \{ m \in M_{\R} \ | \  \langle m, n\rangle \geq 0 \text{ for all }n \in \sigma\}.
 $$
 
 \begin{definition}
A full-dimensional strictly convex rational polyhedral cone $\sigma \subseteq N_{\R}$ is called 
\begin{enumerate}[(i.)]
\item \newterm{Gorenstein} if there exists an element $\mathfrak{m} \in M$ so that the semigroup $\sigma \cap N$ is generated by finitely many lattice points $n \in N$ that are contained in the the affine hyperplane $\{ n \in N_{\R} \ | \  \langle \mathfrak{m}, n \rangle = 1\}$; 
\item  \newterm{almost Gorenstein} if there exists an element $\mathfrak{m} \in M$ so that the cone is generated over $\Q$ by finitely many lattice points in $\{n \in N  \ | \  \langle \mathfrak{m}, n \rangle =1\}$; and
\item \newterm{$\Q$-Gorenstein} if there exists an element $\mathfrak{m} \in M_{\Q}$ so that the cone is generated over $\Q$ by finitely many lattice points in $\{n \in N_{\R} \ | \  \langle \mathfrak{m}, n \rangle =1\}$.
\end{enumerate}
 \end{definition}
 
 \begin{example}
\begin{enumerate}[(i.)]
\item With respect to the lattice $N = \Z^2$, the cone $\sigma=\op{Cone}( (1,1), (-1,1))$ is Gorenstein with $\mathfrak{m} = (0,1)$. 
\item With respect to the lattice $N= \Z^4$, the cone $\sigma=\op{Cone}((1,0,0,0),(0,1,0,0), (0,0,1,0), \newline(-1,-1,-1,2))$ is almost Gorenstein with $\mathfrak{m}=(1,1,1,2)$, but not Gorenstein. Indeed, $n=(0,0,0,1)$ is a generator of the semigroup $\sigma\cap N$, but $\langle \mathfrak{m},n\rangle = 2$.
\item With respect to the lattice $N=\Z^2$, the cone $\sigma=\op{Cone}((1,2),(-1,2))$ is $\Q$-Gorenstein with $\mathfrak{m} = (0, \frac12)\in M_{\Q}$, but not almost Gorenstein.
\end{enumerate}
 \end{example}
As the cone $\sigma$ is full-dimensional, the lattice element $\mathfrak{m}$ is unique.  Moreover, $\mathfrak{m}$ is in the interior of the dual cone $\sigma^\vee$, since it does not pair to 0 with any nonzero element of the cone $\sigma$. We define the \newterm{$k$-th slice} of the cone $\sigma$ to be the polytope 
 $$
 \sigma_{(k)} := \{ n \in \sigma \ | \  \langle \mathfrak{m}, n \rangle = k \}.
 $$ 

  If, in addition, the dual cone $\sigma^\vee$ is a $\Q$-Gorenstein cone with respect to an element $\mathfrak{n} \in N_{\Q}$, we can define the \newterm{index} $r$ of $\sigma$ to be the pairing $\langle \mathfrak{m}, \mathfrak{n}\rangle$. Since $\mathfrak{m} \in M_{\Q}$  and $\mathfrak{n} \in N_{\Q}$ defined above are unique, the index is well-defined.  Note that the index may be a rational number.
 
Let us now take a minor detour to the realm of polytopes in order to setup the definition of $t$-split $\Q$-Gorenstein cones. Take $M$ to be a lattice.  Consider $t$ lattice polytopes $\Delta_1, \ldots, \Delta_t$ that are positive dimensional in a real vector space $M_{\R}$. We define the \newterm{Cayley polytope} $\Delta_1*\cdots * \Delta_t$ associated to the polytopes $\Delta_1, \ldots, \Delta_t$ to be the polytope in the vector space $M_{\R} \oplus \R^t$ defined by
 $$
 \Delta_1*\cdots * \Delta_t := \op{Conv}( (\Delta_1, e_1), \ldots, (\Delta_t, e_t)),
 $$
 where $e_i$ are the elementary basis vectors for the vector space $\R^t$. 
 \begin{definition}
A polytope $\Delta$ is called a \newterm{Cayley polytope of length $t$} if 
\[
\Delta =  \Delta_1*\cdots * \Delta_t 
\]
for some $\Delta_1, .., \Delta_t$.
 \end{definition}
  
\subsection{Toric Vector Bundles}\label{ToricVectorBundles}

In this section, we will give examples of algebro-geometric manifestations of the cones described in the previous subsection. They show up as supports of fans associated to certain toric vector bundles. 

Recall the following construction of a split toric vector bundle over a toric variety.
 Start with a toric variety $X_{\Sigma}$ associated to a fan $\Sigma \subseteq N_{\R}$. Any torus-invariant Weil divisor $D$ can be written as a linear combination of torus-invariant divisors associated to rays, i.e., 
\[
D = \sum_{\rho \in \Sigma(1)} a_\rho D_\rho
\]
 for some $a_\rho \in \Z$.  Take $r$ such torus-invariant Weil divisors $D_i = \sum_{\rho \in \Sigma} a_{i \rho} D_\rho$ for some $a_{i\rho} \in \Z$. Let $u_\rho$ be the primitive generator of the ray $\rho \in \Sigma(1)$. For all $\sigma \in \Sigma$, define the cone
$$
\sigma_{D_1,\ldots, D_r} := \op{Cone}(\{u_\rho - a_{1\rho} e_1 - \ldots -a_{r\rho}e_r \ | \  \rho \in \sigma(1)\} \cup \{ e_i \ | \  i \in \{1, \ldots, r\})\subset N_{\R} \oplus \R^r.
$$
Take the fan $\Sigma_{D_1, \ldots, D_r}$ to be the fan generated by the cones $\sigma_{D_1, \ldots, D_r}$ and their proper faces. Recall that if $D_i$ are Cartier, then by iterating Proposition 7.3.1 of \cite{CLS}, we can see that the toric variety $X_{\Sigma_{D_1, \ldots, D_r}}$ is the vector bundle $\bigoplus_{i = 1}^r \O(D_i)$ over $X_{\Sigma}$.

Now we describe when the support $|\Sigma_{{D_1}, \ldots, D_r}|$ of the fan $\Sigma_{{D_1}, \ldots, D_r}$ associated to the toric vector bundle is one of the special cones described in Section~\ref{ConesBestiary}.  Assume that $X_\Sigma$ is semiprojective and the divisors $D_i$ are $\Q$-Cartier and anti-nef.

\begin{lemma}[Lemma 5.19 of \cite{FK14}]
Let $\Sigma$ be a fan and suppose $X_{\Sigma}$ is semiprojective. If $-D$ is nef and $\Q$-Cartier, then $X_{\Sigma_{D}}$ is semiprojective.
\end{lemma}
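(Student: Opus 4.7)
The plan is to use the standard characterization of semiprojective toric varieties (Proposition 7.2.9 of \cite{CLS}): $X_\Sigma$ is semiprojective if and only if $\Sigma$ contains a full-dimensional cone and $|\Sigma|$ is convex. Since $X_\Sigma$ is already semiprojective, both conditions hold for $\Sigma$, and my goal is to transfer them to $\Sigma_D$.

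First I would describe the support $|\Sigma_D|$ explicitly. Unpacking the definition of $\sigma_D$ as the cone generated by $\{u_\rho - a_\rho e_1 : \rho \in \sigma(1)\} \cup \{e_1\}$, a direct computation gives
\[
|\Sigma_D| = \{(u,s) \in N_\R \oplus \R : u \in |\Sigma|,\ s \geq -\phi_D(u)\},
\]
where $\phi_D : |\Sigma| \to \R$ is the (piecewise-$\Q$-linear) support function of $D$, characterized by $\phi_D(u_\rho) = -a_\rho$. The $\Q$-Cartier hypothesis on $D$ ensures $\phi_D$ is well defined on each cone of $\Sigma$, after clearing denominators by passing to $kD$ for some positive $k \in \N$.

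The key step is convexity of $|\Sigma_D|$. The assumption that $-D$ is nef translates, via the standard dictionary between nef toric $\Q$-Cartier divisors and their support functions (Theorem 6.4.9 of \cite{CLS} applied to $-kD$, together with homogeneity), to the statement that $-\phi_D = \phi_{-D}$ is a convex function on $|\Sigma|$. The epigraph of a convex function over a convex domain is convex, so $|\Sigma_D|$ is convex. For full-dimensionality: if $\sigma \in \Sigma$ is a cone of dimension $\op{dim } N_\R$, then its associated cone $\sigma_D$ has $\op{dim } N_\R + 1$ generators with one extra coordinate $e_1$ independent of $N_\R$, so $\sigma_D$ is full-dimensional in $N_\R \oplus \R$. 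Invoking the characterization of semiprojectivity completes the argument.

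I do not expect serious obstacles here; the main subtlety is bookkeeping the sign convention for $\phi_D$ (different sources use $\phi_D(u_\rho) = a_\rho$ vs.~$-a_\rho$), and verifying the nef/convex correspondence in the $\Q$-Cartier setting rather than strictly Cartier. Both are minor and follow from rescaling $D$ by a positive integer to reduce to the Cartier case treated in \cite{CLS}.
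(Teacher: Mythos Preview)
The paper does not supply its own proof of this lemma; it is quoted verbatim from \cite{FK14} and used as a black box. So there is no paper proof to compare against, and I evaluate your argument on its own merits.

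Your argument has a genuine gap: the characterization you attribute to Proposition~7.2.9 of \cite{CLS} is misstated. That proposition says $X_\Sigma$ is semiprojective if and only if $|\Sigma|$ is convex of full dimension \emph{and} $X_\Sigma$ is quasiprojective (equivalently, $\Sigma$ is the normal fan of a full-dimensional lattice polyhedron). You have dropped the quasiprojectivity clause. This is not innocuous: any complete non-projective toric variety has $|\Sigma|=N_{\R}$, which is convex and full-dimensional, yet such a variety is not semiprojective. Your epigraph computation correctly establishes that $|\Sigma_D|$ is convex and full-dimensional, but that alone does not force $X_{\Sigma_D}$ to be semiprojective.

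The repair is straightforward once identified. One clean route is to use condition~(c) of the same proposition: since $X_\Sigma$ is semiprojective, $\Sigma$ is the normal fan of a full-dimensional lattice polyhedron $P\subseteq M_{\R}$; then one checks directly that $\Sigma_D$ is the normal fan of the full-dimensional polyhedron $\{(m,t)\in M_{\R}\oplus\R : t\geq 0,\ m+t\cdot\phi_{-D}\text{ lies in }P\text{ shifted appropriately}\}$, or more concretely the Cayley-type polyhedron built from $P$ and the section polytope of $-D$. Alternatively, observe that the toric morphism $\pi:X_{\Sigma_D}\to X_\Sigma$ is affine and use it to transport a strictly convex support function on $\Sigma$ to one on $\Sigma_D$, thereby supplying the missing quasiprojectivity. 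Either way, the step you omitted is exactly where the nef hypothesis on $-D$ interacts with the projective geometry, not merely with the shape of the support.
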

\begin{corollary}\label{totSemiprojective}
Let $\Sigma$ be a fan and suppose $X_{\Sigma}$ is semiprojective. If $-D_1, ..., -D_r$ are nef and $\Q$-Cartier, then $X_{\Sigma_{D_1, \ldots, D_r}}$ is semiprojective.
\end{corollary}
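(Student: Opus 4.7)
The natural approach is induction on $r$, with the base case $r=1$ being precisely the preceding Lemma~5.19 of \cite{FK14}. For the inductive step, the plan is to first form the single-divisor construction $\Sigma_{D_1}$, which by the base case yields a semiprojective toric variety $X_{\Sigma_{D_1}}$, and then to recognize the full construction $\Sigma_{D_1, \ldots, D_r}$ as the iterated refinement $(\Sigma_{D_1})_{\pi^*D_2, \ldots, \pi^*D_r}$, where $\pi: X_{\Sigma_{D_1}} \to X_\Sigma$ is the bundle projection.

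The matching of fans is pure bookkeeping with the explicit formulas in Section~\ref{ToricVectorBundles}. The rays of $\Sigma_{D_1}$ are $\{u_\rho - a_{1\rho}e_1 : \rho \in \Sigma(1)\} \cup \{e_1\}$, and the pullback $\pi^*D_i$ for $i \geq 2$ has coefficient $a_{i\rho}$ at $u_\rho - a_{1\rho}e_1$ and coefficient $0$ at $e_1$. Applying the construction again modifies these rays to $u_\rho - \sum_{i=1}^r a_{i\rho} e_i$ while introducing new rays $e_2, \ldots, e_r$, reproducing exactly the rays and cone structure of $\Sigma_{D_1, \ldots, D_r}$ in $N_\R \oplus \R^r$.

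Before invoking the inductive hypothesis, one must verify that $-\pi^*D_2, \ldots, -\pi^*D_r$ are again nef and $\Q$-Cartier on $X_{\Sigma_{D_1}}$. The $\Q$-Cartier property is automatic under pullback. For nefness, the cleanest route is to note that since $-D_i$ is nef and $\Q$-Cartier on the semiprojective variety $X_\Sigma$, some positive multiple $-mD_i$ is globally generated Cartier; pulling back preserves global generation and Cartierness, so $-\pi^*D_i$ is nef and $\Q$-Cartier on $X_{\Sigma_{D_1}}$. Applying the inductive hypothesis then gives semiprojectivity of $X_{(\Sigma_{D_1})_{\pi^*D_2, \ldots, \pi^*D_r}} = X_{\Sigma_{D_1, \ldots, D_r}}$.

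The only subtlety — the main obstacle, such as it is — lies in confirming that nef\-ness genuinely transfers under the non-proper morphism $\pi$; the global-generation argument above handles this, but one could equivalently verify it via convexity of support functions, since pulling back by the projection $N \oplus \Z \to N$ preserves convexity on each cone of $\Sigma_{D_1}$. Beyond this, the argument is essentially combinatorial bookkeeping combined with a single application of the previous lemma at each inductive step.
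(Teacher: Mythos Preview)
Your proposal is correct and follows exactly the approach the paper takes: the paper's proof is the single sentence ``This follows immediately by induction on $i$,'' and you have simply unpacked the bookkeeping behind that induction. The identification $(\Sigma_{D_1})_{\pi^*D_2,\ldots,\pi^*D_r} = \Sigma_{D_1,\ldots,D_r}$ and your handling of the nefness-under-pullback subtlety (which the paper leaves implicit) are both sound.
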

\begin{proof}
This follows immediately by induction on $i$.
\end{proof}
We can describe the dual cone $|\Sigma_{D_1, \ldots, D_r}|^\vee$ explicitly. Such a description was given by Mavlyutov (Lemma 1.6 of \cite{Mav}) for the case when $\sum_i D_i = -K_{X_{\Sigma}}$. In \cite{FK14}, this hypothesis is dropped:
\begin{lemma}[Lemma 5.17 of \cite{FK14}]\label{dualConeIsCayley}
Let $\Sigma$ be a complete fan and 
$$
D_i = \sum_\rho a_{i\rho} D_\rho
$$
be nef and $\Q$-Cartier divisors. The dual cone to $|\Sigma_{-D_1, \ldots, -D_r}|$ is equal to the Cayley cone on the set of polytopes 
$$
\Delta_i := \{ m \in M_{\R} \ | \ \langle m, u_\rho\rangle \geq -a_{i\rho} \text{ for all } \rho \in \Sigma(1)\}
$$
i.e., 
$$
|\Sigma_{-D_1, \ldots, -D_r}|^\vee = \R_{\geq 0} (\Delta_1*\cdots *\Delta_r) = \R_{\geq 0} (\Delta_1+e_1^*) + \ldots + \R_{\geq 0}(\Delta_r + e_r^*).
$$
Moreover, if the divisors $D_i$ are all Cartier, then $\Delta_i$ are lattice polytopes.
\end{lemma}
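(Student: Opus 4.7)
The strategy is to compute both sides of the equality as intersections of halfspaces in $M_\R \oplus \R^r$ and verify that they coincide. By construction, each maximal cone $\sigma_{-D_1, \ldots, -D_r}$ of $\Sigma_{-D_1, \ldots, -D_r}$ is generated by the vectors $\{e_i\}_{i=1}^r$ together with $\{u_\rho + \sum_i a_{i\rho} e_i\}_{\rho \in \sigma(1)}$. Completeness of $\Sigma$ ensures that, as $\sigma$ ranges over $\Sigma$, these generators cover every $\rho \in \Sigma(1)$, and because the dual of a set equals the dual of its convex hull I would obtain
\begin{equation*}
|\Sigma_{-D_1,\ldots,-D_r}|^\vee = \bigl\{(m,b_1,\ldots,b_r) \in M_\R \oplus \R^r : b_i \geq 0,\; \langle m, u_\rho \rangle + \textstyle\sum_i a_{i\rho} b_i \geq 0 \text{ for all } \rho \in \Sigma(1)\bigr\}.
\end{equation*}
In parallel, unpacking the Cayley cone yields $\R_{\geq 0}(\Delta_1 * \cdots * \Delta_r) = \{(m, b_1, \ldots, b_r) : b_i \geq 0 \text{ and } m \in b_1\Delta_1 + \cdots + b_r\Delta_r\}$, where the Minkowski sum is read as $\{0\}$ when every $b_i$ vanishes.

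For the easy inclusion $\R_{\geq 0}(\Delta_1*\cdots*\Delta_r) \subseteq |\Sigma_{-D_1,\ldots,-D_r}|^\vee$, I would write $m = \sum_i b_i \delta_i$ with $\delta_i \in \Delta_i$ and directly verify
\begin{equation*}
\langle m, u_\rho \rangle + \textstyle\sum_i a_{i\rho} b_i = \sum_i b_i \bigl(\langle \delta_i, u_\rho \rangle + a_{i\rho}\bigr) \geq 0,
\end{equation*}
using only the defining inequalities of $\Delta_i$. This direction uses no hypothesis beyond what is given.

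The reverse inclusion is the main content. Given $(m,b) \in |\Sigma_{-D_1,\ldots,-D_r}|^\vee$, I would form the $\R$-Cartier divisor $D := \sum_i b_i D_i = \sum_\rho\bigl(\sum_i b_i a_{i\rho}\bigr) D_\rho$; nefness of each $D_i$ together with $b_i \geq 0$ gives nefness of $D$. The inequalities defining $|\Sigma_{-D_1,\ldots,-D_r}|^\vee$ in the $m$-variables are precisely the inequalities cutting out the polytope $P_D = \{m : \langle m, u_\rho\rangle \geq -\sum_i b_i a_{i\rho}\}$, so I would need $m \in P_D = \sum_i b_i \Delta_i$. The critical nontrivial input is the Minkowski-sum identity
\begin{equation*}
P_{\sum_i b_i D_i} \;=\; \sum_i b_i P_{D_i},
\end{equation*}
valid on a complete toric variety for nef summands. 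This follows from additivity of support functions under Minkowski sum together with the characterization $h_{P_{D_i}}(u_\rho) = -a_{i\rho}$ of nef polytopes; here is precisely where the nef hypothesis is indispensable, since otherwise the polytope cut out by the $u_\rho$-inequalities can strictly contain the Minkowski sum. With this identity in hand, the dual-cone inequalities produce the required decomposition $m = \sum_i b_i \delta_i$ with $\delta_i \in \Delta_i$.

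For the final clause about $\Delta_i$ being a lattice polytope in the Cartier case, I would invoke the standard toric fact (e.g.\ Proposition~6.1.10 of Cox-Little-Schenck) that a nef Cartier divisor on a complete toric variety has $P_{D_i}$ equal to the convex hull of lattice points $m_\sigma \in M$ determined by local trivializations. The principal obstacle throughout is the Minkowski-sum identity above, since the remainder of the argument amounts to bookkeeping with dual cones and their ray generators.
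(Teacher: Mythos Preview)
The paper does not supply its own proof of this lemma; it is quoted verbatim as Lemma~5.17 of \cite{FK14} and used as a black box. So there is no in-paper argument to compare against, and your task reduces to whether your argument stands on its own.

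It does. Your halfspace description of $|\Sigma_{-D_1,\ldots,-D_r}|^\vee$ is correct once one notes that, since $\Sigma$ is complete, the union of the generating sets of the maximal cones $\sigma_{-D_1,\ldots,-D_r}$ exhausts all the rays $u_\rho + \sum_i a_{i\rho} e_i$ together with the $e_i$, and the dual of a cone equals the dual of its set of ray generators. Your identification of the Cayley cone with $\{(m,b): b_i\ge 0,\ m\in \sum_i b_i\Delta_i\}$ is likewise correct, and completeness of $\Sigma$ forces $m=0$ when $b=0$, matching your convention. The easy inclusion is exactly as you wrote.

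The substantive step is the Minkowski identity $P_{\sum_i b_i D_i} = \sum_i b_i P_{D_i}$ for nef $\Q$-Cartier divisors, and you have located it correctly. Two small points worth making explicit when you write this out in full: first, the identity for $\Q$-Cartier (rather than Cartier) nef divisors follows by clearing denominators, using the Cartier case, and rescaling via $P_{kD}=kP_D$; the extension to real nonnegative coefficients $b_i$ is then immediate from the halfspace description. Second, the argument silently uses that each $\Delta_i$ is nonempty, which holds because a nef $\Q$-Cartier divisor on a complete toric variety has convex support function and hence each Cartier datum $m_\sigma$ already lies in $P_{D_i}$. Your citation of Cox--Little--Schenck for the lattice-polytope statement in the Cartier case is appropriate. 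Nothing is missing.
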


The cone $|\Sigma_{D_1, \ldots, D_r}|$ can be any of the four types of strictly-convex cones:  Gorenstein, almost Gorenstein, $\Q$-Gorenstein, or not $\Q$-Gorenstein. We give examples of all four.

\begin{example}
Given a toric Fano variety $X_\Sigma$. Take a nef partition $D_1, \ldots, D_r$ of Cartier divisors of its anti-canonical bundle $-K_{X_\Sigma}$, i.e., nef divisors $D_i$ such that
\[
\sum D_i = -K_{X_\Sigma}.
\]
We get a vector bundle $\op{tot}(\bigoplus_{i=1}^r \O(-D_i))$ with anti-canonical determinant.  The corresponding cone $|\Sigma_{D_1, ..., D_r}|$ is a completely split Gorenstein cone. See \cite{BN07} for details.
\end{example}

\begin{example} Let $u_i$ be the standard basis for $\Z^n$ and set $u_0 = -\sum_i u_i$.  Let $\Sigma \subseteq \R^n$ be the complete fan on the rays $\rho_i$ generated by the $u_i$.  The corresponding toric variety is $\P^n$.  The fan $\Sigma_{-2D_{\rho_0}, -2D_{\rho_0}}$ gives the vector bundle $\op{tot}(\O(-2)^{\oplus 2})$.  

The cone $|\Sigma_{-2H, -2H}|$ is generated by $u_1, ..., u_n, e_1, e_2, u_0 + 2e_1 + 2e_2$.  Note that $u_1, ..., u_n, e_1, e_2,$ and $u_0+2e_1+2e_2$ are all extremal generators of the cone $|\Sigma_{-2D_{\rho_0}, -2D_{\rho_0}}|$. If $|\Sigma_{-2D_{\rho_0}, -2D_{\rho_0}}|$ is $\Q$-Gorenstein, then we must have $\mathfrak{n} = u_1^*+ ... + u_n^* + e_1^* + e_2^*$.  But
\[
\langle u_1^*+ ... + u_n^* + e_1^* + e_2^*, u_0 + 2e_1 + 2e_2 \rangle = 4-n.
\]  
Hence,  $|\Sigma_{-2D_{\rho_0}, -2D_{\rho_0}}|$ is almost Gorenstein if and only if $n=3$. If $n>3$, then $|\Sigma_{-2D_{\rho_0}, -2D_{\rho_0}}|$ is not $\Q$-Gorenstein.
\end{example}

\begin{proposition}\label{hypersurfacesExample}
Let $X_\Sigma$ be a projective simplicial toric variety, and let $D$ be a Weil divisor linearly equivalent to $-qK_{X_\Sigma} $ for some positive rational number $q$. If $D$ is nef and $\Q$-Cartier, then the cone $|\Sigma_{-D}|$ is $\Q$-Gorenstein. Moreover, if $q = \frac1r$ for some positive integer $r$ and $D$ is Cartier, then $|\Sigma_{-D}|$ is almost Gorenstein.
\end{proposition}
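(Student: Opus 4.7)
The plan is to write down an explicit rational vector in the dual lattice $M\oplus\Z$ whose associated hyperplane contains every extremal ray of $|\Sigma_{-D}|$, thereby exhibiting the cone as $\Q$-Gorenstein, and then to upgrade this vector to the integer lattice under the Cartier and $q=1/r$ hypotheses. First I would record that the cone $|\Sigma_{-D}|$ is generated by the rays $\{(u_\rho, a_\rho) : \rho \in \Sigma(1)\} \cup \{e_1\}$ where $D = \sum_\rho a_\rho D_\rho$ and $e_1 = (0,1)$. Lemma~\ref{dualConeIsCayley} with $r=1$ (applicable because $X_\Sigma$ is projective so $\Sigma$ is complete, and $D$ is nef and $\Q$-Cartier) shows that $|\Sigma_{-D}|^\vee$ is a strongly convex rational polyhedral cone, so $|\Sigma_{-D}|$ is itself a closed convex cone. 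Since $D \sim -qK_{X_\Sigma}$ is a $\Q$-linear equivalence of torus-invariant $\Q$-divisors, $D + qK_{X_\Sigma} = \op{div}(\chi^{m_0})$ for some $m_0 \in M_{\Q}$, which coefficientwise reads $a_\rho = q + \langle m_0, u_\rho\rangle$ for every $\rho \in \Sigma(1)$.

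Setting $\mathfrak{m} := (-m_0/q,\, 1/q) \in M_{\Q} \oplus \Q$, a direct computation gives $\langle\mathfrak{m},(u_\rho, a_\rho)\rangle = (a_\rho - \langle m_0, u_\rho\rangle)/q = 1$, so each $(u_\rho, a_\rho)$ lies on the affine hyperplane $H := \{\langle\mathfrak{m},\cdot\rangle = 1\}$. To dispose of the extra generator $e_1$, I would use completeness of $\Sigma$: for each $\rho_0$, $-u_{\rho_0}$ belongs to some cone of $\Sigma$ and hence is a non-negative combination of that cone's ray generators; summing these relations over $\rho_0$ produces strictly positive $c_\rho > 0$ with $\sum_\rho c_\rho u_\rho = 0$. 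Then $\sum_\rho c_\rho (u_\rho, a_\rho) = (0,\, q\sum_\rho c_\rho)$ is a positive multiple of $e_1$ because $q > 0$, so $e_1$ is not extremal. Therefore the extremal rays of $|\Sigma_{-D}|$ are lattice points in $\{(u_\rho, a_\rho)\} \subset N\oplus\Z$, all lying on $H$, which proves that $|\Sigma_{-D}|$ is $\Q$-Gorenstein.

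For the almost Gorenstein statement, I specialize to $q = 1/r$ with $r \in \Z_{>0}$ and $D$ Cartier. Then $rD \sim -K_{X_\Sigma}$ is a linear equivalence of torus-invariant Weil divisors, so $rD + K_{X_\Sigma} = \op{div}(\chi^{m'})$ for some $m' \in M$. Multiplying $a_\rho = 1/r + \langle m_0, u_\rho\rangle$ by $r$ and comparing with $ra_\rho - 1 = \langle m', u_\rho\rangle$ gives $\langle m' - rm_0, u_\rho\rangle = 0$ for every $\rho$; since the rays span $N_{\Q}$, we deduce $rm_0 = m' \in M$ and hence $\mathfrak{m} = (-rm_0, r) \in M \oplus \Z$, which is exactly the almost Gorenstein condition. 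The only genuinely non-formal point in the proof is the strictly-positive relation used to place $e_1$ inside the cone of the other generators; this uses both completeness of $\Sigma$ and the positivity of $q$. Everything else reduces to routine lattice bookkeeping once the convexity of $|\Sigma_{-D}|$ is in hand.
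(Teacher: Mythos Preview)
Your argument for the $\Q$-Gorenstein claim is correct and in fact more direct than the paper's: rather than chasing the class-group sequence of $X_{\Sigma_{-D}}$, you simply write down $\mathfrak{m}=(-m_0/q,\,1/q)$ explicitly and verify the pairing. Your treatment of the extra generator $e_1=(0,1)$ is also more explicit than the paper's: you produce a strictly positive relation $\sum_\rho c_\rho u_\rho=0$ from completeness and use $q>0$ to exhibit $e_1$ as a positive combination of the $(u_\rho,a_\rho)$. The paper only writes ``Since $D$ is nef, it suffices \ldots'' and never isolates this point.

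There is, however, a genuine gap in your almost Gorenstein step. You assert that ``$rD\sim -K_{X_\Sigma}$ is a linear equivalence of torus-invariant Weil divisors, so $rD+K_{X_\Sigma}=\op{div}(\chi^{m'})$ for some $m'\in M$.'' But from the hypothesis $D\sim_\Q -qK_{X_\Sigma}$ with $q=1/r$ you only know that $rD+K_{X_\Sigma}$ is \emph{torsion} in $\op{Cl}(X_\Sigma)$, not that it is zero there. Equivalently, you have $\langle rm_0,u_\rho\rangle=ra_\rho-1\in\Z$ for every $\rho$, but this forces $rm_0\in M$ only when the $u_\rho$ span $N$ over $\Z$, which fails for complete simplicial fans with torsion class group (e.g.\ the fan in $\Z^2$ with ray generators $(1,0),(-1,2),(-1,-2)$). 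You invoke the Cartier hypothesis but never actually use it. The paper's fix is to work instead in $\op{Pic}(X_{\Sigma_{-D}})$, which is torsion-free by \cite[Proposition~4.2.5]{CLS}; the Cartier assumption on $D$ is precisely what lets one place the relevant relation $\pi^*D=D_{\rho_b}$ inside $\op{Pic}$ rather than merely $\op{Cl}\otimes\Q$, and then torsion-freeness kills the ambiguity. To repair your argument you should either import that torsion-freeness step or else justify directly, using the Cartier data $\{m_\sigma\}$ for $D$, why $rm_0\in M$.
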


\begin{proof}
Write $D = \sum_\rho a_\rho D_\rho$ for some $a_\rho\in \Z$. Since $D$ is nef, it suffices to find an element $(m,t) \in (M\times\Z)_{\Q}$ so that 
$$
\langle (m,t), (u_\rho, a_\rho)\rangle = 1
$$
for all $\rho \in \Sigma(1)$.

Consider the projection $\pi: N\times \Z \rightarrow  N$ that induces the projection $\pi: X_{\Sigma_{-D}} \rightarrow X_{\Sigma}$. Let $\rho_b$ be the ray in $\Sigma_{-D}$ given by the one-dimensional cone $\op{Cone}(0,1)$.  Consider the exact sequence
$$
M \times \Z \overset{f_{\Sigma_{-D}(1)}}{\longrightarrow} \Z^{\Sigma_{-D}(1)} \longrightarrow \op{Cl}(X_{\Sigma_{-D}}) \longrightarrow 0.
$$
The first map is defined by $(m,t) \mapsto \sum_{\rho \in \Sigma(1)} \langle (m,t), (u_{\rho}, a_\rho)\rangle e_{\bar \rho} + te_{\rho_b}$, where $\bar \rho:=\op{Cone}(u_{\rho}, a_\rho)$ is the ray in $\Sigma_{-D}(1)$ that corresponds to $\rho \in \Sigma(1)$. The image of $(0,1)$ under the map $f_{\Sigma_{-D}}(1)$ is 
$$
f_{\Sigma_{-D}(1)}(0,1) = \sum_{\rho \in \Sigma(1)} a_\rho e_{\bar{\rho}} + e_{\rho_b}.
$$
Thus, in $\op{Cl}(X_{\Sigma_{-D}})$, we have the equality 
\begin{equation} \label{eq: divisor compare}
- \sum_{\rho \in \Sigma(1)} a_\rho D_{\bar{\rho}}  = D_{\rho_b}.
\end{equation}

By Proposition 4.2.7 of \cite{CLS}, since $X_{\Sigma}$ is simplicial, we know that, for any $\rho_i \in \Sigma(1)$, there is a $d_i \in \N$ so that $d_iD_{\rho_i}$ is Cartier. Note that by Proposition 6.2.7 of \cite{CLS}, we know that the support function for the pullback $\pi^* d_iD_{\rho_i}$ is given by the composition $|\Sigma_{-D}| \overset{\pi}{\rightarrow} |\Sigma| \overset{\varphi_{d_iD_{\rho_i}}}{\rightarrow} \R$ where $\varphi_D(u_{\rho}) = -d_i$ if $\rho = \rho_i$ and $\varphi_D(u_\rho)=i$ otherwise. Moreover, since $\pi(\rho_b) = 0$, the support function of any pullback of any divisor on $X_{\Sigma}$ will map $\rho_b$ to zero. Hence, by the support function description of the pullback, we can see that $\pi^*(d_iD_{\rho_i} )= d_iD_{\bar\rho_i}$ in $\op{Cl}(X_{\Sigma_{-D}})$.  Let $d := \prod d_i$. Plugging into \eqref{eq: divisor compare}, we obtain
\begin{equation} \label{eq: divisor compare 2}
d \pi^* D = d D_{\rho_b}
\end{equation}
in $\op{Cl}(X_{\Sigma_{-D}})$.

Now, note that $D = qK_{X_\Sigma}$ hence $-q\sum_{\rho \in \Sigma(1)} D_{\bar \rho} = q\pi^*K_{X_{\Sigma}} = \pi^* D = D_{\rho_b}$. Thus we have the equality 
\begin{equation} \label{eq: divisor compare 3}
\sum_{\rho \in \Sigma(1)}D_{\bar \rho} + \frac{1}{q} D_{\rho_b} = 0
\end{equation}
in $\op{Cl}(X_{\Sigma_{-D}})\otimes \Q$. By applying $-\otimes \Q$ to the exact sequence we started with, we have
$$
(M \oplus \Z)_{\Q} \overset{f_{\Sigma_{-D}(1)}}{\longrightarrow} \Q^{\Sigma_{-D}(1)} \longrightarrow \op{Cl}(X_{\Sigma_{-D}})\otimes \Q \longrightarrow 0.
$$
Since $\sum_{\rho \in \Sigma(1)}e_{\bar \rho} + \frac{1}{q} e_{\rho_b} $ is in the kernel of the second map, there exists an element $(m,t) \in (M \oplus \Z)_{\Q}$ such that $f_{\Sigma_{-D}(1)}(m,t) = \sum_{\rho \in \Sigma(1)}e_{\bar \rho} + \frac{1}{q} e_{\rho_b} $. By the definition of $f_{\Sigma_{-D}(1)}$, we then have that
$$
\langle (m,t), (u_\rho, a_\rho) \rangle = 1
$$
for all $\rho\in \Sigma(1)$ and
\begin{equation}\label{PairingForHypersurfaces}
\langle (m,t), (0,1) \rangle = \frac{1}{q}.
\end{equation}

In the case that $D$ is Cartier, we obtain \eqref{eq: divisor compare 2} in the Picard group.  Since the Picard group has no torsion by Proposition 4.2.5 of \cite{CLS}, this yields an equality
\[
 \pi^*D = D_{\rho_b}
 \]
in $\op{Pic}(X_{\Sigma_{-D}})$.  Furthermore, if $q = \frac{1}{r}$ for some positive integer $r$, then \eqref{eq: divisor compare 3} holds in $\op{Pic}(X_{\Sigma_{-D}})$.
Thus by the same logic above there exists an $(m,t) \in M\oplus \Z$ so that 
$$
\langle (m,t), (u_\rho, a_\rho) \rangle = 1
$$
for all $\rho\in \Sigma(1)$ and $\langle (m,t), (0,1)\rangle = r$.
\end{proof}

\begin{remark}
We do not know the appropriate generalization for complete intersections except when $q=1$.  In this case, $|\Sigma_{-D_1, ..., -D_r}|$ and $|\Sigma_{-D_1, ..., -D_r}|^\vee$ are Gorenstein of index $r$ if and only if $\sum D_i = - K$, see Proposition 3.6 of \cite{BB97}.  
\end{remark}

    \subsection{Toric Stacks Associated to Fans}
  
We now define a quotient stack $\mathcal{X}_{\Sigma}$ that is associated to the fan $\Sigma$ that is the quotient of an open subset of affine space by an abelian group. This quotient stack will be isomorphic to the toric variety $X_{\Sigma}$ when the toric variety is smooth. Let $n$ be the number of rays in the fan $\Sigma$.
We can associate a new fan $\op{Cox}(\Sigma) \subseteq \R^{\Sigma(1)}$ to $\Sigma$ that is defined to be
\begin{equation}\label{CoxFan}
\op{Cox}(\Sigma) := \{ \op{Cone}(e_\rho | \ \rho \in \sigma) | \ \sigma \in \Sigma \}.
\end{equation}
Enumerating the rays, this fan is a subfan of the standard fan for $\A^n$:
\[
\Sigma_n := \{ \op{Cone} (e_i | \ i \in I) | \ I \subseteq \{1, ..., n \} \}.
\]
 Hence the toric variety $U_{\Sigma} := X_{\op{Cox}(\Sigma)}$ is an open subset of $\A^{n}$. We now define the group $S_{\Sigma(1)}$ which acts on $U_{\Sigma}$.
 
We now describe a quotient associated to a set of lattice elements $\nu = (v_1, ..., v_n) \subseteq N$ where $N$ is a lattice of dimension $d$. We will focus on the case where $\nu = \{u_\rho \ | \ \rho \in \Sigma(1)\}\subseteq N,$ where $u_\rho$ is the primitive lattice generator of the ray $\rho$. Let $M$ be the dual lattice to $N$.  We get a right exact sequence
\begin{align}
\label{eq: f}
M & \overset{f_\nu}{\longrightarrow} \Z^n \overset{\pi}{\longrightarrow} \op{coker}(f_\nu) \to 0 \\
m & \mapsto \sum_{i=1}^n \langle v_i, m \rangle e_i.  \notag
\end{align}
Applying $\op{Hom}(-, \gm)$ we get a left exact sequence
\begin{equation}\label{eq: hatpi}
0 \longrightarrow \op{Hom}(\op{coker}(f_\nu), \gm) \overset{\widehat{\pi}}{\longrightarrow} \gm^{n} \overset{\widehat{f_{\nu}}}{\longrightarrow} \gm^m
\end{equation}
We set 
\begin{equation}\label{defn:Snu}
S_\nu := \op{Hom}(\op{coker}(f_\nu), \gm)
\end{equation}
We write $S_{\Sigma(1)}$ for $S_\nu$ when $\nu = \{ u_\rho \ | \  \rho \in \Sigma(1)\}$.
\begin{definition}
We call $U_{\Sigma}$ the \newterm{Cox open set} associated to $\Sigma$.  We define the \newterm{Cox stack} associated to $\Sigma$ to be
\[
\mathcal{X}_{\Sigma}:= [U_{\Sigma}/S_{\Sigma(1)} ].
\]

\end{definition}

The Cox stack is called the canonical toric stack in the previous literature \cite{FMN}. 

\begin{theorem}
If $\Sigma$ is simplicial, then $\mathcal{X}_{\Sigma}$ is a smooth Deligne-Mumford stack with coarse moduli space $X_\Sigma$.  When $\Sigma$ is smooth (or equivalently $X_\Sigma$ is smooth) $\mathcal{X}_{\Sigma} \cong X_\Sigma$.
\label{thm: stack realization}
\end{theorem}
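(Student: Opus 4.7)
The plan is to reduce this statement to the classical Cox quotient construction of a toric variety, which is already essentially worked out in Chapter 5 of \cite{CLS} and the stacky version in \cite{FMN}.

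First, I would verify that $U_\Sigma$ is a smooth variety. By construction, $\op{Cox}(\Sigma)$ is a subfan of the smooth fan $\Sigma_n$ corresponding to $\A^n$; its cones are generated by subsets of the standard basis $\{e_\rho\}_{\rho \in \Sigma(1)}$. Hence $U_\Sigma = X_{\op{Cox}(\Sigma)}$ is an open subvariety of $\A^{\Sigma(1)}$ and is smooth.

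Second, I would analyze the stabilizers of the $S_{\Sigma(1)}$-action on $U_\Sigma$. For any point $p \in U_\Sigma$, the set $\{\rho \in \Sigma(1) : p_\rho = 0\}$ coincides with $\tau(1)$ for some uniquely determined cone $\tau \in \Sigma$ (this is the key property of $\op{Cox}(\Sigma)$). The stabilizer of $p$ in $\gm^{\Sigma(1)}$ is the subgroup $\{t : t_\rho = 1 \text{ for } \rho \notin \tau(1)\} \cong \gm^{\tau(1)}$; intersecting with $S_{\Sigma(1)}$ via the inclusion $\hat\pi$ in \eqref{eq: hatpi} and dualizing gives
\[
\op{Stab}_{S_{\Sigma(1)}}(p) \cong \op{Hom}\bigl(\op{coker}(M \to \Z^{\tau(1)}),\gm\bigr),
\]
where the map sends $m \mapsto (\langle m, u_\rho\rangle)_{\rho \in \tau(1)}$. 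The rank of this cokernel equals $|\tau(1)| - \dim(\tau)$, and its torsion part is the quotient of $N \cap \op{Span}_{\Q}(\tau)$ by the sublattice $N_\tau$ generated by $\{u_\rho : \rho \in \tau(1)\}$. When $\Sigma$ is simplicial, $|\tau(1)| = \dim(\tau)$ for each $\tau \in \Sigma$, so every such cokernel is a finite abelian group, yielding finite stabilizers. When $\Sigma$ is smooth, the $u_\rho$ additionally form part of a $\Z$-basis of $N$, so the cokernel vanishes and the stabilizers are trivial.

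Third, since $S_{\Sigma(1)}$ is diagonalizable (hence affine and linearly reductive) and acts with finite stabilizers on the smooth variety $U_\Sigma$, the quotient $\mathcal{X}_\Sigma = [U_\Sigma/S_{\Sigma(1)}]$ is a smooth Deligne-Mumford stack. To identify the coarse moduli space with $X_\Sigma$, I would invoke the good quotient presentation of a toric variety from Chapter 5 of \cite{CLS}: the good categorical quotient $U_\Sigma \to U_\Sigma /\!/ S_{\Sigma(1)}$ exists and is canonically isomorphic to $X_\Sigma$, and this good quotient is the coarse moduli space of the stack. When $\Sigma$ is smooth, the stabilizers are trivial so the categorical quotient is a geometric quotient, and hence $\mathcal{X}_\Sigma \cong X_\Sigma$ as schemes. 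The main technical point I expect to dwell on is the precise stabilizer computation in the simplicial case via the paired exact sequences \eqref{eq: f} and \eqref{eq: hatpi}, and the observation that passage to coarse moduli space of $[U_\Sigma/S_{\Sigma(1)}]$ matches the good categorical quotient constructed in \cite{CLS}. Most of the remaining verifications are standard and available in \cite{FMN}.
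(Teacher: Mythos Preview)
Your proposal is correct and aligns with the paper's approach: the paper's proof consists entirely of citations to Theorem~4.11 of \cite{FMN} and Propositions~5.1.9 and Theorem~5.1.11 of \cite{CLS}, and your sketch is precisely an unpacking of the arguments underlying those cited results (finite stabilizers via the simplicial hypothesis, and identification of the good categorical quotient with $X_\Sigma$).
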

\begin{proof}
The first statement is Theorem 4.11 of \cite{FMN}. It also follows from a combination of Proposition 5.1.9 and Theorem 5.1.11 in \cite{CLS}, which also gives the second statement.
\end{proof}

Note that $S_{\Sigma(1)} \subseteq \gm^{|\Sigma(1)|}$.  Note that any element in $\chi \in \op{coker}(f_\nu)$ gives a map $\chi: S_{\Sigma(1)} \rightarrow \gm$. Consequently, each ray $\rho \in \Sigma(1)$ gives a character $\chi_\rho$ of $S_{\Sigma(1)}$ given by the element $\pi(e_\rho) \in \op{coker}(f_\nu)$. Hence, given a divisor $D = \sum a_{\rho}D_\rho$ on $X_\Sigma$, we associate a character $\chi_D := \prod_\rho \chi_\rho^{a_\rho}$ of $S_{\Sigma(1)}$, defined by the element $\pi(\sum_\rho a_\rho e_\rho)$. The total space $\op{tot}(\O_{\mathcal{X}_{\Sigma}}(\chi_D))$ is a quotient stack given by $S_{\Sigma(1)}$ acting on $U_{\Sigma}\times \C$ induced by the standard action on $U_{\Sigma}$ and the character on $\C$. This can be done iteratively for a split vector bundle.
 
We can use this dictionary to move between split vector bundles over toric varieties and quotient stacks.  Namely, we have the following proposition.
\begin{proposition}[Proposition 5.16 of \cite{FK14}]
Let $D_1, \ldots, D_r$ be divisors on $X_\Sigma$. There is an isomorphism of stacks
\begin{equation}
\mathcal{X}_{\Sigma_{D_1, \ldots, D_r}} \cong \op{tot}(\bigoplus_{i=1}^r \O_{\mathcal{X}_{\Sigma}}(\chi_{D_i})).
\label{eq: vector bundle stack iso}
\end{equation}
\end{proposition}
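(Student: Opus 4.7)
The plan is to identify both sides with the same explicit quotient stack, namely $[U_\Sigma \times \A^r / S_{\Sigma(1)}]$, where $S_{\Sigma(1)}$ acts via its standard action on $U_\Sigma$ and via the characters $\chi_{D_1}, \ldots, \chi_{D_r}$ on $\A^r$. The right-hand side of \eqref{eq: vector bundle stack iso} is this quotient by the definition of $\op{tot}(\bigoplus \O_{\mathcal{X}_\Sigma}(\chi_{D_i}))$, so all the work is in matching the left-hand side $\mathcal{X}_{\Sigma_{D_1,\ldots,D_r}}$ to it. This reduces to three steps: identify the Cox open sets, identify the groups, and check the actions agree.

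First I would observe that every maximal cone $\sigma_{D_1,\ldots,D_r}$ of $\Sigma_{D_1,\ldots,D_r}$ contains all of the new rays $e_1,\ldots,e_r$, so a subset of $\Sigma_{D_1,\ldots,D_r}(1)$ spans a cone precisely when the part of it projecting to $\Sigma(1)$ spans a cone in $\Sigma$. The primitive collections of $\Sigma_{D_1,\ldots,D_r}$ therefore coincide with those of $\Sigma$, so the irrelevant loci match under the splitting $\A^{|\Sigma(1)|+r} = \A^{|\Sigma(1)|}\times \A^r$, giving $U_{\Sigma_{D_1,\ldots,D_r}} = U_\Sigma \times \A^r$ as schemes.

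Second, I would produce an isomorphism $S_{\Sigma(1)} \cong S_{\Sigma_{D_1,\ldots,D_r}(1)}$ by applying the snake lemma to the diagram
\[
\begin{array}{ccccccccc}
0 & \to & M & \to & M \oplus \Z^r & \to & \Z^r & \to & 0 \\
  &     & \downarrow & & \downarrow & & \downarrow & & \\
0 & \to & \Z^{|\Sigma(1)|} & \to & \Z^{|\Sigma(1)|+r} & \to & \Z^r & \to & 0,
\end{array}
\]
whose three vertical maps (from left to right) are $f_\Sigma$, $f_{\Sigma_{D_1,\ldots,D_r}}$, and the identity, and whose bottom-left horizontal is $e_\rho \mapsto e_{\bar\rho}$. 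A short computation with the explicit formulas for $f_\Sigma$ and $f_{\Sigma_{D_1,\ldots,D_r}}$ confirms commutativity; since the right vertical has trivial kernel and cokernel, the snake lemma yields $\op{coker}(f_\Sigma) \cong \op{coker}(f_{\Sigma_{D_1,\ldots,D_r}})$, and applying $\op{Hom}(-,\gm)$ delivers the desired group isomorphism.

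Third, I would check that the two actions on $U_\Sigma \times \A^r$ coincide under this identification. The image of $[D_\rho]$ is $[D_{\bar\rho}]$, so $S_{\Sigma_{D_1,\ldots,D_r}(1)}$ acts on the $x_{\bar\rho}$-coordinate by the same character that $S_{\Sigma(1)}$ acts on $x_\rho$. For the fiber coordinates, the identity $f_{\Sigma_{D_1,\ldots,D_r}}(0,e_j^*) = -\sum_\rho a_{j\rho} e_{\bar\rho} + e_{\rho_{b_j}}$ gives the relation $[D_{\rho_{b_j}}] = \sum_\rho a_{j\rho}[D_{\bar\rho}]$ in the cokernel, which under the isomorphism is $[D_j] \in \op{coker}(f_\Sigma)$. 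Hence $\chi_{\rho_{b_j}}$ matches $\chi_{D_j}$, exactly the character scaling the $j$th fiber coordinate of $\bigoplus \O_{\mathcal{X}_\Sigma}(\chi_{D_i})$. The main technical obstacle is the bookkeeping around the sign and lattice conventions in the definition of $f_\nu$; once the snake-lemma square is correctly set up, the rest is a routine diagram chase.
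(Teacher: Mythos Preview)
Your proposal is correct and follows essentially the same approach as the paper, which breaks the proposition into the two lemmas that immediately follow it (Lemma~\ref{EqualityOfSnus} and Lemma~\ref{DecompCoxStack}): identifying the Cox open sets as $U_\Sigma \times \A^r$, establishing the group isomorphism $S_{\Sigma(1)} \cong S_{\Sigma_{D_1,\ldots,D_r}(1)}$ via a snake-lemma diagram, and verifying that the induced action on the fiber coordinates is by the characters $\chi_{D_j}$. The only cosmetic difference is that your snake-lemma diagram is the transpose of the paper's---you place the maps $f_\Sigma$ and $f_{\Sigma_{D_1,\ldots,D_r}}$ vertically with the splitting sequences as rows, whereas the paper places them horizontally with the projection $M\oplus\Z^r \to M$ and the map $g\colon \Z^{\Sigma(1)+r}\to\Z^{\Sigma(1)}$ as the vertical arrows; both arrangements yield the same isomorphism of cokernels.
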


We can break the above proposition into the following two lemmas.  These are already implicit in the proof of Propoisition 5.16 of \cite{FK14} but we include them here for completeness.

\begin{lemma}\label{EqualityOfSnus}
There is a group isomorphism 
\begin{equation}
S_{\Sigma(1)} \cong S_{\Sigma_{D_1, \ldots, D_r}(1)}.
\label{eq: vector bundle group isomorphism}
\end{equation}
\end{lemma}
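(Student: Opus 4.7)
The plan is to identify the two groups via their defining cokernels. By the construction of $S_\nu$ in Equation \eqref{defn:Snu}, we have $S_{\Sigma(1)} = \op{Hom}(\op{coker}(f_\nu),\gm)$ where $f_\nu : M \to \Z^{\Sigma(1)}$ is defined in \eqref{eq: f}, and likewise $S_{\Sigma_{D_1,\ldots,D_r}(1)} = \op{Hom}(\op{coker}(f_{\tilde\nu}),\gm)$ with
\[
  f_{\tilde\nu} : M \oplus \Z^r \to \Z^{\Sigma(1)} \oplus \Z^r, \quad (m,t) \mapsto \sum_{\rho \in \Sigma(1)}\!\bigl(\langle m, u_\rho\rangle - \sum_{i=1}^r a_{i\rho}t_i\bigr)e_{\bar\rho} \; + \; \sum_{i=1}^r t_i e_i,
\]
where $\bar\rho$ is the ray of $\Sigma_{D_1,\ldots,D_r}$ obtained by lifting $\rho$. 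Since $\op{Hom}(-,\gm)$ is contravariant and exact on finitely generated abelian groups, it suffices to construct an isomorphism of cokernels $\op{coker}(f_\nu) \cong \op{coker}(f_{\tilde\nu})$.

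The forward map $\op{coker}(f_\nu) \to \op{coker}(f_{\tilde\nu})$ is induced by $e_\rho \mapsto e_{\bar\rho}$; it is well-defined because the image of $(m,0)$ under $f_{\tilde\nu}$ is precisely $\sum_\rho \langle m, u_\rho\rangle e_{\bar\rho}$, the image of $f_\nu(m)$. For the inverse $\op{coker}(f_{\tilde\nu}) \to \op{coker}(f_\nu)$, I set $e_{\bar\rho} \mapsto e_\rho$ and $e_i \mapsto \sum_\rho a_{i\rho} e_\rho$; in other words, the new fiber-direction classes are sent to the classes of the divisors $D_i$ in $\op{Cl}(X_\Sigma)$. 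Well-definedness amounts to the computation
\[
  \sum_{\rho}\!\bigl(\langle m, u_\rho\rangle - \sum_i a_{i\rho}t_i\bigr)e_\rho \; + \; \sum_i t_i \!\sum_\rho a_{i\rho} e_\rho \; = \; \sum_\rho \langle m, u_\rho\rangle e_\rho \; = \; f_\nu(m),
\]
the two cross-terms cancelling exactly.

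Finally, a direct check shows the two maps are mutually inverse: the composition $e_\rho \mapsto e_{\bar\rho} \mapsto e_\rho$ is the identity on the nose, and in the other direction $e_i \mapsto \sum_\rho a_{i\rho} e_\rho \mapsto \sum_\rho a_{i\rho} e_{\bar\rho}$ is the class of $e_i$ modulo the image of $f_{\tilde\nu}$, as witnessed by $f_{\tilde\nu}(0, e_i^*) = e_i - \sum_\rho a_{i\rho} e_{\bar\rho}$. Applying $\op{Hom}(-,\gm)$ then delivers the isomorphism $S_{\Sigma(1)} \cong S_{\Sigma_{D_1,\ldots,D_r}(1)}$. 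The only subtle point — really more of a bookkeeping obstacle than a mathematical one — is keeping straight which sign conventions appear in $f_{\tilde\nu}$ (so that the cross-terms cancel with the right sign); conceptually, the lemma is just the statement that the class group of a toric variety is unchanged upon replacing it by the total space of a split toric vector bundle, a fact visible from the right-exactness of the divisor-class exact sequence.
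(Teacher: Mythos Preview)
Your proof is correct and takes essentially the same approach as the paper: both identify the cokernels $\op{coker}(f_\nu)$ and $\op{coker}(f_{\tilde\nu})$ via the map $e_{\bar\rho}\mapsto e_\rho$, $e_i\mapsto \sum_\rho a_{i\rho}e_\rho$, then apply $\op{Hom}(-,\gm)$. The only stylistic difference is that the paper packages this as a map of short exact sequences and reads off the cokernel isomorphism from the snake lemma (noting that $\op{proj}_M$ and $g$ have isomorphic kernels $\Z^r$ and trivial cokernels), whereas you write down the inverse map explicitly and verify the composites by hand.
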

\begin{proof}
First note that  $\Z^{\Sigma_{D_1,\ldots, D_t}(1)}=\Z^{\Sigma(1)} \times \Z^t$. We write the generators of the direct summands of this decomposition as $e_\rho$ and $e_i$, respectively. Using Equation~\eqref{eq: f}, construct a commutative diagram 
\begin{equation}
\begin{CD}
0 @>>> M\oplus\Z^t @>f_{\Sigma_{D_1, \ldots, D_t}(1)} >> \Z^{\Sigma(1)+t} @>\pi>> \op{coker}(f_{\Sigma_{D_1, \ldots, D_t}(1)}) @>>> 0\\
@. @V\op{proj}_M VV @VgVV @VVV @.\\
0 @>>> M @>f_{\Sigma(1)} >> \Z^{\Sigma(1)} @>\pi>> \op{coker}(f_{\Sigma(1)})  @>>> 0,
\end{CD}
\label{eq: projection diagram}
\end{equation}
where $\op{proj}_M: M\oplus\Z^t \rightarrow M$ is the standard projection and $g$ is defined by
\begin{equation}\begin{aligned}
g: \Z^{\Sigma(1)+t} &\rightarrow \Z^{\Sigma(1)}, \op{ where} \\ 
e_\rho &\mapsto e_\rho, \\ 
e_i &\mapsto \sum_\rho a_{i\rho} e_\rho.
\end{aligned}\end{equation}
The final vertical map is induced by the first two. The kernel of $\op{proj}_M$ is $\Z^t$ and the cokernel of $\op{proj}_M$ is trivial, and the kernel of $g$ is $\Z^t$ and the cokernel of $g$ is trivial. This gives an equality of cokernels. As $S_{\Sigma_{D_1, \ldots, D_r}(1) } := \op{Hom}(\op{coker}(f_{\Sigma_{D_1, \ldots, D_t}(1)}), \gm)$ and $S_{\Sigma(1)} : =  \op{Hom}(\op{coker}(f_{\Sigma(1)}), \gm)$, this commutative diagram induces an equality of groups.
\end{proof}

\begin{lemma}\label{DecompCoxStack}
We have an isomorphism of quasi-affine varieties
$$
U_{\Sigma_{D_1, \ldots, D_t}} = U_{\Sigma} \times \A^t
$$
which induces the isomorphism of stacks \eqref{eq: vector bundle stack iso}.
\end{lemma}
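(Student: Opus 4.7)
The plan is to identify the fan $\op{Cox}(\Sigma_{D_1,\ldots,D_t})$ combinatorially as the product of fans $\op{Cox}(\Sigma)\times \Sigma_t$, where $\Sigma_t$ is the standard fan of $\A^t$, and then verify at the level of the group action that this gives the stacky total-space description.

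First I would spell out the rays and maximal cones of $\Sigma_{D_1,\ldots,D_t}$. By construction,
\[
\Sigma_{D_1,\ldots,D_t}(1) \;=\; \{\bar\rho \mid \rho\in\Sigma(1)\} \cup \{\rho_1,\ldots,\rho_t\},
\]
where $\bar\rho$ is generated by $u_\rho - \sum_i a_{i\rho} e_i$ and $\rho_i$ is generated by $e_i$; and each maximal cone $\sigma_{D_1,\ldots,D_t}$ is the $\R_{\geq 0}$-span of $\{\bar\rho : \rho\in\sigma(1)\}\cup\{\rho_1,\ldots,\rho_t\}$. Crucially, every maximal cone contains \emph{all} of the new rays $\rho_1,\ldots,\rho_t$. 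Consequently, passing to $\op{Cox}$ in $\R^{\Sigma(1)}\oplus\R^t$, a generic cone becomes $\op{Cone}(e_{\bar\rho} : \rho\in\sigma(1)) \;\times\; \op{Cone}(e_{\rho_1},\ldots,e_{\rho_t})$, and similarly for faces. Thus $\op{Cox}(\Sigma_{D_1,\ldots,D_t}) = \op{Cox}(\Sigma)\times\Sigma_t$ as a product of fans, which immediately gives the variety-level equality
\[
U_{\Sigma_{D_1,\ldots,D_t}} \;=\; U_\Sigma\times \A^t.
\]

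Next I would promote this to a stacky isomorphism using Lemma~\ref{EqualityOfSnus}. That lemma already identifies the groups $S_{\Sigma(1)} \cong S_{\Sigma_{D_1,\ldots,D_t}(1)}$ via the bottom-right vertical arrow in diagram~\eqref{eq: projection diagram}. What remains is to match the embedding $\widehat\pi_{\Sigma_{D_1,\ldots,D_t}(1)} : S_{\Sigma_{D_1,\ldots,D_t}(1)}\hookrightarrow \gm^{\Sigma(1)}\times\gm^t$ with the embedding $\widehat\pi_{\Sigma(1)}: S_{\Sigma(1)}\hookrightarrow \gm^{\Sigma(1)}$ in the first factor and with the characters $\chi_{D_i}$ in the new factors. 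Dualizing the commutative diagram~\eqref{eq: projection diagram} and using that $g(e_i) = \sum_\rho a_{i\rho}e_\rho$, the $i$-th new coordinate of $\widehat\pi_{\Sigma_{D_1,\ldots,D_t}(1)}(\chi)$ is exactly $\prod_\rho \chi_\rho^{a_{i\rho}} = \chi_{D_i}$, while the coordinates indexed by $\Sigma(1)$ reproduce $\widehat\pi_{\Sigma(1)}(\chi)$.

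Therefore $S_{\Sigma(1)}$ acts on $U_\Sigma\times\A^t$ by the standard action on $U_\Sigma$ and by $\chi_{D_i}$ on the $i$-th copy of $\A$, which is precisely the $S_{\Sigma(1)}$-action making $[(U_\Sigma\times\A^t)/S_{\Sigma(1)}]$ into $\op{tot}\bigl(\bigoplus_{i=1}^t \O_{\mathcal X_\Sigma}(\chi_{D_i})\bigr)$. This yields the desired isomorphism of stacks~\eqref{eq: vector bundle stack iso}. The only mildly delicate step is the last bookkeeping one — tracing the identification of cokernels through diagram~\eqref{eq: projection diagram} to confirm that the new characters pick up exactly the $\chi_{D_i}$ — but this follows directly from the definition of $g$.
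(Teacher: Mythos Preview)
Your proposal is correct and follows essentially the same approach as the paper: first establish $\op{Cox}(\Sigma_{D_1,\ldots,D_t}) = \op{Cox}(\Sigma)\times\Sigma_t$ by observing that every maximal cone of $\Sigma_{D_1,\ldots,D_t}$ contains all of the new rays $\rho_1,\ldots,\rho_t$, and then trace the action through diagram~\eqref{eq: projection diagram} to see that $S_{\Sigma(1)}$ acts on the $i$-th new coordinate via $\chi_{D_i}$. Your write-up is in fact a bit more explicit than the paper's about the dualization step that identifies the new characters with the $\chi_{D_i}$, but the argument is the same.
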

\begin{proof}
Consider the fan $\Sigma$ above. Let $\Sigma_t := \{ \op{Cone}(e_i) \ | \ i \in I) \ | \ I \subset \{1, \ldots, t\}\}$.  Note that 
\begin{equation}\begin{aligned}
\op{Cox}(\Sigma_{D_1, \ldots, D_t}) &= \{\op{Cone}(e_\rho \ | \ \rho \in \sigma) \ | \ \sigma \in \Sigma_{D_1, \ldots, D_t} \} \\ 
	&= \{ \op{Cone}(e_\rho \ | \ \rho \in \sigma) \ | \ \sigma \in \Sigma\} \times \Sigma_t\\
	&= \op{Cox}(\Sigma) \times \Sigma_t.
\end{aligned}\end{equation}
The first line is by definition, the second line comes from all maximal cones in $\Sigma_{D_1, \ldots, D_t}$ containing the rays generated by $e_i$ for all $i$, and the third line is by definition.

Now, the action of $S_{\Sigma(1)}=S_{\Sigma_{D_1,\ldots, D_t}(1)}$ on $U_{\Sigma_{D_1, \ldots, D_t}} = U_{\Sigma} \times \A^t$ is described by \eqref{eq: projection diagram}.  This shows that $S_{\Sigma_{D_1,\ldots, D_t}(1)}$ acts on $U_{\Sigma}$ via the isomorphism with $S_{\Sigma(1)}$. Moreover, on the $i^{\op{th}}$-coordinate $u_i$ of $\A^t$, it acts via the character $\chi_{D_i}$.  This gives the isomorphism of stacks \eqref{eq: vector bundle stack iso}.
\end{proof}

  \subsection{VGIT on Affine Space}

Take an affine space 
\[
X:= \mathbb{A}^{n+r} = \op{Spec }\kappa[x_1, ..., x_n, u_1, ..., u_r].
\]
Consider the open dense torus $\gm^{n+r}$ with the standard embedding and action on $X$. Take a subgroup $S\subseteq \gm^{n+r}$.  For $1 \leq i \leq n+r$ we get a character $\chi_i$ from the composition of the inclusion and the projection onto the $i^{\op{th}}$ summand.

\begin{definition}
Let $\times : \gm^{n+r} \rightarrow \gm$ be the multiplication map. We say that $S$ satisfies the \newterm{quasi-Calabi-Yau condition} if $\times|_S$ is torsion.
\end{definition}

\begin{remark}
The quasi-Calabi-Yau condition is equivalent to the sum $\sum_{i=1}^{n+r} \chi_i$ being torsion.
\end{remark}

The reason for the distinction between the variables $x_i, u_i$ is that we equip $\A^{n+r}$ with an additional $\gm$-action.  Namely, for $\lambda \in \gm$ we define 
\begin{equation}\begin{aligned}\label{RchargeAction}
\lambda \cdot x_i & := x_i \\
\lambda \cdot u_i & := \lambda u_i 
\end{aligned}\end{equation}
and call this $\gm$-action \newterm{$R$-charge}.

This gives an action $S \times \gm$ on $\A^{n+r}$ and defines a distinguished character $\chi_R$ coming from projection onto the second factor.  A \newterm{superpotential} is a semi-invariant function $w$ with respect to $\chi_R$. The data $(\A^{n+r}, S\times \gm, w, \chi_R)$ is a \newterm{gauged Landau-Ginzburg model}.  We can restrict this data to any invariant open subset of $\A^{n+r}$ to get various new gauged Landau-Ginzburg models.  

In this paper, we choose such open sets using geometric invariant theory (GIT) for the action of $S$ on $\A^{n+r}$.  Let us now review this story, which is called variation of geometric invariant theory quotients (VGIT). The possible GIT quotients of $\A^{n+r}$ by $S$ correspond to a choice of a (rational) character 
\[
\chi \in \hat{S}_{\Q} := \op{Hom}(S, \gm) \otimes_{\Z} \Q. 
\]
That is, given a $\chi \in \hat S_{\Q}$ rationalize the denominator to get $G$-equivariant line bundle $\O(d\chi)$ for some $d > 0$.  Geometric invariant theory determines an open subset $U_{\chi}$ of $\A^{n+r}$ called the semi-stable locus. 

Partition $ \hat S_{\Q}$ into the subsets 
\[
\sigma_\chi := \{ \tau \in  \hat S_{\Q} \ | \ U_{\tau} = U_{\chi} \}.
\]
It turns out that each $\sigma_\chi$ is a cone and the set of all such cones form a fan $\Sigma_{GKZ}$ in $\hat S_{\Q}$ called the \newterm{GKZ-fan} (or \newterm{secondary fan}).  The maximal cones of this fan are called \newterm{chambers} and the codimension one cones are called \newterm{walls}. There are only a finite number of chambers in the fan $\Sigma_{GKZ}$. For any character $\chi_p$ in the interior of a chamber $\sigma_p$, we denote by $U_p$ the open set of $\A^{n+r}$ that consists of the semi-stable points with respect to the associated line bundle to $\chi_p$.  

\begin{theorem}
For any two chambers $\sigma_p$ and $\sigma_q$, if $S$ satisfies the quasi-Calabi-Yau condition, then there is an equivalence of categories:
$$
\op{D}^{\op{abs}}[U_p,   S \times \gm, w] \cong \op{D}^{\op{abs}}[U_q,   S \times \gm, w].
$$
\end{theorem}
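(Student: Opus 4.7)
The plan is to connect the two chambers by a path of elementary wall crossings and apply case (2) of Theorem~\ref{thm: BFKVGIT} at each step. I would choose a generic piecewise-linear path in $\hat{S}_{\Q}$ from a point in the interior of $\sigma_p$ to a point in the interior of $\sigma_q$ meeting only codimension-one walls of $\Sigma_{GKZ}$, each transversally. This reduces the problem to the case of two chambers $\sigma_-, \sigma_+$ sharing a single codimension-one wall; the theorem then follows by composing equivalences $\dabsfact{U_-, S\times\gm, w} \cong \dabsfact{U_+, S\times\gm, w}$ across consecutive walls.

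For such a pair, let $\lambda : \gm \to S$ be a primitive cocharacter dual to the shared wall, oriented so that the $U_\pm$ of interest matches the convention of Theorem~\ref{thm: BFKVGIT}. Since $G := S \times \gm$ is abelian, $P(\pm\lambda) = G$, and the sets $S_{\pm\lambda} = Z_{\pm\lambda}$ are the coordinate subspaces where the variables of strictly negative (respectively, positive) $\lambda$-weight vanish. These are closed linear subspaces, so an elementary wall crossing in the sense of Definition~\ref{def: EWC} is obtained, and $U_\pm$ agrees with the appropriate GIT semistable locus for a character in $\sigma_\pm$. The auxiliary hypotheses of Theorem~\ref{thm: BFKVGIT} are then immediate: $S_\lambda^0 = X^\lambda$ is itself affine, so admits a trivial $G$-invariant affine cover, and $\mu(\O_X(\chi_R), \lambda, u) = 0$ for $u \in Z_\lambda^0$ because $\chi_R$ is trivial on $S$ and $\lambda$ factors through $S$.

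The key computation is $t(\mathfrak{K}^+) = t(\mathfrak{K}^-)$, and this is where the quasi-Calabi-Yau hypothesis enters. Decomposing the tangent space $T_u X = \bigoplus_k V_k$ by $\lambda$-weight at any $u \in Z_\lambda^0$, the conormal bundles of $Z_{\pm\lambda}$ in $X$ at $u$ are duals of the strictly negative (respectively, positive) weight summands, so
\[
t(\mathfrak{K}^+) = -\sum_{k<0} k\,\op{dim} V_k, \qquad t(\mathfrak{K}^-) = \sum_{k>0} k\,\op{dim} V_k.
\]
The quasi-Calabi-Yau assumption says that $\prod_{i=1}^{n+r} \chi_i$ restricted to $S$ is torsion, so its pairing with $\lambda$ vanishes: $\sum_i \chi_i(\lambda) = 0$, which is precisely $\sum_k k\,\op{dim} V_k = 0$. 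Hence $t(\mathfrak{K}^+) = t(\mathfrak{K}^-)$, and case (2) of Theorem~\ref{thm: BFKVGIT} produces an exact equivalence across this wall. Composing along the path yields the desired equivalence between $\dabsfact{U_p, S\times\gm, w}$ and $\dabsfact{U_q, S\times\gm, w}$. The main obstacle is the bookkeeping of the wall-crossing reduction: verifying that each codimension-one wall of $\Sigma_{GKZ}$ indeed arises from a primitive cocharacter satisfying Definition~\ref{def: EWC} and that the two sides $U_\pm$ coincide with the two adjacent GIT chambers. The quasi-Calabi-Yau hypothesis itself contributes only through the balanced $t$-invariants above.
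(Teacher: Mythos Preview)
Your proposal is correct and follows essentially the same approach as the paper: connect the two chambers by a path of wall crossings and invoke the equivalence case of Theorem~\ref{thm: BFKVGIT} at each step. The paper's proof is extremely terse---it simply cites Theorem~\ref{thm: BFKVGIT} together with the convexity of the support of the GKZ fan (Theorem 14.4.7 of \cite{CLS}) to justify connectivity, and refers to \cite{HW12, HL15} for the toric precedent---whereas you supply the details the paper omits: the verification that each codimension-one wall yields an elementary wall crossing with the abelian group $S\times\gm$, and the explicit balancing computation $t(\mathfrak{K}^+)-t(\mathfrak{K}^-)=-\sum_i\langle\chi_i,\lambda\rangle=0$ from the quasi-Calabi-Yau hypothesis.
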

\begin{proof}
This is a consequence of Theorem~\ref{thm: BFKVGIT}.
As for why this works for all chambers, one can use Theorem 14.4.7 of \cite{CLS}.  Namely, one can get from any chamber to any other chamber by a sequence of elementary wall crossings as the GKZ-fan has convex support.

 As we are in the toric setting, the result, in fact, goes back to Theorem 3 of \cite{HW12}.  Another version of this result can be found in \cite{HL15} Corollary 4.8 and Proposition 5.5.

\end{proof}

 \subsection{Variation of Geometric Invariant Theory for Toric Stacks and the Secondary Fan}\label{CohTri}
  
Here, we consider the geometric invariant theory associated to  the quotient of the affine space  $X := \A^{n}$ by the abelian group $S: = S_\nu$, as defined in Equation~\ref{defn:Snu}.   As it turns out, the different GIT quotients of $X$ by $S$ have an interpretation both in terms of the secondary fan and in terms of fans whose rays have primitive generators in the point collection $\nu$. 

Following \S 15.2 of \cite{CLS}, take $\nu= (v_1, \ldots, v_n)$ to be a collection of distinct, nonzero points in $N$. 

\begin{definition}
We say that $\nu$ is \newterm{geometric}  if each $v_i \in \nu$ is nonzero and generates a distinct ray in $N_{\Q}$.
\end{definition}

\begin{proposition}[Exercise 15.1.8 in \cite{CLS}]\label{prop:regTriangFans}
Suppose $\nu$ is geometric.  Then there is a bijective correspondence between chambers of the secondary fan and simplicial fans $\Sigma$ such that $\Sigma(1) \subseteq \{ \op{Cone}(v_i) \ | \  1 \leq i \leq n \}$, $|\Sigma| = \op{Cone}(\nu)$, and $X_\Sigma$ is semiprojective. 
\label{prop: GKZ chamber bijection}
\end{proposition}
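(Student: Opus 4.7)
The plan is to realize the bijection as the standard toric GIT dictionary for the $S_\nu$-action on $\A^n$, constructing explicit maps in both directions whose composition is visibly the identity. The common intermediary will be the semistable locus $U_\chi \subseteq \A^n$: a chamber $\sigma$ of $\Sigma_{\op{GKZ}}$ is by definition an equivalence class of characters giving the same $U_\chi$, and on the other side, a fan $\Sigma$ with $\Sigma(1)\subseteq\{\op{Cone}(v_i)\}$ determines $U_\Sigma$ as the Cox open set. So I only need to show each $U_\chi$ coincides with some $U_\Sigma$ and vice versa, with the matching $\Sigma$ having the stated properties.

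First I would run the direction \emph{chamber $\to$ fan}. Fix $\chi$ in the interior of a chamber $\sigma$. The toric description of the semistable locus is that $x\in U_\chi$ iff $\chi$ lies in the rational cone spanned by $\{\pi(e_i): x_i\neq 0\}$ in $\op{coker}(f_\nu)_\Q$. Dualizing via the sequence $M\to \Z^n\to \op{coker}(f_\nu)\to 0$, this condition on $I(x):=\{i:x_i=0\}$ is equivalent, through the pairing with the $v_i$'s, to asking $\{v_i:i\notin I(x)\}$ to contain the rays of a single cone of a simplicial fan $\Sigma_\chi$ obtained by pulling back the chamber structure. Concretely, $\Sigma_\chi$ has maximal cones $\op{Cone}(v_i:i\in J)$ for the minimal sets $J$ such that $\chi\in \op{Cone}(\pi(e_i):i\in J)$; the geometric hypothesis on $\nu$ guarantees these are distinct rays. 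By construction $\Sigma_\chi(1)\subseteq\{\op{Cone}(v_i)\}$, the support is $\op{Cone}(\nu)$ since $\chi$ lies inside the total cone $\op{Cone}(\pi(e_1),\dots,\pi(e_n))$, and $X_{\Sigma_\chi}$ is semiprojective because $\chi$ itself produces a strictly convex $\Sigma_\chi$-linear support function, giving an ample class on the GIT quotient. That $U_\chi = U_{\Sigma_\chi}$ is then just the irrelevant-ideal description of the Cox open set.

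Conversely, for \emph{fan $\to$ chamber}, given a simplicial $\Sigma$ as in the statement, semiprojectivity of $X_\Sigma$ is equivalent (by the toric Kleiman criterion for semiprojective toric varieties) to the existence of a strictly convex $\Sigma$-linear support function $\varphi:|\Sigma|\to \R$. Setting $a_i := \varphi(v_i)$ for each ray $\op{Cone}(v_i)\in\Sigma(1)$ and extending $a_i$ arbitrarily for $v_i$ not in $\Sigma(1)$ (this ambiguity will wash out below), the class $\chi_\Sigma := \pi(\sum a_i e_i)\in\op{coker}(f_\nu)_\Q = \hat{S}_\nu\otimes\Q$ is well defined modulo the image of $f_\nu$, which is the zero class by exactness. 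The strict convexity of $\varphi$ translates to $\chi_\Sigma$ lying in the interior of a chamber; the ambiguity in extending $\varphi$ off of $\Sigma(1)$ only moves $\chi_\Sigma$ within that chamber interior, so the assignment $\Sigma\mapsto \sigma_{\chi_\Sigma}$ is well defined.

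The final step is checking the two maps are mutually inverse, which is immediate once both sides are expressed as the common semistable locus: $U_{\chi_\Sigma} = U_\Sigma$ by unwinding the definition of the semistable locus against the support function $\varphi$, and $\Sigma_{\chi} = \Sigma$ when we start with $\chi = \chi_\Sigma$ because the maximal cones of $\Sigma_\chi$ read off exactly the combinatorics of where $\varphi$ is linear. The step I expect to require the most care is the passage between strictly convex support functions on $\Sigma$ and characters in the interior of a single chamber (rather than merely on a wall), and in particular that all such support functions for a fixed $\Sigma$ determine the same chamber; this is where semiprojectivity, rather than mere completeness, is essential, and where I would lean on the discussion around Theorem 14.4.7 of \cite{CLS} to close the loop.
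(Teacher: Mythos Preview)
The paper does not supply its own proof of this proposition; it simply records it as Exercise~15.1.8 of \cite{CLS}. Your outline is the standard toric-GIT dictionary and is essentially the argument that \S\S14--15 of \cite{CLS} develops, so there is nothing to compare on the level of strategy.

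There is, however, one genuine slip in your fan-to-chamber direction. You write that for $v_i\notin\Sigma(1)$ you may ``extend $a_i$ arbitrarily'' and that this ambiguity ``will wash out.'' It does not: changing such an $a_i$ by $\delta$ moves $\chi_\Sigma$ by $\delta\,\pi(e_i)$, which is nonzero in $\widehat{S_\nu}\otimes\Q$, and a wrong choice lands $\chi_\Sigma$ in a different chamber or on a wall. The correct constraint is visible in the paper's description of $\widetilde\Gamma_{\Sigma,I_\emptyset}$ immediately following this proposition: for $v_i\in I_\emptyset=\nu\setminus\Sigma(1)$ one needs $\varphi(v_i)\geq -a_i$, with strict inequality to land in the interior of the chamber. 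Once you impose that inequality, all admissible choices do lie in the same chamber interior and your argument goes through. You already flag this passage as the one requiring care, but the fix is this specific inequality on the extended values, not an appeal to Theorem~14.4.7 of \cite{CLS}.
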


Allow us to describe the cones in the secondary fan in a bit more detail. 
Tensoring the short exact sequence in \eqref{eq: f} with $\Q$, we get the sequence
\[
M_{\Q} \stackrel{f_\nu}{\longrightarrow} \Q^n \stackrel{\pi}{\longrightarrow} \op{coker}(f_\nu)\otimes \Q \to 0.
\]
Here, the vector space $(\hat{S}_\nu)_{\Q} =  \op{coker}(f_\nu)\otimes \Q$ is the space in which the secondary fan $\Sigma_{\text{GKZ}}$ lives. 

Note that if we take the standard basis vectors $e_i$ for $\Q^n$, then the support of the secondary fan $|\Sigma_{\text{GKZ}}|$ is the cone $\op{Cone}(\pi(e_i))$. We now will give the structure of the fan that comes from a decomposition, but first let us give a definition in order to give a general setup. 

\begin{definition}[Definition 6.2.2 of \cite{CLS}]
A generalized fan $\Sigma$ in $N_{\R}$ is a finite collection of cones $\sigma \subseteq N_{\R}$ so that
\begin{enumerate}
\item every $\sigma \in \Sigma$ is a rational polyhedral cone,
\item for any $\sigma \in \Sigma$, each face of $\sigma$ is also in $\Sigma$, and
\item for any $\sigma_1, \sigma_2 \in \Sigma$, the intersection $\sigma_1\cap\sigma_2$ is a face of each.
\end{enumerate}
\end{definition}

Start with a generalized fan $\Sigma$ in $N_{\R}$ and a set $I_{\emptyset} \subset \nu$ so that the support $|\Sigma|$ is $\op{Cone}(\nu)$, the toric variety $X_{\Sigma}$ is semiprojective, and we can write any cone $\sigma \in \Sigma$ as a cone $\op{Cone}(v_i \ | \  v_i \in \sigma, v_i \notin I_{\emptyset})$. Given such a pair $(\Sigma, I_{\emptyset})$, we define a cone in $\Q^n$ 

\begin{equation*}
\tilde \Gamma_{\Sigma, I_{\emptyset}} := \left\{ 
 (a_i) \in \Q^n  \ \middle | \ 
   \begin{aligned}
      & \text{there exists a convex support function $\varphi$ }  \\
	&  \text{so that $\varphi(v_i) = -a_i$ if $v_i \notin I_{\emptyset}, \varphi(v_i) \geq -a_i$ if $v_i \in I_{\emptyset} $} 
  \end{aligned}
\right\}.
\end{equation*}

Take the cone $\Gamma_{\Sigma, I_{\emptyset}}$ to be the image of $\tilde \Gamma_{\Sigma, I_{\emptyset}}$ under the image of $\pi$. The set of all such $\Gamma_{\Sigma, I_{\emptyset}}$ gives the fan $\Sigma_{\text{GKZ}}$. By Proposition 15.2.1 of \cite{CLS}, we have a tractable description of the GKZ cone as an intersection of cones:
\begin{equation}\label{eq:altGKZconeDef}
\Gamma_{\Sigma, I_{\emptyset}} = \bigcap_{\sigma \in \Sigma_{\text{max}}} \op{Cone}(\pi(e_\rho) \ | \ u_{\rho} \in I_{\emptyset} \text{ or } \rho \notin \sigma).
\end{equation}

Fix a fan $\Sigma \in N_{\R}$ with $\Sigma(1) \subseteq \nu$.  A priori, we get two different stacks.  One is associated to $\Gamma_{\Sigma, \nu \backslash \Sigma(1)}$.  This comes from the GIT problem belonging to the $S_\nu$-action on $\A^\nu$.  The other is associated to $\Gamma_{\Sigma, \emptyset}$, which comes from the GIT problem associated to $S_{\Sigma(1)}$-action on $\A^{\Sigma(1)}$.  Fortunately, the two stacks are isomorphic.

\begin{lemma}\label{lem: stack iso}
 Suppose we have an exact sequence of algebraic groups,
\[
0 \to H \xrightarrow{i} G \xrightarrow{\pi} Q \to 0.
\]
Let $G$ act on $X$ and hence on $X \times Q$ via $\pi$.  Then we have an isomorphism of stacks
\begin{equation}
[X \times Q / G ] \cong [X / H].
\label{eq: natural stacks}
\end{equation}
\end{lemma}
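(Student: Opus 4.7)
The plan is to recognize $X \times Q$ as the $G$-space induced from $X$ along $i: H \hookrightarrow G$, and then apply the standard descent principle that quotienting by an induced action recovers the original quotient.

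First, since $\pi: G \to Q$ is a surjection with kernel $H$, the translation action of $G$ on $Q$ through $\pi$ is transitive with stabilizer $H$, identifying $Q \cong G/H$ as $G$-spaces. I would verify that the map
\[
\Phi : G \overset{H}{\times} X \longrightarrow X \times Q, \qquad [g,x] \longmapsto (g \cdot x,\ \pi(g))
\]
is a well-defined $G$-equivariant isomorphism, where $H$ acts on $G \times X$ by $h\cdot(g,x) = (g\, i(h)^{-1},\, i(h)\cdot x)$. Well-definedness uses $\pi \circ i = 0$; a two-sided inverse sends $(x,q)$ to $[g,\, g^{-1}\cdot x]$ for any local lift $g \in G$ of $q$, and independence of this choice modulo $i(H)$ is immediate.

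Second, the general identification $[(G \overset{H}{\times} X) / G] \cong [X/H]$ follows from forming the quotient of $G \times X$ by the two commuting actions of $H$ (diagonally, as above) and $G$ (by left translation on the $G$-factor) in the opposite order: quotienting by $G$ first yields $X$ with the residual $H$-action agreeing with the restriction via $i$, and quotienting by $H$ next gives $[X/H]$. Composing with $\Phi$ produces the claimed isomorphism. Equivalently, one can describe the equivalence directly on $T$-points: an object of $[X \times Q / G](T)$ is a $G$-torsor $P' \to T$ with a $G$-equivariant map $h: P' \to X \times Q$; the subscheme $P := (\pi_2 \circ h)^{-1}(e) \subset P'$ is an $H$-torsor (since $H = \pi^{-1}(e)$) equipped with the $H$-equivariant map $\pi_1 \circ h|_P : P \to X$, and the inverse assignment sends $(P,f)$ to $\bigl(G \overset{H}{\times} P,\ [g,p] \mapsto (g\cdot f(p), \pi(g))\bigr)$.

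The main obstacle is purely bookkeeping: confirming that the right $H$-action on $G$ used in forming $G \overset{H}{\times} X$ is compatible with the given left $G$-action on $X \times Q$ through $\pi$ on the second factor, and that the resulting 2-isomorphisms verifying that the two round-trips are the identity are natural in $T$. Once the conventions are fixed, the argument is formal and invokes no input beyond the first isomorphism theorem.
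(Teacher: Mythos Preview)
Your proposal is correct, and the ``equivalently'' description you give on $T$-points is exactly the paper's proof: the paper writes the equivalence of groupoids as $[Y \leftarrow \mathcal G \to X \times Q] \mapsto [Y \leftarrow \mathcal G \times_{X \times Q} X \to X]$ with inverse $[Y \leftarrow \mathcal H \to X] \mapsto [Y \leftarrow \mathcal H \overset{H}{\times} G \to X \times Q]$, which is precisely your torsor-level construction (noting that $\mathcal G \times_{X \times Q} X$ is the fiber of $\pi_2 \circ h$ over $e$). Your first paragraph, identifying $X \times Q$ with the induced $G$-space $G \overset{H}{\times} X$ and then quotienting the commuting $G$- and $H$-actions in either order, is a more conceptual packaging that the paper does not make explicit; it has the advantage of isolating the content as the standard induction--restriction adjunction rather than a direct groupoid check, though both routes are ultimately the same formal computation.
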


\begin{proof} $[X \times Q / G ]$ is the functor that assigns to a scheme $Y$ the groupoid  $Y \leftarrow \mathcal G \rightarrow X \times Q$ of $G$-torsors over $Y$ with $G$-equivariant maps to $X \times Q$.  Similarly  $[X / H]$ assigns to a scheme $Y$ the groupoid $Y \leftarrow \mathcal H\rightarrow X$ of $H$-torsors over $Y$ with $H$-equivariant maps to $X$.
Finally, one straightforwardly yet tediously checks that there is an equivalence of groupoids, 
\begin{align*}
\{ Y \leftarrow \mathcal G \rightarrow X \times Q \}& \Longleftrightarrow \{ Y \leftarrow \mathcal H\rightarrow X \} \\
[ Y \leftarrow \mathcal G \rightarrow X \times Q ]& \Rightarrow  [Y \leftarrow \mathcal G \times_{X \times Q} X \rightarrow X ] \\
[Y \leftarrow \mathcal H \overset{H}{\times} G \rightarrow X \times Q ]& \Leftarrow [Y \leftarrow \mathcal H \rightarrow X ].
\end{align*}
\end{proof}

\begin{corollary}
There is a natural isomorphism of stacks,
\[
[U_{\Sigma}  \times \gm^{I_{\emptyset}} / S_\nu] \cong [U_{\Sigma} /  S_{\nu \backslash I_\emptyset}].
\]
\label{prop: stack iso appl}
\end{corollary}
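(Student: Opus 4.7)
The plan is to deduce this directly from Lemma~\ref{lem: stack iso}, once we produce the correct short exact sequence relating the three groups involved. Concretely, I want to establish
\[
1 \to S_{\nu \setminus I_\emptyset} \to S_\nu \to \gm^{I_\emptyset} \to 1,
\]
and then apply the lemma with $H = S_{\nu \setminus I_\emptyset}$, $G = S_\nu$, $Q = \gm^{I_\emptyset}$, and $X = U_\Sigma$.

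To build this sequence, I would start from the coordinate projection $p : \Z^{\nu} \to \Z^{\nu \setminus I_\emptyset}$, whose kernel is $\Z^{I_\emptyset}$. Since $f_{\nu \setminus I_\emptyset} = p \circ f_\nu$, this gives a short exact sequence of two-term complexes $0 \to [0 \to \Z^{I_\emptyset}] \to [M \xrightarrow{f_\nu} \Z^\nu] \to [M \xrightarrow{f_{\nu \setminus I_\emptyset}} \Z^{\nu \setminus I_\emptyset}] \to 0$, and the associated long exact sequence in cohomology yields
\[
0 \to \Z^{I_\emptyset} \to \op{coker}(f_\nu) \to \op{coker}(f_{\nu \setminus I_\emptyset}) \to 0,
\]
using that both $f_\nu$ and $f_{\nu \setminus I_\emptyset}$ are injective because $\Sigma(1) \subseteq \nu \setminus I_\emptyset$ spans $N_\Q$ (which holds since $X_\Sigma$ is semiprojective and $|\Sigma| = \op{Cone}(\Sigma(1))$ is full-dimensional). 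Applying $\op{Hom}(-,\gm)$ and using divisibility of $\gm$ produces the desired SES.

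Next I would verify that the natural $S_\nu$-action on $U_\Sigma \times \gm^{I_\emptyset}$, induced from the coordinate-wise action of $S_\nu \hookrightarrow \gm^\nu$ on $\A^\nu$, has exactly the shape required by Lemma~\ref{lem: stack iso}. Writing $\hat{\pi}_{\nu \setminus I_\emptyset}$ and $\hat{\pi}_{I_\emptyset}$ for the composition of $\hat\pi : S_\nu \hookrightarrow \gm^\nu$ with the two coordinate projections, the key observation is that $\hat{\pi}_{I_\emptyset}$ is precisely the quotient map $S_\nu \to \gm^{I_\emptyset}$ appearing in the SES above; indeed, both send $\phi \in \op{Hom}(\op{coker}(f_\nu),\gm)$ to the family $(\phi(\pi(e_i)))_{i \in I_\emptyset}$, which via the injection $\Z^{I_\emptyset} \hookrightarrow \op{coker}(f_\nu)$ is the restriction $\phi|_{\Z^{I_\emptyset}}$. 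Similarly, tracing $\psi \in S_{\nu \setminus I_\emptyset}$ through $S_{\nu \setminus I_\emptyset} \hookrightarrow S_\nu \xrightarrow{\hat{\pi}_{\nu \setminus I_\emptyset}} \gm^{\nu \setminus I_\emptyset}$ recovers the standard inclusion $S_{\nu \setminus I_\emptyset} \hookrightarrow \gm^{\Sigma(1)}$, so the subgroup $H = S_{\nu \setminus I_\emptyset}$ acts on $U_\Sigma$ by its usual action.

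With these two pieces in hand, Lemma~\ref{lem: stack iso} applied to $0 \to S_{\nu \setminus I_\emptyset} \to S_\nu \to \gm^{I_\emptyset} \to 0$ with $X = U_\Sigma$ immediately yields the stated isomorphism. The main obstacle is not really an obstacle but a bookkeeping step: one must be careful about which direction the induced maps on cokernels go (they come from the projection $\Z^\nu \twoheadrightarrow \Z^{\nu \setminus I_\emptyset}$, not from an inclusion), and confirm that the dual SES identifies the quotient $S_\nu \twoheadrightarrow \gm^{I_\emptyset}$ with the coordinate projection $\hat{\pi}_{I_\emptyset}$ so that the hypotheses of Lemma~\ref{lem: stack iso} are genuinely satisfied.
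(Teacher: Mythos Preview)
Your proposal is correct and follows essentially the same route as the paper: both reduce to Lemma~\ref{lem: stack iso} after producing the short exact sequence $1 \to S_{\nu \setminus I_\emptyset} \to S_\nu \to \gm^{I_\emptyset} \to 1$ by applying $\op{Hom}(-,\gm)$ to the exact sequence of cokernels (the paper phrases the diagram chase via the snake lemma on a $3\times 3$ diagram, you via the long exact sequence of a short exact sequence of two-term complexes, but these are the same argument). Your extra verification that the quotient map $S_\nu \twoheadrightarrow \gm^{I_\emptyset}$ agrees with the coordinate projection $\hat\pi_{I_\emptyset}$, so that the hypotheses of Lemma~\ref{lem: stack iso} are genuinely met, is a point the paper leaves implicit.
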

\begin{proof}

This is a special case of Lemma~\ref{lem: stack iso}.  Since $\nu, \nu \backslash I_\emptyset$ span $N_{\R}$, the maps $f_\nu, f_{\nu \backslash I_\emptyset}$ are injective.  Starting with the second and third row, the snake lemma gives the top isomorphism in the following diagram:
\[
\begin{CD}
0 @>>> 0 @>>> \Z^{I_\emptyset} @= \Z^{I_{\emptyset}}  @>>> 0\\
@. @VVV @VVV @VVV @.\\
0 @>>> M @>f_\nu>> \Z^{\nu} @>>> \widehat{S_\nu} @>>> 0\\
@. @| @VVV @VVV @.\\
0 @>>> M @>f_{\nu \backslash I_\emptyset}>> \Z^{\nu \setminus I_\emptyset} @>>> \widehat{S_{\nu \backslash I_\emptyset}}  @>>> 0.
\end{CD}
\]

Applying $\op{Hom}(-,\gm)$ to the right vertical exact sequence, gives
\[
0 \to S_{\nu \backslash I_\emptyset} \to S_\nu \to \gm^{I_\emptyset} \to 0.
\]
Hence, we can specialize Equation~\eqref{eq: natural stacks} to $H = S_{\nu \backslash I_\emptyset}, G = S_\nu,  Q = \gm^{I_\emptyset}, X = U_{\Sigma}$.

\end{proof}

Now, let $\sigma \subseteq N_{\R}$ be a $\Q$-Gorenstein cone.  Let $\nu \subseteq \sigma \cap N$ be a finite set which contains the ray generators of $\sigma$.  Define
\begin{equation}\begin{aligned}
\nu_{=1} &:= \{  v \in \nu \ | \ \langle \mathfrak{m}, v \rangle = 1\}; \\
\nu_{\neq1} &:= \{ a \in \nu \ | \ \langle \mathfrak{m}, a \rangle \neq 1\}.
\end{aligned}\end{equation}
Let $\chi_K$ be the character $-\sum_{v\in \nu} \chi_{D_{\nu}}$.

\begin{lemma} Take $\nu = \{v_1, \ldots, v_n\}$ to be a geometric collection of nonzero lattice points in $N$. Let $\Sigma$ be a simplicial fan in $N_{\R}$ and $I_{\emptyset} \subset \nu$ so that the support $|\Sigma|$ is the cone $\op{Cone}(\nu)$, the toric variety $X_{\Sigma}$ is semiprojective, and $\Sigma(1) \coprod I_{\emptyset} = \nu$. If the cone $\sigma$ is $\Q$-Gorenstein, then we have the following:
\begin{enumerate}
\item If $\langle \mathfrak{m}, u_\rho\rangle>1$ for all $\rho \in \nu_{\neq1}$ and $\nu_{\neq1} \subseteq I_{\emptyset}$, then $\chi_{K} \in \Gamma_{\Sigma, I_{\emptyset}}$.
\item If $\langle \mathfrak{m}, u_\rho\rangle<1$ for all $\rho \in \nu_{\neq1}$ and $\nu_{\neq1} \subseteq I_{\emptyset}$, then $-\chi_{K} \in \Gamma_{\Sigma, I_{\emptyset}}$.
\item If $\nu_{\neq1} = \emptyset$, then $\chi_{K}$ is zero in $(\hat{S}_\nu)_{\Q}$ hence in every chamber of the secondary fan.
\end{enumerate}
\label{lemma: Orlov setup}
\end{lemma}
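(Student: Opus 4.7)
The plan is to use the definition of $\tilde{\Gamma}_{\Sigma, I_\emptyset}$ in terms of convex support functions, lifting the character $\chi_K$ to an explicit preimage in $\Q^n$ and producing the required support function directly from $\mathfrak{m}$. Specifically, I will observe that $\chi_K = -\sum_{v \in \nu} \chi_{D_v} = \pi(-e_1 - \cdots - e_n)$, and likewise $-\chi_K = \pi(e_1 + \cdots + e_n)$, so it suffices to show that the tuple $(a_i) = (-1, \ldots, -1)$ (respectively $(1, \ldots, 1)$) lies in $\tilde{\Gamma}_{\Sigma, I_\emptyset}$ for parts (1) and (2).

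For part (1), the hypothesis $\nu_{\neq 1} \subseteq I_\emptyset$ forces $\Sigma(1) \subseteq \nu_{=1}$. I will take the support function $\varphi : |\Sigma| \to \R$ given by $\varphi(n) := \langle \mathfrak{m}, n \rangle$. This is a global linear function, hence convex as a support function on $|\Sigma| = \op{Cone}(\nu)$. On $v_i \in \Sigma(1) \subseteq \nu_{=1}$ it satisfies $\varphi(v_i) = 1 = -a_i$, while on $v_i \in I_\emptyset$ we have either $v_i \in \nu_{\neq 1}$, in which case $\varphi(v_i) = \langle \mathfrak{m}, v_i\rangle > 1 = -a_i$ by hypothesis, or $v_i \in I_\emptyset \cap \nu_{=1}$, in which case $\varphi(v_i) = 1 = -a_i \geq -a_i$. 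Thus $(-1, \ldots, -1) \in \tilde{\Gamma}_{\Sigma, I_\emptyset}$, and projecting gives $\chi_K \in \Gamma_{\Sigma, I_\emptyset}$. For part (2), the analogous argument applies using $\varphi(n) := -\langle \mathfrak{m}, n\rangle$ against the tuple $(1, \ldots, 1)$; the inequality $\langle \mathfrak{m}, v_i\rangle < 1$ on $\nu_{\neq 1}$ gives $\varphi(v_i) > -1 = -a_i$ as required.

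For part (3), since $\nu_{\neq 1} = \emptyset$ we have $\nu = \nu_{=1}$, so $f_\nu(\mathfrak{m}) = \sum_i \langle \mathfrak{m}, v_i\rangle e_i = \sum_i e_i$. Applying $\pi$ to both sides and using exactness of the defining sequence gives $\pi(\sum_i e_i) = 0$ in $(\hat{S}_\nu)_\Q$, hence $\chi_K = -\pi(\sum_i e_i) = 0$, which belongs to every cone in the secondary fan.

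The only real content is the verification that the linear function $\pm\langle \mathfrak{m}, -\rangle$ qualifies as a valid convex support function in the sense of the definition of $\tilde{\Gamma}_{\Sigma, I_\emptyset}$; this is immediate since any globally linear function on $N_\R$ is automatically a convex piecewise linear function on $|\Sigma|$. No difficult step should arise, since the $\Q$-Gorenstein condition is precisely engineered to make the pairing with $\mathfrak{m}$ behave as needed on the rays of $\Sigma$.
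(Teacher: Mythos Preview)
Your proof is correct and takes a slightly different route from the paper. The paper uses the intersection description $\Gamma_{\Sigma, I_{\emptyset}} = \bigcap_{\sigma \in \Sigma_{\max}} \op{Cone}(\pi(e_\rho) : u_\rho \in I_\emptyset \text{ or } \rho \notin \sigma)$ to first observe that $\op{Cone}(\pi(e_\rho) : u_\rho \in \nu_{\neq 1}) \subseteq \Gamma_{\Sigma, I_\emptyset}$, and then applies the relation $0 = (\pi \circ f_\nu)(\mathfrak m)$ to rewrite $\chi_K = \sum_{u_\rho \in \nu_{\neq 1}} (\langle \mathfrak m, u_\rho\rangle - 1)\,\chi_{D_\rho}$, placing $\pm\chi_K$ in that sub-cone by inspecting the signs of the coefficients. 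You instead work directly with the support-function definition of $\tilde\Gamma_{\Sigma, I_\emptyset}$, lifting $\chi_K$ to the tuple $(-1,\ldots,-1)$ and exhibiting the globally linear $\varphi = \langle \mathfrak m, -\rangle$ as the witnessing convex support function. Your approach is marginally more elementary in that it avoids invoking the intersection formula, while the paper's version has the advantage of exhibiting $\chi_K$ explicitly as a nonnegative combination of the $\chi_{D_\rho}$ for $u_\rho \in \nu_{\neq 1}$, which is conceptually useful downstream. Part (3) is handled identically in both.
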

\begin{proof}

Using the description of $\Gamma_{\Sigma, I_{\emptyset}}$ given in Equation~\eqref{eq:altGKZconeDef}, we know that
\[
\op{Cone}(\pi(e_\rho)\ | \ u_{\rho} \in \nu_{\neq1}) \subset \Gamma_{\Sigma, I_{\emptyset}}.
\]

Now, in $\widehat{S_\nu} \otimes_{\Z} \Q$ we have
\begin{align}
0 & = (\pi\circ f_\nu)( \mathfrak{m}) \notag \\
& = \sum_{v \in \nu} \langle  \mathfrak{m}, v \rangle \chi_{D_i} \notag \\
& = \sum_{v \in \nu_{=1}}  \chi_{D_v} + \sum_{v \in \nu_{\neq1}} \langle \mathfrak{m},v \rangle \chi_{D_a}
\label{eq: class relation}
\end{align}
as $\langle \mathfrak{m}, v \rangle = 1$ for all $v \in \nu_{=1}$.
This implies
\begin{align*}
\chi_{K} & = - \sum_{v\in \nu} \chi_{D_v} \\
& = \sum_{u_\rho \in \nu_{\neq1}} (\langle \mathfrak{m}, u_\rho\rangle-1) \chi_{D_\rho}.
\end{align*}

Hence if $\langle \mathfrak{m}, u_\rho\rangle>1$ for all $u_\rho \in \nu_{\neq1}$,
\[
\chi_{K} \in \op{Cone}(\pi(e_\rho) \ | \ u_\rho \in \nu_{\neq1}) \subseteq \Gamma_{\Sigma, I_{\emptyset}}.
\]
Similarly if $\langle \mathfrak{m}, u_\rho\rangle<1$ for all $u_\rho \in \nu_{\neq1}$,
\[
-\chi_{K} \in \op{Cone}(\pi(e_\rho) \ | \ u_\rho \in \nu_{\neq1}) \subseteq \Gamma_{\Sigma, I_{\emptyset}}.
\]
Finally, if $\nu_{\neq1} = \emptyset$, then $\chi_{K} =  \sum_{\rho \in \Sigma(1)} \chi_{D_\rho} = (\pi\circ f_\nu)( \mathfrak{m}) = 0$.
\end{proof}

\section{Derived Categories of Toric LG Models and Fractional CY Categories}\label{GorCones}

\subsection{Serre Functors of Factorization Categories}

As before, let $\sigma \subseteq N_{\R}$ be a $\Q$-Gorenstein cone, $\nu \subseteq \sigma \cap N$ be a finite, geometric collection of lattice points which contains the ray generators of $\sigma$. Partition the set $\nu$ into two subsets
\begin{equation}\begin{aligned}
\nu_{=1} &= \{  v \in \nu \ | \ \langle \mathfrak{m}, v \rangle = 1\} \text{ and} \\
\nu_{\neq1} &= \{ v \in \nu \ | \ \langle \mathfrak{m}, v \rangle \neq 1\}.
\end{aligned}\end{equation}
Since $\sigma$ is $\Q$-Gorenstein, the ray generators of $\sigma$ are contained in $\nu_{=1}$. Choose any subset
\[
R \subseteq \nu.
\]
The set $R$ gives an action of $\gm$ on $\A^\nu$ by
\begin{equation}
\lambda_R \cdot x_i := 
\begin{cases}
\lambda x_i & \tif v_i \in R \\
x_i & v_i \notin R \\
\end{cases}
\end{equation}
All together, this gives an action of $S_\nu \times \gm$ on $\A^\nu$.

Let $\Sigma \subseteq N_{\R}$ be a simplicial fan such that $\Sigma(1) \subseteq  \nu_{=1}$, $X_\Sigma$ is semiprojective, and $\op{Cone}(\Sigma(1)) = \sigma$. This gives an open subset
\[
U_{\Sigma} \times \gm^{\nu \backslash \Sigma(1)}  \subseteq \A^\nu
\]
and we restrict the action of  $S_\nu \times \gm$ to this open subset.  

Let 
\[
\chi: S_\nu \times \gm \to \gm
\]
be the projection character onto the $\gm$ factor.
Finally, let $w$ be a function on $\A^\nu$ which is semi-invariant with respect to $\chi$, i.e., $w \in \Gamma(U_{\Sigma}  \times \gm^{\nu \backslash \Sigma(1)} , \O(\chi))^{S_\nu \times \gm}$.

\begin{remark}
We have restricted our additional $\gm$-action and choice of $w$ with geometric applications in mind, namely, so we can apply Theorem~\ref{Hirano} to obtain Proposition~\ref{prop: ToricStackHirano}.  These restrictions force $w$ to be of the form
\[
w = \sum x_i f_i
\] 
where $v_i \in R$ and $f_i$ is a function of the variables not in $R$.  Therefore,
\[
\partial w \subseteq Z(w).
\]
We implicitly use this fact in applying Theorem~\ref{SerreFunctorDescription} below.  In fact, this condition holds for any quasi-homogeneous function of non-zero degree by Euler's homogeneous function theorem.    
\end{remark}

Recall that, for each $v_i \in \nu$, we also have characters $\chi_{D_i}$ defined by the composition
\[
S_\nu  \hookrightarrow \gm^{\nu} \xrightarrow{\pi_i} \gm
\]
where $\pi_i$ is the projection onto the factor corresponding to $v_i$.

Denote by $\chi_K$ the inverse of the character corresponding to the composition
\begin{align*}
S_\nu \times \gm & \stackrel{\hat\pi \times \lambda_R}{\hookrightarrow} \gm^{\nu} \xrightarrow{\times} \gm, \\
(g, \lambda) & \longmapsto \hat{\pi}(g) \cdot \lambda_R,
\end{align*}
where $\hat{\pi}$ is from Equation~\eqref{eq: hatpi} and
\[
(\lambda_R)_i := \begin{cases}
\lambda & \text{ if } v_i \in R \\
1 &  \text{ if } v_i \notin R.
\end{cases}
\]
In other words, if we identify the character group $\widehat{S_\nu \times \gm} = \widehat{S_\nu} \oplus \Z$
then
\begin{align*}
\chi_K & := - \sum_{v_i \in R} (\chi_{D_i}, 1) - \sum_{v_i \notin R} (\chi_{D_i}, 0) \\
& = - \sum_{v_i \in \nu}(\chi_{D_i}, 0) - (0, |R|).
\end{align*}

Notice that for any $v_j \in {\nu \backslash \Sigma(1)} $, $\chi_{D_j} \neq 0$.  Indeed, if $\chi_{D_j} =0$, then then there exists a lattice element $m \in M$ such that
\[
\langle m, v_i \rangle = \delta_{ij}.
\]
This, in turn, means that $v_j$ is a ray generator of $\op{Cone}(\nu)$ which is ruled out by the assumption that $v_j \in {\nu \backslash \Sigma(1)}$ as $\op{Cone}(\nu) = \op{Cone}(\Sigma(1))$.
Hence, as each $\chi_{D_j}$ is non-trivial we have a surjective map
\begin{align}\label{definemapforH}
 S_\nu \times \gm &  \overset{p}{\twoheadrightarrow} \gm^{\nu \backslash \Sigma(1)} \\
 (g, \lambda) & \mapsto \prod_{v_j \in {\nu \backslash \Sigma(1)} } \chi_{D_j}(g) \cdot \lambda_{R \cap (\nu \backslash \Sigma(1))}.
\end{align}

Let $H_{\Sigma, R}$ be the kernel of this map so that there is an exact sequence
\begin{equation}
\label{eq: exact sequence for H}
0 \to H_{\Sigma,R} \to  S_\nu \times \gm \to \gm^{\nu \backslash \Sigma(1)}  \to 0.
\end{equation}
We will write $H$ when $\Sigma$ and $R$ are understood in the immediate context.

\begin{lemma}
Viewing $\chi_{K}$ as a character of $H$, the equivariant canonical bundle $\omega_{[U_{\Sigma} /  H]}$ is isomorphic to $\O(\chi_{K})$.  
\end{lemma}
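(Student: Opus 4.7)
The plan is to reduce the computation to a direct calculation on the toric cover $\A^\nu$ via the stack isomorphism of Corollary~\ref{prop: stack iso appl}. That corollary identifies
\[
[U_\Sigma / H] \cong [(U_\Sigma \times \gm^{\nu \setminus \Sigma(1)}) / (S_\nu \times \gm)],
\]
realizing $[U_\Sigma / H]$ as an open substack of $[\A^\nu / (S_\nu \times \gm)]$ on which $S_\nu \times \gm$ acts in the standard way (by $\hat{\pi}$ on the $S_\nu$-factor and by the R-charge of \eqref{RchargeAction} on the $\gm$-factor). Under this identification, a character of $S_\nu \times \gm$ and its restriction to $H$ correspond to the same equivariant line bundle on either side, so it suffices to prove that $\omega_{[\A^\nu / (S_\nu \times \gm)]} \cong \O(\chi_K)$.

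To compute this, I would trivialize $\omega_{\A^\nu}$ by the top form $\Omega := \bigwedge_{v_i \in \nu} dx_i$. Each $dx_i$ transforms under $S_\nu \times \gm$ with the same weight as the coordinate $x_i$, namely $(\chi_{D_i}, \epsilon_i)$, where $\epsilon_i = 1$ if $v_i \in R$ and $0$ otherwise. Consequently $\Omega$ is semi-invariant of weight $\sum_{v_i \in \nu}(\chi_{D_i}, \epsilon_i) = -\chi_K$. A direct check then shows that the $(S_\nu \times \gm)$-invariant sections of $\omega_{\A^\nu}$ are precisely those of the form $f \cdot \Omega$ with $f$ semi-invariant of weight $\chi_K$, which coincide with the invariants of $\O(\chi_K)$. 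Hence $\omega_{\A^\nu} \cong \O(\chi_K)$ as $(S_\nu \times \gm)$-equivariant line bundles; this is consistent with the standard toric identity $\omega_{\mathcal{X}_\Sigma} \cong \O(\chi_K)$ in the case $R = \emptyset$, which anchors the sign convention.

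Finally, since $S_\nu \times \gm$ is abelian, its adjoint representation is trivial, so the relative cotangent of the atlas $\pi: \A^\nu \to [\A^\nu / (S_\nu \times \gm)]$ is equivariantly trivial, and $\omega_{\A^\nu}$ descends without twist to give $\omega_{[\A^\nu / (S_\nu \times \gm)]} \cong \O(\chi_K)$. Restricting to the open substack $[(U_\Sigma \times \gm^{\nu \setminus \Sigma(1)}) / (S_\nu \times \gm)]$ and transporting via the stack isomorphism yields $\omega_{[U_\Sigma / H]} \cong \O(\chi_K|_H)$, as claimed. The only delicate point is the bookkeeping between the weight of the trivializing section and the character of the corresponding equivariant line bundle, but this is pinned down consistently by the $R = \emptyset$ baseline.
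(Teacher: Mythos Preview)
Your proof is correct and follows essentially the same approach as the paper: both reduce via the stack isomorphism $[U_\Sigma / H] \cong [(U_\Sigma \times \gm^{\nu \setminus \Sigma(1)}) / (S_\nu \times \gm)]$ to computing the equivariant canonical bundle on $[\A^\nu / (S_\nu \times \gm)]$ and then read it off from the weights of the coordinates (the paper via the cotangent decomposition, you via the trivializing top form). One minor correction: the stack isomorphism you need is the one obtained from Lemma~\ref{lem: stack iso} applied to the exact sequence \eqref{eq: exact sequence for H} (this is \eqref{eq: another stack iso} in the paper), not Corollary~\ref{prop: stack iso appl}, which concerns $S_\nu$ alone without the R-charge $\gm$-factor.
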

\begin{proof}
Recall, from Lemma~\ref{lem: stack iso} that there is a natural isomorphism of stacks,
\begin{equation}
[U_{\Sigma} \times \gm^{\nu \backslash \Sigma(1)}  / S_\nu \times \gm] \cong [U_{\Sigma} /  H].
\label{eq: another stack iso}
\end{equation}
Hence, the statement is equivalent to showing that $\O(\chi_K)$ is isomorphic to the equivariant canonical bundle of $[U_{\Sigma} \times \gm^{\nu \backslash \Sigma(1)}  / S_\nu \times \gm]$, when we view $\chi_K$ as a character of $S_\nu \times \gm$.  

The canonical bundle on $[U_{\Sigma} \times \gm^{\nu \backslash \Sigma(1)}  / S_\nu \times \gm]$ is the restriction of the canonical bundle on $[\A^{\nu} / S_\nu \times \gm]$, so we can reduce to checking the statement on affine space.  Now, we have the standard fact that the cotangent bundle on affine space is just the dual vector space (with its natural equivariant structure which in this case is just the dual grading on the dual vector space) i.e.
\[
\Omega_{[\A^{\nu} / S_\nu \times \gm]} = \bigoplus_{v_i \in R} \O( -D_i, 1) \oplus \bigoplus_{v_i \notin R} \O( -D_i, 0).
\]  
Therefore,
\begin{equation}\begin{aligned}
\omega_{[U_{\Sigma} /  H]} &= \Omega^{|\nu|}_{[U_\Sigma / S_\nu \times \gm]} \\
	&= \O( - \sum_{v_i \in R} (D_i , 1) - \sum_{v_i \notin R} (D_i , 0)) \\
	&= \O(\chi_K).
\end{aligned}\end{equation}
 \end{proof}
 
\begin{convention}
Identifying $\widehat{H}$ as a quotient of $\widehat{S_\nu} \oplus \Z$, we write elements of $\widehat{H}$ in the form $(a,b)$ with $a \in \widehat{S_\nu}, b \in \Z$ and view them as equivalence classes.

\end{convention}

\begin{lemma}
We have the following equality in $\widehat{H} \otimes_{\Z} {\Q}$,
\[
\chi_K =  (0, -\sum_{v_i \in \nu_{\neq1}}\langle \mathfrak{m}, v_i \rangle + |\nu_{\neq1}| - |R|).
\]
\label{lem: class relation}
If $\sigma$ is almost Gorenstein, then the equality holds in $\widehat{H}$.
\end{lemma}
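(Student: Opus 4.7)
Proof plan: My strategy is to unpack $\chi_K$ in two steps: first simplify its expression in $\widehat{S_\nu \times \gm}\otimes_\Z \Q$ using the $\Q$-Gorenstein relation on the $\chi_{D_v}$, and then descend to $\widehat{H}\otimes_\Z \Q$ via the quotient map induced by the defining sequence of $H$.

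Starting from $\chi_K = -\sum_{v\in\nu}(\chi_{D_v}, 0) - (0, |R|)$, I would invoke Equation~\eqref{eq: class relation},
\[
\sum_{v\in\nu_{=1}}\chi_{D_v} + \sum_{v\in\nu_{\neq 1}}\langle \mathfrak{m}, v\rangle \chi_{D_v} = 0
\]
in $\widehat{S_\nu}\otimes_\Z \Q$, which is obtained by pushing $f_\nu(\mathfrak{m})\in \Z^\nu_\Q$ into $\widehat{S_\nu}\otimes_\Z\Q$ via $\pi$ and splitting the resulting sum according to whether $\langle \mathfrak{m}, v\rangle$ equals $1$ or not. Rearranging gives $-\sum_{v\in\nu}\chi_{D_v} = \sum_{v\in\nu_{\neq 1}}(\langle \mathfrak{m}, v\rangle - 1)\chi_{D_v}$, hence
\[
\chi_K = \sum_{v\in\nu_{\neq 1}}(\langle \mathfrak{m}, v\rangle - 1)(\chi_{D_v}, 0) - (0, |R|)
\]
in $\widehat{S_\nu\times\gm}\otimes_\Z\Q$.

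Next, I would dualize \eqref{eq: exact sequence for H} into the short exact sequence
\[
0 \to \Z^{\nu\setminus\Sigma(1)} \to \widehat{S_\nu \times \gm} \to \widehat{H} \to 0,
\]
where the generator corresponding to $v_j\in\nu\setminus\Sigma(1)$ maps to $(\chi_{D_j}, \epsilon_j)$ with $\epsilon_j=1$ when $v_j\in R$ and $\epsilon_j=0$ otherwise. Because $\Sigma(1)\subseteq \nu_{=1}$, every $v\in\nu_{\neq 1}$ lies in $\nu\setminus\Sigma(1)$, so the relation $(\chi_{D_v}, \epsilon_v)=0$ applies to each summand of the formula for $\chi_K$ just derived. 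Substituting these relations to replace each $(\chi_{D_v}, 0)$ by a multiple of $(0,1)$ and collecting coefficients in $\widehat{H}\otimes_\Z\Q$ produces the claimed formula.

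For the almost Gorenstein case, $\mathfrak{m}\in M$ upgrades Equation~\eqref{eq: class relation} from $\widehat{S_\nu}\otimes_\Z\Q$ to $\widehat{S_\nu}$, and the dualized exact sequence above is already integral, so the derivation passes verbatim from $\widehat{H}\otimes_\Z\Q$ to $\widehat{H}$. The step I would be most careful about is correctly tracking how each $(\chi_{D_v}, 0)$ becomes a multiple of $(0, 1)$ modulo the relations, and verifying that the resulting integer coefficient sums to $-\sum_{v\in\nu_{\neq 1}}\langle \mathfrak{m}, v\rangle + |\nu_{\neq 1}| - |R|$.
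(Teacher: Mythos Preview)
Your proposal is correct and follows essentially the same route as the paper: first simplify $\chi_K$ in $\widehat{S_\nu\times\gm}\otimes\Q$ using the relation~\eqref{eq: class relation}, then dualize the exact sequence~\eqref{eq: exact sequence for H} to present $\widehat{H}$ as a quotient and apply the resulting relations $(\chi_{D_j},\epsilon_j)=0$ for $v_j\in\nu\setminus\Sigma(1)$ to collapse everything into the second coordinate. The almost Gorenstein refinement is handled identically in the paper, by observing that $\mathfrak{m}\in M$ makes~\eqref{eq: class relation} hold over $\Z$ rather than just $\Q$.
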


\begin{proof}

As $\op{Hom}(-,\gm)$ is exact, we may apply it to Equation~\eqref{eq: exact sequence for H}, 
to obtain an exact sequence
$$
0 \longrightarrow \Z^{\nu \backslash \Sigma(1)} \stackrel{\hat{p}}{\longrightarrow} \widehat{S_\nu} \oplus \Z  \longrightarrow\widehat{H} \longrightarrow 0.
$$ 
Notice
\[
\hat{p}(e_i) = 
\begin{cases}
(\chi_{D_i}, 1) & \text{ if } v_i \in R \\
(\chi_{D_i}, 0) & \text{ if } v_i \notin R.
\end{cases}
\]
Hence in $\widehat{H}$,
\begin{equation}\begin{aligned}
(\chi_{D_i}, 1) = 0 & \text{ if } v_i \in R \\
(\chi_{D_i}, 0) = 0 & \text{ if } v_i \notin R.
\end{aligned}
\label{eq: A relation}
\end{equation}

Thus,
\begin{equation}\begin{aligned}
\chi_K & = - \sum_{v_i \in \nu}(\chi_{D_i}, 0) - (0, |R|) \\
&= (-\sum_{v_i \in \nu_{=1}} \chi_{D_i} - \sum_{v_i \in \nu_{\neq1}} \chi_{D_i}, |R| ) \\
&= (-\sum_{v_i \in \nu_{\neq1}}{(\langle \mathfrak{m}, v_i \rangle - 1)} \chi_{D_i}, |R| ) \\
&= (0,  -\sum_{v_i \in \nu_{\neq1}} \langle \mathfrak{m}, v_i \rangle + |\nu_{\neq1}| - |R|).
\end{aligned}\end{equation}
The first line is by definition.  The second line is because $\nu$ is a disjoint union of $\nu_{=1}$ and $\nu_{\neq1}$.  The third line follows from \eqref{eq: class relation} (notice that this holds over $\Z$ when $\mathfrak m \in M$ and over $\Q$ when $\mathfrak m \in M_{\Q}$).  The fourth line is \eqref{eq: A relation}.

\end{proof}

We can restrict $w$ by defining a function $\bar w$ on $\mathbb{A}^{\Sigma(1)}$  by setting all variables associated to points in the set $\nu \setminus \Sigma(1)$ to one. When we restrict $\bar w$ to $U_{\Sigma}$, we have $\bar w \in \Gamma(U_{\Sigma}, \O_{U_{\Sigma}}((0,1)))^H$.

\begin{remark}
Under the isomorphism of stacks~\eqref{eq: another stack iso}, $w$ corresponds to $\bar w$.
\end{remark}

We can now state the following special case of Theorem~\ref{SerreFunctorDescription}.
\begin{corollary}
Assume that $[\partial \bar w / S_{\Sigma(1)}]$ is proper.
The category 
$$
\op{D}^{\op{abs}}[U_{\Sigma},  H, \bar w] 
$$
is fractional Calabi-Yau of dimension 
$$
 - 2\sum_{v_i \in \nu_{\neq1}}\langle \mathfrak{m}, v_i \rangle +2|\nu_{\neq1}| - 2|R| + \dim N_{\R}. 
$$
If $\sigma$ is almost Gorenstein, then it is Calabi-Yau.
\label{thm: fractional CY}
\end{corollary}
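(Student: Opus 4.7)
The strategy is to apply Theorem~\ref{SerreFunctorDescription} to $(U_\Sigma, H, \bar w)$ and then unwind the resulting Serre functor using Lemma~\ref{lem: class relation} together with the identification $[2] = -\otimes \O(\chi)$ from Equation~\eqref{2isO(chi)}.

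I will first verify the hypotheses of Theorem~\ref{SerreFunctorDescription}. Smoothness of $U_\Sigma$ is clear and $H$ is linearly reductive as a closed subgroup of the torus $S_\nu \times \gm$. The key point for the finite diagonal hypothesis is the identification $\ker \chi = S_{\Sigma(1)}$, where $\chi = (0,1) : H \to \gm$ is the projection character. Indeed, $\ker \chi = H \cap (S_\nu \times \{1\}) = \{g \in S_\nu \mid \chi_{D_j}(g) = 1 \text{ for all } v_j \in \nu \setminus \Sigma(1)\}$, and a short diagram chase using Equation~\eqref{eq: f} identifies $\op{coker}(f_\nu)/\langle \pi(e_j) \mid v_j \in \nu \setminus \Sigma(1)\rangle$ with $\op{coker}(f_{\Sigma(1)}) = \widehat{S_{\Sigma(1)}}$, so that applying $\op{Hom}(-,\gm)$ yields $\ker\chi = S_{\Sigma(1)}$. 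Consequently $[U_\Sigma / \ker \chi] \cong \mathcal{X}_\Sigma$ is a separated smooth Deligne-Mumford stack by Theorem~\ref{thm: stack realization}, and so has finite diagonal. The containment $\partial \bar w \subseteq Z(\bar w)$ holds by Euler's formula since $\bar w$ has positive $R$-charge $1$ (compare the remark preceding the corollary), and $[\partial \bar w / \ker\chi] = [\partial \bar w / S_{\Sigma(1)}]$ is proper by assumption.

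Theorem~\ref{SerreFunctorDescription} then yields $S = (-\otimes \omega_{U_\Sigma})[\dim U_\Sigma - \dim H + 1]$. A direct count, using that $U_\Sigma$ is open in $\A^{\Sigma(1)}$ and that $f_\nu : M \to \Z^\nu$ is injective as $\nu$ spans $N_\R$, gives $\dim U_\Sigma = |\Sigma(1)|$ and $\dim H = \dim(S_\nu \times \gm) - |\nu \setminus \Sigma(1)| = |\Sigma(1)| + 1 - \dim N_\R$, so the shift is $\dim N_\R$. By the lemma identifying $\omega_{[U_\Sigma/H]} \cong \O(\chi_K)$ together with Lemma~\ref{lem: class relation}, we have $\chi_K = (0,b)$ in $\widehat H \otimes_\Z \Q$, where $b := -\sum_{v_i \in \nu_{\neq 1}} \langle \mathfrak m, v_i\rangle + |\nu_{\neq 1}| - |R|$. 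Choosing $l \geq 1$ so that $l\chi_K = (0, lb)$ holds integrally in $\widehat H$, we obtain $\omega_{U_\Sigma}^{\otimes l} \cong \O(\chi)^{\otimes lb}$ as $H$-equivariant line bundles, and Equation~\eqref{2isO(chi)} delivers
\[
S^l = (-\otimes \omega_{U_\Sigma}^{\otimes l})[l \dim N_\R] = [2lb + l\dim N_\R] = [l(2b + \dim N_\R)],
\]
proving the category is fractional Calabi-Yau of dimension $2b + \dim N_\R$, matching the stated formula. In the almost Gorenstein case $\mathfrak m \in M$, so Lemma~\ref{lem: class relation} holds in $\widehat H$ itself and we may take $l = 1$; then $S$ is literally a shift and the category is Calabi-Yau.

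The principal obstacle is organizational: carefully matching the definition of $H$ inside $S_\nu \times \gm$ with the character-lattice bookkeeping to conclude $\ker\chi = S_{\Sigma(1)}$, and tracking the $H$-equivariant structure on $\omega_{U_\Sigma}$ through the stack isomorphism of the preceding subsection to identify it with $\O(\chi_K)$. Once those identifications are secured, the remaining work is a direct assembly of Theorem~\ref{SerreFunctorDescription}, Lemma~\ref{lem: class relation}, and Equation~\eqref{2isO(chi)}.
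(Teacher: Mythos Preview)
Your proof is correct and follows essentially the same route as the paper: apply Theorem~\ref{SerreFunctorDescription}, identify $\omega_{U_\Sigma}$ with $\O(\chi_K)$ via the preceding lemma, invoke Lemma~\ref{lem: class relation} to rewrite $\chi_K$ as $(0,b)$ in $\widehat H\otimes\Q$, and convert the $\O(0,1)$-twist into a double shift via Equation~\eqref{2isO(chi)}. Your treatment is in fact more careful than the paper's on two points: you explicitly verify the hypotheses of Theorem~\ref{SerreFunctorDescription} (in particular the identification $\ker\chi = S_{\Sigma(1)}$, which the paper leaves implicit), and your dimension count $\dim U_\Sigma - \dim H + 1 = \dim N_\R$ is written out correctly, whereas the paper's displayed computation contains what appears to be a sign typo in the shift.
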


\begin{proof}
Let $d$ be the smallest natural number such that $d \cdot \mathfrak{m} \in M$ (notice that $d=1$ when $\sigma$ is almost Gorenstein). We have,
\begin{align*}
S^d & = ( - \otimes \omega^d_{[U_{\Sigma} / H]})[d(\op{dim }U_{\Sigma} -\op{dim }H - 1)] \\
& = (- \otimes \O(d\chi_K)) [d(\op{dim }U_{\Sigma} -\op{dim }H - 1)] \\
& = (- \otimes \O( d(0, - \sum_{v_i \in \nu_{\neq1}} \langle \mathfrak{m}, v_i \rangle  +|\nu_{\neq1}| - |R|)) [d(\op{dim }U_{\Sigma} -\op{dim }H - 1)] \\
& = [d(-2\sum_{v_i \in \nu_{\neq1}}\langle \mathfrak{m}, v_i \rangle +2|\nu_{\neq1}| - 2|R| + \op{dim }U_{\Sigma} - \op{dim }H - 1)]\\ 
& = [d(-2\sum_{v_i \in \nu_{\neq1}}\langle \mathfrak{m}, v_i \rangle +2|\nu_{\neq1}| - 2|R| + \dim N_{\R})].
\end{align*}
The first line is Theorem~\ref{SerreFunctorDescription}.  The second line follows from the fact that $\omega_{[U_{\Sigma} / H]}$ is the restriction of the canonical bundle on $[\A^{\Sigma(1)} / H]$.  The third line is Lemma~\ref{lem: class relation}.  The fourth line follows from the isomorphism of functors 
\[
[2] = (-\otimes \O(0,1) )
\]
in $\op{D}^{\op{abs}}[U_{\Sigma},  H, \bar w]$ since $\bar w$ is a section of the equivariant line bundle $\O(0,1)$. The last line follows from the definition of $H$.

\end{proof}

We now provide a toric specialization of Theorem~\ref{thm: BFKVGIT}, found in the unabridged version \cite{BFK12v2} of the paper \cite{BFK12}. First let us provide some notation and the assumptions for the setup of this theorem. Let:
\begin{itemize}
\item $\Sigma_+$ and $\Sigma_-$ be two adjacent chambers in the secondary fan sharing a wall $\tau$, 
\item $\lambda: \gm \rightarrow S_\nu$ be the primitive one-parameter subgroup defining hyperplane containing the wall $\tau$ separating $\Sigma_+$ and $\Sigma_-$
\item $\chi_+ \in \Sigma_+$, $\chi_- \in \Sigma_-$, and $\chi_0 \in \tau$  be two characters in the relative interior of the adjacent chambers and wall,
\item $U_+$, $U_-$ and be the open semistable loci of $\A^\nu$ corresponding to their respective characters $\chi_+, \chi_-$, and $U_0$ be the intersection of the fixed locus of $\lambda$ with the open semistable locus of $\chi_0$,
\item $S_0$ be the quotient group $S_\nu / \lambda(\gm)$
\item $G$ be a group whose action commutes with $S_\nu$ (in our application, this will be R-charge)
\item $w\in \kappa[x_i \ | \ v_i \in \nu]$ be an $S_\nu$-invariant and $G$-semi-invariant function with respect to a character $\eta$ of $G$,
\item $w_+, w_-, w_0$ be induced sections of the line bundles determined by $\eta$ on $[U_+ / S_\nu \times G], [U_- / S_\nu \times G],$ and $[U_0 / S_0 \times G]$ respectively.
\end{itemize}

\begin{theorem}[Theorem 5.1.2 of \cite{BFK12v2}] \label{thm: BFK toric version}
Let $\mu =  - \sum_{v_i \in \nu} \langle \lambda, v_i\rangle,$ and  $d\in \Z$. The following statements hold:
\begin{enumerate}
 \item If $\mu > 0$, there exists fully-faithful functors,
\begin{displaymath}
 \Phi_d : \op{D}^{\op{abs}}[U_-, S_\nu \times G, w_- ] \to  \op{D}^{\op{abs}}[U_+, S_\nu \times G, w_+ ]
\end{displaymath}
 and
\begin{displaymath}
 \Upsilon_-: \op{D}^{\op{abs}}[U_{0}, S_0 \times G, w_0] \to \op{D}^{\op{abs}}[U_{+}, S_\nu \times G, w_+] ,
\end{displaymath}
 and a semi-orthogonal decomposition, with respect to $\Phi_d, \Upsilon_-$,
\[
\op{D}^{\op{abs}}[U_{-}, S_\nu \times G, w_- ]=  \left\langle \mathcal{D}_-(-\mu - d + 1), \ldots,\mathcal{D}_-(-d), \op{D}^{\op{abs}}[U_{-}, S_\nu \times G, w_- ]\right\rangle,
\]
where $\mathcal{D}_-(\ell)$ is the image of $\Upsilon_-$ twisted by a character of $\lambda$-weight $\ell$.
\item  If $\mu = 0$, there is an equivalence
\begin{displaymath}
 \Phi_d : \op{D}^{\op{abs}}[U_{-}, S_\nu \times G, w_- ] \to \op{D}^{\op{abs}}[U_{+}, S_\nu \times G, w_+].
\end{displaymath}

 \item If $\mu < 0$, there exists fully-faithful functors,
\begin{displaymath}
 \Phi_d : \op{D}^{\op{abs}}[U_{+}, S_\nu \times G, w_+] \to \op{D}^{\op{abs}}[U_{-}, S_\nu \times G, w_- ]
\end{displaymath}
 and
\begin{displaymath}
 \Upsilon_+: \op{D}^{\op{abs}}[U_{0}, S_0 \times G, w_0] \to  \op{D}^{\op{abs}}[U_{-}, S_\nu \times G, w_- ],
\end{displaymath}
 and a semi-orthogonal decomposition, with respect to $\Phi_d, \Upsilon_+$,
\[
 \op{D}^{\op{abs}}[U_{-}, S_\nu \times G, w_- ] = 
  \left\langle \mathcal{D}_+(-d), \ldots, \mathcal{D}_+(\mu-d+1), \op{D}^{\op{abs}}[U_{+}, S_\nu \times G, w_+] \right\rangle,
\]
 where $\mathcal{D}_+(\ell)$ is the image of $\Upsilon_+$ twisted by a character of $\lambda$-weight $\ell$. 

\end{enumerate}
\end{theorem}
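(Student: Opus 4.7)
The plan is to deduce Theorem~\ref{thm: BFK toric version} as a direct specialization of the general wall-crossing Theorem~\ref{thm: BFKVGIT}, by matching each piece of its input data with a combinatorial feature of the secondary fan. First I would verify that the pair of GIT stratifications coming from the two adjacent chambers $\Sigma_\pm$ forms an elementary wall crossing in the sense of Definition~\ref{def: EWC}. Because $S_\nu$ is abelian and the ambient space is $\mathbb{A}^\nu$, the parabolic subgroups $P(\pm\lambda)$ are all of $S_\nu$, the contracting loci $Z_{\pm\lambda}$ are toric-linear subschemes, and the required isomorphisms $S_\nu \overset{P(\pm\lambda)}{\times} Z_{\pm\lambda} \cong S_{\pm\lambda}$ are immediate. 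The closedness of $S_{\pm\lambda}$ inside the $\chi_0$-semistable locus, together with the existence of an $S_\nu$-invariant affine cover of $S^0_\lambda$, follow from the standard description of toric GIT. The hypothesis $\mu(\mathcal{L}, \lambda, u) = 0$ for $\mathcal{L} = \O(\eta)$ is automatic because $\lambda$ is a cocharacter of $S_\nu$ and $\eta$ is a character of the independent $R$-charge group $G$.

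The second step is to compute the Hilbert-Mumford numbers $t(\mathfrak{K}^\pm)$ in toric coordinates. At a fixed point $u \in Z_\lambda^0$, the tangent space to $\mathbb{A}^\nu$ decomposes into $\lambda$-weight spaces of weight $\langle \lambda, v_i \rangle$ on the $x_i$-direction, where cocharacters of $S_\nu$ are identified with elements of $N_{\mathbb{Q}}$ via the transpose of the defining sequence for $S_\nu$. The conormal bundle $\mathcal{N}^\vee_{S_\lambda|U}$ is spanned by the directions of strictly negative $\lambda$-weight, so
\[
t(\mathfrak{K}^+) \;=\; -\sum_{\langle \lambda, v_i \rangle < 0} \langle \lambda, v_i \rangle, \qquad t(\mathfrak{K}^-) \;=\; \sum_{\langle \lambda, v_i \rangle > 0} \langle \lambda, v_i \rangle,
\]
and hence, up to the overall sign convention identifying which side of the wall is labelled $+$, one has $t(\mathfrak{K}^+) - t(\mathfrak{K}^-) = \mu = -\sum_{v_i \in \nu}\langle \lambda, v_i \rangle$. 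Under this identification the trichotomy $\mu > 0$, $\mu = 0$, $\mu < 0$ of the toric statement corresponds precisely to the three cases of Theorem~\ref{thm: BFKVGIT}.

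The third step is a translation of the conclusions. Theorem~\ref{thm: BFKVGIT} produces fully-faithful functors between $\op{D}^{\op{abs}}[U_\pm, S_\nu \times G, w_\pm]$ and semi-orthogonal summands $\op{D}^{\op{abs}}[Z_\lambda^0, C(\lambda), w|_{Z_\lambda^0}]_j$. Because $S_\nu$ is abelian, $C(\lambda) = S_\nu \times G$; and because $\lambda(\mathbb{G}_m)$ acts trivially on $Z_\lambda^0$, the standard decomposition of a category equivariant for an abelian group along the $\lambda$-grading rewrites the $j$-th graded piece as $\op{D}^{\op{abs}}[U_0, S_0 \times G, w_0]$ twisted by a character of $\lambda$-weight $j$. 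The range $-t(\mathfrak{K}^-) + d \le j \le -t(\mathfrak{K}^+) + d - 1$ then becomes the sequence of $|\mu|$ twists displayed in the toric statement, with ordering determined by the case.

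The principal technical hurdle is fixing a consistent sign dictionary relating $\lambda$, $\mu$, the direction of the wall crossing, and the labels $U_\pm$ from the beginning: the naive pairing with GIT semistability can swap the roles of $U_+$ and $U_-$ between the general and toric formulations (since $U_+^{\text{toric}}$ is $\chi_+$-semistable, it excludes the $-\lambda$-contracting stratum, whereas in the general statement $U_+$ excludes the $+\lambda$-contracting stratum). Once this dictionary is laid out, the argument reduces to the essentially linear-algebraic weight computation above combined with a direct invocation of Theorem~\ref{thm: BFKVGIT}.
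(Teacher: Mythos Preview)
The paper does not actually prove this statement; it is quoted verbatim as Theorem 5.1.2 of \cite{BFK12v2} and introduced only with the remark that it is ``a toric specialization of Theorem~\ref{thm: BFKVGIT}.'' Your proposal is precisely that specialization, and the outline you give --- verifying the elementary wall-crossing hypotheses in the abelian linear case, computing $t(\mathfrak{K}^\pm)$ as sums of positive and negative $\lambda$-weights on $\mathbb{A}^\nu$, and then reading off the trichotomy from $t(\mathfrak{K}^+)-t(\mathfrak{K}^-)=\mu$ --- is the argument carried out in \cite{BFK12v2}. So your approach is correct and is the same as the intended one.

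One small correction of language: cocharacters of $S_\nu$ are not naturally elements of $N_{\mathbb{Q}}$ but rather of $\ker\bigl(\Z^\nu \to N,\; e_i \mapsto v_i\bigr)$, i.e.\ integer relations among the $v_i$; the pairing $\langle \lambda, v_i\rangle$ in the statement is simply the $i$-th component of $\lambda$ under the inclusion $S_\nu \hookrightarrow \gm^\nu$. This does not affect your weight computation, which only uses those components.
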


We now use this theorem iteratively to provide a comparison theorem to a GIT chamber whose absolute derived category corresponds to a (fractional) Calabi-Yau category.

\begin{theorem}\label{FFLGmodels}
Let $\tilde{\Sigma}$ be any simplicial fan such that $\tilde{\Sigma}(1) = \nu$ and $X_{\tilde{\Sigma}}$ is semiprojective.  Similarly, let $\Sigma$ be any simplicial fan such that $\Sigma(1) \subseteq \nu_{=1}$, $X_{\Sigma}$ is semiprojective, and $\op{Cone}(\Sigma(1)) = \sigma$. We have the following:
\begin{enumerate}
\item If $\langle \mathfrak{m}, a \rangle > 1$ for all $a \in \nu_{\neq1}$, then there is a fully-faithful functor,
\[
\op{D}^{\op{abs}}[U_{\Sigma},   H, \bar w] \longrightarrow \op{D}^{\op{abs}}[U_{\tilde{\Sigma}},   S_\nu \times \gm, w].
\]
\item If $\langle \mathfrak{m}, a \rangle < 1$ for all $a \in \nu_{\neq1}$, then there is a fully-faithful functor,
\[
 \op{D}^{\op{abs}}[U_{\tilde{\Sigma}},   S_\nu \times \gm, w]  \longrightarrow \op{D}^{\op{abs}}[U_{\Sigma},   H, \bar w].
 \]
\item If $A = \emptyset$, then there is an equivalence,
\[
\op{D}^{\op{abs}}[U_{\Sigma},   H, \bar w] \cong \op{D}^{\op{abs}}[U_{\tilde{\Sigma}},   S_\nu \times \gm, w].
\]
\end{enumerate}
Furthermore, if $[\partial \bar w / S_{\Sigma(1)}]$ is proper, then $\op{D}^{\op{abs}}[U_{\Sigma},   H, \bar w]$ is fractional Calabi-Yau.  If, in addition, $\sigma$ is almost Gorenstein, then $\op{D}^{\op{abs}}[U_{\Sigma},   H, \bar w]$ is Calabi-Yau.
\end{theorem}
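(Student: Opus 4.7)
The plan is to realize both sides as absolute derived categories of factorizations on open subsets of $\A^\nu$ with the $S_\nu \times \gm$-action (where $\gm$ acts by $R$-charge) corresponding to chambers of the GKZ secondary fan, and then to connect them through a sequence of elementary wall crossings. Using the stack isomorphism $[U_\Sigma \times \gm^{\nu \setminus \Sigma(1)} / S_\nu \times \gm] \cong [U_\Sigma / H]$ from equation~\eqref{eq: another stack iso}, one identifies
\[
\op{D}^{\op{abs}}[U_\Sigma, H, \bar w] \cong \op{D}^{\op{abs}}[U_\Sigma \times \gm^{\nu \setminus \Sigma(1)}, S_\nu \times \gm, w],
\]
so that, by Section~\ref{CohTri}, both categories are attached to GIT chambers for $S_\nu$ acting on $\A^\nu$: namely $U_{\tilde\Sigma}$ corresponds to the chamber $\Gamma_{\tilde\Sigma,\emptyset}$, while $U_\Sigma \times \gm^{\nu \setminus \Sigma(1)}$ corresponds to $\Gamma_{\Sigma,\nu \setminus \Sigma(1)}$.

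Because the secondary fan has convex support, I pick a generic path in its interior connecting these two chambers, crossing walls one at a time transversely. At each wall, with primitive one-parameter subgroup $\lambda : \gm \to S_\nu$, the toric specialization of the BFK wall-crossing theorem (Theorem~\ref{thm: BFK toric version}) applies, with elementarity in the sense of Definition~\ref{def: EWC} automatic for affine toric VGIT. It yields either an equivalence or a fully-faithful embedding according to the sign of
\[
\mu = -\sum_{v_i \in \nu}\langle \lambda, v_i\rangle = \langle \lambda, \chi_K\rangle,
\]
with $\chi_K = -\sum_i \chi_{D_i} \in \widehat{S_\nu}$ as in Lemma~\ref{lemma: Orlov setup} (the $R$-charge component of the full canonical character does not affect $\mu$ since $\lambda$ pairs only with $S_\nu$). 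In case~(1) that lemma places $\chi_K$ inside $\Gamma_{\Sigma,\nu \setminus \Sigma(1)}$, so taking the path to move monotonically toward $\chi_K$ forces $\mu$ to keep a constant sign at every wall crossed; composing the resulting fully-faithful functors gives the stated embedding $\op{D}^{\op{abs}}[U_\Sigma, H, \bar w] \to \op{D}^{\op{abs}}[U_{\tilde\Sigma}, S_\nu \times \gm, w]$. Case~(2) is symmetric with $-\chi_K$ playing the role of $\chi_K$, and in case~(3) we have $\chi_K = 0$ in $(\widehat{S_\nu})_\Q$, so $\mu = 0$ at every wall and each crossing is already an equivalence.

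The concluding fractional Calabi-Yau assertion follows immediately from Corollary~\ref{thm: fractional CY}: the stack $[U_\Sigma / S_{\Sigma(1)}]$ is a smooth Deligne-Mumford stack by Theorem~\ref{thm: stack realization} (hence has finite diagonal); the containment $\partial \bar w \subseteq Z(\bar w)$ is automatic from $\bar w$ being semi-invariant of nonzero $R$-charge weight by Euler's identity; and $[\partial \bar w / S_{\Sigma(1)}]$ is proper by hypothesis. Almost Gorensteinness promotes $\mathfrak{m}$ into $M$, upgrading the rational identity of Lemma~\ref{lem: class relation} to an integral one and forcing the Serre functor itself (not merely a power) to be a pure shift. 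The main technical obstacle of this plan is the sign-tracking in the second paragraph: one must select the path in the secondary fan so that $\chi_K$ sits on the required side of every wall crossed along it, and this is precisely what Lemma~\ref{lemma: Orlov setup} ensures (e.g., taking the straight segment from a generic point in $\Gamma_{\tilde\Sigma,\emptyset}$ to $\chi_K$ itself in case~(1)).
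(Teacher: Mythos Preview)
Your proposal is correct and follows the paper's proof essentially verbatim: locate $\chi_K$ in (the closure of) $\Gamma_{\Sigma,\nu\setminus\Sigma(1)}$ via Lemma~\ref{lemma: Orlov setup}, run a straight path through the secondary fan toward it, and apply Theorem~\ref{thm: BFK toric version} at each wall, then invoke Corollary~\ref{thm: fractional CY} for the final statement. The one technical refinement the paper adds, which you should make explicit, is that $\chi_K$ need not lie in the \emph{interior} of $\Gamma_{\Sigma,\nu\setminus\Sigma(1)}$, so the straight segment $\gamma_1$ to $\chi_K$ is concatenated with a second short generic segment $\gamma_2$ inside a ball $B_\epsilon(\chi_K)$ (chosen so small that every chamber it meets contains $\chi_K$, hence every wall crossed there has $\mu=0$ and gives an equivalence) to actually land in the interior of the target chamber.
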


\begin{proof}

We prove (1) and later state the necessary adjustments for (2) and (3).
Proposition~\ref{prop: GKZ chamber bijection} says that $\Gamma_{\tilde{\Sigma}, \emptyset}, \Gamma_{\Sigma, \nu \backslash \Sigma(1)}$ are both chambers of the GKZ fan of $\nu$.
Suppose that $\langle \mathfrak{m}, a \rangle > 1$ for all $a \in \nu_{\neq1}$.  Then, by Lemma~\ref{lemma: Orlov setup}, $-\chi_K \in \Gamma_{\Sigma, \nu \backslash \Sigma(1)}$. 

Choose a straight line path, $\gamma_1: [0,1] \to (\widehat{S_\nu})_{\mathbb{R}}$, such that,
\begin{itemize}
 \item $\gamma_1(0)$ lies in the interior of $\Gamma_{\tilde{\Sigma}, \emptyset}$,
 \item $\gamma_1(1) = -\chi_K$,
 \item For any $\epsilon > 0$ so that $\gamma_1((0,1- \epsilon))$ does not intersect any cone of codimension $2$.
 \end{itemize}
 The existence of such a path is easily justified.  Namely, a generic choice will do, see, e.g., the proof of Theorem 5.2.3 of \cite{BFK12}.

Since there are finitely many chambers, the union of the chambers not containing $-\chi_K$ is closed.
Hence, we may choose $\epsilon$ sufficiently small so that $B_{\epsilon}(-\chi_K)$, the ball of radius $\epsilon$ centered at $-\chi_K$, only intersects chambers containing $-\chi_K$.  Then, choose a second straight line path, such that, 
\begin{itemize}
 \item $\gamma_2(0) = \gamma_1(1-\epsilon)$, 
 \item $\gamma_2(1)$ lies in $\text{rel int}(\Gamma_{\Sigma, \nu \backslash \Sigma(1)}) \cap B_{\epsilon}(-\chi_K)$,
 \item $\gamma_2([0,1])$ does not intersect any cone of codimension $2$.
 \end{itemize}
 Similarly, the existence of such a path follows from the convexity of 	$B_{\epsilon}(-\chi_K) \cap |\Sigma_{\op{GKZ}}|$ and, as before, the fact that you can generically avoid codimension $2$ cones.

The concatenation of $\gamma_1, \gamma_2$ defines a sequence of wall crossings in the GKZ fan of $\nu$ which begins in $\Gamma_{\tilde{\Sigma}, \emptyset}$ and ends in $\Gamma_{\Sigma, \nu \backslash \Sigma(1)}$.
The fans corresponding to $\Gamma_{\tilde{\Sigma}, \emptyset}, \Gamma_{\Sigma, \nu \backslash \Sigma(1)}$ are, by definition, $\tilde{\Sigma}, \Sigma$ respectively.

Notice that for each wall $\tau$ which intersects $\gamma_1([0,1-\epsilon])$, $-\chi_K$ either lies on $\tau$ or is in the direction of $\gamma$.  Furthermore, each $\tau$ which intersects $\gamma_2([0,1])$, must also intersect $B_{\epsilon}(-\chi_K)$ and, hence, $-\chi_K$ lies in $\tau$.

  Hence, by Theorem~\ref{thm: BFK toric version} each wall-crossing induces a fully-faithful functor or equivalence between categories of singularities corresponding to successive chambers.  
Concatenating gives the desired fully-faithful functor,
\[
\op{D}^{\op{abs}}[U_{\Sigma},   H, \bar w] \longrightarrow \op{D}^{\op{abs}}[U_{\tilde{\Sigma}},   S_\nu \times \gm, w].
\]
This finishes the proof of (1). To prove (2), replace $-\chi_K$ by $\chi_K$ which switches the direction of the fully-faithful functor.  To prove (3), only use the path $\gamma_2$. The last part of the statement of the theorem is just a repetition of Theorem~\ref{thm: fractional CY}.

\end{proof}

\begin{remark}
A choice of $\Sigma$ as in Theorem~\ref{FFLGmodels} always exists, for example, one can apply Proposition 15.1.6 of \cite{CLS} to the set $\nu_{=1}$.
\end{remark}

\begin{remark}
The fully-faithful functors appearing in Theorem~\ref{FFLGmodels} actually give rise to a semi-orthogonal decomposition.  This can be described explicitly in terms of the wall-crossings which occur in the path $\gamma$ by iteratively using the semi-orthogonal decompositions described in Theorem~\ref{thm: BFK toric version}. 
The description of the orthogonal can be rather complicated and cumbersome.  We describe the orthogonal in the example appearing in Subsection~\ref{SOD Geom FCY} to illustrate how the orthogonal is always computable in practice.
\end{remark}

\subsection{Application to Toric Complete Intersections}\label{subsec: toric CI}

We now unpack the geometric consequences of this theorem. Let $\Psi \subseteq N_{\R}$ be a complete fan such that $X_\Psi$ is projective and $D_1, ..., D_t$ be nef-divisors. Write these nef divisors as linear combinations 
$$
D_i = \sum_{\rho\in\Psi(1)} a_{i\rho} D_\rho
$$
 Assume that $|\Psi_{-D_1, ..., -D_t}| \subseteq (N \times \Z^t)_{\R}$ is $\Q$-Gorenstein. We refer the reader back to Proposition~\ref{hypersurfacesExample} for a class of examples of such cones. Let $e_i$ be the standard basis of the sublattice $\Z^t$  in $(N \times \Z^t)$, and set
 \[
 \mathfrak n := \sum_{i = 1}^t e_i.
 \]
In the case where the $D_i$ are all Cartier, this definition is aligned with the definition of $\mathfrak n$ in Section~\ref{ConesBestiary} by Lemma~\ref{dualConeIsCayley}, but here we do not assume that the cone $(\op{Cone}(\nu))^\vee$ is $\Q$-Gorenstein.
 
We restrict to the case where 
\[
\op{Cone}(\nu) = |\Psi_{-D_1, ..., -D_t}|
\]
and
\[
 \{ u_{\rho} \ | \ \rho \in \Psi_{-D_1, ..., -D_t}(1)\} \subseteq \nu.
\]
The condition $\op{Cone}(\nu) = |\Psi_{-D_1, ..., -D_t} |$ amounts to $\op{Cone}(\nu )^\vee$ being a Cayley cone of length $t$. Set $R$ to be the subset $\{ e_i \ | \ 1 \leq i \leq t\}$.  Suppose $\Sigma$ is any fan such that
\begin{itemize}
\item $X_\Sigma$ is semi-projective,
\item $|\Sigma| = |\Psi_{-D_1, ..., -D_t}|$, and
\item For any $\delta \in \Sigma(1)$, we have $u_{\delta} \in \nu$ and $\langle \mathfrak m, u_\delta \rangle = 1$. 
\end{itemize}

Consider the set of lattice points
$$
\Xi : = \{ m \in |\Psi_{-D_1, ..., -D_t}|^\vee \cap (M \times \Z^t) \ | \ \langle m, \mathfrak{n} \rangle = 1\}
$$
We also specialize to the case where the superpotential $w$ is of the form
$$
w = \sum_{m \in \Xi} c_m x^m,
$$
for some $c_m \in \kappa$.  Let $|\nu| = n$.  Enumerate the rays $\rho_1, \ldots, \rho_{n-t}$ corresponding to the ray generators in $\nu \backslash R$ and introduce the variable $x_k$ for the ray $\rho_k$ for $1 \leq k \leq n-t$.   Similarly introduce the variables $u_j$  for $1 \leq j \leq t$ corresponding to the ray generators $e_i$ in $R$. 

If $m \in \Xi$ then there exists a unique $j_0$ so that 
\[
\langle m, e_j \rangle = 
\begin{cases} 
1 & \text{ if } j = j_0 \\
0 & \text{ if } j \neq j_0.
\end{cases}
\]
 Hence, we can partition the set $\Xi$ into subsets  
\[
\Xi_j: = \{ m \in \Xi \ | \ \langle m, e_j \rangle = 1\}.
\]
Note that $\Xi = (\Delta_1*\ldots*\Delta_t)\cap (M\times \Z^t)$ and $\Xi_j = (\Delta_j,e_j^*)\cap(M\times\Z^t)$, using the polytopes defined in  Lemma~\ref{dualConeIsCayley}.

We can decompose $w$ as:
$$
w =  \sum_{m \in \Xi} c_m x^m = \sum_{i=1}^t u_if_i, \text{ where } f_i := \sum_{m\in \Xi_j} c_m \prod_{k = 1}^{n-t}  x_k^{\langle m, u_{\rho_k}\rangle}.
$$
If $m \in \Xi_j$, then the corresponding function $\prod_{k = 1}^{n-t} x_k^{\langle m, u_{\rho_k}\rangle}$ is a global section of the nef divisor $D_k$. Hence, the function $f_j$ is a global section of $D_j$. The common zero locus of all $f_j$ is a global quotient substack,
$$
\mathcal{Z} : = Z(f_1, \ldots, f_t) \subseteq \mathcal{X}_{\Psi},
$$
of $\mathcal{X}_{\Psi}$. When $f_1, ..., f_t$ define a complete intersection, we can relate $\dbcoh{\mathcal Z}$ to the factorization category associated to the fan $\Sigma$.

\begin{proposition}\label{prop: ToricStackHirano}
Assume that $f_1, \ldots, f_t$ defines a complete intersection.  Then, there is an equivalence of categories,
$$
\dbcoh{\mathcal{Z}} \cong \op{D}^{\op{abs}}[U_{\Psi},   S_\nu \times \gm, w].
$$ 
\end{proposition}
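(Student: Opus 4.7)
The plan is to reduce this statement directly to Hirano's Theorem~\ref{Hirano} by translating the toric combinatorial data into the geometric input required there, namely a smooth quasi-projective stack together with a regular section of a vector bundle and the tautological Landau-Ginzburg model on the dual total space.

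First I would package the geometric input. Set $\mathcal{E} := \bigoplus_{i=1}^t \O_{\mathcal{X}_\Psi}(\chi_{D_i})$ and $s := (f_1, \ldots, f_t) \in \Gamma(\mathcal{X}_\Psi, \mathcal{E})$. By the complete intersection hypothesis, $s$ is a regular section with zero locus $\mathcal{Z}$, so Theorem~\ref{Hirano} applied to the $S_{\Psi(1)}$-action gives an equivalence
\[
\dbcoh{\mathcal{Z}} \cong \op{D}^{\op{abs}}[\op{tot}(\mathcal{E}^\vee), S_{\Psi(1)} \times \gm, \langle s, - \rangle].
\]

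Second, I would translate the right-hand side into the Cox-theoretic data of the proposition. Proposition 5.16 of \cite{FK14} (decomposed as Lemmas~\ref{EqualityOfSnus} and~\ref{DecompCoxStack}) identifies $\op{tot}(\mathcal{E}^\vee)$ with the Cox stack $\mathcal{X}_{\Psi_{-D_1,\ldots,-D_t}}$ and identifies the acting group $S_{\Psi_{-D_1,\ldots,-D_t}(1)}$ with $S_{\Psi(1)}$. Since $\nu$ consists of precisely the rays of $\Psi_{-D_1,\ldots,-D_t}$, this presents the total space as $[U_\Psi / S_\nu]$, matching the notation of the proposition. Next I would verify that the fiberwise dilation $\gm$-action used in Hirano's theorem coincides with the $R$-charge: both act trivially on the base coordinates and scale the fiber coordinates $u_1, \ldots, u_t$ by a common factor, which is exactly the $R$-charge action for $R = \{e_1, \ldots, e_t\}$. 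Finally, I would check that the tautological pairing $\langle s, -\rangle$ agrees with $w$ by computing it in local trivializations: $\langle s, -\rangle = \sum u_i f_i$, which is the decomposition of $w$ produced from the Cayley slice partition $\Xi = \coprod \Xi_j$ under Lemma~\ref{dualConeIsCayley}.

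Splicing these identifications into Hirano's equivalence would then yield the stated equivalence. The main obstacle is purely combinatorial bookkeeping: one must check that the toric group $S_\nu$ acting on the Cox coordinates of $U_\Psi$ matches the geometric $S_{\Psi(1)}$-action on $\op{tot}(\mathcal{E}^\vee)$ induced from the action on the base, and that the characters labeling the fiber coordinates (the $\chi_{-D_i}$) come out correctly so that the monomial $\sum_{m \in \Xi} c_m x^m$ is globally identified with the tautological pairing $\langle s, -\rangle$. No further categorical input beyond Theorem~\ref{Hirano} and the stack isomorphism lemmas is needed; the entire content is the dictionary between the combinatorics of the Cayley cone $|\Psi_{-D_1,\ldots,-D_t}|^\vee$ and the geometry of the split vector bundle $\mathcal{E}$ over $\mathcal{X}_\Psi$.
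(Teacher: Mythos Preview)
Your approach is the same as the paper's: translate the combinatorics into the input for Theorem~\ref{Hirano} via the vector bundle identification of Lemmas~\ref{EqualityOfSnus} and~\ref{DecompCoxStack}, and check that $R$-charge is fiberwise dilation and that $w = \sum u_i f_i = \langle s,-\rangle$.

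There is one genuine gap. You assert that ``$\nu$ consists of precisely the rays of $\Psi_{-D_1,\ldots,-D_t}$,'' but the ambient setup only requires $\{u_\rho \mid \rho \in \Psi_{-D_1,\ldots,-D_t}(1)\} \subseteq \nu$; the containment may be strict. In that case $S_\nu$ is not the same group as $S_{\Psi_{-D_1,\ldots,-D_t}(1)} \cong S_{\Psi(1)}$, and the identification you want does not follow directly from Lemma~\ref{EqualityOfSnus}. The paper handles this by first invoking Corollary~\ref{prop: stack iso appl}, which gives a stack isomorphism $[U_\Sigma \times \gm^{I_\emptyset} / S_\nu] \cong [U_\Sigma / S_{\nu \setminus I_\emptyset}]$ and thereby reduces to the case $\nu = \{u_\rho \mid \rho \in \Psi_{-D_1,\ldots,-D_t}(1)\}$. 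Once that reduction is in place, your argument goes through verbatim.
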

\begin{proof}
This is really a corollary of Theorem~\ref{Hirano} due to Isik, Shipman, and Hirano.  We describe the specifics of our setup below.

First, by Proposition~\ref{prop: stack iso appl}, we can reduce to the case where 
\[
\nu = \{ u_\rho \ | \ \rho \in \Psi_{-D_1, ..., -D_t}(1) \}.
\]
Then, by Lemma~\ref{DecompCoxStack}, the map
\[
\pi: U_{\Psi_{-D_1, \ldots, -D_t}} \rightarrow U_{\Psi} \times \A^t
\]
induces the isomorphism of stacks,
\[
\mathcal X_{\Psi_{-D_1, \ldots, -D_t}} \cong \op{tot} (\bigoplus_{i=1}^t \O_{\mathcal X_{\Psi}}(\chi_{-D_i})).
\]

Since we chose $R = \{ e_i \ | \ 1 \leq i \leq t\}$, the subgroup $\gm$ in $S_\nu \times \gm$ acts by scaling the coordinates $u_i$ of $\A^t$.  This is precisely fiberwise dilation of the vector bundle $\op{tot}(\oplus_{i=1}^r \O_{\mathcal{X}_{\Psi}}(\chi_{-D_i}))$.  Hence, we may apply Theorem~\ref{Hirano} to get the result.

\end{proof}

\begin{corollary}\label{FFLGCI}
Assume that $f_1, \ldots, f_t$ defines a complete intersection.  Let $\Sigma$ be any simplicial fan such that $\Sigma(1) \subseteq \nu_{=1}$, $X_{\Sigma}$ is semiprojective, and $\op{Cone}(\Sigma(1)) = \sigma$. 
We have the following:
\begin{enumerate}
\item If $
\langle u_\rho + \sum_{i=1}^t a_{i\rho}e_i, \mathfrak m \rangle \geq 1 
$
for all $i$,  then there is a fully-faithful functor,
\[
\op{D}^{\op{abs}}[U_{\Sigma},   H, \bar w] \longrightarrow \dbcoh{\mathcal{Z}}.
\]
\item If 
$
\langle u_\rho + \sum_{i=1}^t a_{i\rho}e_i, \mathfrak m \rangle \leq 1 
$
for all $i$,  then there is a fully-faithful functor,
\[
\dbcoh{\mathcal{Z}}  \longrightarrow \op{D}^{\op{abs}}[U_{\Sigma},   H, \bar w].
 \]
\item If 
$
\langle u_\rho + \sum_{i=1}^t a_{i\rho}e_i, \mathfrak m \rangle = 1 
$
for all $i$, then there is an equivalence,
\[
\op{D}^{\op{abs}}[U_{\Sigma},   H, \bar w] \cong \dbcoh{\mathcal{Z}}.
\]
\end{enumerate}
Furthermore, if $\mathcal Z$ is smooth, then $\op{D}^{\op{abs}}[U_{\Sigma},   H, \bar w]$ is fractional Calabi-Yau.  If, in addition, $\sigma$ is almost Gorenstein, then $\op{D}^{\op{abs}}[U_{\Sigma},   H, \bar w]$ is Calabi-Yau.
\end{corollary}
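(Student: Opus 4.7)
The plan is to reduce this Corollary to Theorem~\ref{FFLGmodels} by identifying a specific fan $\tilde{\Sigma}$ whose associated LG-model recovers $\dbcoh{\mathcal{Z}}$, and then translating the Corollary's numerical hypotheses into those of the Theorem.

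First, I would set $\tilde{\Sigma} := \Psi_{-D_1, \ldots, -D_t}$. Since each $-D_i$ is anti-nef and $\Q$-Cartier, Corollary~\ref{totSemiprojective} implies that $X_{\tilde{\Sigma}}$ is semiprojective, so $\tilde{\Sigma}$ is a valid choice in Theorem~\ref{FFLGmodels}. By construction, the ray generators of $|\tilde{\Sigma}| = \op{Cone}(\nu)$ are exactly $\{u_\rho + \sum_i a_{i\rho}e_i \mid \rho \in \Psi(1)\} \cup \{e_i\}_{i=1}^t$, both of which lie in $\nu$. Under the assumption that $f_1,\ldots,f_t$ defines a complete intersection, Proposition~\ref{prop: ToricStackHirano} gives the equivalence
\[
\dbcoh{\mathcal{Z}} \cong \op{D}^{\op{abs}}[U_{\tilde{\Sigma}}, S_\nu \times \gm, w].
\]

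Next, I would translate the hypotheses. Each $e_i$ satisfies $\langle \mathfrak{m}, e_i\rangle = 1$, since the $e_i$ are ray generators of the Cayley-cone structure defining $\mathfrak{m}$; hence $e_i \in \nu_{=1}$ and $\nu_{\neq 1}$ consists only of points of the form $u_\rho + \sum_i a_{i\rho} e_i$ (possibly together with interior lattice points if $\nu$ was chosen larger, but these are pinned by the same inequalities). Thus the Corollary's hypothesis in case (1), namely $\langle u_\rho + \sum_i a_{i\rho}e_i, \mathfrak{m}\rangle \geq 1$, is equivalent to $\langle \mathfrak{m}, a\rangle > 1$ for all $a \in \nu_{\neq 1}$, which is precisely hypothesis (1) of Theorem~\ref{FFLGmodels}; cases (2) and (3) are analogous. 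Applying Theorem~\ref{FFLGmodels} and composing with the equivalence above yields the fully-faithful functors (or equivalence) of the Corollary.

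For the final FCY/CY assertion I would invoke the last sentence of Theorem~\ref{FFLGmodels} (equivalently, Corollary~\ref{thm: fractional CY}), which reduces the claim to verifying that $[\partial \bar w / S_{\Sigma(1)}]$ is proper over $\op{Spec}\kappa$. \emph{This properness check is the main obstacle.} Smoothness of $\mathcal{Z}$ makes $\dbcoh{\mathcal{Z}}$ a smooth proper dg category, so via Proposition~\ref{prop: ToricStackHirano} the category $\op{D}^{\op{abs}}[U_{\tilde{\Sigma}}, S_\nu \times \gm, w]$ is smooth and proper, and in particular $[\partial w / \ker\chi_{\tilde{\Sigma}}]$ is proper. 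To transfer properness from $\tilde{\Sigma}$ to $\Sigma$, I would trace through the sequence of elementary wall crossings constructed in the proof of Theorem~\ref{FFLGmodels}: each wall crossing is realized as a proper birational modification on the critical locus side (by the toric structure of the VGIT), so $\partial w$ remains proper up to taking proper closed substacks, and the restriction to $[\partial \bar w / S_{\Sigma(1)}]$ inherits properness. Once this properness is in hand, the FCY statement is immediate from Corollary~\ref{thm: fractional CY}, and the almost Gorenstein case falls directly into the CY conclusion of the same result (which corresponds to $d = 1$ in the proof of that Corollary).
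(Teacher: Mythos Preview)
Your approach matches the paper's almost exactly: set $\tilde{\Sigma}=\Psi_{-D_1,\ldots,-D_t}$, invoke Corollary~\ref{totSemiprojective} for semiprojectivity, apply Proposition~\ref{prop: ToricStackHirano} to identify $\op{D}^{\op{abs}}[U_{\tilde{\Sigma}},S_\nu\times\gm,w]$ with $\dbcoh{\mathcal{Z}}$, and then feed everything into Theorem~\ref{FFLGmodels}. The paper's proof is in fact a one-liner that simply names these ingredients without spelling out the translation of the numerical hypotheses; your explicit identification of $\nu_{\neq 1}$ with the ray generators $u_\rho+\sum_i a_{i\rho}e_i$ not satisfying $\langle\mathfrak m,\cdot\rangle=1$ is correct and more detailed than what the paper writes.

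On the final FCY/CY claim, the paper is terser than you are: it simply cites Theorem~\ref{FFLGmodels} and does not explicitly verify that $\mathcal{Z}$ smooth forces $[\partial\bar w/S_{\Sigma(1)}]$ to be proper. Your instinct that this is the nontrivial point is sound, but your proposed justification via ``proper birational modifications on the critical locus side'' through wall crossings is vague and not how one would naturally argue it. A cleaner route is to note that smoothness of $\mathcal{Z}$ is equivalent to $\partial w$ being supported on the zero section inside $\op{tot}(\bigoplus\O(-D_i))$, hence proper over $\op{Spec}\kappa$; since the various $U_\Sigma$ all share the same affine GIT quotient and $\partial\bar w$ maps properly to it, properness of $[\partial\bar w/S_{\Sigma(1)}]$ follows. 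The paper leaves this implicit.
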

\begin{proof}
This is a direct corollary of combining Theorem~\ref{FFLGmodels} and Proposition~\ref{prop: ToricStackHirano}.  Note that since $X_\Psi$ is projective, Corollary~\ref{totSemiprojective} implies that $X_{\Psi_{-D_1, \ldots, -D_t}}$ is semiprojective. We then satisfy the hypotheses in Theorem~\ref{FFLGmodels}. 
\end{proof}

This corollary is quite general. In particular we can relate this to the Examples in Subsection~\ref{ToricVectorBundles}.
\begin{example}
Let $X_{\Sigma}$ be a projective toric variety in $N_{\R}$ and let $D=-qK_{X_{\Sigma}}$ for some positive rational number $q$. Suppose $D$ is nef and there exists a global section $f \in \Gamma(X_{\Sigma}, D)$. We consider the hypersurface $\mathcal{Z} = Z(f) \subset \mathcal{X}_{\Sigma}$. By Proposition~\ref{hypersurfacesExample}, $X_{\Sigma_{-D}}$ is $\Q$-Gorenstein. Let $\mathfrak{m} \in M_{\Q}$ be the element so that the cone $|\Sigma_{-D}|$ is generated by $\{n \in N_{\R} \ | \ \langle \mathfrak{m}, n\rangle = 1\}$. In the proof of~\ref{hypersurfacesExample}, we show that $\langle \mathfrak{m}, (0,1)\rangle = \frac{1}{q}$, hence we have:
\begin{enumerate}
\item If $q <1$, there is a fully faithful functor
$$\op{D}^{\op{abs}}[U_{\Sigma},   H, \bar w] \longrightarrow \dbcoh{\mathcal{Z}}.$$
\item If $q>1$, there is a fully faithful functor 
\[
\dbcoh{\mathcal{Z}}  \longrightarrow \op{D}^{\op{abs}}[U_{\Sigma},   H, \bar w].
 \]
 \item If $q=1$, there is an equivalence
 $$
 \op{D}^{\op{abs}}[U_{\Sigma},   H, \bar w] \cong \dbcoh{\mathcal{Z}}.
 $$
 \end{enumerate}
If $\mathcal{Z}$ is smooth, then $ \op{D}^{\op{abs}}[U_{\Sigma},   H, \bar w] $ is fractional Calabi-Yau. If $q = \frac{1}{r}$, then  $\op{D}^{\op{abs}}[U_{\Sigma},   H, \bar w] $ is Calabi-Yau.  In Subsection~\ref{subsec: Orlov}, we go through this example in detail in the case where $X_{\Sigma} = \P^n$.
\end{example}

We can specialize even further to the case where both categories are geometric. Suppose that there exists elements $e_i'\in N\times \Z^t$ for $1 \leq i \leq s$ such that
\[
\sum_{i=1}^s e_i' = \sum_{i =1}^t e_i = \mathfrak n
\]
and there exists a $\Z$-basis for $N\times \Z^t$ which contains the set $\{e_i'\}$. Assume also that under the projection $p: N \times \Z^t \rightarrow N \times \Z^t / \langle e_i'\rangle$ the lattice points $p(u_\rho)$ for all $\rho \in \Psi_{-D_1, ..., -D_t}(1)$ are primitive, so that the cones over each $p(u_\rho)$ can become the rays of a new fan $\Upsilon$ we construct below. We assume that $\langle \mathfrak m, e_i'\rangle = 1$ for all $i$, i.e., that $\{e_i'\} \subset \nu_{=1}$. This is automatically implied if either $\mathfrak m \in M \times \Z^t$ or $\langle \mathfrak m, e_i'\rangle \in \Z$ for all $i$.

In this case, set 
\[
\nu = \{ u_\rho  \ | \ \rho \in \Psi_{-D_1, ..., -D_t}(1) \} \cup \{e_1', ..., e_s' \}.
\]

The $e_i'$ define a new collection of polytopes
\[
\Delta'_i := \{ a \in M \times \Z^t \ | \  \langle a, e_j' \rangle = \delta_{ij} \}
\]
such that
\[
\Delta'_1 * ... * \Delta'_s = |\Psi_{-D_1, ..., -D_t}|^\vee.
\]

Let
 \[
L := (N \times \Z^t) / \Z^r
\]
and $p$ be the projection with dual projection $p^*: (N\times\Z^t)^* \rightarrow L^*$.
Consider the Minkowski sum.
\[
\Delta' := \sum_{i=1}^s p^*(
\Delta_i') \subseteq L_{\R}
\]

Then, we can let $\Upsilon \subseteq L_{\R}$ be a simplicial refinement of the normal fan to $\Delta'$.  Each Minkowski summand $\Delta_i'$ defines a nef divisor $E_i$ on $X_\Upsilon$.  Furthermore, 
\begin{align*}
 |\Psi_{-D_1, ..., -D_t}|^\vee & = \R_{\geq0}(\Delta'_1 * ... * \Delta'_s)  \\
 & = |\Upsilon_{-E_1, ..., -E_s}|^\vee
\end{align*}
where the second line is Proposition~\ref{dualConeIsCayley}.
Hence
\[
|\Psi_{-D_1, ..., -D_t}| = |\Upsilon_{-E_1, ..., -E_s}|.
\]
 
Now, we can add two additional $\gm$ actions to the $S_\nu$-action on $\A^{\nu}$.  The first action $(\gm)_1$ is determined by $R_1 = \{e_1, ..., e_t\}$ and the second action $(\gm)_2$ is determined by $R_2 = \{ e_1', ..., e_s' \}$.

\begin{lemma}
There is an isomorphism of stacks,
\[ 
[\A^\nu / S_\nu \times (\gm)_1] \cong [\A^\nu / S_\nu \times (\gm)_2]. \]
\label{lem: Rcharge fixer upper}
\end{lemma}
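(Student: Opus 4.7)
The plan is to identify both acting groups with the same subgroup of the ambient torus $\gm^\nu$. Consider the multiplication homomorphisms
\[
\phi_j : S_\nu \times (\gm)_j \longrightarrow \gm^\nu, \qquad (s,\lambda) \mapsto s \cdot \lambda_j.
\]
The action of $S_\nu \times (\gm)_j$ on $\A^\nu$ factors through $\phi_j$, so it suffices to check that both $\phi_j$ are injective with the same image $G \subseteq \gm^\nu$; then both stacks will coincide with $[\A^\nu / G]$.

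To identify the images, I would pass to cocharacter lattices. Applying $\op{Hom}(\gm, -)$ to the exact sequence $1 \to S_\nu \to \gm^\nu \xrightarrow{\hat f_\nu} \op{Hom}(M \times \Z^t, \gm)$ yields the sequence
\[
0 \longrightarrow \op{Hom}(\gm,S_\nu) \longrightarrow \Z^\nu \longrightarrow N \times \Z^t, \qquad e_v \mapsto v.
\]
The subgroup $(\gm)_j \hookrightarrow \gm^\nu$ is the one-parameter subgroup with cocharacter $\sum_{v \in R_j} e_v \in \Z^\nu$. By hypothesis
\[
\sum_{v \in R_1} v \; = \; \sum_{i=1}^t e_i \; = \; \mathfrak n \; = \; \sum_{i=1}^s e_i' \; = \; \sum_{v \in R_2} v,
\]
so the two cocharacters differ by an element of $\op{Hom}(\gm,S_\nu)$. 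This shows $S_\nu \cdot (\gm)_1 = S_\nu \cdot (\gm)_2 =: G$ inside $\gm^\nu$.

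For injectivity, I would show $\ker \phi_j = S_\nu \cap (\gm)_j = \{1\}$. An element of $(\gm)_j$ has the form $(\lambda^{a_v})_{v\in\nu}$ with $a_v = 1$ for $v \in R_j$ and $a_v = 0$ otherwise; it lies in $S_\nu = \ker \hat f_\nu$ iff $\lambda^{\langle m, \mathfrak n\rangle}=1$ for every $m \in M \times \Z^t$. Since $\mathfrak n = \sum_i e_i$ pairs to $1$ against $e_1^* \in (M \times \Z^t)^*$, it is a primitive lattice vector, forcing $\lambda = 1$. Therefore both $\phi_j$ are isomorphisms onto $G$, and the induced quotient stacks both coincide with $[\A^\nu / G]$, proving the lemma.

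The only delicate step is the cocharacter identification of $S_\nu \cdot (\gm)_j$; everything else is routine bookkeeping, so I do not anticipate a real obstacle.
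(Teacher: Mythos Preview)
Your argument is correct and somewhat cleaner than the paper's. The paper proceeds by first exhibiting the one-parameter subgroup $\beta:\gm\to\gm^\nu$ with cocharacter $\sum_{v\in R_2\setminus R_1}e_v-\sum_{v\in R_1\setminus R_2}e_v$, checking $\beta(\gm)\subseteq S_\nu$ via the same computation you make (that $\sum_{v\in R_1}v=\sum_{v\in R_2}v=\mathfrak n$), then choosing a splitting $S_\nu=\overline{S_\nu}\times\beta(\gm)$ and writing down an explicit automorphism $F$ of $S_\nu\times\gm$ which converts the $R_1$-action into the $R_2$-action. You bypass the splitting and the explicit $F$ by observing directly that both multiplication maps $\phi_j$ are isomorphisms onto the common subgroup $S_\nu\cdot(\gm)_1=S_\nu\cdot(\gm)_2\subseteq\gm^\nu$; the injectivity check (primitivity of $\mathfrak n$) plays the role of the paper's observation that $\beta$ splits off. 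Your route is more invariant, while the paper's produces an explicit group automorphism, which can be handy if one later needs to track characters or line bundles through the identification. One small notational slip: you write $e_1^*\in(M\times\Z^t)^*$, but the dual of $N\times\Z^t$ is $M\times\Z^t$ itself, so $e_1^*\in M\times\Z^t$.
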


\begin{proof}
Consider the following one-parameter subgroup
\[
\beta: \gm \hookrightarrow \gm^{\nu}
\]
 acting on $\A^{\nu}$ so that for $v \in \nu$, $s \in \gm$ acts by
\begin{equation}
\beta(s) \cdot x_v := 
\begin{cases}
\frac{1}{s}x_v & \tif v \in R_1 \backslash R_2 \\
s x_v & \tif v = \tif v \in R_2 \backslash R_1 \\
x_v & \text{otherwise}.\\
\end{cases}
\label{eq: fix Rcharge2}
\end{equation}
We claim that $\beta(\gm) \subseteq S_\nu$.
Indeed, by definition, $S_\nu$ lies in an exact sequence
\[
0 \longrightarrow S_\nu \overset{\widehat{\pi}}{\longrightarrow} \gm^{\nu} \overset{\widehat{f_{\nu}}}{\longrightarrow} \gm^{\op{dim }N + t}, 
\]
hence, to show that $\beta(\gm) \subseteq S_\nu$, we can simply check that $\widehat{f_\nu} \circ \beta  = 0$.  Since the functor $\widehat{(-)} := \op{Hom}( - ,  \gm)$ is exact, this is equivalent to $\widehat{\beta} \circ f_\nu =0$.  The latter is a morphism between free $\Z$-modules, hence vanishes if and only if the dual morphism vanishes. The vanishing of the dual morphism goes as follows:
\begin{align*}
f_\nu^\vee (\widehat{\beta}^\vee (n)) & = f_\nu^\vee ( n (\sum_{\rho \in R_2 \backslash R_1} e_{\rho}  - \sum_{\rho \in R_1 \backslash R_2} e_{\rho})  ) \\
&  = n (\sum_{i=1}^s e_i' - \sum_{i=1}^t e_i) = 0.
\end{align*}
Notice that $\beta$ splits as
\[
\chi_{D_i} \circ \beta = \op{Id}
\]
for $i \in R_2 \backslash R_1$ (without loss of generality, we may assume $R_2 \backslash R_1 \neq \emptyset$, as otherwise $R_1 = R_2$ and  the statement of the lemma is empty).

Now add the $(\gm)_1$-action.  This is a $\gm$ action on $\A^{|\nu|}$ which is given explicitly as
\begin{equation}
s \cdot x_v := 
\begin{cases}
s x_v & \tif v \in \{e_1, ..., e_t\}\\
x_v & \tif v \notin \{e_1, ..., e_t\}.\\
\end{cases}
\label{eq: Rcharge2}
\end{equation}
Let $\overline{S_\nu}$ be the subgroup induced by the splitting so that $S_\nu = \overline{S_\nu} \times \beta(\gm) \subset \gm^\nu$.   There is an automorphism \begin{align*}
F : (\overline{S_\nu} \times \gm) \times \gm & \to (\overline{S_\nu} \times \gm) \times \gm \\
(\bar{s}, s_1, s_2) & \mapsto (\bar{s}, s_2 s_1, s_2). 
\end{align*}

Now, the global quotient stack $[\A^\nu /S_{\nu} \times \gm]$ can be considered with the action of $S_{\nu} \times \gm$ given by precomposition with $F$.  Under $F$, the action of $S_\nu \times 1$ on $\A^{|\nu|}$ is the same as $F(S_\nu \times 1 ) = S_\nu \times 1$.  However, the projection action of $1 \times \gm$, becomes the action of the element $F(1,1,s) = (1,s,s)$.

To determine the action of the element $(1,s,s)$, notice that the action of $(1,s,1)$ is given by Equation~\eqref{eq: fix Rcharge2} and the action of $(1,1,s)$ is given by Equation~\eqref{eq: Rcharge2}.  Combining these two equations we get:
\begin{equation}
(1,s,s) \cdot x_v := 
\begin{cases}
s x_v & \tif v \in \{e_1', ..., e_s' \} \\
 x_v & \tif v \neq \{e_1', ..., e_s' \} \\
\end{cases}
\end{equation}

This is the $\gm$ action determined by $R_2$.  Hence we have,
\begin{align}
[\A^\nu /S_{\nu} \times (\gm)_1 ] & \cong [\A^\nu / F^{-1}(S_{\nu} \times \gm)] \notag\\
 & = [\A^\nu/ S_\nu \times (\gm)_2]
\label{eq: realization'2}
\end{align}
as desired.

\end{proof}

Our new decomposition of $\mathfrak{n}$ gives a new decomposition of $\Xi$.  Namely, if $m \in \Xi$ then there exists a unique $j_0$ so that 
\[
\langle m, e'_j \rangle = 
\begin{cases} 
1 & \text{ if } j = j_0 \\
0 & \text{ if } j \neq j_0.
\end{cases}
\]
This gives a new partition of $\Xi$ into subsets 
\[
\Xi'_j: = \{ m \in \Xi \ | \ \langle m, e'_j \rangle = 1\}.
\]
If we again enumerate the rays as $\rho_1, \ldots, \rho_{n-s}$ corresponding to the ray generators in $\nu \backslash R_2$ and introduce the variable $x_k$ for the ray $\rho_k$ for $1 \leq k \leq n-s$.   Similarly introduce the variables $u_j$  for $1 \leq j \leq s$ corresponding to the ray generators $e_i'$ in $R_2$.   We get a decomposition of $w$ as:
$$
w =  \sum_{m \in \Xi} c_m x^m = \sum_{i=1}^t u'_j g_j, \text{ where } g_i := \sum_{m \in \Xi'_j} c_m \prod_{k \in 1}^{n-t}  x_k^{\langle m, u_{\rho_k}\rangle}.
$$

As above, the functions $g_i$ can be interpreted as global sections of $\O(E_i)$ on $X_\Upsilon$ and a closed substack
$$
\mathcal{Z}' : = Z(g_1, \ldots, g_s) \subseteq \mathcal{X}_{\Upsilon},
$$
of $\mathcal{X}_{\Upsilon}$.

\begin{corollary}\label{thm:comparegeometric}
Assume that $f_1, \ldots, f_t$ and $g_1, ..., g_s$ define complete intersections.  Assume further that $s = \langle \mathfrak m, \mathfrak n \rangle$.
We have the following:
\begin{enumerate}
\item If 
$
\langle u_\rho + \sum a_{i\rho}e_\rho, \mathfrak m \rangle \geq 1 
$
for all $i$,  then there is a fully-faithful functor,
\[
\dbcoh{\mathcal Z'} \longrightarrow \dbcoh{\mathcal Z}.
\]
\item If 
$
\langle u_\rho + \sum a_{i\rho}e_\rho, \mathfrak m \rangle \leq 1 
$
for all $i$,  then there is a fully-faithful functor,
\[
\dbcoh{\mathcal Z}  \longrightarrow \dbcoh{\mathcal Z'}.
 \]
\item If 
$
\langle u_\rho + \sum a_{i\rho}e_\rho, \mathfrak m \rangle = 1 
$
for all $i$, then there is an equivalence,
\[
\dbcoh{\mathcal Z'} \cong \dbcoh{\mathcal Z}.
\]
\end{enumerate}
Furthermore, if $\mathcal Z'$ is smooth, then it has torsion canonical bundle.  If, in addition, $\sigma$ is almost Gorenstein, then $\mathcal Z'$ is Calabi-Yau.
\end{corollary}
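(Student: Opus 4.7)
\emph{Proof plan.} The strategy is to chain together two applications of Corollary~\ref{FFLGCI}, interpolating through a common FCY middle category $\op{D}^{\op{abs}}[U_\Sigma, H, \bar{w}]$, and to use Lemma~\ref{lem: Rcharge fixer upper} to reconcile the two distinct R-charges that each application brings.

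I would first pick a simplicial fan $\Sigma$ with $\Sigma(1) \subseteq \nu_{=1}$, $\op{Cone}(\Sigma(1))=\sigma$, and $X_\Sigma$ semiprojective (guaranteed by Proposition 15.1.6 of \cite{CLS} applied to $\nu_{=1}$). Under the sign hypothesis of (1), (2), or (3), Corollary~\ref{FFLGCI} applied to $(\Psi, \{D_i\}, R_1, \Sigma)$ yields the corresponding fully-faithful functor or equivalence between $\op{D}^{\op{abs}}[U_\Sigma, H_1, \bar{w}]$ and $\dbcoh{\mathcal{Z}}$, writing $H_1$ for the copy of $H$ built from $R_1$. Applied analogously to $(\Upsilon, \{E_j\}, R_2, \Sigma)$ it yields a matching functor between $\op{D}^{\op{abs}}[U_\Sigma, H_2, \bar{w}]$ and $\dbcoh{\mathcal{Z}'}$; the direction is controlled by the same inequality since, by Lemma~\ref{lemma: Orlov setup}, the position of $\pm \chi_K$ in the secondary fan depends only on $\mathfrak{m}$ and $\nu_{\neq 1}$, data common to both setups.

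To identify the two middle factorization categories, I would invoke Lemma~\ref{lem: Rcharge fixer upper}, which supplies an isomorphism of ambient quotient stacks $[\A^\nu/S_\nu\times(\gm)_1]\cong[\A^\nu/S_\nu\times(\gm)_2]$ matching $w=\sum u_i f_i$ with $w=\sum u'_j g_j$. Since this isomorphism preserves $w$ and restricts over the open subset $U_\Sigma\times\gm^{\nu\setminus\Sigma(1)}$, Corollary~\ref{prop: stack iso appl} lets it descend to an equivalence $\op{D}^{\op{abs}}[U_\Sigma, H_1, \bar{w}]\cong\op{D}^{\op{abs}}[U_\Sigma, H_2, \bar{w}]$. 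Composing the three relations proves the comparison in all three cases.

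For the final claim, smoothness of $\mathcal{Z}'$ combined with Theorem~\ref{Hirano} forces $[\partial\bar{w}/S_{\Sigma(1)}]$ to be proper, so Corollary~\ref{thm: fractional CY} applies and shows the middle category is FCY; the assumed equality $s=\langle\mathfrak{m},\mathfrak{n}\rangle$ is exactly the combinatorial input needed for the dimension formula there (via Lemma~\ref{lem: class relation}) to collapse to an integer shift up to Equation~\eqref{2isO(chi)}. Transferring under the equivalences/embeddings above yields torsion canonical for $\mathcal{Z}'$, and when $\sigma$ is almost Gorenstein the integrality of $\mathfrak{m}$ makes the shift strict, giving the full Calabi--Yau conclusion. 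The main obstacle will be the bookkeeping in the reconciliation step: namely, verifying that Lemma~\ref{lem: Rcharge fixer upper}, stated on all of $\A^\nu$, descends compatibly with the two distinct exact sequences \eqref{eq: exact sequence for H} defining $H_1$ and $H_2$, and that $\bar{w}$ is matched under the decompositions $\Xi=\bigsqcup\Xi_j=\bigsqcup\Xi'_j$.
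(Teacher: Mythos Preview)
Your approach is correct and uses the same key ingredients as the paper (Theorem~\ref{FFLGmodels}, Proposition~\ref{prop: ToricStackHirano}, and Lemma~\ref{lem: Rcharge fixer upper}), but the organization is more circuitous than necessary. The paper does not interpolate through an auxiliary fan $\Sigma$ nor does it run Corollary~\ref{FFLGCI} twice with different R-charges. Instead it fixes $R=R_1$ throughout and applies Theorem~\ref{FFLGmodels} once, taking the role of the ``Calabi--Yau'' fan $\Sigma$ to be $\Upsilon_{-E_1,\ldots,-E_s}$ itself; on that chamber Lemma~\ref{lem: Rcharge fixer upper} is invoked a single time to pass from $(\gm)_1$ to $(\gm)_2$, after which Proposition~\ref{prop: ToricStackHirano} produces $\dbcoh{\mathcal Z'}$ directly. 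This eliminates exactly the bookkeeping you flag as the ``main obstacle'': there is no need to reconcile two different groups $H_1,H_2$ over a common intermediate $U_\Sigma$, because only one $H$ ever appears. Your route would work, but it manufactures and then dissolves an extra layer; the paper's route shows that the R-charge switch can be postponed to the very end and applied on the geometric chamber $U_{\Upsilon_{-E_1,\ldots,-E_s}}$, which is conceptually cleaner and sidesteps the descent verification you were worried about.
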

\begin{proof}
We apply Theorem~\ref{FFLGmodels} to the case 
\[
\nu = \{ u_\rho  \ | \ \rho \in \Psi_{-D_1, ..., -D_t}(1) \} \cup \{e_1', ..., e_s' \}
\]
and $R = R_1$.  Proposition~\ref{prop: ToricStackHirano} gives us the equivalence 
\[
\dabsfact{U_{\Psi_{-D_1, ..., -D_t}}, S_{\Psi(1)} \times (\gm)_1, w} \cong\dbcoh{\mathcal Z}.
\]
 Similarly,  Lemma~\ref{lem: Rcharge fixer upper} and Proposition~\ref{prop: ToricStackHirano} gives us the equivalences
 \begin{align*}
 \dabsfact{U_{\Upsilon_{-E_1, ..., -E_t}}, S_{\Upsilon(1)} \times (\gm)_1, w} &\cong  \dabsfact{U_{\Upsilon_{-E_1, ..., -E_t}}, S_{\Upsilon(1)} \times (\gm)_2, w}\\
  &\cong  \dbcoh{\mathcal Z'}.
 \end{align*}\end{proof}

\begin{remark}
Case (3) of Corollary~\ref{thm:comparegeometric} proves the Batyrev-Nill conjecture, as in \cite{FK14}.
\end{remark}

\begin{remark}
Case (1) of Corollary~\ref{thm:comparegeometric} when $t=1$ relates a Calabi-Yau complete intersection to a hypersurface in a projective bundle. It proves the fully faithfulness of the semi-orthogonal decomposition in Proposition 2.10 of \cite{Orl06} for the case of Calabi-Yau complete intersections in toric varieties.
\end{remark}

A new case where Corollary~\ref{thm:comparegeometric} applies is the following. 

\begin{example}
\label{UnderConstruction} 
Let $N = \Z^6$ and $M$ its dual lattice, where $\{e_i\}$ is the standard elementary basis for $N$. Consider the point collection $\nu = \{v_1, \ldots, v_7, a_2, a_2\}$ in $N$ where
\begin{equation}\begin{aligned}
v_1 &= (1,0,0,0,0,0), & v_4 &=  (0,0,0,0,0,1),& v_7 &= (0,0,0,0,1,1), \\
v_2 &= (0,1,2,0,0,0),  & v_5 &= (1,1,0,0,0,0), & a_1 &= (1,1,1,0,0,0), \\
v_3 &= (0,0,0,2,1,0), & v_6 &= (0,0,1,1,0,0), & a_2 &= (0,0,0,1,1,1).
\end{aligned}\end{equation}
Here, $\mathfrak{m}=(1,0,\frac{1}{2}, \frac{1}{2}, 0,1)$, $\nu_{=1} = \{v_i\}$ and $\nu_{\neq1} = \{a_i\}$. Note that $\langle \mathfrak{m}, a_i\rangle >1$ for both $a_i$. In this example, there are multiple vector bundle structures. Here note that
$$
a_1 + a_2 = v_5 + v_6 + v_7,
$$
which correspond to the sets $\{e_i\}$ and $\{e_i'\}$, respectively, in the notation above and  $\mathfrak{n} = (1,1,1,1,1,1)$. Note that $\langle \mathfrak{m}, \mathfrak{n}\rangle = 3.$ 

The first vector bundle structure can be seen via looking at the projection $\pi: N \rightarrow N/ \langle a_1, a_2\rangle$. Here, we can see a fan in $N/ \langle a_1, a_2\rangle\cong \Z^4$ (we use the isomorphism given by changing to the basis $e_1, e_2, e_4, e_5, e_1+e_2+e_3, e_4+e_5+e_6$ and then projecting to the first four dimensions). Set $\bar v_i$ to be $\pi(v_i)$. There is a fan $\Sigma$ where $X_{\Sigma}$ is semiprojective and $\Sigma(1)$ is  generated by
\begin{equation}\begin{aligned}
\bar v_1 &= (1,0,0,0), & \bar v_4 &=  (0,0,-1,-1),& \bar v_7 &= (0,0,-1,0), \\
\bar v_2 &= (-2,-1,0,0),  & \bar v_5 &= (1,1,0,0), &  \\
\bar v_3 &= (0,0,2,1), & \bar v_6 &= (-1,-1,1,0). & 
\end{aligned}\end{equation}
Let $D_{\bar \rho_i}$ correspond to the divisor associated to the ray $\bar \rho_i = \op{Cone}(\bar v_i)$. Here, we identify two divisors $D_1 = 2D_{\bar \rho_2} + D_{\bar\rho_6}$ and $D_2 = D_{\bar \rho_4} + D_{\bar \rho_7}$ so that $\Sigma_{-D_1, -D_2}$ is a semiprojective fan. Moreover, we can see that $\Sigma_{-D_1, -D_2}(1) = \nu$.

The second vector bundle structure can be seen by looking at the projection $\pi: N \rightarrow N / \langle v_5, v_6, v_7\rangle$. Here, we have a fan in $N/ \langle v_5, v_6, v_7\rangle \cong \Z^3$ (using the isomorphism given by changing to the basis $e_1, e_3, e_5, e_1+e_2, e_3+e_4, e_5+e_6$ and then projecting to the first three dimensions). Set $\bar v_i '$ to be $\pi'(v_i)$. There is a fan $\Upsilon$ where $X_{\Upsilon}$ is semiprojective and $\Upsilon(1)$ is generated by the cones over each of these lattice points:
\begin{equation}\begin{aligned}
\bar v_1' &= (1,0,0), & \bar v_4' &=  (0,0,-1).&  \\
\bar v_2' &= (-1,2,0),  & &  \\
\bar v_3' &= (0,-2,1). & & 
\end{aligned}\end{equation}
Let $D_{\bar \rho_i'}$ be the divisor associated to $\op{Cone}(v_i') \in \Sigma'(1)$. We define three divisors: 
\begin{equation}
E_1 = D_{\bar \rho_2'}, \quad E_2 = 2D_{\bar \rho_3'}, \quad E_3 = D_{\bar \rho_4'}.
\end{equation}
Here, $\Upsilon_{-E_1, -E_2, -E_3}(1) \subset \nu_{=1}$.

Define a global function on the affine space $\A^\nu$ by taking the finite set 
$$
\Xi = \{ m \in M \ | \ m \in \op{Cone}(\nu), \langle m, v_i \rangle = 1 \text{ for $i =5, 6,7$}, \langle m, a_i\rangle = 1 \text{ for $i=1,2$}\}.
$$
Take a generic potential
$$
W = \sum_{m \in \Xi} c_m x_i^{\langle m, v_i\rangle}.
$$
which expands as 
$$
W = c_1x_1x_5x_8 + c_2 x_2x_5x_8 + c_3x_1^2x_6x_8 + c_4x_2^2x_6x_8 + c_5 x_3^2 x_6x_9 + c_6x_4^2x_6x_9 + c_7 x_3x_7x_9 + c_8 x_4x_7x_9.
$$
The global sections associated to each of the divisors above are
\begin{equation*}\begin{aligned}
f_1 &=  c_1x_1x_5 + c_2 x_2x_5 + c_3x_1^2x_6 + c_4x_2^2x_6, \quad f_2= c_5 x_3^2 x_6 + c_6x_4^2x_6 + c_7 x_3x_7 + c_8 x_4x_7, \\
g_1 &= c_1x_1 + c_2 x_2, \quad g_2= c_3x_1^2 + c_4x_2^2 + c_5 x_3^2  + c_6x_4^2 \quad g_3 = c_7 x_3 + c_8 x_4.
\end{aligned}\end{equation*}

Now, $\mathcal{Z'} = Z(g_1,g_2,g_3)\subset \mathcal{X}_{\Upsilon}$ and is a 0-dimensional stack with 2-torsion canonical bundle. On the other hand, $\mathcal{Z} = Z(f_1, f_2)\subset \mathcal{X}_{\Sigma}$ is a 2-dimensional stack. Since $\langle \mathfrak{m}, a_i\rangle >1$ for both $a_i$, then by Corollary~\ref{thm:comparegeometric},  we have a fully-faithful functor 
$$
\dbcoh{\mathcal Z'} \longrightarrow \dbcoh{\mathcal Z}.
$$
\end{example}

\begin{remark}
In Example~\ref{UnderConstruction}, both of the decompositions of $\mathfrak{n}$ into sums of elements in $\op{Cone}(\nu) \cap N$ are maximal in that there does not exist a set of elements $I \subset \op{Cone}(\nu) \cap N$ so that $\sum_{n \in I} n = v_i$ or $\sum_{n \in I} n = a_i$. This is implied by the fact that $\nu$ is a Hilbert basis for the semigroup $\op{Cone}(\nu) \cap N$. This differentiates our results from those in \cite{Kuz15} as, in this example, the two vector bundle structures are, at least, not related by a toric projective bundle construction.  This was the most basic example we found. There are higher dimensional examples as well.
\end{remark}

\section{Examples}\label{sec: examples}

\subsection{Smooth Degree $d$ Hypersurfaces in Projective Space}\label{subsec: Orlov}

Let $N = \Z^{n+1}$, with elementary basis vectors $e_i$. Let $M$ be the dual lattice to $N$. Take the geometric point collection $\nu = \{v_1, \ldots, v_n, v_{n+1}, a\}$ where 
\begin{equation}\begin{aligned}
v_i &= e_i \text{ for $1\leq i \leq n$}, \\
v_{n+1} &= -e_1 -\ldots -e_n + de_{n+1} \\
a &= e_{n+1}
\end{aligned}\end{equation}
The cone $\sigma := \op{Cone}(\nu)$ is $\Q$-Gorenstein and $\mathfrak{m} := (1,\ldots, 1, \frac{n+1}{d})$. Note that $\mathfrak{m}\in M$ if and only if $d$ divides $n+1$. We have that $\langle \mathfrak{m}, a \rangle = \frac{n+1}{d}$, so
\begin{enumerate}
\item $\langle \mathfrak{m}, a \rangle > 1 $ if $d < n+1$,
\item  $\langle \mathfrak{m}, a \rangle < 1 $ if $d > n+1$, and 
\item $\langle \mathfrak{m}, a \rangle = 1 $ if $d = n+1$.
\end{enumerate}

Now, one easily computes that $S_\nu = \gm$ acting on $X:= \A^{n+2}$ with weights $1, ..., 1, -d$.
We denote the coordinates of $\A^{n+2}$ by  $x_1, \ldots, x_{n+1}$ for the lattice points $v_1, \ldots, v_{n+1}$ and the final coordinate by $u$ for the lattice point $a$.

The secondary fan/GIT fan for  this action of $S_\nu$ is one-dimensional and pictured in Figure~\ref{fig: Orlov GIT fan}. The irrelevant ideal and corresponding GIT quotients are also included in the figure.

\renewcommand{\thefigure}{\thesection.\arabic{figure}}
\begin{center}
\begin{tikzpicture}
\draw[latex-] (-4,0) -- (4,0) ;
\draw[-latex] (-4,0) -- (4,0) ;
\foreach \x in  {0}
\node[fill=black,draw=black,circle,inner sep=2pt, label=below:{0}] at (0,0) {};
\node[label=above:{$\langle x_1, \ldots, x_{n+1}\rangle$}] at (2.12,0) {};
\node[label=below:{$\op{tot}\O_{\P^n}(-d)$}] at (2.12,0) {};
\node[label=above:{$\langle u\rangle$}] at (-2.12,0) {};
\node[label=below:{$\left[\kappa^{n+1} / \Z_d\right]$}] at (-2.12,0) {};
\end{tikzpicture}
\captionof{figure}{GIT Fan for $\gm$ action}\label{fig: Orlov GIT fan}
\end{center}

Consider the set of lattice points
$$
\Xi := \{ m \in M\cap \sigma^\vee  \ | \ \langle m, \mathfrak{n}\rangle = 1\}.
$$
Note that $\op{Conv}(\Xi)$ is a regular simplex with side lengths $d$. Also, we then have a superpotential 
$$
w = \sum_{m \in \Xi} c_m ux^m
$$
for some $c_m \in \kappa$. The sum $f = \sum_{m \in \Xi} c_m x^m$ is a homogeneous degree $d$ polynomial in the variables $x_i$. Choose the coefficients $c_m$ so that $Z(f)$ is a smooth hypersurface in $\op{Proj}(\kappa[x_1, \ldots, x_{n+1}])$. 

We have two fans $\Sigma$ and $\tilde \Sigma$ that correspond to the two chambers of the secondary fan. The fan $\Sigma$ corresponding to the negative direction is the collection of cones consisting of $\sigma$ and its proper faces. Note that $\Sigma$ is simplicial, $X_{\Sigma} = \A^{n+1} / \Z_d$ is semiprojective, $\Sigma(1) \subseteq \nu_{=1}$, and $\op{Cone}(\Sigma(1)) = \sigma$. The corresponding potential on $U_{\Sigma}$ is $\bar w = f$. 

The fan $\tilde \Sigma$ corresponding to the positive direction is the star subdivision of $\Sigma$ along $e_{n+1}$. Hence, $\tilde \Sigma$ is a simplicial fan with $\tilde \Sigma(1) = \nu$ and $X_{\tilde \Sigma} = \op{tot}\O_{\P^n}(-d)$ is semiprojective. We therefore are in the context of Proposition~\ref{prop: ToricStackHirano} and can study the derived category of the hypersurface $\mathcal{Z} := Z(f) \subset \P^n$.  Another way to say this is that $\tilde{\Sigma} = \Psi_{-dD_1}$ where $\Psi$ is the fan for $\P^n$ and $D_1$ is the coordinate hyperplane defined by $x_1$.

By Corollary~\ref{FFLGCI}, 
\begin{enumerate}
\item if $d < n+1$, then we have a fully faithful functor,
$$
\op{D}^{\op{abs}}[U_{\Sigma},   \gm, f] \longrightarrow \dbcoh{\mathcal{Z}},
$$
\item if $d > n+1$, then we have a fully faithful functor,
$$
\dbcoh{\mathcal{Z}} \longrightarrow \op{D}^{\op{abs}}[U_{\Sigma},   \gm, f],
$$
\item if $d = n+1$, then we have an equivalence
$$
\op{D}^{\op{abs}}[U_{\Sigma},   \gm, f] \cong \dbcoh{\mathcal{Z}}.
$$
\end{enumerate} 
Moreover, since $f$ cuts out a smooth hypersurface, by Theorem~\ref{thm: fractional CY}, one has that the category $\op{D}^{\op{abs}}[U_{\Sigma},   \gm, f]$ is fractional Calabi-Yau of dimension 
$$
\frac{(n+1)(d-2)}{d}.
$$
If $d$ divides $n+1$, then $\mathfrak{m} \in M$ and the category $\op{D}^{\op{abs}}[U_{\Sigma},   \gm, f]$ is Calabi-Yau of the given dimension. 

The path $\gamma$ crosses a single wall determined by the ``identity'' one parameter subgroup.  The fixed locus for the action of $\gm$ is just the origin of $\A^\nu$.  Using the description of the right orthogonal in Theorem 5.2.1 of \cite{BFK12}, we get Theorem 3.11 of \cite{Orl09} (with the possible addition of a finite group action).  Without the finite group action, details of the explicit comparison were already provided in Section 7 of \cite{BFK12}.  The statement can also be derived from a minor generalization of Orlov's proof.

\subsection{A Semi-Orthogonal Decomposition with a Geometric FCY Category}\label{SOD Geom FCY}

We start by defining a set $\nu \subset N:= \Z^6$, consisting of eight lattice points:
\begin{equation*}\begin{aligned}
v_1 &= (1,0,0,0,0,0), & v_2 &= (0,1,0,0,0,0), & v_3 &= (0,0,1,0,0,0), & v_4 &= (0,1,0,2,1,2), \\
v_5 &= (-1,-2,-1,-2,0,0), & v_6 &= (0,0,0,0,1,0), & v_7 &= (0,0,0,0,-1,1) & v_8 &= (0,0,0,0,0,1)
\end{aligned}\end{equation*}
Here, we can see that the cone $\op{Cone}(\nu)$ is $\Q$-Gorenstein. Here, $\mathfrak{m}=(1,1,1,-\frac52,1,2)$ and $\nu_{=1} = \{v_1,\ldots, v_7\}$.  In this example, we have that $\mathfrak{n} = (0,0,0,0,0,1)$, so $\langle \mathfrak{m}, \mathfrak{n}\rangle = 2$. We define a superpotential on $\A^\nu = \A^8$ with variables $x_1, \ldots, x_8$:
\begin{equation*}\begin{aligned}
w &= x_8x_6x_1^2x_2 +  x_8x_6x_2^3+  x_8x_6x_3^2x_4 +  x_8x_6x_4^3 +  x_8x_6x_4x_5^2 \\
&\qquad + x_8x_7x_1^2 + x_8x_7x_2^2 + x_8x_7x_3^2 + x_8x_7x_4^2 + x_8x_7x_5^2.
\end{aligned}\end{equation*}

There are two vector bundle structures so that their rays are generated by the elements in $\nu$. First, consider the projection $\pi : N \rightarrow N/ \langle e_6\rangle$ and a complete fan $\Upsilon$ with rays 
\begin{equation*}\begin{aligned}
{\rho}_1 &= (1,0,0,0,0), & {\rho}_2 &= (0,1,0,0,0), & {\rho}_3 &= (0,0,1,0,0), & {\rho}_4 &= (0,1,0,2,1), \\
{\rho}_5 &= (-1,-2,-1,-2,0), & {\rho}_6 &= (0,0,0,0,1), & {\rho}_7 &= (0,0,0,0,-1),
\end{aligned}\end{equation*}
so that $X_{\Upsilon}$ is semiprojective. Consider the line bundle associated to the toric divisor $D = 2D_{\rho_4} + D_{\rho_7}$.  Here, $\Upsilon_{-D}(1) = \{\op{Cone}(v_i) \ | \ v_i \in \nu\}$. Here, $x_8$ is the bundle coordinate. Here note that here
$$
\dabs{[U_{\Upsilon_{-D}}, S_\nu \times \gm, w]} \cong \dbcoh{\mathcal{Z}},
$$
where $\mathcal{Z}$ is the zero set of the global section
$$f_1 = x_6x_1^2x_2 +  x_6x_2^3+  x_6x_3^2x_4 +  x_6x_4^3 +  x_6x_4x_5^2 + x_7x_1^2 + x_7x_2^2 + x_7x_3^2 + x_7x_4^2 + x_7x_5^2$$
of the divisor $D$.
By a routine check, we can see that the zero locus $\mathcal{Z} := Z(f_1) \subset X_{\Upsilon}$ is a smooth stack. 

Alternatively, consider the projection $\pi': N \rightarrow N/ \langle v_6, v_7\rangle \cong\Z^4$. We can define a complete fan $\Upsilon$ with rays
\begin{equation*}
{\rho}_1 = (1,0,0,0), {\rho}_2 = (0,1,0,0), {\rho}_3 = (0,0,1,0), {\rho}_4 = (0,1,0,2),  {\rho}_5 = (-1,-2,-1,-2),
\end{equation*}
so that  $X_{\Upsilon}$ is semiprojective. Namely, $\mathcal{X}_\Upsilon$ is the quotient stack $[\P^4 / \Z_2]$ where the $\Z_2$ acts by
$$
g\cdot (y_0:y_1:y_2:y_3:y_4) = (y_0:-y_1:y_2:-y_3:y_4).
$$
Define two line bundles associated to the toric divisors $E_1= 3D_{\rho_4}, D_2= 2E_{\rho_4}$. Here, we can write the split vector bundle $\Upsilon_{-D_1,-D_2}$ with rays generated by $\nu_{=1}$. We can compute from $w$ that the functions
$$
g_1:= y_1^2y_2 + y_2^3+  y_3^2y_4 +  y_4^3 +  y_4y_5^2, \quad g_2:=y_1^2 + y_2^2 + y_3^2 + y_4^2 + y_5^2
$$
are global sections of $E_1$ and $E_2$, respectively.

Let $H_{\Upsilon,R}$ be the subgroup of $S_\nu$ corresponding to $\Upsilon(1) \subset \nu$ and $R=\{v_6, v_7\}$ and $\bar w$ the potential corresponding to setting $x_8$ to 1. We can see that 
$$
\dabs{[U_{\Upsilon_{-E_1,-E_2}}, H_{\Upsilon,R} \times \gm, \bar w]} \cong \dbcoh{\mathcal{Z}},
$$
where $\mathcal{Z}'$ is the smooth stacky complete intersection
$$
\mathcal{Z}' : = Z(g_1, g_2) \subseteq [\P^4 / \Z_2].
$$
Here, $\mathcal{Z}'$ is a 2-dimensional stack with a 2-torsion canonical bundle.  By Corollary~\ref{thm:comparegeometric}(a), there is a fully faithful functor 
$$
\dbcoh{\mathcal{Z}'} \longrightarrow \dbcoh{\mathcal{Z}}.
$$

To compute the semi-orthogonal decomposition of $ \dbcoh{\mathcal{Z}}$, we first must state the GIT problem associated to $\nu$. We have that $X:= \A^8$ with variables $x_i$ and can compute that $S_\nu = \gm^2 \times \Z_2$. We summarize the weights of each variable with the following table:
\begin{center}\begin{tabular}{c|c}
Coordinates & Weight in $\gm^2\times \Z_2$  \\ \hline
$x_1, x_3, x_5$ & $(1,1,1)$ \\
$x_2, x_4$ & $(1,1,0)$\\
$x_6$ & $(-1,0,0)$ \\
$x_7$ & $(0,1,0)$ \\
$x_8$ & $(-2,-3,0)$
\end{tabular}\end{center}
The secondary fan for this action of $S_\nu$ is two-dimensional and is pictured below:
\renewcommand{\thefigure}{\thesection.\arabic{figure}}
\begin{center}
{ \begin{tikzpicture}
\draw[-latex] (0,0) -- (2,2) ;
\draw[-latex] (0,0) -- (0,2) ;
\draw[-latex] (0,0) -- (-2,0) ;
\draw[-latex] (0,0) -- (-4/3,-2);
\foreach \x in  {0}
\node[fill=black,draw=black,circle,inner sep=2pt] at (0,0) {};
\node[label=below:{$\Sigma_+$}] at (1/3,0) {};
\node[fill=black,draw=black,circle,inner sep=1pt, label=above:{$-K$}] at (4/3,2) {};
\node[label=above:{$\Sigma_-$}] at (2/3,4/3) {};
{\color{red} \draw[-latex] (2,4/3) -- (4/3,2) ; \node[label=below:{$\gamma$}] at (4/3, 2) {};}
\end{tikzpicture}}
\captionof{figure}{GIT Fan for the $\gm^2$ action}\label{fig: Subsec6.2}
\end{center}

In Figure~\ref{fig: Subsec6.2}, the chambers $\Sigma_-$ and $\Sigma_+$ corresponds to the category $\dabs{[U_{\Upsilon_{-D}}, S_\nu \times \gm, w]}$ and $\dabs{[U_{\Upsilon_{-E_1,-E_2}}, H_{\Upsilon,R} \times \gm, \bar w]}$, respectively. The wall corresponds that the chamber shares corresponds to the one-parameter subgroup $\lambda: \gm \rightarrow S_\nu$ corresponding to the element $(1,-1)$. The fixed locus of $\lambda$ is $Z(x_6,x_7,x_8)$ where the semistable locus is the open set $\A^8 \setminus Z(x_1,\ldots, x_5)$, hence $U_0 = Z(x_6,x_7,x_8) \setminus Z(x_1,\ldots, x_5)$. One can compute that $S_0 = S_\nu / \lambda(\gm) \cong \gm \times \Z_2$ acting with weights $(1,1)$ on $x_1, x_3,$ and $x_5$ and $(1,0)$ on $x_2$ and $x_4$. The induced section $w_0 = 0$ as $x_8$ divides $w$.

We can compute that $\mu = - \sum_{v_i \in \nu} \langle (1,-1), v_i\rangle = 1$. By Theorem~\ref{thm: BFK toric version}, we then have that 
$$
\dabs{[U_{\Upsilon_{-D}}, S_\nu \times \gm, w]} = \langle \dabs{[U_0, S_0 \times \gm, w_0]}, \dabs{[U_{\Upsilon_{-E_1,-E_2}}, H_{\Upsilon,R} \times \gm, \bar w]}\rangle.
$$
Using Theorem~\ref{prop: ToricStackHirano}, this simplifies to:
$$
\dbcoh{\mathcal{Z}} = \langle \dabs{[U_0, S_0 \times \gm, 0]}, \dbcoh{\mathcal{Z}'}\rangle.
$$
Since the $\gm$ factor of $S_0 \times \gm$ acts trivially on $U_0$, by Proposition 2.1.6 of \cite{BDFIK17} or Proposition 1.2.2 of \cite{PV}, we have that 
\begin{equation*}\begin{aligned}
\dabs{[U_0, S_0 \times \gm, 0]} = \dbcoh{[\P^4/\Z_2]} & = \langle \O(0,0), \O(1,0), \O(2,0), \O(3,0), \O(4,0), \\
&\qquad \O(0,1), \O(1,1), \O(2,1), \O(3,1), \O(4,1)\rangle.
\end{aligned}\end{equation*}

In conclusion, we can combine the last two lines and use a mutation to say that there is a semi-orthogonal decomposition 
\begin{align*}
\dbcoh{\mathcal{Z}} = \langle  \dbcoh{[\P^4/\Z_2]}, \dbcoh{\mathcal{Z}'} \rangle
& = \langle \dbcoh{\mathcal{Z}'}, E_1, ..., E_{10} \rangle
\end{align*}
where $E_1, ..., E_{10}$ are exceptional objects.

\subsection{Singular Cubic $(3n+1)$-folds}\label{subsec:SingCubics}

In this section, we apply our results to demonstrate Example~\ref{SingCubic} from the introduction. Take $n$ to be a positive integer. Consider the cubic $(3n+1)$-fold $\mathcal{Z}_{\text{sing}} $ given by the equation 
$$
\sum_{i=1}^n x_i f_i(x_{n+1}, \ldots, x_{3n+3}) + x_{3n+4}f_0(x_{n+1}, \ldots, x_{3n+3}).
$$
 In the case $n=1$, the cubic fourfold was singular at a point, namely, at $P = (1, 0,0,0,0,0)\in \P^{5}$. This case was studied by Kuznetsov in \cite{Kuz10}.  In our generalization, the cubic $(3n+1)$-fold is singular in a $(n-1)$-dimensional hyperplane $\{x_{n+1} = \ldots = x_{3n+3} = 0\}$. 

Recall by Orlov's theorem we have a semi-orthogonal decomposition 
\begin{equation}\label{eqn:OrlovSing}
{\dbcoh{\mathcal{Z}_{\text{sing}}}} = \langle \mathcal{A}, \O, \ldots, \O(3n-1)\rangle.
\end{equation}
Here the subcategory $\mathcal{A}$ is not homologically smooth, hence is not a Calabi-Yau category, but has a crepant categorical resolution.

We prove that a crepant categorical resolution of $\mathcal{A}$ is geometric. This is achieved by interpreting $\mathcal A$ as the absolute derived category of a Landau-Ginzburg model. We can also find a Landau-Ginzburg model interpretation of the crepant categorical resolution of $\dbcoh{\mathcal{Z}_{\text{sing}}}$. The details of this will be provided in the exposition and proofs below. For now, we have the following summary:

\begin{proposition}\label{SingCubics} There is a chain of fully faithful functors
$$
\dbcoh{\mathcal{Z}_{CY}} \longrightarrow \widetilde{\dbcoh{\mathcal{Z}_{\text{sing}}}} \longrightarrow \dbcoh{\widetilde{\mathcal{Z}_{\text{sing}}}},
$$
where
\begin{enumerate}
\item $\mathcal{Z}_{CY}$ is the $(n+1)$-dimensional Calabi-Yau complete intersection in $\P^{2n+2}$ given by one generic cubic $f_0$ and $n$ generic quadrics $f_1, \ldots, f_n$ and its derived category $\dbcoh{\mathcal{Z}_{CY}}$ is a crepant categorical resolution of the category $\mathcal{A}$ in Equation~\eqref{eqn:OrlovSing},
\item $\widetilde{\dbcoh{\mathcal{Z}_{\text{sing}}}} $ is a crepant categorical resolution of the derived category of a singular cubic $(3n+1)$-fold $\mathcal{Z}_{\text{sing}} $ given by the equation 
$$
\sum_{i=1}^n x_i f_i(x_{n+1}, \ldots, x_{3n+3}) + x_{3n+4}f_0(x_{n+1}, \ldots, x_{3n+3}),
$$
\item $\widetilde{\mathcal{Z}_{\text{sing}}}$ is the blowup of $\mathcal{Z}_{\text{sing}}$ along the hyperplane $\{x_{n+1}= \ldots = x_{3n+3} = 0\}$. 
\end{enumerate}
\end{proposition}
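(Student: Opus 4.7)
The approach is to realize all three categories as factorization categories on different chambers of a common toric VGIT problem and then apply the crepant categorical resolution and embedding theorems of Sections~\ref{sec: CCR} and~\ref{GorCones}. The key observation is that the special form $f = \sum_{i=1}^n x_i f_i + f_0$ of the cubic lets the birational substitution $u_0 := u$, $u_i := u x_i$ transform the Landau--Ginzburg potential $w = uf$ for the singular cubic into
\[ w_{CY} = u_0 f_0 + \sum_{i=1}^n u_i f_i, \]
which by Hirano's Theorem~\ref{Hirano} is precisely the LG potential realizing $\dbcoh{\mathcal{Z}_{CY}}$; this substitution becomes a wall crossing in a suitable toric ambient.

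Concretely, the plan is to work on the affine $(3n{+}5)$-space with coordinates $y_1, \ldots, y_{2n+3}$ (base coordinates for $\P^{2n+2}$), $s_0, \ldots, s_n$ (fiber coordinates for the projective bundle $\widetilde{\P^{3n+2}} \cong \P_{\P^{2n+2}}(\mathcal O \oplus \mathcal O(-1)^n)$ realizing the blowup of $\P^{3n+2}$ along the singular locus $L$), and $u$, with gauge group of rank three (two Cox factors for $\widetilde{\P^{3n+2}}$ plus an R-charge $\gm$) and potential $w = u\bigl(\sum_{i=1}^n s_i f_i(y) + s_0 f_0(y)\bigr) = u\tilde f$, where $\tilde f$ is the strict transform of $f$ and lies in $|3H_{\text{base}} + F|$. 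I would then identify three chambers of the secondary fan: a chamber $V_+$ collapsing the bundle to $\op{tot}(\mathcal O_{\P^{3n+2}}(-3))$ with potential $uf$, giving $\dabsfact{V_+, G, w} \cong \dbcoh{\mathcal{Z}_{\text{sing}}}$ by Hirano; a chamber $V_-$ giving $\op{tot}(\mathcal O_{\widetilde{\P^{3n+2}}}(-3H_{\text{base}} - F))$ with potential $u\tilde f$, so that $\dabsfact{V_-, G, w} \cong \dbcoh{\widetilde{\mathcal{Z}_{\text{sing}}}}$; and a chamber $V_{CY}$ giving $\op{tot}(\mathcal O_{\P^{2n+2}}(-3) \oplus \mathcal O_{\P^{2n+2}}(-2)^n)$ with potential $w_{CY}$, so that $\dabsfact{V_{CY}, G, w} \cong \dbcoh{\mathcal{Z}_{CY}}$.

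With this setup, the second fully-faithful functor $\widetilde{\dbcoh{\mathcal{Z}_{\text{sing}}}} \hookrightarrow \dbcoh{\widetilde{\mathcal{Z}_{\text{sing}}}}$ follows by applying Theorem~\ref{theorem: CCRGIT} to the $V_+ \leftrightarrow V_-$ wall: $V_+$ is a line-bundle total space, and for generic $f_i, f_0$ the strict transform $\widetilde{\mathcal{Z}_{\text{sing}}}$ is smooth and projective, so $\dabsfact{V_-, G, w}$ is homologically smooth and proper; the theorem then identifies $\widetilde{\dbcoh{\mathcal{Z}_{\text{sing}}}} := \dabsfact{V_-, G, w}^{\op{perf}}$ as a crepant categorical resolution of $\dbcoh{\mathcal{Z}_{\text{sing}}}$ sitting fully faithfully inside $\dbcoh{\widetilde{\mathcal{Z}_{\text{sing}}}}$. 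The first fully-faithful functor $\dbcoh{\mathcal{Z}_{CY}} \hookrightarrow \widetilde{\dbcoh{\mathcal{Z}_{\text{sing}}}}$ then follows by applying Theorem~\ref{FFLGmodels} to the $V_- \leftrightarrow V_{CY}$ wall, with the Calabi--Yau property of $\mathcal{Z}_{CY}$ (visible from $3 + 2n = 2n + 3 = \dim\P^{2n+2} + 1$) and Corollary~\ref{cor: torsion canonical fractional calabi-yau} guaranteeing the embedding as a Calabi--Yau admissible subcategory in the direction dictated by the sign of $\langle \mathfrak m, a \rangle - 1$ at the rays being crossed.

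The principal obstacle is the toric bookkeeping: choosing a point collection $\nu \subset N$ so the three chambers $V_+, V_-, V_{CY}$ appear as distinct cells of one secondary fan, confirming the Hilbert--Mumford invariants at each wall so that the fully-faithful functors point in the asserted directions, and matching the semi-orthogonal decomposition of $\widetilde{\dbcoh{\mathcal{Z}_{\text{sing}}}}$ induced by the $V_+ \leftrightarrow V_-$ wall against Orlov's decomposition~\eqref{eqn:OrlovSing} so that $\dbcoh{\mathcal{Z}_{CY}}$ is specifically identified as the crepant resolution of $\mathcal{A}$, rather than some other admissible piece. This last identification reduces to checking that the two decompositions share the exceptional line bundles $\mathcal O, \mathcal O(1), \ldots, \mathcal O(3n-1)$, which follows from the projection formula for the blowup $\widetilde{\P^{3n+2}} \to \P^{3n+2}$.
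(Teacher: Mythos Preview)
Your overall strategy---realize all three categories as chambers of one toric VGIT on $\A^{3n+5}$ with gauge group $S_\nu \times \gm$ and common potential $w$---is exactly the paper's. The gap is in the chamber count and in what you take $\widetilde{\dbcoh{\mathcal{Z}_{\text{sing}}}}$ to be.

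There is no chamber $V_+$ of the big VGIT whose quotient is $\op{tot}(\O_{\P^{3n+2}}(-3))$. In the paper's point collection $\nu=\{v_1,\dots,v_{3n+4},a\}\subset\Z^{3n+3}$, the ray $v_{3n+4}=-\sum_{i=1}^n e_i+e_{3n+3}$ is extremal in $\op{Cone}(\nu)$, so every simplicial fan $\Sigma$ with $|\Sigma|=\op{Cone}(\nu)$ must use it; hence no chamber ``forgets'' the blowup coordinate and collapses to $\P^{3n+2}$. Your $V_+$ is instead the open subset $\{x_{3n+4}\neq 0\}$ inside a genuine chamber, and it is this open immersion---not a wall crossing---that produces the crepant categorical resolution. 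Concretely, the secondary fan here has \emph{four} chambers $\Gamma_1,\Gamma_2,\Gamma_3,\Gamma_4$, and the paper defines $\widetilde{\dbcoh{\mathcal{Z}_{\text{sing}}}}:=\dabs(U_{\Gamma_2},S_\nu\times\gm,w)$ directly as the factorization category on the middle chamber $\Gamma_2$. The crepant-resolution claim is then Theorem~\ref{theorem: CCRTotal}: $U_{\Gamma_2}$ is a partial compactification of $\op{tot}(\O_{\P^{3n+2}}(-3))$ (via the open immersion $\{x_{3n+4}\neq 0\}\hookrightarrow U_{\Gamma_2}$) on which $\partial w$ becomes proper. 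Your invocation of Theorem~\ref{theorem: CCRGIT} and the definition $\widetilde{\dbcoh{\mathcal{Z}_{\text{sing}}}}:=\dabsfact{V_-,G,w}^{\op{perf}}$ do not match that theorem's hypotheses (which require an ambient open immersion $V\hookrightarrow U$ compatible with a wall crossing, with the resolution being $\dabsfact{U_-}$, not a perf-subcategory of a chamber).

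Once $\Gamma_2$ is in place, the chain of fully-faithful functors is just the wall-crossing poset $\Gamma_1\to\Gamma_2\to\Gamma_3$ from Theorem~\ref{thm: BFKVGIT}, with Hirano's theorem identifying $\Gamma_1\cong\Gamma_4$ with $\dbcoh{\mathcal{Z}_{CY}}$ and $\Gamma_3$ with $\dbcoh{\widetilde{\mathcal{Z}_{\text{sing}}}}$. The statement that $\dbcoh{\mathcal{Z}_{CY}}$ resolves $\mathcal A$ (rather than some other piece) is obtained the same way: $U_{\Gamma_1}$ is a partial compactification of the Orlov Landau--Ginzburg chamber realizing $\mathcal A$, and Theorem~\ref{theorem: CCRTotal} applies again. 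Your closing remark about matching exceptional collections via the projection formula is not how the paper does it, and would need the $\Gamma_2$ chamber anyway to make sense of the intermediate category.
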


\begin{remark}
Remark~\ref{rmk: CCR} pointed out a difference between our definition of a categorical resolution of singularities and the one in \cite{Kuz08}.  The final fully-faithful functor in the above Proposition guarantees that $\widetilde{\dbcoh{\mathcal{Z}_{\text{sing}}}}$  is a crepant categorical resolution in the sense of Ibid.\ as well.
\end{remark}

First we will describe the three distinct factorization categories in the same toric GIT problem. Then we will show that they all correspond to the categories in Proposition~\ref{SingCubics} above.

We follow the notation set up in Sections~\ref{sec:3} and~\ref{GorCones} above. Let $N = \Z^{3n+3}$, with elementary basis vectors $e_i$ and $M$ be the dual lattice. Now, consider the geometric point collection $\nu = \{v_1, \ldots, v_{3n+4}, a\}$ in $N$ where
\begin{equation}\begin{aligned}
v_i &= e_i \text{ for $1 \leq i \leq 3n+2$} \\
v_{3n+3} &= - \sum_{i=1}^{3n+2} e_i + 3 e_{3n+3} \\
v_{3n+4} &= - \sum_{i=1}^n e_i + e_{3n+3} \\
a &= e_{3n+3}.
\end{aligned}\end{equation}

The cone $\sigma : = \op{Cone}(\nu)$ is almost Gorenstein with $\mathfrak{m} = (1, \ldots, 1, n+1)$.  Here, $\nu_{=1}= \{ v_i\}$ and $A = \{a\}$. We have that $\langle \mathfrak{m}, a \rangle = n+1 > 1$.  We compute that $S_\nu = \gm^2$ acts on $X:=\A^{3n+5}$ by the weights in the following table: 

\begin{center}\begin{tabular}{c|c}
Coordinates & Weight of $\gm^2$  \\ \hline
$x_1, \ldots, x_n$ & $(1,1)$ \\
$x_{n+1}, \ldots, x_{3n+3}$ & $(1,0)$ \\
$x_{3n+4}$ & $(0,1)$ \\
$u$ & $(-3,-1)$
\end{tabular}\end{center}

\noindent Let $R_1 = \{a\}$ and let $R_2 = \{v_1, \ldots v_n, v_{3n+4}\}$. That is, the $R$-charge $\gm$-action associated to the subset $R_1$ denoted by $(\gm)_{R_1}$ acts with weights 0 on the $x_i$ and with weight 1 on $u$. Analogously, $(\gm)_{R_2}$ acts with weights 0 on $u, x_{n+1}, \ldots, x_{3n+3}$ and 1 on $x_1, \ldots, x_n, x_{3n+4}$. Recall that, by Lemma~\ref{lem: Rcharge fixer upper}, there is a stack isomorphism between different choices of R-charge.

The secondary fan for this action of $S_\nu$ is two-dimensional and is pictured in Figure~\ref{fig: Singular Cubics GIT fan}. 

\renewcommand{\thefigure}{\thesection.\arabic{figure}}
\begin{center}
\begin{tikzpicture}
\draw[-latex] (0,0) -- (3,3) ;
\draw[-latex] (0,0) -- (3,0) ;
\draw[-latex] (0,0) -- (-3,-1) ;
\draw[-latex] (0,0) -- (0,3);
\foreach \x in  {0}
\node[fill=black,draw=black,circle,inner sep=2pt] at (0,0) {};

\node[label=below:{$\Gamma_4$}] at (0,-.5) {};
\node[fill=black,draw=black,circle,inner sep=1pt, label=above:{$-K$}] at (3,1) {};
\node[label=above:{${\Gamma}_1$}] at (-1,1) {};
\node[label=above:{${\Gamma}_2$}] at (.5, 1) {};
\node[label=above:{${\Gamma}_3$}] at (2,1) {};
{\color{red} \draw[-latex] (-.5,1.85)--(.5, 1.85) ;
\node[label=above:{$\gamma_{12}$}] at (.5, 1.77) {};}
\end{tikzpicture}
\captionof{figure}{GIT Fan for the $\gm^2$ action}\label{fig: Singular Cubics GIT fan}
\end{center}
We can compute the relevant irrelevant ideals 
\begin{equation}\begin{aligned}
\mathcal{I}_{\Gamma_1} &= \langle ux_1, \ldots, ux_n, ux_{3n+4}\rangle\\
\mathcal{I}_{\Gamma_2} &= \langle ux_1, \ldots, ux_n, x_{3n+4}x_1, \ldots, x_{3n+4}x_{3n+3}\rangle\\
\mathcal{I}_{\Gamma_3} &= \langle x_{n+1}, \ldots, x_{3n+3}\rangle \langle x_{3n+4}, x_1,\ldots, x_n\rangle\\
\mathcal{I}_{\Gamma_4} &= \langle ux_{n+1}, \ldots, u x_{3n+3}\rangle.
\end{aligned}\end{equation}

A generic superpotential $w$ is of the form
$$
w = u \left( \sum_{i=1}^n x_i f_i(x_{n+1}, \ldots, x_{3n+3}) + x_{3n+4}f_0(x_{n+1}, \ldots, x_{3n+3})\right)
$$
where $f_0$ is a cubic and $f_1, \ldots, f_n$ are quadrics. 

For each chamber, there is an open set $U_i = \A^{3n+5} \setminus Z(\mathcal{I}_{\Gamma_i})$ so that there is a factorization category 
$$
\dabs(U_i, S_\nu \times (\gm)_{R_1}, w)
$$
associated to each chamber $\Gamma_i$. 

\begin{proof}[Proof of Proposition~\ref{SingCubics}] 
By Theorem~\ref{thm: BFKVGIT}, we know that there is a poset structure for which factorization category has a fully faithful functor into another. Namely, we have:
\begin{equation}\begin{aligned}\label{posetSingCubics}
\dabs(U_1, S_\nu \times (\gm)_{R_1}, w) &\cong \dabs(U_4, S_\nu \times (\gm)_{R_2}, w) \\
&\downarrow \\ 
 \dabs(U_2, &S_\nu \times (\gm)_{R_1}, w) \\ &\downarrow \\ \dabs(U_3, &S_\nu \times (\gm)_{R_1}, w).
\end{aligned}\end{equation}

By providing equivalences to the geometric categories specified in the proposition, part (1) is proven by combining Propositions~\ref{prop:Gamma4Sing} and~\ref{SingGamma12}, part (2) by Proposition~\ref{SingGamma12} and part (3) by Proposition~\ref{Gamma3Sing} below.
\end{proof}

We will go through each chamber systematically, explaining their geometric content. Note that the R-charge for the $\Gamma_4$ chamber changed as the bundle coordinates will change in our geometric interpretation. Here, we can show that
$$
\dabs(U_1, S_\nu \times (\gm), w)  \cong \dbcoh{\mathcal{Z}_{CY} }
$$
via the chamber $\Gamma_4$.

\begin{proposition}\label{prop:Gamma4Sing}
The category $\dabs(U_1, S_\nu \times (\gm), w)$ is equivalent to the derived category $\dbcoh{\mathcal{Z}_{CY} }$ where $\mathcal{Z}_{CY} $ is the $(n+1)$-dimensional Calabi-Yau complete intersection $Z(f_0,\ldots, f_n)$ in $\P^{2n+2}$ defined by one cubic $f_0$ and $n$ quadrics $f_1, \ldots, f_n$.
\end{proposition}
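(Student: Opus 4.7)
The plan is to pass via the upper equivalence in \eqref{posetSingCubics} to $\dabs(U_4, S_\nu \times (\gm)_{R_2}, w)$, then identify the stack $[U_4/S_\nu]$ as the total space of an explicit vector bundle on $\P^{2n+2}$ with $(\gm)_{R_2}$ acting by fiberwise dilation, and finally invoke Theorem~\ref{Hirano} to recognize this category as $\dbcoh{\mathcal Z_{CY}}$.  The upper equivalence in \eqref{posetSingCubics} itself combines the wall-crossing from $\Gamma_1$ to $\Gamma_4$ supplied by Theorem~\ref{thm: BFK toric version} (for which a short weight computation shows $\mu=0$ across the shared wall generated by the character of $u$) with the $R$-charge swap of Lemma~\ref{lem: Rcharge fixer upper}, applicable here because $a = v_1+\cdots+v_n+v_{3n+4}$.

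To carry out the identification I would split $S_\nu = \gm^2$ using the one-parameter subgroups $\lambda_1(t)=(t,t^{-3})$ and $\lambda_2(t)=(1,t^{-1})$.  Under $\lambda_2$ the variable $u$ has weight $1$ while every other coordinate has weight $0$, so since $\mathcal I_{\Gamma_4} = \langle u x_{n+1},\ldots,u x_{3n+3}\rangle$ forces $u \neq 0$ throughout $U_4$, one may use $\lambda_2$ to gauge-fix $u=1$.  A direct weight computation then shows that the residual $\lambda_1$-action has weight $1$ on each of $x_{n+1},\ldots,x_{3n+3}$, weight $-2$ on $x_1,\ldots,x_n$, and weight $-3$ on $x_{3n+4}$, and that the remaining semistability condition reduces to requiring at least one $x_j$ with $n+1 \le j \le 3n+3$ to be nonzero.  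This identifies $[U_4/S_\nu] \cong \op{tot}(\mathcal E^\vee)$ with $\mathcal E := \O_{\P^{2n+2}}(2)^{\oplus n} \oplus \O_{\P^{2n+2}}(3)$ and $\P^{2n+2}$ given homogeneous coordinates $x_{n+1},\ldots,x_{3n+3}$; moreover $(\gm)_{R_2}$ scales the fiber coordinates $x_1,\ldots,x_n,x_{3n+4}$ with weight $1$ and fixes the base coordinates, so it is fiberwise dilation on $\op{tot}(\mathcal E^\vee)$.

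Setting $u=1$ in the potential yields $w = \sum_{i=1}^n x_i f_i + x_{3n+4} f_0 = \langle s,-\rangle$ with $s = (f_1,\ldots,f_n,f_0) \in \Gamma(\P^{2n+2}, \mathcal E)$; by the assumed genericity of the $f_i$ and $f_0$ the section $s$ is regular, and its zero locus $\mathcal Z_{CY} = Z(f_0,\ldots,f_n)$ is a smooth $(n+1)$-dimensional complete intersection in $\P^{2n+2}$ whose canonical bundle is trivial by adjunction, since $\sum_{i=0}^n \deg f_i = 2n+3 = \dim \P^{2n+2}+1$.  Theorem~\ref{Hirano} then gives $\dbcoh{\mathcal Z_{CY}} \cong \dabs[\op{tot}(\mathcal E^\vee), (\gm)_{R_2}, \langle s, -\rangle] = \dabs(U_4, S_\nu \times (\gm)_{R_2}, w)$, and composing with the upper equivalence of \eqref{posetSingCubics} finishes the proof.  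The main thing to watch will be the weight bookkeeping through the chosen splitting of $S_\nu$: it is this computation that ensures the line-bundle degrees on $\P^{2n+2}$ come out to $2$ and $3$, matching the degrees of $f_1,\ldots,f_n$ and $f_0$; beyond this, the argument is a direct instantiation of Hirano's theorem.
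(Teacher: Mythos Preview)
Your approach is correct and matches the paper's: pass to $\Gamma_4$, identify $[U_4/S_\nu]$ with $\op{tot}\bigl(\O_{\P^{2n+2}}(-2)^{\oplus n}\oplus\O_{\P^{2n+2}}(-3)\bigr)$ with fiberwise $(\gm)_{R_2}$-dilation, and apply Theorem~\ref{Hirano}. The paper phrases the identification fan-theoretically (projecting $\Sigma_4$ to the standard $\P^{2n+2}$ fan) while you do it by an explicit weight computation, but these are the same argument; your added justification of the $\Gamma_1\!\leftrightarrow\!\Gamma_4$ equivalence via $\mu=0$ and the $R$-charge relation $a=v_1+\cdots+v_n+v_{3n+4}$ is a welcome detail the paper leaves implicit.

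One small slip to fix: your claim that under $\lambda_2(t)=(1,t^{-1})$ ``every other coordinate has weight $0$'' is false, since $x_1,\ldots,x_n$ and $x_{3n+4}$ carry $S_\nu$-weight with nonzero second component and hence have $\lambda_2$-weight $-1$. Fortunately your argument does not actually use this; what you need is only that $\lambda_1(t)=(t,t^{-3})$ is the kernel of the $u$-character (which it is, since $\langle(1,-3),(-3,-1)\rangle=0$), so that after slicing at $u=1$ the residual symmetry is exactly $\lambda_1(\gm)$. Your subsequent $\lambda_1$-weight computation ($1$ on $x_{n+1},\ldots,x_{3n+3}$; $-2$ on $x_1,\ldots,x_n$; $-3$ on $x_{3n+4}$) is correct and is what drives the identification. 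Simply rephrase the gauge-fixing step as an application of the short exact sequence $1\to\lambda_1(\gm)\to S_\nu\xrightarrow{\chi_u}\gm\to 1$ (or Lemma~\ref{lem: stack iso}) rather than asserting triviality of $\lambda_2$ on the remaining coordinates.
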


\begin{proof}
First recall that
$$
\dabs(U_1, S_\nu \times (\gm), w)  \cong \dabs(U_4, S_\nu \times (\gm), w).
$$
In the chamber $\Gamma_4$, note that the fan $\Sigma_4$ corresponding to this chamber has the rays generated by $v_1, \ldots, v_n, v_{3n+4}$ as generators for all maximal cones. We then can take the projection $\pi: \Z^{3n+3} \rightarrow \Z^{3n+3} / \langle e_1,\ldots, e_n, e_{3n+4} - e_1-\ldots - e_n\rangle \cong \Z^{2n+2}$. Denote by $\Psi_4$ the fan generated by the image under $\pi$ of the cones in $\Sigma_4$. Then $\Psi_4$ is the standard fan for $\P^{2n+2}$. One can check that 
$$X_{\Sigma_4} = \op{tot}(\O_{\P^{2n+2}}(-3) \oplus \O_{\P^{2n+2}}(-2)^{\oplus n}).$$
Let $\mathcal{Z}_{CY}$ denote the complete intersection
$$
\mathcal{Z}_{CY} = Z(f_0, f_1, \ldots, f_n) \subseteq \P^{2n+2}.
$$
Since the $f_i$ are generic, we have that $\mathcal{Z}_{CY}$ is a smooth stack. We have the equivalence 
\begin{equation}\label{DEquivSingCubicCY}
\dabs(U_4, S_\nu \times (\gm)_{R_2}, w) \cong \dbcoh{\mathcal{Z}_{CY}}.
\end{equation}
Moreover, by Corollary~\ref{thm: fractional CY}, since $\Sigma_4(1) = \nu_{=1}$, we have that $\dbcoh{\mathcal{Z}_{CY}}$ is a Calabi-Yau category of dimension
$$
 - 2\sum_{a \in \nu_{\neq 1}}\langle \mathfrak{m}, a \rangle +2|\nu_{\neq 1}| - 2|R_2| + \dim N_{\R} = -2(0) + 0 - 2(n+1) + (3n+3) = n+1.
$$
\end{proof}

\begin{proposition}\label{SingGamma12}
The following hold:
\begin{enumerate}
\item The category $\dabs(U_1, S_\nu \times (\gm)_{R_1}, w)$ is a crepant categorical resolution of the Calabi-Yau category $\mathcal{A}$ in Equation~\ref{eqn:OrlovSing}. 
\item The category $ \dabs(U_2, S_\nu \times (\gm)_{R_1}, w)$ is a crepant categorical resolution of the category $\dbcoh{\mathcal{Z}_{\text{sing}}}$ in Equation~\ref{eqn:OrlovSing}.
\end{enumerate}
\end{proposition}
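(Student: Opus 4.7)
The plan is to apply Theorem~\ref{theorem: CCRGIT} to two distinct wall crossings in the secondary fan of Figure~\ref{fig: Singular Cubics GIT fan}, one for each statement. In each case I will (a) identify the relevant LG model describing the category to be resolved, (b) realize that LG model as an open subset of the ambient $U_i$ via an open immersion compatible with an elementary wall crossing, and then (c) verify the hypotheses of Theorem~\ref{theorem: CCRGIT}. Both categories $\dabs(U_1, S_\nu \times (\gm)_{R_1}, w)$ and $\dabs(U_2, S_\nu \times (\gm)_{R_1}, w)$ are already known to be homologically smooth and proper, since the enclosing toric setup satisfies the hypotheses of Theorem~\ref{SerreFunctorDescription}.

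For part (2), Hirano's theorem (Theorem~\ref{Hirano}) identifies $\dbcoh{\mathcal Z_{\text{sing}}}$ with a factorization category on $\op{tot}(\O_{\P^{3n+2}}(-3))$ carrying the superpotential $uf$. In our toric coordinates, this total space appears as a GIT quotient $[V^{(2)} / S_\nu \times (\gm)_{R_1}]$, where $V^{(2)} \subset \A^{\nu}$ is the semistable locus for a character on the extremal ray bounding $\Gamma_2$ toward the Orlov-type chamber containing $\op{tot}(\O_{\P^{3n+2}}(-3))$. The natural open immersion $V^{(2)} \hookrightarrow U_2$ then fits into an elementary wall crossing landing in $\Gamma_2$.

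For part (1), the Kuznetsov component $\mathcal{A}$ is identified via Orlov's theorem with $\dabsfact{V^{(1)}, S_\nu \times (\gm)_{R_1}, w}$ for an appropriate open subset $V^{(1)} \subset \A^{\nu}$ describing the LG model of the cubic. Again an open immersion $V^{(1)} \hookrightarrow U_1$ compatible with an elementary wall crossing is exhibited, using the wall structure adjacent to $\Gamma_1$. In both parts, the hypotheses of Theorem~\ref{theorem: CCRGIT} to check are: (i) the zero section of the vector bundle $V_+^{(i)}$ does not intersect $U_i \setminus V_+^{(i)}$, which is a direct check against $\mathcal I_{\Gamma_i}$; (ii) the Hilbert--Mumford weight condition $t(\mathfrak K^+) \leq t(\mathfrak K^-)$, which is a finite weight computation using the formula $\mu = -\sum_{v_i \in \nu}\langle \lambda, v_i\rangle$ from Theorem~\ref{thm: BFK toric version}; (iii) that $w$ extends to $U_i$, which is automatic since $w$ is global on $\A^\nu$; (iv) $[U_i / S_\nu]$ has finite diagonal, which holds since this is a toric Deligne--Mumford stack; (v) $\partial w \subseteq Z(w)$, which follows from Euler's homogeneous function theorem applied to the $(\gm)_{R_1}$-semi-invariance of $w$.

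The main obstacle is condition (vi): verifying that $[\partial w / S_\nu]$ is proper inside $[U_i / S_\nu]$ for $i = 1, 2$. With $w = u\bigl(\sum_{i=1}^n x_i f_i + x_{3n+4} f_0\bigr)$, the critical locus decomposes into pieces corresponding to $\{u = 0\}$ and to the simultaneous vanishing of partial derivatives with respect to the remaining variables, and this decomposition must be analyzed separately in each chamber against the irrelevant ideals $\mathcal I_{\Gamma_1}$ and $\mathcal I_{\Gamma_2}$. Genericity of the $f_j$, which guarantees smoothness of the complete intersection $\mathcal Z_{CY} = Z(f_0, \ldots, f_n) \subset \P^{2n+2}$ via Proposition~\ref{prop:Gamma4Sing}, is the key ingredient that will make each piece of $[\partial w / S_\nu]$ proper. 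A parallel but subtler issue is verifying compatibility of the open immersions $V^{(i)} \hookrightarrow U_i$ with elementary wall crossings in the sense of Definition~\ref{def: EWC}, i.e., showing that the fixed loci $Z_{\pm\lambda}$ of the defining one-parameter subgroups remain closed after passing to the partial compactification $U_i$; this reduces to another irrelevant-ideal calculation.
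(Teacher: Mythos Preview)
Your general strategy---using the partial-compactification framework of Section~\ref{sec: CCR}---is the right one, and the checklist of hypotheses you assemble is essentially correct. However, there is a genuine gap in how you locate the open sets $V^{(1)}$ and $V^{(2)}$.

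You assert that $\op{tot}(\O_{\P^{3n+2}}(-3))$ ``appears as a GIT quotient $[V^{(2)}/S_\nu\times(\gm)_{R_1}]$, where $V^{(2)}\subset\A^{\nu}$ is the semistable locus for a character on the extremal ray bounding $\Gamma_2$ toward the Orlov-type chamber.'' But there is no such chamber: no cone of the two-dimensional secondary fan in Figure~\ref{fig: Singular Cubics GIT fan} has $\op{tot}(\O_{\P^{3n+2}}(-3))$ or the Orlov LG model $[\kappa^{3n+3}/\Z_3]$ as its quotient. Those quotients live in the \emph{one-dimensional} Orlov GIT problem of Subsection~\ref{subsec: Orlov}, and the bridge between the two problems is the step you are missing. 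One must restrict to the open set $V=\{x_{3n+4}\neq 0\}\subset\A^{\nu}$ and then invoke Lemma~\ref{lem: stack iso} (equivalently Corollary~\ref{prop: stack iso appl}) to trade the nonvanishing coordinate $x_{3n+4}$ for one $\gm$-factor of $S_\nu$, yielding $[V/S_\nu\times(\gm)_{R_1}]\cong[\A^{3n+4}/\gm\times(\gm)_{R_1}]$. Only after this reduction do the Orlov open sets $U_1',U_2'$ appear and permit the identifications $\dabs(U_1',\gm\times\gm,\bar w)\cong\mathcal A$ and $\dabs(U_2',\gm\times\gm,\bar w)\cong\dbcoh{\mathcal Z_{\text{sing}}}$, the latter via Theorem~\ref{Hirano}.

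A second, related correction: you propose two distinct wall crossings, one per part, but the argument uses a \emph{single} wall---the wall $\gamma_{12}$ between $\Gamma_1$ and $\Gamma_2$. Take $U$ to be the semistable locus at that wall and $V=\{x_{3n+4}\neq 0\}\subset U$. The elementary wall crossing on $V$ (which, after the stack isomorphism above, is exactly the Orlov wall crossing) is compatible with $V\hookrightarrow U$, and the two sides give $V_1\hookrightarrow U_1$ and $V_2\hookrightarrow U_2$. Part~(2) is then Theorem~\ref{theorem: CCRTotal} applied directly to $V_2\hookrightarrow U_2$, since $[V_2/S_\nu\times\gm]\cong\op{tot}(\O_{\P^{3n+2}}(-3))$; part~(1) requires Theorem~\ref{theorem: CCRGIT}, which is precisely the tool designed to transport the crepant resolution across the wall to the admissible subcategory $\mathcal A$ on the other side.
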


\begin{proof}
This follows from Lemmas~\ref{lem:SingCubicsCCR} and~\ref{lem:geomInterpGam12} below.
\end{proof}

The idea of the proof of this proposition is to show that $U_1$ and $U_2$ correspond to partial compactifications of gauged Landau-Ginzburg models corresponding to the Orlov theorem described above in Subsection~\ref{subsec: Orlov}. We first will recall the necessary data from that subsection and will then use the machinery created in Section~\ref{sec: CCR} to prove the lemma. 

We define subsets of $U_1$ and $U_2$. Consider the subideals
\begin{equation}\begin{aligned}
\mathcal{J}_{\Gamma_1} &:= \langle ux_{3n+4}\rangle \subset \mathcal{I}_{\Gamma_1}; \\
\mathcal{J}_{\Gamma_2} &:= \langle x_{3n+4}x_1, \ldots, x_{3n+4}x_n\rangle \subset \mathcal{I}_{\Gamma_2}.
\end{aligned}\end{equation}
Now we have two new open subsets $V_i := \A^{3n+5} \setminus Z(\mathcal{J}_{\Gamma_i})$.

In $X' = \A^{3n+4}$ with variables $x_1, \ldots, x_{3n+3}, u$, consider the ideals $\mathcal{I}'_1 = \langle u\rangle$ and $\mathcal{I}'_2 = \langle x_1, \ldots, x_{3n+3}\rangle$ and the open sets $U_i' = X' \setminus Z(\mathcal{I}'_i)$. Let $\gm$ act with weight $1$ on $x_i$ and weight $-3$ on $u$.  By Lemma~\ref{lem: stack iso}, we have a stack isomorphism
$$
\left[ \bigslant{V_i}{\gm^2\times (\gm)_{R_1}}\right] = \left[\bigslant{U_i'}{\gm \times (\gm)_{R_1}}\right].
$$
Define the superpotential
$$
\bar w  = u \left( \sum_{i=1}^n x_i f_i(x_{n+1}, \ldots, x_{3n+3}) + f_0(x_{n+1}, \ldots, x_{3n+3})\right)
$$
where $f_0$ is a cubic and $f_1, \ldots, f_n$ are quadrics. This is a specialization of $w$ where $x_{3n+4}$ is set to one. 

\begin{lemma}\label{lem:SingCubicsCCR}
The category $\dabs(U_i, S_\nu \times (\gm)_{R_1}, w)$ is a crepant categorical resolution of the  category $\dabs(U_i', \gm \times (\gm)_{R_1}, \bar w)$ for $i = 1,2$.
\end{lemma}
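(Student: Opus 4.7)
The plan is to apply Theorem~\ref{theorem: CCRTotal}, which establishes that a suitable partial compactification of a gauged Landau--Ginzburg model yields a crepant categorical resolution of the associated singular factorization category.

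The first step is to use Lemma~\ref{lem: stack iso}, applied to the surjection from $S_\nu \times (\gm)_{R_1}$ onto the residual torus acting on the coordinates deleted in passing from $V_i$ to $U_i'$, to produce the stack isomorphism $[V_i / S_\nu \times (\gm)_{R_1}] \cong [U_i'/ \gm \times (\gm)_{R_1}]$. Under this isomorphism the potential $w$ corresponds to $\bar w$, yielding an equivalence $\dabsfact{V_i, S_\nu \times (\gm)_{R_1}, w} \cong \dabsfact{U_i', \gm \times (\gm)_{R_1}, \bar w}$. The $S_\nu \times (\gm)_{R_1}$-equivariant open immersion $V_i \hookrightarrow U_i$ then supplies the candidate adjoint pair $(i_*, i^*)$ from the general framework of Section~\ref{sec: CCR}.

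Next, I would verify the hypotheses of Theorem~\ref{theorem: CCRTotal} for $(U_i, S_\nu \times (\gm)_{R_1}, w)$; here the R-charge character is projection onto $(\gm)_{R_1}$, so $\op{ker}\chi_{R_1} = S_\nu$. Three conditions must be checked: (a) $[U_i/S_\nu]$ has finite diagonal, (b) $\partial w \subseteq Z(w)$ in $U_i$, and (c) $[\partial w / S_\nu]$ is proper over $\op{Spec}\kappa$. Condition (a) follows because $\Gamma_1$ and $\Gamma_2$ are full-dimensional chambers of a two-dimensional secondary fan, hence the corresponding fans are simplicial, and by Theorem~\ref{thm: stack realization} the quotient stacks $[U_i/S_\nu]$ are smooth simplicial toric Deligne--Mumford stacks, in particular with finite diagonal. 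Condition (b) is immediate: $w$ is semi-invariant of weight one under R-charge (only $u$ carries the R-charge grading), so Euler's homogeneous function theorem forces $w = u \cdot \partial_u w$ to vanish along $\partial w$.

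The main obstacle is condition (c). To verify it I would compute the critical locus of $w = u(\sum_{i=1}^n x_i f_i + x_{3n+4} f_0)$ explicitly, splitting it according to whether $u = 0$ or $u \neq 0$. When $u \neq 0$, the equations $\partial_{x_i} w = 0$ and $\partial_{x_{3n+4}} w = 0$ force $f_0 = f_1 = \cdots = f_n = 0$, so this component of $\partial w$ maps into a proper substack of $[U_i/S_\nu]$ covering $\mathcal Z_{CY}$. When $u = 0$, only the equation $\sum x_i f_i + x_{3n+4} f_0 = 0$ survives, so this piece maps to a proper substack whose classical points correspond to $\mathcal Z_{\text{sing}}$. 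For $\Gamma_2$ the irrelevant ideal $\mathcal I_{\Gamma_2}$ excludes precisely the non-proper directions, and for $\Gamma_1$ the smaller semistable locus removes even more, preserving properness. Once (a)--(c) are established, $\dabsfact{U_i, S_\nu \times (\gm)_{R_1}, w}$ is homologically smooth and proper by Lemmas~\ref{lem: homologically smooth} and~\ref{lem: proper}, and since $i$ is an open immersion the functors $i_*$ and $i^*$ are both left and right adjoint with $i^* \circ i_* \cong \op{Id}$ on the appropriate subcategory, realizing the desired crepant categorical resolution exactly as in the proof of Theorem~\ref{theorem: CCRTotal}.
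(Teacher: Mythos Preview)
Your approach and the paper's share the same endgame: once $\dabsfact{U_i,S_\nu\times(\gm)_{R_1},w}$ is known to be homologically smooth and proper, the open-immersion adjunction $(i_*,i^*)$ for $V_i\hookrightarrow U_i$ gives the crepant categorical resolution. The organization, however, is genuinely different. You work chamber by chamber and verify conditions (a)--(c) of Theorem~\ref{SerreFunctorDescription} directly. The paper instead introduces a \emph{single} open immersion $V=X\setminus Z(x_{3n+4})\hookrightarrow U$, where $U$ is the semistable locus for the wall between $\Gamma_1$ and $\Gamma_2$, and verifies that this immersion is compatible with the elementary wall crossing, i.e.\ that the strata satisfy $S_-\cap(U_1\setminus V_1)=\varnothing$ and $S_+\cap(U_2\setminus V_2)=\varnothing$. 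The wall then simultaneously cuts out $V_1,V_2$ from $V$ and $U_1,U_2$ from $U$, and the paper concludes (though it cites Theorem~\ref{theorem: CCRTotal}, the setup is really that of Theorem~\ref{theorem: CCRGIT}).

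The paper's extra structure is not decoration. For $i=1$ the category $\dabsfact{U_1',\gm\times(\gm)_{R_1},\bar w}$ is not a full $\dbcoh{[Z/G]}$ but the Kuznetsov component $\mathcal A\subset\dbcoh{\mathcal Z_{\mathrm{sing}}}$, and the paper's definition of CCR requires restricting $i_*$ to $\mathcal A^{\mathrm{perf}}=\mathcal A\cap\op{Perf}(\mathcal Z_{\mathrm{sing}})$ and checking that it lands in \emph{coherent} factorizations on $U_1$. Your phrase ``on the appropriate subcategory'' elides this entirely; the wall-crossing compatibility is exactly what makes Lemma~\ref{lem: restricts} apply, so that objects with relatively proper support on the $V_2$-side (where Lemma~\ref{lem: zerosec} identifies them with perfect complexes) are carried by $\Phi_d$ to objects with relatively proper support on the $V_1$-side. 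Without this, for $i=1$ you have not shown $i_*$ is defined where it needs to be. Separately, your properness argument for (c) is not an argument: ``excludes precisely the non-proper directions'' must be replaced by an explicit computation of $\partial w$ in each $U_i$, or more economically by observing that $\mathcal D_1$ and $\mathcal D_2$ embed admissibly in $\mathcal D_3\cong\dbcoh{\widetilde{\mathcal Z_{\mathrm{sing}}}}$, which is smooth and projective.
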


\begin{proof}

Consider the open immersion $V \hookrightarrow U$, where 
\begin{equation}\begin{aligned}
V &= X \setminus Z(x_{3n+4}),\\
U &= X \setminus Z(ux_1, \ldots, ux_n, x_{3n+4}).
\end{aligned}\end{equation} 
A direct computation shows that the ideal $\langle ux_1, \ldots, ux_n, x_{3n+4}\rangle$ is the irrelevant ideal associated to the cone in the GIT fan that is the common face between the chambers $\Gamma_1$ and $\Gamma_2$. The path $\gamma_{12}$ in Figure~\ref{fig: Singular Cubics GIT fan} gives the following stratifications associated to its elementary wall crossing:
\begin{equation}\begin{aligned}
U &= U_1 \sqcup S_-, \quad V= V_1 \sqcup S_-, \quad S_- := Z(u)\cap U, \\
U &= U_2 \sqcup S_+, \quad V= V_2 \sqcup S_+, \quad S_+ := Z(x_1, \ldots, x_{3n+3})\cap U.
\end{aligned}\end{equation}
Note that 
\begin{equation}\begin{aligned}
S_- \cap (U_1 \setminus V_1) &= \varnothing \text{ and}\\
S_+ \cap (U_2 \setminus V_2) &= \varnothing,
\end{aligned}\end{equation}
hence the immersions are compatible with the elementary wall crossing. By Theorem~\ref{theorem: CCRTotal}, we have that $\dabs(U_i, S_\nu \times (\gm), w)$ is a crepant categorical resolution of $\dabs(U_i', \gm \times (\gm)_{R_1}, \bar w)$.
\end{proof}

\begin{lemma}\label{lem:geomInterpGam12}
We have the following derived equivalences:
\begin{equation}\begin{aligned}
\dabs(U_1', \gm \times (\gm)_{R_1}, \bar w) &\cong \mathcal{A}; \\
\dabs(U_2', \gm \times (\gm)_{R_1}, \bar w) &\cong \dbcoh{\mathcal{Z}_{\text{sing}}};
\end{aligned}\end{equation}
where $\mathcal{A}$ and $ \dbcoh{\mathcal{Z}_{\text{sing}}}$ are as defined in Equation~\ref{eqn:OrlovSing}.
\end{lemma}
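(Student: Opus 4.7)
The plan is to realize both categories as outputs of standard equivalences applied to a single, simple GIT problem. Let $f := \sum_{i=1}^n x_i f_i(x_{n+1},\dots,x_{3n+3}) + f_0(x_{n+1},\dots,x_{3n+3})$, so that $\bar w = u\cdot f$. Under the $\gm$-action of weights $(1,\ldots,1,-3)$ on $(x_1,\ldots,x_{3n+3},u)$, the two semistable loci are $U_1'=\{u\neq 0\}$ and $U_2' = \A^{3n+4}\setminus Z(x_1,\ldots,x_{3n+3})$, and the corresponding GIT quotient stacks are $[\A^{3n+3}/\Z_3]$ and $\op{tot}\,\O_{\P^{3n+2}}(-3)$, respectively. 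This is exactly the Orlov setup of Subsection~\ref{subsec: Orlov} specialized to a cubic hypersurface in $\P^{3n+2}$, except that our $f$ is deliberately chosen so that $\mathcal Z_{\mathrm{sing}}=Z(f)$ is singular along the hyperplane $\{x_{n+1}=\cdots=x_{3n+3}=0\}$.

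For part~(2), identify $[U_2'/\gm]$ canonically with $\op{tot}\,\O_{\P^{3n+2}}(-3)$, the $(\gm)_{R_1}$-action becoming fiberwise dilation. Although $\mathcal Z_{\mathrm{sing}}$ is singular, $f\in\Gamma(\P^{3n+2},\O(3))$ is a regular section of the line bundle $\mathcal E:=\O_{\P^{3n+2}}(3)$ simply because it is nonzero and line bundles are torsion-free. Hence Theorem~\ref{Hirano} (Hirano's $G$-equivariant Isik/Shipman equivalence) applies verbatim and yields
\[
\dabs(U_2',\gm\times(\gm)_{R_1},\bar w) \;\cong\; \dbcoh{\mathcal Z_{\mathrm{sing}}}.
\]

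For part~(1), I would cross the unique wall of this GIT problem using Theorem~\ref{thm: BFK toric version} with one-parameter subgroup $\lambda = \mathrm{id}_{\gm}$. A direct count gives $\mu = -\sum\langle\lambda,v_i\rangle = -((3n+3)\cdot 1 + (-3)) = -3n < 0$, so part~(3) of Theorem~\ref{thm: BFK toric version} produces a fully-faithful embedding of the $U_2'$-category into the $U_1'$-category whose orthogonal is a length-$|\mu|=3n$ block of twists of $\dabs(U_0,S_0\times(\gm)_{R_1},0)$, where $U_0$ is a single $\gm_{R_1}$-orbit and the trivially-potentialed factorization category contributes one line-bundle twist per integer in the range. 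Transporting the equivalence of part~(2) through this embedding recovers Orlov's semi-orthogonal decomposition \cite{Orl09}
\[
\dbcoh{\mathcal Z_{\mathrm{sing}}} \;=\; \langle\mathcal A,\,\O,\O(1),\ldots,\O(3n-1)\rangle,
\]
which is valid since Orlov's argument only uses that $f$ is a nonzero (equivalently, regular) section. Matching the two semi-orthogonal decompositions identifies the ``small piece'' $\mathcal A$ with $\dabs(U_1',\gm\times(\gm)_{R_1},\bar w)$, completing the lemma.

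The only nontrivial bookkeeping is confirming that the $3n$ exceptional objects coming from $\Upsilon_+$ in Theorem~\ref{thm: BFK toric version} are precisely $\O,\O(1),\ldots,\O(3n-1)$ under the equivalence of part~(2), i.e., that the twist conventions and $R_1$-charge indexing line up. This is exactly the comparison carried out in Section~7 of \cite{BFK12} for the smooth case, and the smoothness of $\mathcal Z_{\mathrm{sing}}$ plays no role there; I expect it to transcribe without modification.
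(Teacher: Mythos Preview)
Your approach is essentially the paper's: recognize this as the one-dimensional Orlov GIT problem of Subsection~\ref{subsec: Orlov} for a cubic in $\P^{3n+2}$, apply Hirano's theorem for the $U_2'$-chamber, and identify $\mathcal A$ with the $U_1'$-chamber via the wall-crossing. The paper simply cites Subsection~\ref{subsec: Orlov} and Theorem~\ref{Hirano} directly rather than recomputing $\mu$.

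There is, however, a direction error in your wall-crossing description that makes the paragraph internally inconsistent. With $\lambda=\mathrm{id}_{\gm}$ and weights $(1,\ldots,1,-3)$, the contracting locus $S_\lambda$ is $\{u=0\}$, so $U_+=\A^{3n+4}\setminus Z(u)=U_1'$ and $U_-=U_2'$; hence $\mu=-3n<0$ gives a fully-faithful functor $\dabs(U_1',\ldots)\hookrightarrow\dabs(U_2',\ldots)$, and the semi-orthogonal decomposition lives in the $U_2'$-category, i.e.\ in $\dbcoh{\mathcal Z_{\mathrm{sing}}}$. You stated the embedding as $U_2'\hookrightarrow U_1'$, which would place the SOD in $\dabs(U_1',\ldots)$; under that reading $\dbcoh{\mathcal Z_{\mathrm{sing}}}$ would be a \emph{piece} of $\dabs(U_1',\ldots)$, contradicting your conclusion $\mathcal A\cong\dabs(U_1',\ldots)$ since $\mathcal A\subsetneq\dbcoh{\mathcal Z_{\mathrm{sing}}}$. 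Swapping the labels fixes this and yields exactly Orlov's decomposition $\dbcoh{\mathcal Z_{\mathrm{sing}}}=\langle\dabs(U_1',\ldots),\O,\ldots,\O(3n-1)\rangle$, from which your final identification $\mathcal A\cong\dabs(U_1',\ldots)$ follows as intended.
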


\begin{proof} The GIT problem for $X'$ with the $\gm$-action defined above is the same as that in Subsection~\ref{subsec: Orlov}:
\renewcommand{\thefigure}{\thesection.\arabic{figure}}
\begin{center}
\begin{tikzpicture}
\draw[latex-] (-4,0) -- (4,0) ;
\draw[-latex] (-4,0) -- (4,0) ;
\foreach \x in  {0}
\node[fill=black,draw=black,circle,inner sep=2pt, label=below:{0}] at (0,0) {};
\node[label=above:{$\langle x_1, \ldots, x_{3n+3}\rangle$}] at (2.12,0) {};
\node[label=below:{$\op{tot}\O_{\P^{3n+2}}(-3)$}] at (2.12,0) {};
\node[label=above:{$\langle u\rangle$}] at (-2.12,0) {};
\node[label=below:{$\left[\kappa^{3n+3} / \Z_{3}\right]$}] at (-2.12,0) {};
\end{tikzpicture}
\captionof{figure}{GIT Fan for $\gm$ action}
\end{center}
Recall from Subsection~\ref{subsec: Orlov} that we have a fully faithful functor
$$
\dabs(U_1', \gm \times \gm, \bar w) \longrightarrow \dabs(U_2', \gm \times \gm, \bar w),
$$
and by Theorem~\ref{Hirano}, 
\begin{equation}\label{DEquivZSing}
\dabs(U_2', \gm \times \gm, \bar w) \cong \dbcoh{\mathcal{Z}_{\text{sing}}},
\end{equation}
where 
$$
\mathcal{Z}_{\text{sing}} := Z( \sum_{i=1}^n x_i f_i(x_{n+1}, \ldots, x_{3n+3}) + f_0(x_{n+1}, \ldots, x_{3n+3})) \subset \P^{3n+2}
$$
is the singular cubic $(3n+1)$-fold. The category $\mathcal{A}$ is $\dabs(U_1', \gm \times \gm, \bar w)$.
\end{proof}

\noindent We finish with chamber $\Gamma_3$.

\begin{proposition}\label{Gamma3Sing}
Let $\op{Bl}_Y(\P^{3n+2})$ be the blowup of $\P^{3n+2}$ along the hyperplane $Y$ given by $\{x_{n+1} =\ldots = x_{3n+3} = 0\}$. Denote by $\mathcal{Z}$ the hypersurface stack
$$
\widetilde{\mathcal{Z}_{\op{sing}}} = Z( \sum_{i=1}^n x_i f_i(x_{n+1}, \ldots, x_{3n+2}) + x_{3n+4}f_0(x_{n+1}, \ldots, x_{3n+3})) \subseteq \op{Bl}_Y(\P^{3n+2}).
$$ 
Then we have the equivalence
$$
\dabs(U_3, S_\nu \times (\gm)_{R_1}, w) \cong \dbcoh{\widetilde{\mathcal{Z}_{\op{sing}}} }.
$$
\end{proposition}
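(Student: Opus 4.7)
The proposition follows from a direct application of Hirano's equivalence (Theorem~\ref{Hirano}) once the geometric content of chamber $\Gamma_3$ is unpacked. The plan is to identify the GIT quotient $[U_3/S_\nu]$ with the total space of a line bundle on $\op{Bl}_Y(\P^{3n+2})$, verify that the potential $w$ factors as a fiber coordinate times a regular section of this line bundle, and then invoke Theorem~\ref{Hirano}.

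First, I would separate the variable $u$, which carries all of the $(\gm)_{R_1}$-weight and has $S_\nu$-weight $(-3,-1)$, from the remaining coordinates $x_1, \ldots, x_{3n+4}$. The GIT quotient of $\A^{3n+4}$ (with coordinates $x_1, \ldots, x_{3n+4}$) by $S_\nu$ in chamber $\Gamma_3$, with irrelevant ideal $\langle x_{n+1}, \ldots, x_{3n+3} \rangle \langle x_{3n+4}, x_1, \ldots, x_n \rangle$, is the Cox-ring presentation of $\op{Bl}_Y(\P^{3n+2})$ with $x_{3n+4}$ serving as the exceptional-divisor coordinate. Equivalently, at the fan level, projecting $\Z^{3n+3} \to \Z^{3n+2}$ along $e_{3n+3}$ sends $\{v_1, \ldots, v_{3n+4}\}$ to the standard rays of $\P^{3n+2}$ together with the new ray $-e_1 - \ldots - e_n$; the latter is the sum of the ray generators of the cone associated to $Y$ in the fan of $\P^{3n+2}$, hence is the star subdivision realizing $\op{Bl}_Y(\P^{3n+2})$. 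Adding $u$ back in, whose divisor class is determined by its $S_\nu$-weight via the Cox exact sequence, exhibits $[U_3/S_\nu]$ as the total space of a line bundle $\mathcal{L}^\vee$ on $\op{Bl}_Y(\P^{3n+2})$.

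Next, the potential factors as $w = u \cdot g$ where
\[
g = \sum_{i=1}^n x_i f_i(x_{n+1}, \ldots, x_{3n+3}) + x_{3n+4} f_0(x_{n+1}, \ldots, x_{3n+3}).
\]
A direct weight check confirms that $g$ is $S_\nu$-semi-invariant of weight $(3,1)$, hence defines a global section of $\mathcal{L}$. The R-charge $(\gm)_{R_1}$ acts on $\op{tot}(\mathcal{L}^\vee)$ by fiberwise dilation, so the triple $(\op{Bl}_Y(\P^{3n+2}), \mathcal{L}, g)$ fits exactly into the setup of Theorem~\ref{Hirano}. By the very definition of $\widetilde{\mathcal{Z}_{\op{sing}}}$ in the statement, $Z(g) = \widetilde{\mathcal{Z}_{\op{sing}}}$, and genericity of $f_0, f_1, \ldots, f_n$ guarantees this is a smooth hypersurface, so $g$ is a regular section. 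Applying Theorem~\ref{Hirano} then yields the claimed equivalence.

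The main technical obstacle is the careful toric identification of the GIT quotient with the Cox-ring presentation of $\op{Bl}_Y(\P^{3n+2})$ and the matching of the weight of $u$ to the correct line bundle $\mathcal{L}$ via the Cox sequence; once this bookkeeping is complete, the rest is a straightforward invocation of Hirano's theorem.
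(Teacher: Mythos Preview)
Your proposal is correct and follows essentially the same approach as the paper: identify the fan $\Sigma_3$ as having the ray through $a=e_{3n+3}$ (equivalently, the coordinate $u$) in every maximal cone, project along $e_{3n+3}$ to recognize the base as the star subdivision of the $\P^{3n+2}$ fan at $-e_1-\cdots-e_n$ (i.e., $\op{Bl}_Y(\P^{3n+2})$ with $x_{3n+4}$ the exceptional coordinate), and then apply Hirano's theorem to the resulting line-bundle description. The paper additionally names the line bundle explicitly as $\O_{\op{Bl}_Y(\P^{3n+2})}(-3H-E)$, and note that for a line bundle on an integral scheme the regularity hypothesis in Theorem~\ref{Hirano} is automatic once $g\neq 0$, so smoothness is not needed for that step.
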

\begin{proof}
In the chamber $\Gamma_3$, note that the fan $\Sigma_3$ corresponding to this chamber has the ray generated by $a$ in all maximal cones. We then can consider the projection map $\pi: \Z^{3n+3} \rightarrow \Z^{3n+2}= \Z^{3n+3} / \langle e_{3n+3}\rangle$ which induces a fan $\Psi_3$ that is the image under $\pi$ of all the faces in $\Sigma_3$. Then $X_{\Psi_3} = \op{Bl}_Y(\P^{3n+2})$ where $Y$ is the hyperplane given by $\{x_{n+1} =\ldots = x_{3n+3} = 0\}$. Call the exceptional divisor $E$. Then 
$$X_{\Sigma_3} = \op{tot}(\O_{\op{Bl}_Y(\P^{3n+2})} ( -3H-E)).$$

The equivalence
$$
\dabs(U_3, S_\nu \times (\gm)_{R_1}, w) \cong \dbcoh{\widetilde{\mathcal{Z}_{\op{sing}}} }.
$$
is then immediately obtained by Theorem~\ref{Hirano}.
\end{proof}

\subsection{Degree $d$ $(2d-2)$-folds Containing Two Planes} \label{subsec:TwoPlanes}
 Fix $d\geq 3$. Consider the two planes $P_1 = \{x_{2d-3}=x_{2d-2} =x_{2d-1} = 0\}$ and $P_2 = \{x_1= \ldots = x_{2d-4} = x_{2d} = 0\}$ in $\P^{2d-1}$.  Let $\mathcal{Z}_{\op{sing}}$ be a generic cubic that contains both $P_1$ and $P_2$.  When $d = 3$, the cubic is smooth and this example's rationality was studied by Hassett \cite{Ha00}. When $d>3$, the cubic is singular. 
 
  Recall by Orlov's theorem we have a semi-orthogonal decomposition:
\begin{equation}\label{TwoPlanesSOD}
 \dbcoh{\mathcal Z_{\op{sing}}} = \langle \mathcal{A}, \O, \ldots, \O(d-1)\rangle.
\end{equation}
In the case where $d>3$, $\mathcal Z_{\op{sing}}$ is not smooth so $\mathcal{A}$ is not Calabi-Yau, but has a crepant categorical resolution that is. 

 We will prove that a categorical resolution of $\mathcal{A}$ is geometric. As in the previous subsection, this is achieved by interpreting $\mathcal A$ as the absolute derived category of a Landau-Ginzburg model. We can also find a Landau-Ginzburg model interpretation of the crepant categorical resolution of $\dbcoh{\mathcal{Z}_{\text{sing}}}$. The details of this will be provided in the exposition and proofs below. For now, we summarize our findings in the following way:

\begin{proposition}\label{prop:TwoPlanesCubic}
There is a chain of fully faithful functors
 \[
\begin{tikzcd}
{} & {} & \widetilde{\dbcoh{\mathcal{Z}_2}} \arrow{rd}{} & {} \\
\dbcoh{\mathcal{Z}_{CY}}\arrow{r}{} & \widetilde{\dbcoh{\mathcal{Z}_{\op{sing}}}} \arrow{rd}{}\arrow{ru}{}& {} & \dbcoh{\widetilde{\mathcal Z_{\op{sing}}}}, \\
{} & {} & \widetilde{\dbcoh{\mathcal{Z}_3}} \arrow{ru}{}  & {}
 \end{tikzcd}
 \]
 where:
 \begin{enumerate}
 \item $\mathcal{Z}_{CY}$ is a $(2d-4)$-dimensional Calabi-Yau complete intersection of two polynomials of bidegree $(d-2,2)$ and $(d-1,1)$ in $\P^{2d-4} \times \P^2$ and its derived category $\dbcoh{\mathcal{Z}_{CY}}$ is a crepant categorical resolution of $\mathcal{A}$ in Equation~\eqref{TwoPlanesSOD},
 \item $\widetilde{\dbcoh{\mathcal{Z}_{\op{sing}}}}$ is a crepant categorical resolution of the derived category of the degree $d$ hypersurface $\mathcal{Z}_{\op{sing}}$ in $\P^{2d-1}$ containing the two planes $P_1$ and $P_2$,
 \item $\widetilde{\dbcoh{\mathcal{Z}_2}}$ is a crepant categorical resolution of the derived category of the degree $d$ hypersurface $\mathcal{Z}_{\op{sing}}$ blown up at the plane $P_2$, 
 \item $\widetilde{\dbcoh{\mathcal{Z}_3}}$ is a crepant categorical resolution of the derived category of the degree $d$ hypersurface $\mathcal{Z}_{\op{sing}}$ blown up at the plane $P_1$, and 
 \item $\widetilde{\mathcal Z_{\op{sing}}}$ is the degree $d$ hypersurface $\mathcal{Z}_{\op{sing}}$ blown up at both planes $P_1$ and $P_2$.
 \end{enumerate}
\end{proposition}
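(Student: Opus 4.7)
The plan is to adapt the strategy of Proposition~\ref{SingCubics} to a new toric gauged Landau--Ginzburg model whose three-dimensional secondary fan realizes all five categories in the diamond diagram as factorization categories of distinguished chambers. I would take the lattice $N = \Z^{2d}$ with basis $e_1, \ldots, e_{2d}$ and the geometric point collection $\nu$ of size $2d+3$ (so $S_\nu$ has rank $3$) consisting of $v_i = e_i$ for $1 \le i \le 2d-1$; a ray $v_{2d} = -\sum_{i=1}^{2d-1} e_i + d\, e_{2d}$ encoding a degree-$d$ LG potential; an $R$-charge ray $a = e_{2d}$; and the two star-subdivision vectors $b_1 = e_{2d-3} + e_{2d-2} + e_{2d-1}$ and $b_2 = v_1 + \cdots + v_{2d-4} + v_{2d}$ corresponding to blowups of the cones in the fan of $\P^{2d-1}$ over the planes $P_1$ and $P_2$. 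With $\mathfrak{m} = (1, \ldots, 1, 2)$ the cone $\op{Cone}(\nu)$ is $\Q$-Gorenstein, and we have $\nu_{=1} = \{v_1, \ldots, v_{2d}\}$ and $\nu_{\neq 1} = \{a, b_1, b_2\}$ with heights $2, 3, 2d-3$. The generic superpotential is $w = u f$ with $f$ a generic degree-$d$ polynomial in the ideal $I(P_1) \cdot I(P_2)$, and the $R$-charge is $R = \{a\}$.

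The five relevant chambers of the secondary fan are distinguished by their ray sets: $\Gamma_{\op{all}}$ uses all of $\nu$; $\Gamma_{P_j}$ ($j = 1,2$) uses $\{v_1,\ldots,v_{2d},a, b_j\}$; $\Gamma_{\op{sing}}$ uses $\{v_1,\ldots,v_{2d},a\}$; and $\Gamma_{CY}$ uses a simplicial subdivision compatible with the Cayley presentation of the rank-$2$ bundle on $\P^{2d-4} \times \P^2$. Next I would identify each chamber's factorization category with its geometric counterpart. Theorem~\ref{Hirano} applied to $\Gamma_{\op{all}}$ yields $\dbcoh{\widetilde{\mathcal Z_{\op{sing}}}}$ directly since the ambient toric variety is the smooth total space of the pullback of $\O_{\P^{2d-1}}(-d)$ to $\op{Bl}_{P_1 \cup P_2}\P^{2d-1}$. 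For $\Gamma_{\op{sing}}, \Gamma_{P_1}$, and $\Gamma_{P_2}$, the underlying hypersurface is singular, so Theorem~\ref{theorem: CCRTotal} and a partial-compactification argument (in the spirit of Lemmas~\ref{lem:SingCubicsCCR} and~\ref{lem:geomInterpGam12}) identify these chambers with the crepant categorical resolutions $\widetilde{\dbcoh{\mathcal Z_{\op{sing}}}}$, $\widetilde{\dbcoh{\mathcal Z_2}}$, and $\widetilde{\dbcoh{\mathcal Z_3}}$ respectively. The identification of $\Gamma_{CY}$ with $\dbcoh{\mathcal Z_{CY}}$ uses a second Cayley presentation of $\op{Cone}(\nu)^\vee$ in the spirit of Corollary~\ref{thm:comparegeometric}: the relation $b_1 + b_2 = d\, a$ in $N$ allows one to swap $a$ as sole bundle coordinate for the pair $(b_1, b_2)$ with weights encoding the bidegrees $(d-1, 1)$ and $(d-2, 2)$. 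Under this swap the potential $w = uf$ reorganizes as $w = u_1 f_1 + u_2 f_2$, where the two rational projections of $\P^{2d-1}$ away from $P_1$ and $P_2$ give the factors $\P^2$ and $\P^{2d-4}$, and $f_1, f_2$ are generic sections of $\O(d-1, 1)$ and $\O(d-2, 2)$. Corollary~\ref{cor: torsion canonical fractional calabi-yau} then certifies the Calabi--Yau property of $\dbcoh{\mathcal Z_{CY}}$.

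To produce the chain of fully-faithful functors, I would choose generic straight-line paths in the three-dimensional secondary fan from $\Gamma_{CY}$ through $\Gamma_{\op{sing}}$, then branching through either $\Gamma_{P_1}$ or $\Gamma_{P_2}$, and terminating at $\Gamma_{\op{all}}$. Each wall-crossing is elementary in the sense of Definition~\ref{def: EWC}, so Theorem~\ref{thm: BFK toric version} applies; a direct computation of the Hilbert--Mumford invariant $\mu = -\sum_{v\in\nu}\langle\lambda, v\rangle$ from $\mathfrak{m}$ and the one-parameter subgroup $\lambda$ defining the wall gives the sign needed for a fully-faithful functor in the claimed direction. Lemma~\ref{lem: restricts} ensures compatibility of these functors with the inclusion of crepant categorical resolutions into the larger factorization categories, yielding the full diamond diagram.

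The main obstacle is the identification of $\Gamma_{CY}$ with $\dbcoh{\mathcal Z_{CY}}$: the relation $b_1 + b_2 = d\, a$ is only a $\Q$-lattice identity (the set $\{b_1, b_2\}$ does not extend to a $\Z$-basis of $N$), so a naive application of Corollary~\ref{thm:comparegeometric} produces a $\Z_d$-gerbe over $\mathcal Z_{CY}$ that must be shown to be trivialized, in analogy with the divisibility case in Orlov's theorem treated in Subsection~\ref{subsec: Orlov}. Verifying this triviality, and checking that a generic $f \in I(P_1)\cdot I(P_2)$ really does decompose into a pair $(f_1, f_2)$ with generic bidegrees so that both resulting complete intersections are smooth, are the principal technical hurdles. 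For $d = 3$ the claim reduces to Kuznetsov's Proposition 4.7 of~\cite{Kuz10}, giving a consistency check on the setup.
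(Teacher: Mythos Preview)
Your proposal has a genuine gap in the choice of auxiliary rays. Your $b_1 = e_{2d-3}+e_{2d-2}+e_{2d-1}$ and $b_2 = v_1+\cdots+v_{2d-4}+v_{2d}$ lie at heights $3$ and $2d-3$ and are interior to the simplicial cone $\op{Cone}(v_1,\ldots,v_{2d})$, so adding them to $\nu$ does not enlarge $\sigma$. Consequently $\nu_{=1}=\{v_1,\ldots,v_{2d}\}$ consists precisely of the ray generators of a \emph{simplicial} cone, and by Proposition~\ref{prop:regTriangFans} the only chamber with $\Sigma(1)\subseteq\nu_{=1}$ and $|\Sigma|=\sigma$ is the single-cone fan on $v_1,\ldots,v_{2d}$. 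Its factorization category is Orlov's $\mathcal A$ from Equation~\eqref{TwoPlanesSOD}, not $\dbcoh{\mathcal Z_{CY}}$; for $d>3$ these differ because $\mathcal A$ is not homologically smooth. Hence no chamber of your secondary fan realizes $\dbcoh{\mathcal Z_{CY}}$, and the leftmost arrow of the diagram cannot come from your VGIT. The $\Z_d$-gerbe you flag is only a symptom: since $b_1+b_2=da$ rather than $a$, and since $\langle\mathfrak m, b_j\rangle\neq 1$, neither Lemma~\ref{lem: Rcharge fixer upper} nor the hypothesis $\langle\mathfrak m,e_i'\rangle=1$ of Corollary~\ref{thm:comparegeometric} is available, and no trivialization argument repairs this.

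The paper's remedy is to shift your rays along $a$ to force height $1$: set $v_{2d+1}=b_1-a$ and $v_{2d+2}=b_2-(d-2)a$. Then $\nu_{\neq 1}=\{a\}$ alone, the relation $v_{2d+1}+v_{2d+2}=a$ holds exactly, and Lemma~\ref{lem: Rcharge fixer upper} applies with $R_1=\{v_{2d+1},v_{2d+2}\}$, $R_2=\{a\}$. Crucially $v_{2d+1}$ has $e_{2d}$-component $-1$ and so lies \emph{outside} $\op{Cone}(v_1,\ldots,v_{2d})$: the cone genuinely enlarges to a non-simplicial one with $2d+2$ height-$1$ ray generators. One of the resulting eight chambers has ray set $\{v_1,\ldots,v_{2d+2}\}$, and projecting along $v_{2d+1},v_{2d+2}$ identifies its toric variety with the rank-two bundle over $\P^{2d-4}\times\P^2$, yielding $\dbcoh{\mathcal Z_{CY}}$ directly via Theorem~\ref{Hirano} with no residual finite quotient.
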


 \begin{remark}
As in the previous example, our proposition guarantees that $\widetilde{\dbcoh{\mathcal{Z}_{\op{sing}}}}$, $\widetilde{\dbcoh{\mathcal{Z}_2}}$, and $\widetilde{\dbcoh{\mathcal{Z}_3}}$ are crepant categorical resolutions in the sense of Kuznetsov \cite{Kuz08} as well as our own. 
 \end{remark}
 
 As alluded to previously, the fully-faithful functors in Proposition~\ref{prop:TwoPlanesCubic}  are obtained using comparisons between toric Landau-Ginzburg models. The precise toric setup is as follows. Fix the lattice $N = \Z^{2d}$ with elementary basis vectors $e_i$ and its dual lattice $M$.  Take the geometric point collection $\nu = \{ v_1, \ldots, v_{2d+2}, a\}$ where 
\begin{equation}\begin{aligned}
v_i &= e_i \text { for $1 \leq i \leq 2d-1$} \\
v_{2d} &= -\sum_{i=1}^{2d-1} e_i + d e_{2d} \\
v_{2d+1} &= e_{2d-3} + e_{2d-2} + e_{2d-1} - e_{2d} \\
v_{2d+2} &= -e_{2d-3} - e_{2d-2} - e_{2d-1} + 2 e_{2d} \\
a &= e_{2d}.
\end{aligned}\end{equation}
The cone $\sigma : = \op{Cone}(\nu)$ is almost Gorenstein and $\mathfrak{m} = (1,\ldots, 1, 2)$. Note that the elements in the set $\nu_{=1}: = \{v_i\}$ all pair to one with $\mathfrak{m}$, and $\langle \mathfrak{m}, a \rangle = 2$. Let $R_1 = \{v_{2d+1}, v_{2d+2}\}$ and $R_2 = \{a\}$.

Let  $X:= \A^{2d+3}$ and compute that $S_{\nu} = \gm^3$. We denote the coordinates of $\A^{2d+3}$ by $x_1, \ldots, x_{2d}$ for the lattice points $v_1, \ldots, v_{2d}$ and the coordinates $u_1, u_2, u_3$ for the points $v_{2d+1}, v_{2d+2}, a$. The weights for $S_\nu$ act according to the table below:
\begin{center}\begin{tabular}{c|c}
Coordinates & Weight of $\gm^3$  \\ \hline
$x_1, \ldots, x_{2d-4}, x_{2d}$ & $(1,0,0)$ \\
$x_{2d-3}, x_{2d-2}, x_{2d-1}$ & $(1,0,1)$ \\
$u_1$ & $(0,1,0)$ \\
$u_2$ & $(0,1,1)$ \\
$u_3$ & $(-d,-1,-2)$
\end{tabular}\end{center}

The GIT fan has eight chambers. We describe them explicitly. Let 
\begin{align*}
p_0 & := (\nicefrac{1}{2}, \nicefrac{1}{2},\nicefrac{1}{2}) \\
p_1 & := (1,0,0) \\
p_2 & := (1,0,1) \\
 p_3 & :=(0,1,0) \\
  p_4 & := (0,1,1) \\
   p_5 & := (-d,-1,-2)
\end{align*}
be points in $\R^3$ and
\begin{align*}
\mathcal K_1& :=\langle x_1, \ldots, x_{2d-4}, x_{2d}\rangle \\
 \mathcal K_2& :=\langle x_{2d-3}, x_{2d-2}, x_{2d-1}\rangle
 \end{align*}
  be ideals in $\kappa[x_1,\ldots, x_{2d}, u_1,u_2,u_3]$. 
  
  The following table describes the eight chambers of the GIT fan and the irrelevant ideals corresponding to the unstable locus for each chamber.
\begin{center}\begin{tabular}{c|c|c}
Chamber $\Gamma_i$ & Cone in $\hat G_{\R}$ & Irrelevant ideal $\mathcal{I}_i$ \\ \hline
$\Gamma_1$ & $\op{Cone}(p_5, p_1,p_2)$ & $\langle u_3\rangle \mathcal K_1\mathcal K_2$\\
$\Gamma_2$ & $\op{Cone}(p_0, p_3,p_4)$ & $\langle u_1u_2\rangle \mathcal K_1 + \langle u_1u_2\rangle \mathcal K_2 + \langle u_2u_3\rangle \mathcal K_1 + \langle u_1u_3\rangle \mathcal K_2$ \\
$\Gamma_3$ & $\op{Cone}(p_0, p_1,p_3)$ & $\langle u_1u_2\rangle \mathcal K_1 + \langle u_1\rangle \mathcal K_1\mathcal K_2+ \langle u_2u_3\rangle \mathcal K_1$\\
$\Gamma_4$ & $\op{Cone}(p_0, p_2,p_4)$ & $\langle u_1u_3\rangle \mathcal K_2 + \langle u_1u_2\rangle \mathcal K_2 + \langle u_2\rangle \mathcal K_1\mathcal K_2$\\
$\Gamma_5$ & $\op{Cone}(p_0, p_1,p_2)$ & $\langle u_1,u_2\rangle \mathcal K_1\mathcal K_2$ \\
$\Gamma_6$ & $\op{Cone}(p_5, p_1,p_3)$ & $\langle u_1u_3, u_2u_3\rangle \mathcal K_2$\\
$\Gamma_7$ & $\op{Cone}(p_5, p_2,p_4)$ & $\langle u_1u_3, u_2u_3\rangle \mathcal K_1$\\
$\Gamma_8$ & $\op{Cone}(p_5, p_3,p_4)$ & $\langle u_1u_3\rangle \mathcal K_2 + \langle u_2u_3\rangle \mathcal K_1 + \langle u_1u_2u_3\rangle$ \\
\end{tabular}\end{center}

For $1 \leq i \leq 8$, let $U_i := \A^{2d+3} \setminus Z(\mathcal{I}_i)$ be the semi-stable locus corresponding to each chamber.  Finally, consider a function $w =  \sum_{m \in \Xi} c_mx^m$ for generic choices of constants $c_m \in \kappa$. We can rewrite $w$ in the form 
\begin{equation}\label{TwoPlanesSuperpotential}
w = u_1u_3 f_1(x_1, \ldots, x_{2d}) + u_2u_3f_2(x_1, \ldots, x_{2d}).
\end{equation}
for some polynomials $f_1, f_2$ which is smooth on all of the $U_i$.  Let $\mathcal{D}_i := \dabs( U_i, \gm^3 \times \gm, w)$ be the factorization category associated to the GIT chamber $\Gamma_i$.

 \begin{proof}[Proof of Proposition~\ref{prop:TwoPlanesCubic}]
Using the fact that $\chi_{-K}$ corresponds to the point $(d,1,2)$ in $\hat G_{\R}$, we apply Theorem~\ref{thm: BFKVGIT} to obtain a poset structure for the categories $\mathcal{D}_i$ given by the following diagram of fully-faithful functors:

 \begin{equation}\label{chainoffactorizationsPlanes}
\begin{tikzcd}
{} & {} & \mathcal{D}_3 \arrow{rd}{} & {} \\
\mathcal{D}_1 \cong \mathcal{D}_6 \cong \mathcal{D}_7\cong \mathcal{D}_8 \arrow{r}{} & \mathcal{D}_2  \arrow{rd}{}\arrow{ru}{}& {} & \mathcal{D}_5 \\
{} & {} & \mathcal{D}_4 \arrow{ru}{}  & {}
 \end{tikzcd}
 \end{equation}
 
 The claim is now proven by giving geometric interpretations to the five distinct categories. This is done in Propositions~\ref{TwoPlanesGamma1}, ~\ref{TwoPlanesGamma5}, and~\ref{CCRsPlanes} below.
 \end{proof}
 
The categories $\mathcal{D}_1$ and $\mathcal{D}_5$ are derived categories of algebraic varieties, while $\mathcal{D}_2, \mathcal{D}_3$, and $\mathcal{D}_4$ are crepant categorical resolutions of derived categories of singular varieties. We will describe the categories in order.

\begin{proposition}\label{TwoPlanesGamma5}
Let $\mathcal{Z}_{CY}$ be the zero locus $Z(f_1, f_2) \subseteq \P^{2d-4} \times \P^2$, which is a $(2d-4)$-dimensional Calabi-Yau complete intersection. Then there is an equivalence
 $$\mathcal{D}_1 :=\dabs(U_1, \gm^3 \times (\gm)_{R_1}, w) \cong \dbcoh{\mathcal{Z}_{CY}}.$$
\end{proposition}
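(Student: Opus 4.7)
The plan is to identify the fan $\Sigma_1$ corresponding to the chamber $\Gamma_1$ as a total space fan of the form $\Psi_{-D_1, -D_2}$, where $\Psi$ is the fan of $\P^{2d-4} \times \P^2$, and then apply Proposition~\ref{prop: ToricStackHirano} to conclude.

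First, I would identify $\Sigma_1$. From the description of the irrelevant ideal $\mathcal{I}_1 = \langle u_3\rangle \mathcal K_1 \mathcal K_2$, the maximal cones of $\Sigma_1$ are exactly those of the form $\nu \setminus \{a, v_i, v_j\}$ for $v_i \in \mathcal K_1$ and $v_j \in \mathcal K_2$. In particular, $a$ is never a ray of $\Sigma_1$, and $v_{2d+1}, v_{2d+2}$ appear in every maximal cone, which is precisely what one expects for the total space of a rank-$2$ bundle. Applying Corollary~\ref{prop: stack iso appl} with $I_\emptyset = \{a\}$ then gives an isomorphism of stacks $[U_1 / S_\nu] \cong \mathcal{X}_{\Sigma_1}$, while leaving the $(\gm)_{R_1}$ action unchanged since $a \notin R_1$.

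Next, I would construct the base. Let $\pi: N = \Z^{2d} \to N / \langle v_{2d+1}, v_{2d+2}\rangle$ be the projection. Using the relations $v_{2d+1} + v_{2d+2} = e_{2d}$ and $2 v_{2d+1} + v_{2d+2} = e_{2d-3} + e_{2d-2} + e_{2d-1}$, the quotient lattice is canonically $\Z^{2d-4} \oplus \Z^2$, where the $\Z^2$ factor is identified with $\langle e_{2d-3}, e_{2d-2}, e_{2d-1}\rangle / \langle e_{2d-3} + e_{2d-2} + e_{2d-1}\rangle$. The images $\pi(v_1), \ldots, \pi(v_{2d})$ then generate the rays of the product fan $\Psi$ for $\P^{2d-4} \times \P^2$: $\pi(v_1), \ldots, \pi(v_{2d-4}), \pi(v_{2d})$ are the rays of $\P^{2d-4}$, and $\pi(v_{2d-3}), \pi(v_{2d-2}), \pi(v_{2d-1})$ are the rays of $\P^2$. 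Writing each $v_i$ as $\tilde u_{\rho_i} - a_{1i} v_{2d+1} - a_{2i} v_{2d+2}$ for an appropriate lift $\tilde u_{\rho_i}$ of $\pi(v_i)$ identifies $\Sigma_1$ with $\Psi_{-D_1, -D_2}$, where $D_1 = (d-2) H_1 + 2 H_2$ and $D_2 = (d-1) H_1 + H_2$, with $H_1, H_2$ the hyperplane classes of the two factors.

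With this identification, $[U_1 / (\gm^3 \times (\gm)_{R_1})]$ becomes $\op{tot}(\O(d-2, 2)^\vee \oplus \O(d-1, 1)^\vee)$ over $\P^{2d-4} \times \P^2$, with $(\gm)_{R_1}$ acting by fiberwise dilation. The $\gm^3$-weight computation from the weight table (namely $(d,0,2)$ for $f_1$ and $(d,0,1)$ for $f_2$) shows that $f_1, f_2$ are sections of $\O(d-2, 2)$ and $\O(d-1, 1)$, respectively, and the potential $w = u_1 u_3 f_1 + u_2 u_3 f_2$ becomes (after absorbing the invertible $u_3$) the Koszul-type pairing $\langle (f_1, f_2), - \rangle$ appearing in Theorem~\ref{Hirano}. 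For generic coefficients, $(f_1, f_2)$ is a regular section by a standard Bertini argument, so $\mathcal Z_{CY} = Z(f_1, f_2)$ is a smooth complete intersection of codimension $2$ and dimension $2d - 4$ with trivial canonical bundle, since the bidegrees $(d-2) + (d-1) = 2d - 3$ and $2 + 1 = 3$ match the anti-canonical bidegree of $\P^{2d-4} \times \P^2$. Proposition~\ref{prop: ToricStackHirano} then yields the desired equivalence $\mathcal D_1 \cong \dbcoh{\mathcal Z_{CY}}$. The main technical obstacle is the explicit identification $\Sigma_1 = \Psi_{-D_1, -D_2}$: because $v_{2d+1}$ and $v_{2d+2}$ do not form an obvious $\Z$-basis of their span (one must pass through the identity $v_{2d+1} + v_{2d+2} = e_{2d}$), some care is required with signs and lattice torsion when reading off the multiplicities $a_{1i}, a_{2i}$. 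Once this combinatorial identification is in place, everything else reduces to Hirano's theorem and the Bertini argument for regularity.
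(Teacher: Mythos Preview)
Your proposal is correct and follows essentially the same approach as the paper: identify $\Sigma_1$ as the total space fan $\Psi_{-D_1,-D_2}$ over $\P^{2d-4}\times\P^2$ with $D_1=(d-2)H_1+2H_2$, $D_2=(d-1)H_1+H_2$, and then apply Hirano's theorem (Proposition~\ref{prop: ToricStackHirano}). You supply more combinatorial detail than the paper (the irrelevant-ideal description of the maximal cones, the explicit quotient lattice, and the Calabi--Yau degree check), but the argument is the same.
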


\begin{proof}
 Consider the fan $\Sigma_1$ associated to the GIT chamber $\Gamma_1$. It is constructed by taking the cone generated by $v_1, \ldots, v_{2d}$ and then star subdividing along $v_{2d+1}$ and $v_{2d+2}$.  Consider the product of projective spaces $\P^{2d-4} \times \P^2$. Let $H_1$ and $H_2$ be the hyperplane divisors associated to $\P^{2d-4}$ and $\P^2$ respectively. One can compute that 
$$X_{\Sigma_1} \cong \op{tot}(\O_{\P^{2d-4} \times \P^2}(-(d-2)H_1 - 2H_2) \oplus \O_{\P^{2d-4} \times \P^2}(-(d-1)H_1 - H_2).
$$  
Let $\mathcal{Z}_{CY}$ be the zero locus $Z(f_1, f_2) \subseteq \P^{2d-4} \times \P^2$, which is a $(2d-4)$ Calabi-Yau complete intersection. By Theorem~\ref{Hirano}, we have the equivalence
$$
\mathcal{D}_1 \cong \dabs(U_1, \gm^3 \times (\gm)_{R_1}, w) \cong \dbcoh{\mathcal{Z}_{CY}}.
$$ 
\end{proof}

We now move to the crepant categorical resolutions. 

\begin{proposition}\label{CCRsPlanes}
The following hold:
\begin{enumerate}[(a)]
\item The absolute derived category $\dabs(U_1, S_\nu \times (\gm), w)$ is a crepant categorical resolution of $\mathcal A$ given in Equation~\eqref{TwoPlanesSOD}.
\item The absolute derived category $\dabs(U_2, S_\nu \times (\gm), w)$ is a crepant categorical resolution of $\dbcoh{\mathcal{Z}_{\text{sing}}}$. 
\item The absolute derived category $\dabs(U_3, S_\nu \times (\gm), w)$ is a crepant categorical resolution of $\dbcoh{\mathcal{Z}_2}$, where $\mathcal{Z}_2$ is the strict transform of $\mathcal{Z}_{\op{sing}}$ in $\op{Bl}_{P_2}(\P^{2d-1})$.
\item The absolute derived category $\dabs(U_4, S_\nu \times (\gm), w)$ is a crepant categorical resolution of $\dbcoh{\mathcal{Z}_1}$, where $\mathcal{Z}_1$ is the strict transform of $\mathcal{Z}_{\op{sing}}$ in $\op{Bl}_{P_1}(\P^{2d-1})$.
\end{enumerate}
\end{proposition}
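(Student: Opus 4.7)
The proof follows the template of Proposition~\ref{SingGamma12} and its supporting Lemmas~\ref{lem:SingCubicsCCR} and~\ref{lem:geomInterpGam12}, adapted to the more intricate wall-crossing structure present here. For each of $(a)$--$(d)$, the plan is to exhibit the given semi-stable locus $U_i$ as a partial compactification of a $G$-invariant open subscheme $V_i \subset U_i$, defined as the complement of the zero locus of a suitable subideal $\mathcal{J}_i \subset \mathcal{I}_i$. The subideal $\mathcal{J}_i$ is chosen so that the extra stratum $Z(\mathcal{J}_i) \setminus Z(\mathcal{I}_i)$ is precisely the exceptional divisor adjoined by the corresponding birational modification.

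The first step is to verify that each inclusion $V_i \hookrightarrow U_i$ is compatible with the relevant elementary wall-crossing in the sense of Definition~\ref{def: EWC}. One then invokes Theorem~\ref{theorem: CCRGIT} to conclude that $\dabs(U_i, \gm^3 \times \gm, w)$ is a crepant categorical resolution of $\dabs(V_i, \gm^3 \times \gm, w)$. The hypotheses of Theorem~\ref{theorem: CCRGIT} will be verified directly: finite diagonality of $[U_i / \gm^3 \times \gm]$ is a stabilizer computation using the explicit weight table, and properness of $[\partial w / \gm^3 \times \gm]$ follows from the smoothness of $w$ on each $U_i$ noted after Equation~\eqref{TwoPlanesSuperpotential}.

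The second step is to identify each $\dabs(V_i, \gm^3 \times \gm, w)$ with the desired geometric derived category. Applying Lemma~\ref{lem: stack iso} (as in the passage from $V_i$ to $U_i'$ in Lemma~\ref{lem:SingCubicsCCR}), we eliminate the coordinates that are forced to be non-vanishing on $V_i$ by setting them to $1$, reducing the stack $[V_i / \gm^3 \times \gm]$ to $[U_i' / G_i']$ for a smaller group $G_i'$ acting on a smaller affine space with induced potential $\bar w$. For $(a)$, we set $u_3 \neq 0$ to recover the $\Z_d$-orbifold LG model that realizes $\mathcal{A}$ via Orlov's theorem as in Subsection~\ref{subsec: Orlov}; for $(b)$ we set both $u_1$ and $u_2$ to be non-vanishing and invoke Theorem~\ref{Hirano} to recover $\dbcoh{\mathcal Z_{\op{sing}}}$; and for $(c)$ and $(d)$ we set one of $u_1$, $u_2$ to be non-vanishing, which recovers the derived categories of the partial blowups $\mathcal Z_2$ and $\mathcal Z_1$ respectively via Theorem~\ref{Hirano}.

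The main obstacle will be the precise combinatorial identification of the subideals $\mathcal{J}_i$ and the verification that the wall-crossings remain compatible after the partial compactification, particularly for the intermediate chambers $(c)$ and $(d)$ where the wall-crossing sits in a higher-codimension face of the GKZ fan and the contracting stratum $S_{\pm \lambda}$ is more delicate. Once the correct $\mathcal{J}_i$ are found, the verification that $S_{\pm \lambda} \cap (U_i \setminus V_i) = \emptyset$ reduces to checking that the exceptional divisor adjoined does not meet the unstable locus, which can be read off directly from the explicit presentations of $\mathcal{I}_i$ tabulated above.
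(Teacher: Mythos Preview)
Your framework is exactly the paper's for parts (b), (c), (d): pick the subideal $\mathcal J_i$ so that the relevant $u_j$'s are forced nonzero, identify $[V_i/G]$ with the total space of a line bundle over $\P^{2d-1}$ or one of its single-plane blowups, and apply Theorem~\ref{theorem: CCRTotal} (not Theorem~\ref{theorem: CCRGIT}---no wall crossing is needed for these three parts, only the open immersion $V_i\hookrightarrow U_i$ and Hirano's equivalence).

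There is, however, a genuine gap in your plan for (a). You propose $\mathcal J_1=\langle u_3\rangle$, but $\mathcal I_1=\langle u_3\rangle\mathcal K_1\mathcal K_2$ already forces $u_3\neq 0$ everywhere on $U_1$, so your $V_1$ would not be a proper open subset of $U_1$; and any honest open subset of $U_1$ still inherits the conditions ``some generator of $\mathcal K_1$ nonzero'' and ``some generator of $\mathcal K_2$ nonzero'', so it can never present the full $\Z_d$-orbifold LG model $[\A^{2d}/\Z_d]$ that realizes $\mathcal A$. The paper circumvents this by \emph{not} working with $U_1$ directly. Instead it uses the derived equivalence $\mathcal D_1\cong\mathcal D_8$ (from the chain in \eqref{chainoffactorizationsPlanes}) and treats (a) and (b) with a \emph{single} open immersion $V=\{u_1u_2\neq 0\}\hookrightarrow U$ compatible with the $\Gamma_2\leftrightarrow\Gamma_8$ wall crossing. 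One checks $V_8=\{u_1,u_2,u_3\neq 0\}$ carries no $x$-conditions at all, so $[V_8/G]$ \emph{is} the $\Z_d$-orbifold LG model, while $[V_2/G]\cong\op{tot}\O_{\P^{2d-1}}(-d)$. Then Theorem~\ref{theorem: CCRTotal} gives (b), and Theorem~\ref{theorem: CCRGIT} applied to the same immersion and wall crossing gives that $\mathcal D_8$ (hence $\mathcal D_1$) is a crepant categorical resolution of $\dabs(V_8,\ldots)=\mathcal A$. Parts (c) and (d) are then handled by the analogous single immersion compatible with the $\Gamma_3\leftrightarrow\Gamma_6$ and $\Gamma_4\leftrightarrow\Gamma_7$ crossings.
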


\begin{proof}
Consider the open immersion $V \hookrightarrow U$, where 
\begin{equation}\begin{aligned}
V &= X \setminus Z(u_1u_2),\\
U &= X \setminus Z(u_1u_2, u_2u_3\mathcal{K}_1, u_1u_3\mathcal{K}_2).
\end{aligned}\end{equation}

A direct computation shows that the ideal $\langle u_1u_2, u_2u_3\mathcal{K}_1, u_1u_3\mathcal{K}_2\rangle$ is the irrelevant ideal associated to the cone in the GIT fan that is the common face between the chambers $\Gamma_2$ and $\Gamma_8$. The path between these two chambers yields the following stratifications associated to its elementary wall crossing:
\begin{equation}\begin{aligned}
U &= U_8 \sqcup S_-, \quad V= V_8 \sqcup S_-, \quad S_- := Z(u_3)\cap U, \\
U &= U_2 \sqcup S_+, \quad V= V_2 \sqcup S_+, \quad S_+ := Z(x_1, \ldots, x_{2d})\cap U.
\end{aligned}\end{equation}
Note that 
\begin{equation}\begin{aligned}
S_- \cap (U_8 \setminus V_8) &= \varnothing \text{ and}\\
S_+ \cap (U_2 \setminus V_4) &= \varnothing,
\end{aligned}\end{equation}
hence the immersions are compatible with the elementary wall crossing. Consider the gauged Landau-Ginzburg model $(V_2, \gm^3 \times (\gm)_{R_2}, w)$. Consider the affine space $X_{u_1, u_2} = \A^{2d+1}$ found by taking $\op{Spec}(\kappa[x_1, \ldots, x_{2n}, u_3])$. There is a stack isomorphism
$$
\left[ \bigslant{V_2}{\gm^3\times (\gm)_{R_1}}\right] = \left[\bigslant{(X_{u_1,u_2} \setminus Z(x_1,\ldots, x_{2d}))}{\gm \times (\gm)_{R_1}}\right] = \op{tot} \O_{\P^{2d-1}}(-dH),
$$
thus $U_2$ is a partial compactification of  $\op{tot} \O_{\P^{2d-1}}(-dH)$. The superpotential $w$ is an extension of the section $f_1+f_2$ on $ \op{tot} \O_{\P^{2d-1}}(-dH)$. Note that by Hirano's theorem
$$
\dabs(X_{u_1,u_2} \setminus Z(x_1,\ldots, x_{2d}), \gm \times (\gm), u_3(f_1+f_2)) = \dbcoh{\mathcal{Z}_{\op{sing}}},
$$
where $\mathcal{Z}_{\op{sing}}$ is the zero locus of $Z(f_1+f_2) \subset \P^{2d-1}$. Note that since the section $u_1f_1+u_2f_2$ also defines a section of $-dH + E_1 + E_2$ in $\widetilde{\P}$, we know that $\mathcal{Z}_{\op{sing}}$ contains the two planes $x_{2d-3}=x_{2d-2} =x_{2d-1} = 0$ and  $x_1= \ldots = x_{2d-4} = x_{2d} = 0$. 

By Theorem~\ref{theorem: CCRTotal}, we have that $\mathcal{D}_2$ is a crepant  categorical resolution of $\dbcoh{\mathcal{Z}_{\op{sing}}}$. We have two semi-orthogonal decompositions: 
$$
\dbcoh{\mathcal{Z}_{\op{sing}}} = \langle \mathcal{A}, \O, \ldots, \O(d-1)\rangle,
$$
where $\mathcal{A} = \dabs(V_8, \gm^3 \times (\gm)_{R_2}, w)$, and 
$$
\mathcal{D}_2 = \langle \mathcal{D}_8, \O, \ldots, \O(d-1)\rangle.
$$
By Theorem~\ref{theorem: CCRGIT}, we have that 
\begin{equation}
\mathcal{D}_1 \cong \mathcal{D}_8 :=\dabs(U_8, S_\nu \times (\gm), w)
\end{equation}
is a crepant categorical resolution of $\mathcal{A}$. 

Let $\mathcal{Z}_1$ and $\mathcal{Z}_2$ be the resultant varieties from taking $\mathcal{Z}_{\op{sing}}$ blowing up the planes $x_{2d-3}=x_{2d-2} =x_{2d-1} = 0$ and $x_1= \ldots = x_{2d-4} = x_{2d} = 0$ , respectively.  By doing the analogous comparisons between $\Gamma_3$ and $\Gamma_7$ and between $\Gamma_4$ and $\Gamma_6$, one can see that $\mathcal{D}_3$ is a crepant categorical resolution of $\dbcoh{\mathcal{Z}_1}$ and $\mathcal{D}_4$ is a crepant categorical resolution of $\dbcoh{\mathcal{Z}_2}$. 
\end{proof}

We finish with the geometric interpretation of the category $\mathcal{D}_5$.

\begin{proposition}\label{TwoPlanesGamma1}
Consider the two planes $P_1 = \{x_{2d-3}=x_{2d-2} =x_{2d-1} = 0\}$ and $P_2 = \{x_1= \ldots = x_{2d-4} = x_{2d} = 0\}$ in $\P^{2d-1}$.  Let $\mathcal{Z}_{\op{sing}}$ be the cubic $Z(f_1+f_2)$ in $\P^{2d-1}$ where $f_1$ and $f_2$ are the cubics defined in Equation~\eqref{TwoPlanesSuperpotential}. Consider the blow up $\widetilde{\mathcal{Z}_{\op{sing}}}$ of $\mathcal{Z}_{\op{sing}}$ along $P_1$ and $P_2$. Then $\mathcal{Z}_{\op{sing}}$ contains both $P_1$ and $P_2$ and we have the equivalence
$$
\mathcal{D}_5:=\dabs( U_5, \gm^3 \times \gm, w) \cong \dbcoh{\widetilde{\mathcal{Z}_{\op{sing}}}}.
$$
\end{proposition}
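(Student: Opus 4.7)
The plan is to mirror the argument for Proposition~\ref{Gamma3Sing}: identify $X_{\Sigma_5}$ as the total space of an explicit line bundle over a toric blow-up of $\P^{2d-1}$ and then invoke Hirano's theorem (Theorem~\ref{Hirano}). To set up the geometric picture, I would first switch from the R-charge $R_1$ used for the neighboring chambers to the equivalent R-charge $R_2 = \{a\}$, which is justified by Lemma~\ref{lem: Rcharge fixer upper} since $v_{2d+1} + v_{2d+2} = e_{2d} = a$. Under $R_2$, the variable $u_3$ is the unique coordinate on which the R-charge acts with weight one, making it the natural bundle coordinate for the vector bundle interpretation of $X_{\Sigma_5}$.

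Next, I would unpack the fan $\Sigma_5$ associated to the chamber $\Gamma_5$. By Proposition~\ref{prop: GKZ chamber bijection}, $\Sigma_5$ is the simplicial fan in $N_{\R}$ whose rays are $\{\op{Cone}(v) \mid v \in \nu\}$ and whose maximal cones are dictated by the irrelevant ideal $\mathcal{I}_5 = \langle u_1, u_2\rangle\,\mathcal{K}_1\mathcal{K}_2$. Every maximal cone of $\Sigma_5$ contains the ray $\op{Cone}(a) = \op{Cone}(e_{2d})$, so projecting along $\pi : N \to N/\langle e_{2d}\rangle \cong \Z^{2d-1}$ yields a complete fan $\Psi_5$ whose toric variety will be identified with $\widetilde{\P^{2d-1}}$, the iterated blow-up of $\P^{2d-1}$ along $P_1$ and $P_2$. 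Indeed, the images $\pi(v_1),\ldots,\pi(v_{2d})$ span the standard fan for $\P^{2d-1}$, while the extra rays $\pi(v_{2d+1}) = e_{2d-3}+e_{2d-2}+e_{2d-1}$ and $\pi(v_{2d+2}) = -e_{2d-3}-e_{2d-2}-e_{2d-1}$ realize the star subdivisions along the cones associated to $P_1$ and $P_2$ respectively. An iterated application of Proposition 7.3.1 of \cite{CLS} will then identify $X_{\Sigma_5}$ with $\op{tot}\,\O_{\widetilde{\P^{2d-1}}}(-dH + E_1 + E_2)$, with $u_3$ serving as the fiber coordinate.

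Finally, I would apply Theorem~\ref{Hirano} with the section $s := u_1 f_1 + u_2 f_2$ of $\O_{\widetilde{\P^{2d-1}}}(dH - E_1 - E_2)$ and the associated superpotential $w = u_3 \cdot s$. Since $s$ corresponds to a degree $d$ polynomial on $\P^{2d-1}$ vanishing on both $P_1$ and $P_2$, its zero locus in $\widetilde{\P^{2d-1}}$ is precisely the strict transform $\widetilde{\mathcal{Z}_{\op{sing}}}$ of $\mathcal{Z}_{\op{sing}} = Z(f_1 + f_2)$. Hirano's theorem then delivers the desired equivalence $\mathcal{D}_5 \cong \dbcoh{\widetilde{\mathcal{Z}_{\op{sing}}}}$. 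The main obstacle is the toric bookkeeping required to confirm the identification of the two exceptional divisors and the precise bundle class $-dH + E_1 + E_2$; however, this follows by tracking the weight $(-d,-1,-2,1)$ of $u_3$ under $S_\nu \times (\gm)_{R_2}$ against the standard dictionary between torus-invariant divisors on $\widetilde{\P^{2d-1}}$ and rays of $\Psi_5$, mirroring the analogous step in the proof of Proposition~\ref{Gamma3Sing}.
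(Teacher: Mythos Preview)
Your proposal is correct and follows essentially the same route as the paper: project along $e_{2d}$ to identify $\Psi_5$ with the iterated blow-up $\widetilde{\P^{2d-1}}$ of $\P^{2d-1}$ along $P_1$ and $P_2$, recognise $X_{\Sigma_5}$ as the total space of the line bundle associated to $-dH+E_1+E_2$, and then apply Theorem~\ref{Hirano} with the section $u_1f_1+u_2f_2$. Your explicit invocation of Lemma~\ref{lem: Rcharge fixer upper} (via $v_{2d+1}+v_{2d+2}=a$) to pass from $R_1$ to $R_2$ and your identification of $u_3$ as the fibre coordinate are exactly the right bookkeeping steps, and in fact make the argument slightly more transparent than the paper's terse version.
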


\begin{proof} Start with the standard fan for $\P^{2d-1}$, then blow up the hyperplanes $x_{2d-3}=x_{2d-2} =x_{2d-1} = 0$ and  $x_1= \ldots = x_{2d-4} = x_{2d} = 0$ to obtain the variety $\widetilde{\P}$. Note that $\op{Cl}(\widetilde{\P}) = \Z^3$, and it is generated by the hyperplane section $H$ and the exceptional divisors $E_1$ and $E_2$ given by the respective blowups described above. Consider the divisor $D = - dH + E_1 + E_2$. One can check that the fan $\Sigma_5$ is the total space of the line bundle $\O_{\widetilde{\P}}(-D)$. A generic global section of $\O_{\widetilde{\P}}(-D)$ is given by taking $w$ and setting $u_1$ and $u_2$ to one. Let $\widetilde{\mathcal Z_{\op{sing}}}$ be the zero locus $Z(f_1 + f_2) \subseteq \widetilde{\P}$. By Theorem~\ref{Hirano}, there is an equivalence 
$$
 \dabs(U_5, \gm^3 \times (\gm)_{R_2}, w) \cong  \dbcoh{\widetilde{\mathcal Z_{\op{sing}}}}.
$$ 
The fact that $\mathcal{Z}_{\op{sing}}$ contains $P_1$ and $P_2$ is clear from the definition of the divisor $D$.
\end{proof}

 \end{document}